\definecolor{cerulean}{RGB}{0,123,167}
\pgfplotsset{compat=1.17} 
\newcommand{\C}{\mathbb{C}}
\newcommand{\F}{\mathbb{F}}	
\newcommand{\R}{\mathbb{R}}
\newcommand{\Q}{\mathbb{Q}}
\newcommand{\N}{\mathbb{N}}
\newcommand{\Z}{\mathbb{Z}}
\newcommand{\K}{\mathbb{K}}
\newcommand{\ve}{\varepsilon}
\newcommand{\p}{\partial}
\newcommand{\abs}[1]{\left\vert#1\right\vert}
\newcommand{\tfloor}[1]{\left\lfloor#1\right\rfloor}
\newcommand{\nab}{\nabla}
\providecommand{\norm}[1]{\left\Vert#1\right\Vert}
\newcommand{\weakar}{\rightharpoonup} 
\newcommand{\triplenorm}[1]{{\left\vert\kern-0.25ex\left\vert\kern-0.25ex\left\vert #1 
    \right\vert\kern-0.25ex\right\vert\kern-0.25ex\right\vert}}
\DeclareMathOperator{\Tan}{tan}
\newcommand{\Hzerotan}{{}_{\Tan}H^1}
\newcommand{\Hzerostan}[1]{{}_{\Tan}H^{#1}}
\newcommand{\alphaHzeros}[1]{{}_{\alpha-\Tan}H^{#1}}
\newcommand{\Hzerostanigtan}{{}_{\Tan}H^1_\sigma}
\renewcommand{\Re}{\operatorname{Re}}
\renewcommand{\Im}{\operatorname{Im}}
\numberwithin{equation}{section}
\DeclareMathOperator{\esssup}{esssup}
\DeclareMathOperator{\Tr}{Tr}
\DeclareMathOperator{\diverge}{div}
\DeclareMathOperator{\Range}{Ran}
\DeclareMathOperator{\sym}{sym}
\newtheorem{thm}{Theorem}[section]
\newtheorem{cor}[thm]{Corollary}
\newtheorem{prop}[thm]{Proposition}
\newtheorem{lem}[thm]{Lemma}
\theoremstyle{definition}
\newtheorem*{thm*}{Theorem}
\newtheorem*{def*}{Definition}
\newtheorem*{prop*}{Proposition}
\newtheorem{defn}[thm]{Definition}
\theoremstyle{remark}
\newcommand{\m}{\mathrm}
\newcommand{\lv}{\lVert}
\newcommand{\rv}{\rVert}
\newcommand{\lf}{\lfloor}
\newcommand{\rf}{\rfloor}
\newcommand{\bpm}{\begin{pmatrix}}
\newcommand{\epm}{\end{pmatrix}}
\newcommand{\loc}{\m{loc}}
\renewcommand{\bar}{\overline}
\renewcommand{\le}{\leqslant}
\renewcommand{\ge}{\geqslant}
\newcommand{\tnorm}[1]{\lv#1\rv}
\title[Traveling wave solutions to the free boundary incompressible Navier-Stokes equations]{Traveling wave solutions to the free boundary incompressible Navier-Stokes equations with Navier boundary conditions}
\author{Junichi Koganemaru}
\address{
Department of Mathematical Sciences\\
Carnegie Mellon University\\
Pittsburgh, PA 15213, USA
}
\email[J. Koganemaru]{jkoganem@andrew.cmu.edu}
\author{Ian Tice}
\address{
Department of Mathematical Sciences\\
Carnegie Mellon University\\
Pittsburgh, PA 15213, USA
}
\email[I. Tice]{iantice@andrew.cmu.edu}
\thanks{I. Tice was supported by an NSF Grant (DMS \#2204912). }
\subjclass[2010]{Primary 35Q30, 35R35, 35C07 ; Secondary 35B30, 76D33, 76D03}
\keywords{Free boundary Navier-Stokes, Navier-slip boundary condition, traveling waves}
\begin{document}

\begin{abstract}
In this paper we study traveling wave solutions to the free boundary incompressible Navier-Stokes system with generalized Navier-slip conditions. The fluid is assumed to occupy a horizontally infinite strip-like domain that is bounded below by a flat rigid surface and above by a moving surface. We assume that the fluid is acted upon by a bulk force and a surface stress that are stationary in a coordinate system moving parallel to the fluid bottom, and a uniform gravitational force that is perpendicular to the flat rigid surface. We construct our solutions via an implicit function argument, and show that as the slip parameter shrinks to zero, the Navier-slip solutions converge to solutions to the no-slip problem obtained previously. 
\end{abstract}

\maketitle

\section{Introduction}

The construction of traveling wave solutions to the inviscid, incompressible equations of fluid dynamics is a classical subject in mathematics with a rich history. In comparison, progress on the corresponding viscous problems began quite recently: the series of papers \cite{tice, leonitice, noahtice, noahtice3} developed a well-posedness theory for the free surface Navier-Stokes equations modeling incompressible fluids in a horizontally infinite strip-like domain of finite depth, subject to sources of applied force and stress.  In each of these papers, the fluid is assumed to obey the standard no-slip boundary condition at its lower boundary with a flat, rigid floor. The purpose of this paper is to continue the study of this type of problem by incorporating the more general Navier-slip condition, which allows the fluid to slip along the bottom boundary, and show that a generic well-posedness theory persists.  The slip boundary condition, first proposed by Navier \cite{Navier} in 1832, asserts that the tangential fluid velocity at the fluid bottom is proportional to the tangential stress experienced by the fluid.  The ratio of the tangential stress to the tangential fluid velocity is referred to as the slip parameter.  We will prove that not only are traveling wave solutions also generic under the Navier-slip conditions, but that  one recovers the no-slip solutions in the limit as the characteristic slip parameter goes to zero.

\subsection{Problem formulation}\label{sec: initial description}

In this paper we consider a single layer of viscous, incompressible fluid evolving in a horizontally infinite strip-like domain, bounded below by a flat, rigid surface and above by a free moving surface that can be described by the graph of a continuous function, in dimensions $n \ge 2$. Even though the only physically relevant cases are when $n=2,3$, the analysis in this paper can be applied more generally to higher dimensions as well.  Since our primary interest is the construction of traveling wave solutions, we will skip the somewhat lengthy discussion of the formulation of the fully dynamic problem and the subsequent reformulation under a traveling wave ansatz and jump straight to the traveling wave problem; these omitted details can be found in the introduction of \cite{tice}. The equations for a traveling wave solution to the free boundary Navier-Stokes system are
\begin{align}\label{eq: main unflattened}
		\begin{cases}
			\diverge S(q,v) - \gamma  e_1  \cdot \nabla v + v \cdot \nabla v  + \mathfrak{g} (\nabla' \eta, 0)= \mathfrak{f}, & \text{in} \; \Omega_{b + \eta} \\
			\diverge v = 0 , & \text{in} \; \Omega_{b + \eta} \\
			-\gamma  \p_1 \eta + \nabla' \eta \cdot v' = v_n  ,  & \text{on} \; \Sigma_{b + \eta} \\	
			S(q,v) \mathcal{N} =  (-\sigma \mathcal{H}(\eta) I  + \mathcal{T})\mathcal{N}   ,  & \text{on} \; \Sigma_{b + \eta}	\\
			-\alpha( S (q,v)  e_n)' = [A(v)]', & \text{on} \; \Sigma_0 \\
		v_n= 0, & \text{on} \; \Sigma_0.
	\end{cases}
\end{align}

We now explain all the terms appearing in the system \eqref{eq: main unflattened}.  The fluid occupies the unknown domain $\Omega_{b +\eta} = \{ x = (x',x_n) \in \R^{n}  \mid 0 < x_n < b +\eta(x') \}$, where $\eta : \R^{n-1}  \to (-b, \infty)$ is the unknown free surface function and $b >0$ is the equilibrium depth of the fluid.  The graph  $\Sigma_{b+\eta} =  \{x = (x',x_n) \in \R^{n}  \mid x_n = b+\eta(x')\}$ is the unknown upper boundary of the fluid, while the trivial graph $\Sigma_0 = \{ x = (x', x_n) \in \R^{n}  \mid x_n = 0 \}$ denotes its fixed, rigid lower boundary.  See Figure \ref{fig:fig_1} for a graphical depiction of the fluid domain.

\begin{figure}[!ht]
	\includegraphics[scale=0.5]{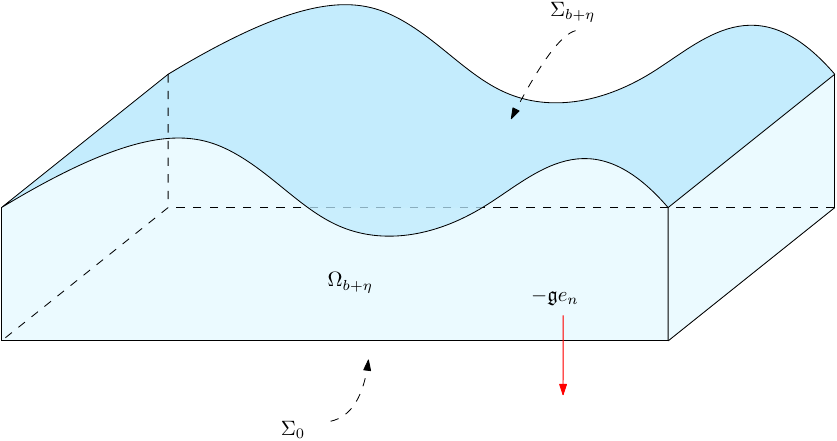}
	\caption{A sample portion of the unknown fluid domain in dimension $n=3$}
	\label{fig:fig_1}
\end{figure}

The fluid's velocity field and pressure are denoted here by $v : \Omega_{b+\eta} \to \R^n$ and $q : \Omega_{b+\eta} \to \R$, and together they determine the viscous stress tensor 
\begin{equation}
 S(q,v) = q I_{n \times n} - \mu \mathbb{D} v  = q I_{n \times n} - \mu (\nabla v + (\nabla v)^T) \in \R^{n \times n}
\end{equation}
with the viscosity coefficient $\mu >0$.  We emphasize, though, that the pressure $q$ is not really the fundamental fluid mechanical pressure, but rather a ``good'' pressure unknown obtained by subtracting off a variant of the hydrostatic pressure (see \cite{tice} for details). The parameter $\mathfrak{g} >0$ is the strength of the gravitational field, and the term $\mathfrak{g}(\nab' \eta,0)$ corresponds to the gravitational force the fluid experiences, after the aforementioned reformulation of the pressure unknown.  Without loss of generality, we henceforth assume the convenient normalization $\mu = \mathfrak{g} =1$.

The parameter $\gamma \in \R$ is the traveling wave speed, and its specific appearance in \eqref{eq: main unflattened} corresponds to solutions to the dynamic problem that are stationary in a coordinate frame moving with velocity $\gamma e_1$.  The applied bulk force $\mathfrak{f} : \Omega_{b+\eta} \to \R^n$ and the applied surface stress $\mathcal{T} : \Sigma_{b+\eta} \to  \R^{n\times n}_{\sym}$ are given data that are responsible for inducing the motion of the fluid.  The term $\mathcal{N} = (-\nabla' \eta, 1)$ denotes the non-unit normal vector field to $\Sigma_{b+\eta}$,  while the term $- \sigma \mathcal{H}(\zeta)$ corresponds to surface tension on $\Sigma_{b+\eta}$, with $\sigma >0$ denoting the coefficient of surface tension and $\mathcal{H}(\eta) = \diverge' ( \nabla' \eta / \sqrt{1+ \abs{\nabla' \eta}^2 })$ denoting the mean-curvature operator.   

The system \eqref{eq: main unflattened} is obtained from the incompressible Navier-Stokes system.  The first two equations in \eqref{eq: main unflattened} correspond to the balance of momentum and conservation of mass.  The third equation is the kinematic boundary condition describing the evolution of the free surface.  The fourth equation is called the dynamic boundary condition, as it encodes the balance of forces on the free surface.  The fifth and sixth equations constitute a general nonlinear version of the Navier-slip condition, which we now elaborate on.  The sixth equation is called the no-penetration condition, and it requires that the fluid is not able to detach from or pass through $\Sigma_0$.  Unlike in the case of the no-slip boundary condition, the fluid is allowed to have a nontrivial tangential component on $\Sigma_0$, which is described as ``slip.''   However, slip comes at a price: it generates a tangential stress on the fluid that opposes the motion, which one should think of as being analogous to the way that air resistance is modeled in standard Newtonian point-particle mechanics.  The precise form we impose in the fifth equation is (using the sixth)
\begin{equation}\label{eq:slip_expansion}
 [A(v)]' = -\alpha( S (q,v)  e_n)'  =  -\alpha (q e_n - \mathbb{D}v e_n)' = \alpha (\mathbb{D} v e_n)' =\alpha( \nabla' v_n + \p_n v') = \alpha \p_n v'
\end{equation}
for a given smooth ``slip parameter'' $\alpha >0$ and  ``slip function'' $A : \R^n \to \R^n$ satisfying (for technical reasons we will discuss later)
\begin{align}\label{eq: A monotone}
	A(0) = 0, \; A(w) \cdot w > 0 \text{ for } w \in \R^{n} \setminus \{0\}, \text{ and } A (w) \cdot w \ge \theta \abs{w}^2 \text{ for } w \in B(0,\delta) \setminus \{0\},
\end{align}
where $\delta, \theta > 0$ are fixed constants.  One should then think of \eqref{eq:slip_expansion} as a one-parameter family (indexed by $\alpha$) of nonlinear Robin boundary conditions with the extreme case $\alpha =0$ recovering the no-slip condition since then \eqref{eq:slip_expansion} requires $0 = A(v)'$ on $\Sigma_0$, which together with the condition $v_n=0$ and \eqref{eq: A monotone} implies that $v=0$ on $\Sigma_0$.  The most common form of the Navier-slip condition in the literature is in a linear form, in which $A$ is a linear map (often just the identity);  we have included the nonlinear form for the sake of generality, and our analysis certainly handles the standard linear case.

\subsection{Previous work}

The Navier-slip condition was first proposed by Navier \cite{Navier} and it is now used to model a wide range of physical phenomena, including liquid-solid contact lines (we refer to Dussan's survey \cite{dussan}) and flows through irregular surfaces (see, for instance, the work of G\'erard-Varet and Masmoudi \cite{masmoudi}).  It also plays a crucial role in the analysis of collisions in fluid-solid systems: see, for example, \cite{feireisl,gv_hillairet_2010, gv_hillairet_2014,gv_hillairet_wang,hillairet,hillairet_takahashi,starovoitov}. The slip phenomenon has also been empirically observed in recent experiments; we refer to the survey of Neto et al. \cite{neto} and the references therein for a review of these results. 

The well-posedness of the Navier-Stokes system with Navier-slip boundary conditions has been investigated by several authors. Solonnikov-\v{S}\v{c}adilov \cite{solonnikovslip} studied the 3D linearized stationary Navier-Stokes system and proved the existence of weak solutions as well as their regularity. Beir\~{a}o da Veiga \cite{viega} studied the stationary problem on the half space and proved strong regularity up to the boundary. Ferreira \cite{Ferreira} studied the inhomogeneous system on bounded space-time domains and proved the existence of weak solutions. Masmoudi-Rousset \cite{rousset} proved  uniform in time bounds with respect to the viscosity parameter.   Kelliher \cite{kelliher} studied the 2D  equations on bounded domains  and proved that 2D Navier-slip solutions with sufficiently smooth initial velocities converge to the no-slip solutions as the slip parameter goes to zero. Murata-Shibata \cite{murata} studied the compressible variant with slip boundary conditions on bounded domains and proved a global in time unique existence theorem for small data. 

The dynamical stability of Navier-slip solutions has also been studied by various authors. Li-Pan-Zhang \cite{li} studied the stability of steady state solutions to the 3D incompressible problem on bounded domains.  Ding-Lin \cite{ding} studied the stability of the Couette flow in 2D, and Li-Zhang \cite{li-zhang} studied the stability of Couette flow in 3D, and separately they proved that the Couette flow is asymptotically stable under small perturbations with various conditions on the slip parameter and viscosity.

The well-posedness of the traveling wave problem for the free boundary Navier-Stokes system first appeared in the recent work of Leoni-Tice \cite{leonitice}.  This work was extended to periodic and tilted fluid configurations by Koganemaru-Tice \cite{tice}. Stevenson-Tice \cite{noahtice} studied multi-later configurations \cite{noahtice}, the vanishing wave speed limit \cite{noahtice3}, and the compressible  traveling wave problem \cite{noahtice2}.  Similar well-posedness result for the traveling wave formulation of the Muskat problem were obtained by Nguyen-Tice \cite{nguyentice}.

\subsection{Reformulation in a fixed domain}\label{sec:flatten}

The fluid domain $\Omega_{b+\eta}$ is one of the unknowns in  \eqref{eq: main unflattened}, so it is convenient to recast the system in a fixed domain.  We choose the equilibrium domain $\Omega := \Omega_b = \R^{n-1} \times (0,b)$ for this, and write $\Sigma_b = \{x \in \R^n \mid x_n =b\}$ for the flat upper boundary.  
The reformulation is achieved by introducing  the flattening map $\mathfrak{F}: \R^n \to \R^n$, associated to any $\eta \in C^1_b(\R^{n-1}; \R)$ satisfying $\eta > -b$, defined via 
\begin{align}\label{eq:flattening}
	\mathfrak{F}(x',x_n) = (x' , x_n + \eta(x') \varphi(x_n)),
\end{align}
where $\varphi \in C_b^\infty( \R ; \R)$ is some fixed function  that it is a monotone and satisfies $\varphi = 0$ on $(-\infty,b/4]$ and $\varphi = 1$ on $[3b/4,\infty)$.  By construction, we have that $\mathfrak{F} (\overline{\Omega_b}) =\overline{\Omega_{b+\eta}}$,  $\mathfrak{F}(\Sigma_b) = \Sigma_{b+\eta}$, and $\mathfrak{F}  = I$ in $\R^{n-1} \times (-\infty,b/4)$, which in particular means that $\mathfrak{F}$ is the identity on $\Sigma_0$.  Moreover, it's easy to see that if $\norm{\eta}_{C^0_b}$ is sufficiently small then $\mathfrak{F}$ is a diffeomorphism.  

We compute 
\begin{align}\label{eq:A}
\nabla \mathfrak{F}(x) = \begin{pmatrix}
I_{(n-1) \times (n-1)} & 0_{ (n-1) \times 1 } \\
\nabla' \eta(x')^T \varphi(x_n) & 1 + \eta(x') \varphi'(x_n)
\end{pmatrix}
\text{ and } 
(\nabla \mathfrak{F})^{-\intercal}(x) =  \begin{pmatrix} 
	I_{(n-1) \times (n-1) } & -\frac{ \nabla' \eta (x')\varphi(x_n) }{1 + \eta(x') \varphi'(x_n)} \\
	0_{1 \times (n-1)} &  \frac{1}{1 + \eta(x') \varphi'(x_n)}.
\end{pmatrix}.
\end{align}
We then define $\mathcal{A}: \R^n \to \R^{n\times n}$ via  $\mathcal{A}(x) = (\nabla \mathfrak{F})^{-\intercal}(x)$ and $\mathcal{J},\mathcal{K} : \R^n \to \R$ via
\begin{align}\label{eq:JandK}
\mathcal{J}(x) = \det \nabla \mathfrak{F}(x) = 1+ \eta(x') \varphi'(x_n)
\text{ and }
\mathcal{K}(x) = \frac{1}{J(x)} =\frac{1}{ 1+ \eta(x') \varphi'(x_n)}.
\end{align}
Then we define the $\mathcal{A}$-dependent differential operators: $(\nabla_{\mathcal{A}} f)_i = \sum_{j=1}^n \mathcal{A}_{ij} \p_j f,$ $(X \cdot \nabla_\mathcal{A} u)_i = \sum_{j,k=1}^n X_j \mathcal{A}_{jk} \p_k u_i$, $\diverge_{\mathcal{A}} X = \sum_{i,j = 1}^n \mathcal{A}_{ij} \p_j X_i$, $(\mathbb{D}_{\mathcal{A}} u)_{ij} = \sum_{k=1}^n \mathcal{A}_{ik} \p_k u_j + \mathcal{A}_{jk} \p_k u_i$, $S _{\mathcal{A}}(p,u) = pI - \mu \mathbb{D}_\mathcal{A} u$, $\diverge_{\mathcal{A}} S_{\mathcal{A}}(p,u) = \nabla_{\mathcal{A}} p - \mu \Delta_{\mathcal{A}} u - \mu \nabla_{\mathcal{A}} \diverge_{\mathcal{A}} u$, $(\Delta_\mathcal{A} u)_i = \sum_{j,k,m=1}^n \mathcal{A}_{j,k} \p_k (\mathcal{A}_{jm} \p_m u_i)$. 

Next, we introduce the new unknowns $u: \Omega \to \R^n$, $p: \Omega \to \R,$ and  $f :\Omega \to \R^n$ via $u = v \circ \mathfrak{F}$, $p = q \circ \mathfrak{F}$, and $f = \mathfrak{f} \circ \mathfrak{F}$.  This yields the reformulation of \eqref{eq: main unflattened}:
\begin{align} \label{eq: main flattened}
	\begin{cases}	
		\diverge_{\mathcal{A}} S_{\mathcal{A}}(p,u) - \gamma e_1 \cdot \nabla_{\mathcal{A}} u + u \cdot \nabla_{\mathcal{A}} u + (\nabla'\eta,0)  = \mathfrak{f} \circ \mathfrak{F}
		, &  \text{in} \; \Omega \\
	\diverge_\mathcal{A} u  = 0, & \text{in} \; \Omega \\
	-\gamma \p_1 \eta - u\cdot \mathcal{N} = 0, & \text{on} \; \Sigma_b \\
	S_\mathcal{A}(p, u) \mathcal{N} = \left[ - \sigma \mathcal{H}(\eta) I + \mathcal{T}\circ \mathfrak{F} \right] \mathcal{N}, & \text{on} \; \Sigma_b \\
		\alpha[S_\mathcal{A} (p,u) \nu]' = [A(u)]', & \text{on} \; \Sigma_0 \\
	u_n = 0, & \text{on} \; \Sigma_0.
	\end{cases}
\end{align}

\subsection{Main results and discussion}

In this subsection we state the main results obtained in this paper.  The first result establishes the existence and uniqueness of solutions to the flattened problem \eqref{eq: main flattened};   the spaces $C^k_b, C^k_0$ appearing in the statement are defined in Section~\ref{sec: notation} and the space $\mathcal{X}^s$ is defined in Definition~\ref{defn:Xsforsol}.

\begin{thm}[Proved later in Section~\ref{sec: main flattened}]\label{thm:main1}
	Suppose  $\N \ni s \ge 1 + \tfloor{n/2}$ and that either $\sigma > 0$ and $n \ge 2$ or else $\sigma = 0$ and $n = 2$.   Further suppose that $A : \R^n \to \R^n$ is smooth and obeys \eqref{eq: A monotone}. Then there exists open sets 
\begin{align} \
		\mathcal{U}^s \subset \R^+ \times (\R \setminus \{0\}) \times H^{s+3}(\R^{n} ; \R^{n \times n}_{\sym}) \times H^{s+\frac{1}{2}}(\R^{n-1} ; \R^{n\times n}_{\sym}) \times H^{s+2}(\R^{n}; \R^n) \times H^s(\R^{n-1} ; \R^n) 
\end{align}
and $\mathcal{O}^s \subset \mathcal{X}^s$ such that the following hold.
\begin{enumerate}
	\item $(0,0,0) \in \mathcal{O}^s$, and for every $(u,p,\eta) \in \mathcal{O}^s$ we have that $u \in C_0^{s + 1 - \lf n/2 \rf} (\Omega ; \R^n)$, $p \in C^{s - \lf n/2 \rf}_0 (\Omega; \R)$,  $\eta \in C^{s + 1 - \lf (n-1)/2 \rf}_0 (\R^{n-1}; \R)$ with $\max_{\R^{n-1}} \abs{\eta} \le \frac{b}{2}$, and the flattening map $\mathfrak{F}$ is a  $C^{s + 1  - \lf (n-1)/2 \rf}$ diffeomorphism. 
	\item We have $\R^+ \times (\R \setminus \{0\}) \times \{ 0\} \times \{ 0 \} \times \{ 0 \} \times \{0 \} \subset \mathcal{U}^s$.
	\item For each $(\alpha, \gamma, \mathcal{T}, T, \mathfrak{f}, f) \in \mathcal{U}^s$, there exists a unique $(u, p, \eta) \in \mathcal{O}^s$ classically solving
\begin{align} \label{eq: thm 1 main flattened}
	\begin{cases}
		\diverge_{\mathcal{A}} S_{\mathcal{A}}(q,v) - \gamma  e_1  \cdot \nabla_{\mathcal{A}} v + v \cdot \nabla_{\mathcal{A}} v  + (\nabla' \eta, 0)= \mathfrak{f} \circ \mathfrak{F} + L_{\Omega_b} f, & \text{in} \; \Omega \\
		\diverge_{\mathcal{A}} v = 0 , & \text{in} \; \Omega \\
		-\gamma  \p_1 \eta + \nabla' \eta \cdot v' = v_n  ,  & \text{on} \; \Sigma_{b} \\	
		S_{\mathcal{A}}(q,v) \mathcal{N} =  (-\sigma \mathcal{H}(\eta) I  + \mathcal{T}\circ \mathfrak{F} \vert_{\Sigma_b} + S_b T )\mathcal{N}   ,  & \text{on} \; \Sigma_{b}	\\
		\alpha[S_{\mathcal{A}} (q,v)  \nu]' = [A(v)]', & \text{on} \; \Sigma_0 \\
	v_n= 0, & \text{on} \; \Sigma_0,
\end{cases}
\end{align}	
	where $L_{\Omega_b}, S_b$ are defined via \eqref{eq: L_Omega and S_b}.
	\item The map $\mathcal{U}^s \ni (\alpha, \gamma, \mathcal{T}, T, \mathfrak{f}, f) \mapsto (u,p,\eta) \in \mathcal{O}^s$ is $C^1$ and locally Lipschitz. 
\end{enumerate}
\end{thm}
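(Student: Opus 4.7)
The plan is to apply the implicit function theorem to a single nonlinear map $\Psi$ whose zero set parameterizes solutions of \eqref{eq: thm 1 main flattened}. Specifically, I would bundle the six equations into
\begin{equation*}
\Psi : \mathcal{U}^s \times \mathcal{O}^s \to \mathcal{Y}^s,
\end{equation*}
where $\mathcal{Y}^s$ is a product of Sobolev spaces matched to the interior and boundary residuals at regularity level $s$, and the no-penetration condition $u_n\vert_{\Sigma_0} = 0$ is built into the solution space $\mathcal{X}^s$. The auxiliary operators $L_{\Omega_b}$ and $S_b$ appearing in \eqref{eq: L_Omega and S_b} are presumably designed to absorb any compatibility constraints so that the data space is an honest product of Sobolev spaces. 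Since $\mathfrak{F} = I$ when $\eta = 0$ and the nonlinearities vanish at $u = 0$, the trivial triple $(u,p,\eta) = (0,0,0)$ solves the homogeneous problem for every $\alpha > 0$ and $\gamma \in \R \setminus \{0\}$, which immediately delivers item (2).

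The core step is to verify that the Fr\'echet derivative $D_{(u,p,\eta)} \Psi$ at the trivial solution is a bounded linear isomorphism onto the data space. Since $\mathcal{A} = I$, $\mathcal{J} = 1$, and $\mathcal{N} = e_n$ at the base point, the linearization collapses to a Stokes-type problem coupled to the free surface:
\begin{equation*}
\begin{cases}
\diverge S(p,u) - \gamma \p_1 u + (\nabla' \eta, 0) = F, & \text{in } \Omega,\\
\diverge u = G, & \text{in } \Omega,\\
-\gamma \p_1 \eta - u_n = H, & \text{on } \Sigma_b,\\
S(p,u) e_n + (\sigma \Delta' \eta) e_n = K, & \text{on } \Sigma_b,\\
\alpha (S(p,u) e_n)' - [DA(0) u]' = J, & \text{on } \Sigma_0,\\
u_n = 0, & \text{on } \Sigma_0.
\end{cases}
\end{equation*}
My first attack would be a variational formulation: test the momentum equation against divergence-free fields $w$ with $w_n\vert_{\Sigma_0} = 0$, use the kinematic equation on $\Sigma_b$ to eliminate $\eta$ in terms of $u_n\vert_{\Sigma_b}$ via Fourier-side invertibility of $-\gamma \p_1$ on $\R^{n-1}$ (this is where $\gamma \neq 0$ is essential), and read off the Navier-slip boundary contribution $\alpha^{-1} \int_{\Sigma_0} DA(0) u' \cdot u'$ from integration by parts. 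Hypothesis \eqref{eq: A monotone} gives $DA(0) w \cdot w \ge \theta |w|^2$, so this contribution is coercive on $L^2(\Sigma_0)$ — crucial because Korn's inequality alone fails to be coercive on $H^1(\Omega)$ under only the no-penetration constraint, since constant horizontal fields would otherwise lie in the kernel. Combining boundary coercivity with Korn, with the surface tension contribution (when $\sigma > 0$) or the gravity term (in the $\sigma = 0$, $n=2$ case), produces a continuous coercive bilinear form on a suitable Hilbert space, so Lax--Milgram yields a weak solution; pressure and $\eta$ are recovered in the standard manner.

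The hard part will be lifting this low-regularity solvability to the full $H^s$ scale claimed in the theorem. For this I would exploit translation invariance of $\Omega$ in the horizontal directions to apply difference quotients and iterate the base solvability, obtaining arbitrary tangential regularity, and then recover normal regularity by reading the momentum and divergence equations as a Stokes system with mixed Navier-slip / dynamic boundary conditions and invoking standard elliptic gain after checking the appropriate Lopatinski--Shapiro complementing condition. Once $D_{(u,p,\eta)} \Psi$ is shown to be an isomorphism uniformly on compact subsets of $\R^+ \times (\R \setminus \{0\})$, the implicit function theorem produces, near each base point $(\alpha_0, \gamma_0, 0, 0, 0, 0)$, an open neighborhood and a $C^1$, locally Lipschitz solution map, yielding items (3) and (4); taking the union of these local pieces defines $\mathcal{U}^s$ and $\mathcal{O}^s$. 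Item (1) then follows from the Sobolev embedding $H^s(\R^m) \emb C^{s - \lf m/2 \rf}_0(\R^m)$ applied componentwise, together with shrinking $\mathcal{O}^s$ to ensure $\norm{\eta}_{C^1_b}$ is small enough to force $\max |\eta| \le b/2$ and to keep $\nabla \mathfrak{F}$ close to $I$, so that $\mathfrak{F}$ is a diffeomorphism of the claimed regularity.
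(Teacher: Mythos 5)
Your high-level scaffolding is consistent with the paper: bundle the equations into a nonlinear map, verify it is differentiable, show the linearization at $(0,0,0)$ is a linear isomorphism, and invoke the implicit function theorem locally near each $(\alpha_0,\gamma_0,0,0,0,0)$, then union the local pieces. Item (1) from embeddings and item (2) from the trivial solution are also as in the paper (though $\eta$ actually lives in the anisotropic space $X^{s+5/2}$, not $H^{s+5/2}$, which matters for the precise embedding statement). However, your proposed treatment of the key step --- the linear isomorphism --- has a genuine gap.

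You propose to ``use the kinematic equation on $\Sigma_b$ to eliminate $\eta$ in terms of $u_n\vert_{\Sigma_b}$ via Fourier-side invertibility of $-\gamma\p_1$ on $\R^{n-1}$.'' But $-\gamma\p_1$ is \emph{not} invertible on $\R^{n-1}$: its symbol $-2\pi i \gamma\xi_1$ vanishes on the entire hyperplane $\{\xi_1 = 0\}$, so for $n\ge 3$ both the kernel and cokernel are infinite-dimensional, and even for $n=2$ there is a singularity at the origin. Consequently $\eta$ cannot be solved for in a standard Sobolev space, and the bilinear form obtained after substitution is not a continuous, coercive form on a standard $H^1$-type space. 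This degeneracy is precisely the central difficulty the paper is built around. The paper sidesteps direct elimination entirely: it first establishes an isomorphism for the $\gamma$-Stokes system \emph{without} $\eta$ (Section~\ref{sec: gammaStokes}), then studies the \emph{overdetermined} problem with $u_n\vert_{\Sigma_b}$ additionally prescribed and identifies a compatibility functional $\Lambda_{\alpha,\beta,\gamma}$ (Section~\ref{sec: overdetermined}), and finally constructs $\eta$ explicitly on the Fourier side as $\hat{\eta} = \rho_{\alpha,\beta,\gamma}^{-1}\mathscr{F}[\Lambda_{\alpha,\beta,\gamma}(\text{data})]$, where $\rho_{\alpha,\beta,\gamma}(\xi) = 2\pi i\gamma\xi_1 + (1 + 4\pi^2|\xi|^2\sigma)\overline{m_{\alpha,\beta}(\xi,-\gamma)}$ and $m_{\alpha,\beta}$ is a Dirichlet-to-normal-stress symbol whose asymptotics must be carefully derived (Lemma~\ref{lem:aymptotics m}). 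The low-frequency degeneracy of $\rho_{\alpha,\beta,\gamma}$ forces $\eta$ into the anisotropic space $X^{s+5/2}$, whose defining weight $(\xi_1^2 + |\xi|^4)/|\xi|^2$ at low frequencies is matched exactly to the behaviour of $\rho_{\alpha,\beta,\gamma}$; this is not reproducible by the ``$\sigma$ or gravity makes the form coercive'' heuristic you invoke. Injectivity of the full linear operator $\Upsilon_{\alpha,\beta,\gamma,\sigma}$ is then proved (Proposition~\ref{prop:Upsiloninjective}) by a frequency-by-frequency ODE energy argument rather than by a single global variational coercivity estimate. Your proposal as written would stall at exactly this point and would need to be replaced by the paper's symbol-level machinery, so while the outer implicit-function-theorem shell matches, the linear-analysis core does not.
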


The theorem is proved by way of the implicit function theorem by adapting the strategies employed for the corresponding no-slip problem; we refer to Section 1.5 of \cite{leonitice} and Section 1.7 in \cite{tice} for a high-level summary of this plan.  We emphasize, though, that while there is a serious overlap in the strategies, there are interesting technical problems introduced by the Navier-slip condition that must be dealt with along the way.  We further note that by following the approach in \cite{leonitice}, solutions to the unflattened system \eqref{eq: main unflattened} may be obtained from this theorem by employing the inverse of the flattening map, $\mathfrak{F}^{-1}$.  We omit the details here for the sake of brevity.  

Our second result, which is the principal novelty of this paper, establishes that if the slip map $A$ is linear then  in the limit $\alpha \to 0$ we can recover the no-slip solution to the incompressible Navier-stokes system obtained in \cite{tice,leonitice}.

\begin{thm}[Proved later in Section~\ref{sec: alpha to zero}]\label{thm: alpha to zero}
Suppose that  $\N \ni s \ge 1 + \tfloor{n/2}$ and that either $\sigma > 0$ and $n \ge 2$ or else $\sigma = 0$ and $n=2$. Further suppose that $A(\cdot) = \beta \cdot$ where $\beta \in \R^{n \times n}$ is positive definite. Then there exist open sets 
\begin{align} \label{eq: set U^s}
		\mathcal{U}^s \subset \R^+ \times (\R \setminus \{0\}) \times H^{s+3}(\R^{n} ; \R^{n \times n}_{\sym}) \times H^{s+\frac{1}{2}}(\R^{n-1} ; \R^{n\times n}_{\sym}) \times H^{s+2}(\R^{n}; \R^n) \times H^s(\R^{n-1} ; \R^n) 
\end{align}
and $\mathcal{O}^s \subset \mathcal{X}^s$ such that for each $\gamma_* \in \R \setminus \{0\}$, there exists an open set $V(\gamma_*)$ such that for all $\alpha_* \in (0,1)$ the following hold. 
\begin{enumerate}
	 \item The open sets $\mathcal{U}^s, \mathcal{O}^s$ satisfy the first two items of Theorem~\ref{thm:main1}.
	 \item $(\alpha_*, \gamma_*, 0 ,0,0, 0) \in (0,1) \times V(\gamma_*) \subset \mathcal{U}^s$.
	 \item For every $(\mathcal{T},  T, \mathfrak{f}, f)$ such that $(\gamma_*, \mathcal{T}, T, \mathfrak{f}, f) \in V(\gamma_*)$, there exists a unique $(u_{\alpha_*}, p_{\alpha_*}, \eta_{\alpha_*}) \in \mathcal{O}^s$ classically solving \eqref{eq: thm 1 main flattened}. Furthermore, $(u_{\alpha_*}, p_{\alpha_*}, \eta_{\alpha_*})$ converges weakly to $(u_0, p_0, \eta_0)$ in $H^{s+2}(\Omega; \R^n) \times H^{s+1}(\Omega; \R) \times X^{s+5/2}(\R^{n-1} ; \R)$ as $\alpha_* \to 0$, where 
	 \begin{equation*}
	 (u_0, p_0, \eta_0) \in C_b^{s + 1 - \lf n/2 \rf} (\Omega ; \R^n) \times \in C^{s - \lf n/2 \rf}_b (\Omega; \R) \times C^{s + 1 - \lf (n-1)/2 \rf}_0 (\R^{n-1}; \R)
	 \end{equation*}
 is the unique solution to 
	 \begin{align}\label{eq: noslip}
		 \begin{cases}
			 \diverge_{\mathcal{A}} S_{\mathcal{A}}(q,v) - \gamma  e_1  \cdot \nabla_{\mathcal{A}} v + v \cdot \nabla_{\mathcal{A}} v  + (\nabla' \eta, 0)= \mathfrak{f} \circ \mathfrak{F} + L_{\Omega_b} f, & \text{in} \; \Omega_{b} \\
			 \diverge_{\mathcal{A}} v = 0 , & \text{in} \; \Omega_{b} \\
			 -\gamma  \p_1 \eta + \nabla' \eta \cdot v' = v_n  ,  & \text{on} \; \Sigma_{b} \\	
			 S_{\mathcal{A}}(q,v) \mathcal{N} =  (-\sigma \mathcal{H}(\eta) I  + \mathcal{T}\circ \mathfrak{F} \vert_{\Sigma_b} + S_b T )\mathcal{N}   ,  & \text{on} \; \Sigma_{b}	\\
		 v= 0, & \text{on} \; \Sigma_0.
	 \end{cases}
	 \end{align}
	
\end{enumerate}
\end{thm}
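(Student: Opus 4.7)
The plan is to combine the parameterized existence statement of Theorem~\ref{thm:main1} with uniform-in-$\alpha$ linearization estimates, so that a single neighborhood $V(\gamma_*)$ of $(\gamma_*, 0, 0, 0, 0)$ produces a family of Navier-slip solutions whose weak limit can be identified with the no-slip solution of \cite{tice,leonitice}. A priori the set $\mathcal{U}^s$ of admissible data delivered by Theorem~\ref{thm:main1} could shrink as $\alpha_* \to 0$, so the first step is to revisit the implicit function theorem argument underlying that theorem while regarding $(u,p,\eta)$ as the unknowns and $(\alpha, \gamma, \mathcal{T}, T, \mathfrak{f}, f)$ as parameters. At the trivial solution the linearization is a Stokes-type operator with the Robin condition $\alpha_* [S_\mathcal{A}(p,u)\nu]' = \beta u'$ on $\Sigma_0$ and the standard linearized free-surface conditions on $\Sigma_b$; the key estimate is that this linearization is an isomorphism on the relevant Sobolev scale with inverse norm bounded uniformly for $\alpha_* \in [0,1]$. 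Because $\int_{\Sigma_0} \alpha_* \beta u' \cdot u'$ is nonnegative and the $\alpha_* = 0$ endpoint is precisely the well-posed no-slip Stokes operator from \cite{tice,leonitice}, this is structurally plausible. A uniform implicit function theorem with parameters then yields a common $V(\gamma_*)$ together with a uniform bound
\[
\|(u_{\alpha_*}, p_{\alpha_*}, \eta_{\alpha_*})\|_{H^{s+2}(\Omega;\R^n) \times H^{s+1}(\Omega;\R) \times X^{s+5/2}(\R^{n-1};\R)} \le C
\]
independent of $\alpha_* \in (0,1)$.

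Next, from this uniform bound Banach--Alaoglu produces a subsequence $\alpha_{*,k} \to 0$ with $(u_{\alpha_{*,k}}, p_{\alpha_{*,k}}, \eta_{\alpha_{*,k}}) \rightharpoonup (u_0, p_0, \eta_0)$ in the claimed spaces. Standard local compactness (Rellich on bounded subdomains of $\Omega$, combined with the decay at infinity encoded in the $C^0_0$-embedding) transfers the interior nonlinearity $u\cdot\nabla_\mathcal{A} u$, the kinematic term $\nabla'\eta \cdot v'$, the mean-curvature expression $\mathcal{H}(\eta)$, and the other lower-order pieces to the limit. The genuinely new point is the boundary condition on $\Sigma_0$: the uniform bounds on $u_{\alpha_*}$ and $p_{\alpha_*}$ together with the trace theorem give a uniform control of $[S_\mathcal{A}(p_{\alpha_*}, u_{\alpha_*})\nu]'|_{\Sigma_0}$ in a fixed Sobolev space, and multiplying by $\alpha_*$ yields
\[
\beta u_{\alpha_*}'|_{\Sigma_0} = \alpha_* [S_\mathcal{A}(p_{\alpha_*}, u_{\alpha_*})\nu]'|_{\Sigma_0} \longrightarrow 0.
\]
Combined with the weak trace convergence $u_{\alpha_*}'|_{\Sigma_0} \rightharpoonup u_0'|_{\Sigma_0}$ and the positive-definiteness of $\beta$, this forces $u_0'|_{\Sigma_0} = 0$, which with the preserved no-penetration condition $(u_0)_n = 0$ gives the full no-slip condition $u_0 = 0$ on $\Sigma_0$.

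The limit $(u_0, p_0, \eta_0)$ therefore classically solves \eqref{eq: noslip}, and the small-data uniqueness theory for the no-slip traveling wave system established in \cite{tice, leonitice} implies that it is independent of the chosen subsequence; this upgrades subsequential weak convergence to weak convergence of the entire family as $\alpha_* \to 0$, and the claimed pointwise regularity of $(u_0, p_0, \eta_0)$ is inherited from the uniform bounds via Sobolev embedding.

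The hardest step is the $\alpha_*$-uniform invertibility of the linearized Stokes--Robin operator on the high Sobolev scale $H^{s+2} \times H^{s+1} \times X^{s+5/2}$. A basic $H^1$ energy estimate handles coercivity uniformly in $\alpha_* \in [0,1]$, but the boundary condition on $\Sigma_0$ degenerates continuously from Robin ($\alpha_* > 0$) to Dirichlet ($\alpha_* = 0$), and naive elliptic-regularity bootstraps introduce $\alpha_*$-dependent constants that cannot be controlled directly. I expect the right approach is to decouple the tangential and normal components of $u$ on $\Sigma_0$, combine direct elliptic estimates for $\alpha_*$ bounded away from $0$ with a compactness-and-contradiction argument against the limiting no-slip problem for small $\alpha_*$, and thereby close the high-regularity estimate uniformly across the whole parameter interval.
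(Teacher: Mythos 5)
Your high-level architecture matches the paper's: establish an $\alpha$-uniform invertibility estimate for the linearization, feed it into a parameter-dependent implicit function theorem to get a common data neighborhood $V(\gamma_*)$ and a uniform solution bound, extract a weak limit, identify the limiting boundary condition as no-slip, and upgrade subsequential to full convergence via uniqueness. Your treatment of the $\Sigma_0$ boundary condition in the limit (multiply the slip identity by $\alpha_*$, use the uniform stress-trace bound to get strong convergence to zero, combine with weak trace convergence and positive-definiteness of $\beta$) is a clean variant of the paper's argument and works as stated. Your appeal to uniqueness of the no-slip solution to promote subsequential weak convergence to convergence of the whole family is also correct, modulo the need (which you do not mention) to verify that the relevant bounded ball is weakly metrizable, which the paper handles by checking that $X^{s+5/2}$ is separable.

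The genuine gap is in the step you yourself flag as the hardest: the $\alpha$-uniform high-regularity estimate. Your proposed route -- decouple tangential and normal components and run a compactness-and-contradiction argument against the limiting no-slip problem -- is not what the paper does and is not clearly viable, because the domain $\Omega$ is horizontally infinite (so compactness fails globally) and the Robin-to-Dirichlet degeneration on $\Sigma_0$ as $\alpha_* \to 0$ is not a norm-continuous perturbation of boundary operators. The paper instead proves the estimate directly in Proposition~\ref{prop:ind of alpha}: the $\alpha$-dependence in the constants arises entirely from terms of the form $\tfrac{1}{\alpha}\int_{\Sigma_0} l\cdot v$, so if the slip datum $l$ is identically zero the only surviving $\alpha$-dependent term is $\tfrac{1}{\alpha}\int_{\Sigma_0}\beta u\cdot u \ge 0$, which has a good sign and can simply be dropped; the higher-order estimates are then bootstrapped via tangential Bessel multipliers $\mathfrak{J}^{s+1}_M$ and the commutation result in Theorem~\ref{thm: chi commutes with M_omega}. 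Arranging $l = 0$ in the \emph{nonlinear} problem is not automatic; it requires two structural choices that your sketch does not identify: the slip map $A$ must be linear (so the slip equation becomes a linear homogeneous constraint rather than a source term), and the flattening map $\mathfrak{F}$ must equal the identity near $\Sigma_0$ (the cutoff $\varphi$ in \eqref{eq:flattening}), so that $\mathcal{A} = I$ on $\Sigma_0$ and no $\eta$-dependent coefficients pollute the slip boundary operator. Only then can the homogeneous $\alpha$-slip condition be absorbed into the solution space as the $\alpha$-dependent subspace $\alphaHzeros{s+2}$, which is precisely why the paper develops the tailored implicit function theorem \ref{thm: implicitFT} for maps on a one-parameter family of Banach spaces $X\times Y_\alpha$ and applies it to the reduced map $\mathfrak{X}$ (with the slip component dropped from the codomain) rather than to $\Xi$.
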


We now turn to a brief discussion of our strategy for proving this theorem.  There are essentially two key difficulties that must be dealt with.  The first comes from the fact that we want to fix the stress-force tuple $(\mathcal{T},  T, \mathfrak{f}, f)$ and produce a family of solutions $(u_\alpha,p_\alpha,\eta_\alpha)$ to \eqref{eq: thm 1 main flattened}, parameterized by $\alpha \in (0,1)$.  This is certainly plausible within the context of Theorem \ref{thm:main1}, but there is nothing within the statement of that result that can guarantee that the tuple remains within the open set of data that yields solutions.  Indeed, in principle the open set could shrink dramatically as $\alpha \to 0$, making it impossible to employ a fixed data tuple in the limiting argument.  Provided this problem can be dispatched, we then arrive at the second: we need to establish $\alpha$-independent estimates for the solutions $(u_\alpha,p_\alpha,\eta_\alpha)$ in order to invoke weak compactness results.  
 
We resolve both of these problems by combining a careful analysis of the linearization of  \eqref{eq: thm 1 main flattened} with some nonlinear tricks.  In the linear analysis we achieve $\alpha$-independent estimates by focusing on the linearization \eqref{eq: linear w/o eta} with $l=0$.  This is only reasonable insofar as we can encode $l=0$ in the nonlinear problem, which means that the fifth equation in \eqref{eq: thm 1 main flattened} must already be linear.  To enforce this we require that $A$ itself is linear and that the matrix $\mathcal{A}$ is the identity in a neighborhood of $\Sigma_0$.  The latter condition is the motivation for the introduction of the cutoff $\varphi$ in the definition of the flattening map $\mathfrak{F}$; unfortunately, its presence here requires us to retool many previously established results.  

In order to enforce the linear slip condition in an implicit function theorem argument we then need to build this condition into the domain of the nonlinear map.  For any fixed value of $\alpha$ this is easy, but we need to do this for $\alpha \in (0,1)$, which means the linear subspace obeying the $\alpha$-slip condition changes as a function of $\alpha$.  This then requires us to develop a special version of the implicit function theorem capable of handling maps $f_\alpha: X \times Y_\alpha \to Z$  defined over a one-parameter family of Banach spaces. We prove this variant of the implicit function theorem in Appendix~\ref{sec: parameter IFT} and demonstrate that with uniform control over the derivatives of the nonlinear solution operator with respect to the parameter $\alpha$, we may also deduce uniform control over the norms of solutions obtained via the parameter-dependent implicit function theorem.  This is a stronger mandate than that from the standard implicit function theorem, so we must then verify these conditions in our linear analysis.  This turns out to be doable but somewhat tricky because it requires determining the asymptotics of an implicit Fourier multiplier as a function of $\alpha$.

\subsection{Notational conventions and outline of the article}\label{sec: notation}

We will frequently use the ``horizontal'' Fourier transform for functions on  $\Omega = \R^{n-1} \times (0,b)$, defined by $\hat{f}(\xi, x_n) = \int_{\R^{n-1}} f(x', x_n) e^{-2\pi i x' \cdot \xi} \; dx'$. For $k \in \N$, a non-empty open set $U \subseteq \R^d$, and a finite-dimensional inner product space $V$, we write $H^k(U;V)$ for the usual  $L^2$-based Sobolev space; when $U = \R^d$ we extend to $H^s(\R^d;V)$ for $s \in \R$ in the usual way.  For $s \ge 0$ and a function $f \in L^2(\R^d;V)$, we write $f \in \dot{H}^{-1}$ to mean that the $\dot{H}^{-1}$-seminorm of $f$ defined via $[f]_{\dot{H}^{-1}}^2 = \int_{\R^d} \abs{\xi}^{-2}|\hat{f}(\xi)|^2  d\xi$ is finite. For $k \in \N$, a real Banach space $V$, and a nonempty open set $U \subseteq \R^{d}$ for $d \ge 1$, we define the space $C^k_b(U;V)$ of $k-$times continuous differentiable maps from $U \to V$ with all derivatives bounded. We also define the space $C_0^k(\R^d;V) \subset C^k_b(\R^d;V)$ to be the closed subspace of $f$ such that $\lim_{|x| \to \infty} \p^\alpha f(x) = 0$ for all $\abs{\alpha} \le k$.

\section{The $\gamma$-Stokes equations with stress boundary conditions}\label{sec: gammaStokes}

In this section our goal is to study the solvability of the linear problem 
\begin{align} \label{eq: linear w/o eta}
	\begin{cases}
		\diverge S(p,u) - \gamma \p_1 u = f, & \text{in} \; \Omega \\
		\diverge u = g, & \text{in} \; \Omega \\
		S(p,u) e_n = k, & \text{on} \; \Sigma_b \\
		\left[ \alpha S (p,u) e_n +  \beta u \right]' = l, & \text{on} \; \Sigma_0 \\
		u_n = 0, & \text{on} \; \Sigma_0
	\end{cases}
\end{align}
with a given data tuple $(f,g,k,l)$ and parameters $\alpha \in (0,\infty)$, $\gamma \in \R$, and $\beta \in \R^{n \times n}$. Due to techniques we will employ later, it will be convenient to have access to a well-posedness theory over both the reals and the complex numbers.  As such, throughout this section and the next we let $\F \in \{ \R , \C\}$ denote either field, and we develop a well-posedness theory generically over $\F$.  Recall that when $\F = \C$ and $X$ is a complex Hilbert space, the Riesz map is a linear isomorphism from $X$ to $X^{\overline{*}}$, where the latter denotes the anti-linear functionals on $X$.  We will use this notation a few times throughout this section.

We begin our analysis by fixing some notation.  

\begin{defn}\label{defn:OPB}
Let $\Omega$ be defined as per Section~\ref{sec: initial description}.  For $\R \ni s > 1/2$, we define the spaces ${}_{\text{tan}}H^s(\Omega ; \F)= \{ u \in H^s(\Omega ;\F) : u_n \rvert_{\Sigma_0}= 0  \}$ and     ${}_{\text{tan}}H_\sigma^1(\Omega ; \F)= \{ u \in {}_{\text{tan}}H^s(\Omega;\F) : \diverge u = 0  \}$.
%
%
		We equip these spaces with the standard $H^s$-norm, and note that since these spaces are closed subspaces of the Hilbert space $H^s(\Omega ;\F)$, they inherit the natural Hilbert structure.  If in addition $\R \ni s > 3/2$ and $\alpha \in \R$, we define the space ${}_{\alpha-\text{tan}}H^s(\Omega ; \F)= \{ u \in H^s(\Omega ;\F) : u_n \rvert_{\Sigma_0}= 0, [-\alpha \mathbb{D}u e_n + \beta u]' \rvert_{\Sigma_0} = 0  \}$,
		which is a closed subspace of $ {}_{\text{tan}}H^s(\Omega ; \F)$ and thus inherits the natural Hilbert structure from $H^s(\Omega ;\F)$ as well. 
	\end{defn}

In order to produce weak solutions to the system \eqref{eq: linear w/o eta} we will first need some functional analytic tools in ${}_{\text{tan}}H^s(\Omega ; \F)$.  We begin with a version of Korn's inequality.

\begin{lem}\label{lem: korn}
We have that $\norm{u}_{H^1(\Omega)} \lesssim \norm{\mathbb{D}u}_{L^2(\Omega)} + \norm{\Tr_{\Sigma_0} u}_{L^2(\Sigma_0)}$  for $u \in H^1(\Omega ; \F^n)$.  Consequently, 
\begin{align}\label{eq: equivalent norm on H1}
	 \norm{u}_{\Hzerotan(\Omega)} = \sqrt{ \norm{\mathbb{D}u}_{L^2(\Omega)}^2 + \norm{\Tr_{\Sigma_0} u}_{L^2(\Omega)}^2 },
\end{align}
generates the standard $H^1$ topology on the space  ${}_{\text{tan}}H^s(\Omega ; \F)$. 

\end{lem}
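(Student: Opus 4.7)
The plan is to reduce to a Korn-type estimate on $\|\nabla u\|_{L^2(\Omega)}$ and then prove that estimate via an algebraic identity combined with a boundary analysis on the horizontal Fourier side. First, the $L^2(\Omega)$ portion of the $H^1$ norm is handled by a one-dimensional Poincar\'e argument in $x_n$: from the identity $u(x', x_n) = u(x', 0) + \int_0^{x_n}\partial_n u(x', s)\,ds$ and Cauchy--Schwarz one obtains $|u(x', x_n)|^2 \le 2|u(x', 0)|^2 + 2b\int_0^b|\partial_n u(x', s)|^2 \, ds$, and integrating over $\Omega$ yields $\|u\|_{L^2(\Omega)}^2 \lesssim \|\Tr_{\Sigma_0} u\|_{L^2(\Sigma_0)}^2 + \|\partial_n u\|_{L^2(\Omega)}^2$. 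Hence it suffices to prove the reduced bound $\|\nabla u\|_{L^2(\Omega)}^2 \lesssim \|\mathbb{D}u\|_{L^2(\Omega)}^2 + \|\Tr_{\Sigma_0} u\|_{L^2(\Sigma_0)}^2$, and by a density argument we may assume $u$ is smooth with compact horizontal support.

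The reduced bound comes from the algebraic identity $|\mathbb{D}u|^2 = 2|\nabla u|^2 + 2 \partial_i u_j \partial_j u_i$ combined with the divergence-form rewriting
$$ \partial_i u_j \partial_j u_i = \partial_i(u_j \partial_j u_i) - \partial_j(u_j \diverge u) + (\diverge u)^2. $$
Integrating over $\Omega$, discarding the nonnegative $(\diverge u)^2$ term, and applying the divergence theorem yields
$$ 2 \int_\Omega |\nabla u|^2 \le \int_\Omega |\mathbb{D}u|^2 + 2 \int_{\partial \Omega} \bigl[(u \cdot \nu)\diverge u - \nu \cdot (u \cdot \nabla) u\bigr]\, dS. $$
On each flat piece $\Sigma_0$ or $\Sigma_b$ of $\partial \Omega$, the integrand reduces (using $\nu = \pm e_n$) to $\pm(u_n \diverge' u' - u' \cdot \nabla' u_n)$, which a horizontal integration by parts collapses to $\pm 2 u_n \diverge' u'$.

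It remains to bound the two resulting boundary integrals by the right-hand side of the desired inequality. I would apply the horizontal Fourier transform, reducing to a uniform-in-$\xi$ one-dimensional estimate on $(0,b)$ for $w(x_n) := \hat u(\xi, x_n)$: integration by parts in $x_n$ trades the non-sign-definite cross contribution $\Im((\xi \cdot \overline{\dot w'}) w_n)$ for boundary values at $x_n = 0$ and $x_n = b$ plus an interior term absorbable into $\|\dot w\|_{L^2(0,b)}^2$ via Young's inequality. The trace at $x_n = b$ is then reduced to the trace at $x_n = 0$ via the 1D estimate $|w(b)|^2 \le 2|w(0)|^2 + 2b\|\dot w\|_{L^2(0,b)}^2$, yielding the trace term $\|\Tr_{\Sigma_0} u\|_{L^2}^2$ together with controllable bulk terms. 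The main obstacle is the delicate constant chasing in this final step: a naive bound on the cross term produces a coefficient of exactly $2$ on $\|\dot w\|_{L^2(0,b)}^2$, leaving no slack for absorption into $\|\nabla u\|_{L^2(\Omega)}^2$, so we must carefully extract genuine boundary contributions rather than interior ones in the integration by parts.
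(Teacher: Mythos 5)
Your algebraic setup is sound: the identity $|\mathbb{D}u|^2 = 2|\nabla u|^2 + 2\partial_i u_j\partial_j u_i$, the divergence-form rewriting, and the reduction of the boundary integrand to $\pm 2u_n\diverge'u'$ on each flat face are all correct, and the preliminary 1D Poincar\'e reduction for $\|u\|_{L^2}$ is fine. But the argument is not actually closed, and you say so yourself. The difficulty is not merely ``constant chasing'': after the divergence theorem the inequality reads (schematically) $2\|\nabla u\|^2_{L^2(\Omega)} \le \|\mathbb{D}u\|^2_{L^2(\Omega)} + C\bigl|\int_{\Sigma_b}u_n\diverge'u'\bigr| + C\bigl|\int_{\Sigma_0}u_n\diverge'u'\bigr|$, and each boundary term is an $H^{1/2}\times H^{-1/2}$ pairing of traces that, by trace theory, is only bounded by $\|u\|_{H^1(\Omega)}^2$ --- i.e., by exactly the quantity you are trying to estimate, with no visible smallness. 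In particular the term on $\Sigma_b$ has no trace hypothesis attached to it at all, and transferring it to $\Sigma_0$ via $|w(b)|^2 \le 2|w(0)|^2 + 2b\|\dot w\|^2$ reintroduces $\|\partial_n u\|^2_{L^2(\Omega)}$ with a constant that is not obviously absorbable once all the cross terms (especially the ones carrying a factor of $|\xi|$ from the horizontal derivative) are accounted for. You flag this as the main obstacle; it really is, and you have not shown that the Fourier-side bookkeeping resolves it. As written this is a gap, not a proof.

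The paper sidesteps all of this by a local-to-global argument: invoke the standard Korn inequality $\|u\|_{H^1(Q)} \lesssim \|u\|_{L^2(Q)} + \|\mathbb{D}u\|_{L^2(Q)}$ on a single bounded Lipschitz rectangle $Q$, upgrade it to $\|u\|_{L^2(Q)} \lesssim \|\mathbb{D}u\|_{L^2(Q)} + \|\Tr u\|_{L^2(\partial Q_0)}$ by a compactness/contradiction argument using the characterization of the kernel of $\mathbb{D}$ (rigid motions) plus the bottom trace condition, and then tile $\Omega = \R^{n-1}\times(0,b)$ by integer translates of $Q$ and sum. The $\F=\C$ case follows by applying the real result to $\Re u$ and $\Im u$. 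This is strictly more elementary (no Fourier analysis, no boundary-integral estimates) and avoids the absorption issue entirely because the local Korn inequality is quoted as a black box. If you want to salvage your direct approach, the right route is probably the Ne\v{c}as/Lions lemma ($\|\nabla u\|_{L^2} \lesssim \|u\|_{L^2} + \|\nabla u\|_{H^{-1}}$ combined with expressing $\partial_i\partial_j u_k$ in terms of first derivatives of $\mathbb{D}u$), but on the unbounded strip that too needs care, and the paper's tiling argument is the cleaner path.
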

\begin{proof}
The second assertion follows from the first bound and standard trace theory. To prove the first, it suffices to prove the result when $\F = \R$, as the case $\F=\C$ can then be recovered by applying the real result to the real and imaginary parts of $u$.  Assume $\F = \R$.  

Consider a rectangle  $Q = \{ x' \in \R^{n-1} : \abs{x' }_\infty < 1 \} \times (0,b)$.  The standard Korn inequality in Lipschitz domains (see, for instance, Lemma IV.7.6 in  \cite{boyer}) shows that $\norm{u}_{H^1(Q)} \lesssim \norm{u}_{L^2(Q)} + \norm{\mathbb{D}u}_{L^2(Q)}$.  We claim that 
\begin{equation}\label{eq: reduced korn}
\norm{u}_{L^2(Q)} \lesssim  \norm{\mathbb{D}u}_{L^2(Q)} + \norm{\Tr_{\p Q_0} u}_{L^2(\p Q_0)}, 
\end{equation}
where $\p Q_0 = \p Q \cap \{x_n =0\}$.  Indeed, if not then we can produce a sequence $\{ u_k \}_{k=1}^\infty \subset H^1(Q ; \R^n)$ such that $\norm{u_k}_{L^2(Q)} =1$, $\norm{\mathbb{D} u_k}_{L^2(Q)} < 1/k$,  and $\norm{\Tr_{\p Q_0} u_k}_{L^2(\p Q_0)} < 1/k$. Then, by compactness, there exists $u \in \Hzerotan(Q ; \R^n)$ with $\norm{u}_{L^2(Q)} = 1$ such that up to passing to a subsequence, $\mathbb{D} u_k \to \mathbb{D} u = 0$ and $\Tr_{\p Q_0}  u_k \to \Tr_{\p Q_0} u = 0$ as $k \to \infty$.   Since $\mathbb{D}u = 0$, we then have that $u(x) = a + Bx$ for a constant $ a \in \R^n$ and $B$ skew-symmetric, but since $\Tr_{\p Q_0} u = 0$ we deduce that $a = 0$ and $B = 0$.  Thus $u = 0$, and we contradict the identity $\norm{u}_{L^2(Q)}=1$, proving the claim.

With \eqref{eq: reduced korn} in hand, we write $\Omega$ as a countable almost disjoint union (null intersections along the boundary) of rectangles of the form  $Q_{l} = \{ x \in \Omega : \abs{x' - l}_\infty \le 1 \}$ for $l \in \Z^{n-1}$.  Since each $Q_\ell$ is a translation of the rectangle $Q$ from above, \eqref{eq: reduced korn} allows us to bound 
\begin{equation}
\norm{u}_{L^2(\Omega)}^2 = \sum_{\ell \in \Z^{n-1}} \norm{u}_{L^2(Q_l)}^2 \lesssim  \sum_{\ell \in \Z^{n-1}}\left( \norm{\mathbb{D}u}_{L^2(Q_l)}^2 + \norm{\Tr_{ \Sigma_0} u}_{L^2((\p Q_\ell)_0)}^2 \right) = \norm{\mathbb{D}u}_{L^2(\Omega)}^2 + \norm{\Tr_{\Sigma_0} u}_{L^2(\Sigma_0)}^2.
\end{equation}
This is the desired bound.
\end{proof}

The next result provides a right inverse to the divergence operator.

\begin{lem}\label{lem:divinverse}
	There exists a linear and continuous mapping $\Pi : L^2(\Omega ; \F) \to {}_0H^{1}(\Omega; \F^n)$ such that $\diverge \Pi g = g$ for all $g \in L^2(\Omega; \F)$. In particular, for all $g \in L^2(\Omega ; \F)$ we have $\norm{\Pi g}_{\Hzerotan(\Omega)} \lesssim_{n,b} \norm{g}_{L^2(\Omega)}$.
\end{lem}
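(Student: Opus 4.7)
The plan is to construct $\Pi$ through a scalar potential: for $g \in L^2(\Omega;\F)$, let $p$ be the unique weak solution of the mixed Dirichlet--Neumann Poisson problem
\begin{equation*}
\Delta p = g \text{ in } \Omega, \qquad \partial_n p = 0 \text{ on } \Sigma_0, \qquad p = 0 \text{ on } \Sigma_b,
\end{equation*}
and then set $\Pi g := \nabla p$. The boundary conditions are designed so that $\diverge \Pi g = \Delta p = g$ and $(\Pi g)_n|_{\Sigma_0} = \partial_n p|_{\Sigma_0} = 0$, placing $\Pi g \in \Hzerotan(\Omega;\F^n)$; linearity of $\Pi$ is immediate from linearity of the BVP. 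The whole result thus reduces to the elliptic regularity estimate $\|\nabla p\|_{H^1(\Omega)} \lesssim \|g\|_{L^2(\Omega)}$ with constants depending only on $n$ and $b$.

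Existence of a weak $p \in \{q \in H^1(\Omega;\F) : q|_{\Sigma_b} = 0\}$ with $\|\nabla p\|_{L^2} \lesssim \|g\|_{L^2}$ is standard Lax--Milgram: the bilinear form $(q,r) \mapsto \int_\Omega \nabla q \cdot \overline{\nabla r}$ is coercive on this space via the vertical Poincar\'e inequality $\|q\|_{L^2(\Omega)} \lesssim_b \|\partial_n q\|_{L^2(\Omega)}$, and the right-hand side functional $r \mapsto -\int_\Omega g\overline{r}$ is bounded by the same inequality. To upgrade to second-order bounds, I would pass to the horizontal Fourier variable: for each $\xi \in \R^{n-1}$, $\hat p(\xi,\cdot)$ satisfies the ODE BVP
\begin{equation*}
y'' - \lambda^2 y = \hat g(\xi,\cdot) \text{ on } (0,b), \qquad y'(0) = 0, \qquad y(b) = 0, \qquad \lambda := 2\pi|\xi|,
\end{equation*}
which has the explicit Green's function
\begin{equation*}
G(x_n,s;\lambda) = -\frac{1}{\lambda \cosh(\lambda b)}\begin{cases}\sinh(\lambda(b-s))\cosh(\lambda x_n), & 0 \le x_n \le s,\\ \cosh(\lambda s)\sinh(\lambda(b-x_n)), & s \le x_n \le b,\end{cases}
\end{equation*}
specializing to the polynomial kernel $G(x_n,s;0) = \min(x_n,s) - b$ at the zero mode.

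By Plancherel, the $H^1$-bound on $\nabla p$ reduces to the uniform-in-$\xi$ multiplier estimates $\int_0^b \lambda^4 |\hat p|^2\,dx_n$ and $\int_0^b \lambda^2 |\partial_n \hat p|^2\,dx_n \lesssim \int_0^b |\hat g|^2\,dx_n$, since the ODE itself converts $\int_0^b |\partial_n^2 \hat p|^2$ into a combination of $\int_0^b \lambda^4 |\hat p|^2$ and $\int_0^b |\hat g|^2$. I would obtain these by the Schur test applied to the integral operators with kernels $\lambda^2 G(x_n,s;\lambda)$ and $\lambda\,\partial_n G(x_n,s;\lambda)$, where the main work is to check that both kernels have uniformly bounded $L^\infty_{x_n} L^1_s$ and $L^\infty_s L^1_{x_n}$ norms. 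The main obstacle is extracting these bounds uniformly in $\lambda$ across both asymptotic regimes: at high frequency, the exponential decay of the sinh/cosh quotients at rate $\lambda$ in $|x_n - s|$ exactly cancels the powers of $\lambda$ in the multipliers; at low frequency, the prefactor $1/(\lambda\cosh(\lambda b))$ is singular but is absorbed by the small factors $\lambda^2$ and $\lambda$; and the degenerate case $\lambda = 0$ is handled directly by the explicit polynomial kernel.
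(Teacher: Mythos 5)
Your construction takes $\Pi g = \nabla p$ with a Neumann condition $\partial_n p = 0$ on $\Sigma_0$, which gives $(\Pi g)_n\rvert_{\Sigma_0} = 0$ but leaves the tangential trace $(\Pi g)'\rvert_{\Sigma_0} = \nabla' p\rvert_{\Sigma_0}$ generically nonzero, so $\Pi g$ lands only in $\Hzerotan(\Omega;\F^n)$. The lemma, however, asserts that $\Pi$ maps into ${}_0H^1(\Omega;\F^n)$, which in this line of papers (Leoni--Tice and Stevenson--Tice, and by inheritance here) is the subspace of $H^1(\Omega;\F^n)$ with the \emph{entire} trace vanishing on $\Sigma_0$, not merely the normal component. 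Your own write-up even states the weaker conclusion $\Pi g \in \Hzerotan$, which is a strictly larger space than ${}_0H^1$; the construction does not reach the stated target.

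This gap has real consequences downstream. In the proofs of Proposition~\ref{prop:ind of alpha} and Theorem~\ref{thm:VandQ}, the weak formulation \eqref{eq: abst weakform} is tested with $v_0 = \Pi(\cdot)$, and the $\alpha$-independent pressure estimates hinge on the boundary term $\alpha^{-1}\int_{\Sigma_0}\beta u \cdot \overline{v_0}$ dropping out, which happens precisely because $v_0\rvert_{\Sigma_0} = 0$ as an element of ${}_0 H^1$. If $v_0$ only had vanishing normal trace, this term would survive with an $\alpha^{-1}$ factor and the uniformity in $\alpha \in (0,1)$ that underpins the paper's $\alpha\to 0$ limit would be lost. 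Moreover, the scalar-potential route cannot be patched by adjusting boundary conditions: $\nabla p$ has full trace vanishing on $\Sigma_0$ only if $p$ has vanishing Cauchy data there, which overdetermines the BVP on that face. The paper's proof instead cites the explicit vector-field constructions in Leoni--Tice and Stevenson--Tice, which are engineered to vanish entirely on $\Sigma_0$. Your Lax--Milgram and Fourier-side Green's function analysis is sound as elliptic regularity, but it is aimed at an operator that does not satisfy the required boundary condition.
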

\begin{proof}
This follows from Lemma 2.1 in \cite{leonitice} and Lemma 2.2 in \cite{noahtice}.
\end{proof}

We next prove a Helmholtz decomposition of $\Hzerotan(\Omega;\F^n)$.

\begin{lem}\label{lem:orthodecomp}
	Define the bounded linear operator $Q: L^2(\Omega; \F) \to \Hzerotan(\Omega, \F^n)$ via  
		 \begin{align} \label{eq:G}
			 \int_\Omega p \diverge \overline{v} = \left( Qp, v \right)_{\Hzerotan(\Omega;\F)} \; \text{for all} \; v \in \Hzerotan(\Omega; \F^n).
		 \end{align}
	Then $Q$ has closed range, and $(\Range Q)^\perp = \Hzerostanigtan(\Omega; \F^n)$. Consequently, we have the orthogonal decomposition
		\begin{align} \label{eq: helmholtz}
			\Hzerotan(\Omega; \F^n) = \Hzerostanigtan(\Omega; \F^n) \oplus_{\Hzerotan} \Range Q.
		\end{align}
\end{lem}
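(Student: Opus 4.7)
The plan is to verify three things in sequence: that $Q$ is a well-defined bounded linear operator, that $(\Range Q)^\perp = \Hzerostanigtan(\Omega;\F^n)$, and finally that $\Range Q$ is closed; once all three are in hand, \eqref{eq: helmholtz} is immediate from standard Hilbert space theory.

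First I would observe that for fixed $p \in L^2(\Omega;\F)$, the map $v \mapsto \int_\Omega p\,\overline{\diverge v}$ is (anti-)linear on $\Hzerotan(\Omega;\F^n)$ and bounded by $\norm{p}_{L^2}\norm{v}_{H^1}$, so the Riesz representation theorem supplies a unique $Qp \in \Hzerotan(\Omega;\F^n)$ satisfying \eqref{eq:G}, and $p \mapsto Qp$ is linear and bounded. For the characterization of $(\Range Q)^\perp$, note $v \in (\Range Q)^\perp$ iff $(Qp,v)_{\Hzerotan} = 0$ for all $p \in L^2$, which by \eqref{eq:G} is equivalent to $\int_\Omega p\,\overline{\diverge v} = 0$ for all $p \in L^2(\Omega;\F)$. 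Taking $p = \diverge v \in L^2$ forces $\diverge v = 0$, and the converse is immediate; thus $(\Range Q)^\perp = \Hzerostanigtan(\Omega;\F^n)$.

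The crux is establishing that $\Range Q$ is closed, which I would do by showing $Q$ is bounded below using the right inverse $\Pi$ from Lemma~\ref{lem:divinverse}. For $p \in L^2(\Omega;\F)$, the vector $\Pi p$ lies in $\Hzerotan(\Omega;\F^n)$ (the space ${}_0H^1$ embeds into $\Hzerotan$ since vanishing trace on all of $\p\Omega$ is stronger than $u_n\vert_{\Sigma_0} = 0$), so we may test \eqref{eq:G} against $v = \Pi p$ to obtain
\begin{equation*}
\norm{p}_{L^2}^2 = \int_\Omega p\,\overline{\diverge \Pi p} = (Qp, \Pi p)_{\Hzerotan} \le \norm{Qp}_{\Hzerotan}\norm{\Pi p}_{\Hzerotan} \lesssim \norm{Qp}_{\Hzerotan}\norm{p}_{L^2}.
\end{equation*}
Dividing through by $\norm{p}_{L^2}$ yields $\norm{p}_{L^2} \lesssim \norm{Qp}_{\Hzerotan}$, so $Q$ is bounded below; combined with its continuity this gives that $\Range Q$ is closed.

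With these pieces assembled, the standard orthogonal decomposition for a closed subspace of a Hilbert space gives $\Hzerotan(\Omega;\F^n) = \Range Q \oplus (\Range Q)^\perp = \Range Q \oplus \Hzerostanigtan(\Omega;\F^n)$, which is \eqref{eq: helmholtz}. The only real content beyond bookkeeping is the coercivity bound from testing against $\Pi p$; this is the main obstacle, but once one recognizes that Lemma~\ref{lem:divinverse} is the natural tool, it becomes a one-line computation.
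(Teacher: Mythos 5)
Your proposal is correct and follows the same strategy as the paper: characterize $(\Range Q)^\perp$ directly from \eqref{eq:G}, and obtain closedness of $\Range Q$ by testing against the divergence right-inverse $\Pi p$ from Lemma~\ref{lem:divinverse} to get the lower bound $\norm{p}_{L^2} \lesssim \norm{Qp}_{\Hzerotan}$. The two proofs differ only in the order of the two main steps, which is immaterial.
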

\begin{proof}
	We first show that $Q$ has closed range. To do so, we first note that for all $p \in L^2(\Omega; \F)$ we have the bound $\norm{Qp}_{\Hzerotan(\Omega)} \lesssim_{n,p} \norm{p}_{L^2(\Omega)}$. 	On the other hand, by Lemma~\ref{lem:divinverse} there exists a $v_0 \in \Hzerotan(\Omega; \F^n)$ such that $\diverge \overline{v_0} = p$ and $\norm{v_0}_{\Hzerotan} \lesssim_{n,b} \norm{p}_{L^2}$. Therefore by the Cauchy-Schwartz inequality, 
	\begin{align} 
		\norm{p}_{L^2}^2 = \int_\Omega p \diverge\overline{v_0}= \left( Qp, v_0 \right)_{\Hzerotan(\Omega)} \le \norm{Qp}_{\Hzerotan(\Omega)} \norm{v_0}_{\Hzerotan(\Omega)} \lesssim_{n,b} \norm{Qp}_{\Hzerotan(\Omega)} \norm{p}_{L^2(\Omega)}.
	\end{align}
	This implies that $\norm{p} \lesssim_{n,b} \norm{Qp}_{\Hzerotan}$, thus we have the equivalence of norms $\norm{Qp}_{\Hzerotan} \asymp \norm{p}_{L^2}$ for all $p \in L^2(\Omega; \F)$. This immediately implies that $Q$ has closed range, and so $\Hzerotan(\Omega; \F^n) = \Range Q \oplus_{\Hzerotan} \left( \Range Q \right)^{\perp}$. It remains to show that $\left( \Range Q \right)^\perp = \Hzerostanigtan(\Omega; \F^n)$. If $v \in \left( \Range Q \right)^\perp$, then  $\left( Qp , v \right)_{\Hzerotan} = (p, \diverge v)_{L^2} = 0$ for all $p \in L^2(\Omega; \F)$. Thus we must have $\diverge \overline{v} = 0$ $\mathcal{L}^n$-a.e., which implies that $v \in \Hzerostanigtan(\Omega; \F^n)$. If $v \in \Hzerostanigtan(\Omega; \F^n)$, then $\left( Qp, v \right) = \int_\Omega p \diverge \overline{v} = 0$ for any $p \in L^2(\Omega; \F)$, which implies that $v \in \left( \Range Q \right)^\perp$. This shows that $(\Range Q)^\perp = \Hzerostanigtan(\Omega, \F^n)$ as desired. Since the range of $Q$ is closed, the Helmholtz decomposition \eqref{eq: helmholtz} follows. 
\end{proof}

This gives us an immediate corollary.

\begin{cor}\label{cor:pressure}
 Let $\Lambda_1 \in \left( \Hzerotan(\Omega;\F^n) \right)^{\overline{*}}$ be such that  $\langle \Lambda_1, v \rangle = 0$ for all $v \in \Hzerostanigtan(\Omega; \F^n)$. Then there exists unique $p \in L^2(\Omega;\F)$ such that $\langle \Lambda_1, v \rangle = \int_\Omega p \diverge \overline{v}$ for all $v \in \Hzerotan(\Omega; \F^n)$.  Moreover, we have the estimate $\norm{p}_{L^2} \lesssim_{n,b} \norm{\Lambda_1}_{\left( \Hzerotan \right)^{\overline{*}} }$.
\end{cor}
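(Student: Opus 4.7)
The plan is to leverage the Helmholtz-type decomposition from Lemma 2.3 together with the Riesz representation theorem applied to the anti-dual $\left(\Hzerotan(\Omega;\F^n)\right)^{\overline{*}}$. Since this corollary immediately follows the lemma and is labeled as such, the heavy functional-analytic lifting has already been done; what remains is to assemble the pieces and keep track of the anti-linear conventions over $\F \in \{\R, \C\}$.

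First, I would invoke the Riesz representation theorem on the Hilbert space $\Hzerotan(\Omega;\F^n)$ to produce a unique $w \in \Hzerotan(\Omega;\F^n)$ such that $\langle \Lambda_1, v \rangle = (w, v)_{\Hzerotan}$ for every $v \in \Hzerotan(\Omega;\F^n)$, and moreover $\|w\|_{\Hzerotan} = \|\Lambda_1\|_{(\Hzerotan)^{\overline{*}}}$. The hypothesis that $\Lambda_1$ annihilates $\Hzerostanigtan(\Omega;\F^n)$ translates directly into $(w, v)_{\Hzerotan} = 0$ for all $v \in \Hzerostanigtan(\Omega;\F^n)$, which places $w$ in the orthogonal complement of $\Hzerostanigtan(\Omega;\F^n)$. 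By the identification $(\Hzerostanigtan)^{\perp} = \Range Q$ from Lemma 2.3, we may then write $w = Qp$ for some $p \in L^2(\Omega;\F)$.

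With this $p$ in hand, the defining identity \eqref{eq:G} for the operator $Q$ gives
\begin{equation*}
    \langle \Lambda_1, v \rangle = (w, v)_{\Hzerotan} = (Qp, v)_{\Hzerotan} = \int_\Omega p \diverge \overline{v}
\end{equation*}
for every $v \in \Hzerotan(\Omega;\F^n)$, which is the representation claimed. Uniqueness of $p$ would follow from the equivalence of norms $\|Qp\|_{\Hzerotan} \asymp \|p\|_{L^2}$ established in the proof of Lemma 2.3, which in particular makes $Q$ injective: two representers $p_1, p_2$ would satisfy $Q(p_1 - p_2) = 0$, hence $p_1 = p_2$. The norm estimate $\|p\|_{L^2} \lesssim_{n,b} \|\Lambda_1\|_{(\Hzerotan)^{\overline{*}}}$ comes from chaining $\|p\|_{L^2} \lesssim_{n,b} \|Qp\|_{\Hzerotan} = \|w\|_{\Hzerotan} = \|\Lambda_1\|_{(\Hzerotan)^{\overline{*}}}$, again using the same equivalence of norms and the Riesz isometry.

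There is not really a substantive obstacle here; the one point requiring minor care is the sesquilinear/anti-linear bookkeeping when $\F = \C$, since $\Lambda_1$ lies in the anti-dual, the inner product convention used in \eqref{eq:G} places the conjugate on $v$ inside $\diverge \overline{v}$, and the Riesz map into the anti-dual is linear (not anti-linear). All conventions in the statement of Lemma 2.3 are already consistent with this, so the argument goes through uniformly in $\F$.
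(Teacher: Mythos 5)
Your proof is correct and follows the same core strategy as the paper: Riesz representation in the anti-dual, the orthogonal-complement identification $(\Hzerostanigtan)^\perp = \Range Q$ from Lemma~\ref{lem:orthodecomp}, and injectivity of $Q$ from the norm equivalence $\norm{Qp}_{\Hzerotan} \asymp \norm{p}_{L^2}$. The one genuine difference is in handling $\F = \C$: you treat both fields uniformly by invoking the anti-dual Riesz isomorphism (which the paper itself records just before Definition~\ref{defn:OPB}), whereas the paper first proves the $\F = \R$ case and then reduces $\F = \C$ to it by introducing the real-linear functionals $\langle \Lambda_{\Re}, v\rangle = \Re\langle \Lambda, v\rangle$ and $\langle \Lambda_{\Im}, v\rangle = \Re\langle \Lambda, i v\rangle$ on $\Hzerotan(\Omega; \R^n)$, producing real $q,r$ via the real case and assembling $p = q + ir$. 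Your route is cleaner and avoids that detour, at the cost of leaning directly on the complex anti-dual Riesz theorem rather than only the real one; both are valid given that Lemma~\ref{lem:orthodecomp} is already established over generic $\F$. One small observation: the paper's final sentence reads that uniqueness holds ``since $Q$ is surjective,'' which should say injective; your argument via the norm equivalence is the correct one.
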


\begin{proof}
First we suppose that $\F = \R$ and let $\Lambda \in \left( \Hzerotan(\Omega;\R^n) \right)^{*}$ be such that it vanishes on solenoidal fields. By the Riesz representation theorem, there exists $w \in \Hzerotan(\Omega; \R^n)$ such that  $\langle \Lambda, v \rangle = \left(  w,v\right)_{\Hzerotan}$ for all$v \in \Hzerotan(\Omega; \R^n)$ and $\norm{w}_{\Hzerotan} = \norm{\Lambda}_{\left( \Hzerotan \right)^{\overline{*}}}$. Then for all $v \in \Hzerostanigtan(\Omega; \R^n)$, we have $(w, v)_{\Hzerotan} = \langle \Lambda, v \rangle = 0$, thus $w \in \left( \Hzerostanigtan(\Omega;\R^n) \right)^\perp$. By Lemma~\ref{lem:orthodecomp}, we have $w \in \Range Q$, therefore there exists a $p \in L^2(\Omega; \R)$ such that $Qp = w$. So we have $\langle \Lambda, v \rangle = \left( Qp,v \right)_{\Hzerotan} = \int_\Omega p \diverge \overline{v}$ for all $v \in \Hzerotan(\Omega; \R^n)$, with the estimate 
	\begin{align} 
		\norm{p}_{L^2} \lesssim_{n,b} \norm{Qp}_{\Hzerotan} = \norm{w}_{\Hzerotan} = \norm{\Lambda}_{\left( \Hzerotan \right)^{\overline{*}}}.
	\end{align}
	Moreover, $p \in L^2(\Omega; \R)$ is unique since $Q$ is surjective.

	Now we consider the case when $\F = \C$. If we have an antilinear functional $\Lambda \in \left( \Hzerotan(\Omega;\C^n) \right)^{\overline{*}}$ vanishing on solenoidal fields, we can define the $\R$-linear functionals $\Lambda_{\Re}, \Lambda_{\Im} \in \left( \Hzerotan(\Omega; \R^n) \right)^{\overline{*}}$ via $\langle \Lambda_{\Re}, v \rangle = \Re \langle F, v \rangle$ and $ \langle \Lambda_{\Im}, v \rangle = \Re \langle F, iv \rangle$ 	for any $v \in \Hzerotan(\Omega; \R^n)$. Note that if $v  \in \Hzerostanigtan(\Omega; \R^n)$, then $\langle \Lambda, v \rangle = 0$ by assumption, so it follows that $\Lambda_{\Re}, \Lambda_{\Im}$ vanishes on real-valued solenoidal fields. Thus when $\F = \R$, there exist unique $q,r \in L^2(\Omega; \R)$ such that for all $v,w \in \Hzerotan(\Omega; \R^n)$, 
	\begin{align} 
		\Re [ \langle \Lambda, v + i w \rangle ] &= \langle \Lambda_{\Re}, v \rangle + \langle \Lambda_{\Im}, w \rangle = \int_\Omega q \diverge v + r \diverge w = \Re \left[ \int_\Omega \left( q+ir \right) \overline{\diverge \left( v + i w \right) }\right]. 	\end{align}
	Now define $p \in L^2(\Omega; \C)$ via $p = q + ir$, and for any $u \in \Hzerotan(\Omega; \C^n)$ we write it as $u = v+ iw$. Then 
	\begin{multline} 
		\langle \Lambda, u \rangle = \Re [\langle \Lambda, v + i w \rangle] + i \Im [ \langle \Lambda, v + i w \rangle ] 
					    = \Re [\langle \Lambda, v + i w \rangle] + i \Re [-i \langle \Lambda, v + iw \rangle] 
						\\ = \Re [\langle \Lambda, v + i w \rangle] + i \Re [ \langle \Lambda, -w + iv  \rangle] 
					   = \Re \left[ \int_\Omega \left( q+ir \right) \overline{\diverge \left( v + i w \right) }\right] 	+ i \Re \left[ \int_\Omega \left( q+ir \right) \overline{\diverge \left( -w+iv \right) }\right] \\
					   = \Re \left[ \int_\Omega \left( q+ir \right) \overline{\diverge \left( v + i w \right) }\right] 	+ i \Im \left[ \int_\Omega \left( q+ir \right) \overline{\diverge \left( v + iw \right) }\right] 
					   = \int_{\Omega} p \diverge \overline{u}.
	\end{multline}
\end{proof}

With these preliminary results in hand, we now turn to the question of  weak solvability of \eqref{eq: linear w/o eta}.  We first set some notation.

\begin{defn}
Let $\R \ni s \ge 0$, $\R \ni \alpha > 0, \gamma \in \R$, and $\beta \in \R^{n \times n}$ be positive definite.  We  define the map $\mathfrak{L}_{\alpha,\beta,\gamma} : \Hzerostan{s+3/2}(\Omega ; \F^n) \times H^{s}(\Omega ; \F) \to (\Hzerotan(\Omega;\F^n))^{\bar{*}}$  via 
\begin{align}\label{eq: bilinear map B}
			  \left\langle \mathfrak{L}_{\alpha,\beta,\gamma}(u,p) , v \right\rangle_{(\Hzerotan)^{\bar{*}}, \Hzerotan} = \int_\Omega \frac{\mu}{2} \mathbb{D} u : \mathbb{D} \overline{v} -  p \diverge \overline{v} - \gamma \p_1 u \cdot \overline{v} +  \frac{1}{\alpha} \int_{\Sigma_0} \beta u \cdot \overline{v}.
\end{align}
Given $F \in (\Hzerotan(\Omega;\F^n))^{\overline{*}}$ and $g \in L^2(\Omega; \F)$, we say that $u \in \Hzerotan(\Omega; \F^n)$ and $p \in L^2(\Omega; \F)$ are weak solutions to \eqref{eq: linear w/o eta} if $\diverge u = g$ and 
			 \begin{align}\label{eq: abst weakform}
				 \left\langle \mathfrak{L}_{\alpha,\beta,\gamma}(u,p) , v \right\rangle_{(\Hzerotan)^{\bar{*}}, \Hzerotan} = \left\langle F, v \right\rangle_{(\Hzerotan)^{\bar{*}}, \Hzerotan}.
			\end{align}
\end{defn}

This notation allows us to efficiently state our weak well-posedness result.

\begin{thm}\label{thm:weaksol}
	Let $\R \ni \alpha > 0$, $\gamma \in \R$, and $\beta \in \R^{n \times n}$ be positive definite.  Define $ \chi_{\alpha,\beta, \gamma}: \Hzerotan(\Omega;\F^n) \times L^2(\Omega;\F) \to (\Hzerotan(\Omega;\F^n))^{\overline{*}} \times L^2(\Omega;\F)$ via $\chi_{\alpha,\beta, \gamma} (u,p) = (\mathfrak{L}_{\alpha,\beta,\gamma}(u,p), \diverge u)$,  where $\mathfrak{L}_{\alpha,\beta,\gamma}$ is defined in \eqref{eq: bilinear map B}. Then $ \chi_{\alpha,\beta, \gamma}$ is an isomorphism. 
\end{thm}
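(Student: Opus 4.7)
The plan is to reduce to a coercive problem on the divergence-free subspace $\Hzerostanigtan(\Omega;\F^n)$, invoke Lax--Milgram there, and recover the pressure using Corollary~\ref{cor:pressure}.

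Continuity of $\chi_{\alpha,\beta,\gamma}$ is immediate from the defining integrals and the trace theorem. For invertibility, given data $(F,g) \in (\Hzerotan)^{\bar{*}} \times L^2$, I would use Lemma~\ref{lem:divinverse} to produce $\Pi g \in \Hzerotan$ with $\diverge \Pi g = g$, and then seek $u = w + \Pi g$ with $w \in \Hzerostanigtan$. Restricting the weak formulation~\eqref{eq: abst weakform} to divergence-free test functions $v \in \Hzerostanigtan$ annihilates the pressure and reduces the problem to finding $w \in \Hzerostanigtan$ such that
\begin{equation*}
a(w,v) := \int_\Omega \tfrac{\mu}{2}\mathbb{D}w : \mathbb{D}\bar{v} - \gamma \p_1 w \cdot \bar{v} + \frac{1}{\alpha}\int_{\Sigma_0} \beta w \cdot \bar{v} = \langle F,v\rangle - \langle \mathfrak{L}_{\alpha,\beta,\gamma}(\Pi g, 0), v\rangle
\end{equation*}
holds for all $v \in \Hzerostanigtan$.

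The next step is verifying Lax--Milgram on the (complex) Hilbert space $\Hzerostanigtan$. Boundedness of $a$ is routine via trace estimates. Coercivity is the key point: Korn's inequality (Lemma~\ref{lem: korn}) gives $\norm{w}^2_{\Hzerotan} \asymp \norm{\mathbb{D}w}^2_{L^2} + \norm{\Tr_{\Sigma_0}w}^2_{L^2}$, positive definiteness of $\beta$ yields $\Re \int_{\Sigma_0}\beta w\cdot\bar w \gtrsim \norm{\Tr_{\Sigma_0}w}^2_{L^2}$, and the transport term contributes nothing to $\Re a(w,w)$ because $\Re(\p_1 w \cdot \bar w) = \tfrac{1}{2}\p_1|w|^2$ and $\int_\Omega \p_1|w|^2 = 0$. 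Lax--Milgram then produces a unique $w$ with $\norm{w}_{\Hzerotan} \lesssim \norm{(F,g)}$.

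To recover the pressure, the residual functional $v \mapsto \langle F,v\rangle - \langle \mathfrak{L}_{\alpha,\beta,\gamma}(u,0), v\rangle$ on $\Hzerotan$ vanishes on $\Hzerostanigtan$ by construction of $w$, so Corollary~\ref{cor:pressure} yields a unique $p \in L^2$ realizing it as $\int_\Omega p\,\diverge\bar v$, with $\norm{p}_{L^2} \lesssim \norm{(F,g)}$. Injectivity of $\chi_{\alpha,\beta,\gamma}$ follows by running the same decomposition on the homogeneous problem: coercivity forces $w=0$, and Corollary~\ref{cor:pressure} applied to the zero functional forces $p=0$. The principal technical obstacle I anticipate is justifying the identity $\Re\int_\Omega \gamma\p_1 w\cdot\bar w = 0$ at merely $H^1$-regularity; this requires a slice-wise Fubini argument, using that on a.e.\ horizontal line $w$ restricts to an $H^1(\R)$ map, whence $|w|^2 \in W^{1,1}(\R)$ has vanishing limits at $\pm\infty$.
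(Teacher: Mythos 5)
Your proposal is correct and follows essentially the same route as the paper's proof: reduce via the divergence right-inverse $\Pi$, apply Lax--Milgram on $\Hzerostanigtan$ using coercivity from Korn, positive-definiteness of $\beta$, and the vanishing of the transport term, then recover the pressure from Corollary~\ref{cor:pressure}. The one small divergence is cosmetic: the paper disposes of the identity $\Re\int_\Omega \p_1 u\cdot\bar u = 0$ by a one-line conjugate-symmetry argument ($\int_\Omega \p_1 u\cdot\bar u = -\overline{\int_\Omega \p_1 u\cdot\bar u}$ after integrating by parts), rather than the slice-wise Fubini argument you flag as the main technical obstacle.
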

\begin{proof}
	We first define the map $B_\alpha : \Hzerotan(\Omega; \F^n ) \times \Hzerotan(\Omega; \F^n ) \to \F$ via 
	\begin{align}
		B_\alpha(u,v) = \int_\Omega \frac{\mu}{2} \mathbb{D} u : \mathbb{D} \overline{v} - \gamma \p_1 u \cdot \overline{v}  +  \frac{1}{\alpha} \int_{\Sigma_0} \beta u \cdot \overline{v},
	\end{align} 
    which is clearly well-defined and continuous. Note that if $u \in \Hzerotan(\Omega; \F^n)$, then integration by parts shows that 
	\begin{align} \label{eq: by parts calculation}
		\int_\Omega \p_1 u \cdot \overline{u} = - \int_\Omega u \cdot \overline{\p_1 u} = -\overline{\int_\Omega \p_1 u \cdot \overline{u}} \implies 	\Re \int_\Omega \p_1 u \cdot \overline{u} = 0. 
	\end{align}
	Thus by the Korn's inequality from Lemma~\ref{lem: korn}, and using $\alpha > 0$ and  the fact that $\beta$ is positive definite, we have 
	\begin{align} \label{eq:Bcoercive}
		\abs{B_\alpha(u,u) } \ge \Re B_\alpha(u,u) = \frac{1}{2} \int_\Omega \abs{\mathbb{D}u}^2 + \frac{1}{\alpha} \int_{\Sigma_0} \beta u \cdot \overline{u}  \gtrsim \norm{u}^2_{\Hzerotan},
	\end{align}
	which shows that $B_\alpha$ is $\Hzerotan$-coercive. Since $\Hzerostanigtan(\Omega; \F^n)$ is a closed subspace of $\Hzerotan(\Omega; \F^n)$, $B_\alpha$ is a well-defined, continuous, coercive functional that is bilinear when $\F = \R$ and sesquilinear when $\F = \C$. 
	
	Let $(F,g) \in (\Hzerotan(\Omega ; \F^n))^{\overline{*}} \times L^2(\Omega ; \F)$ and define the functional $\Lambda_\alpha \in \left( \Hzerotan(\Omega; \F^n \right)^{\overline{*}}$  via $\langle \Lambda_\alpha, v \rangle =  -B_\alpha(\Pi g,v) + \langle F, v \rangle_{(\Hzerotan)^{\overline{*}}}$, where $\Pi: L^2(\Omega;\F) \to {}_0H^1(\Omega; \F^n)$ is the right inverse of the divergence operator introduced in Lemma~\ref{lem:divinverse}. By applying the standard Lax-Milgram theorem when $\F = \R$ and the anti-dual Lax Milgram theorem (see, for instance, Theorem A.5 of \cite{noahtice}) when $\F = \C$, there exists a unique $u \in \Hzerostanigtan(\Omega; \F^n)$ such that  $B_\alpha(u,v)  = \langle \Lambda_\alpha, v \rangle$  for all $v \in \Hzerostanigtan(\Omega; \F^n)$,  obeying the estimate 
	\begin{align}\label{eq: weak u est}
		\norm{u}_{\Hzerotan} \lesssim \norm{\Lambda}_{(\Hzerotan(\Omega; \F^n))^{\overline{*}}}  \lesssim_{\alpha, n,b} \norm{F}_{\left( \Hzerotan \right)^{\overline{*}}} + \norm{g}_{L^2}.
	\end{align}
	Furthermore, by Corollary~\ref{cor:pressure} there exists a unique $p \in L^2(\Omega; \F)$ such that 
	\begin{align} 
		B_\alpha(u,v)  = -B_\alpha(\Pi g,v) + \langle F, v \rangle_{(\Hzerotan)^{\overline{*}}, \Hzerotan} + \int_\Omega p \diverge \overline{v}
	\end{align}
	for all $v \in \Hzerotan(\Omega; \F^n)$. This shows that $ \chi_{\alpha,\beta, \gamma}(u + \Pi g, p) = (F,g)$, so $ \chi_{\alpha,\beta, \gamma}$ is surjective. 

	On the other hand, if $(u,p) \in \Hzerotan(\Omega;\F^n) \times L^2(\Omega;\F)$ such that $ \chi_{\alpha,\beta, \gamma}(u,p) = (F,g)$, first we can use the Helmholtz decomposition \eqref{eq: helmholtz} to write $u = w + \Pi g$. Then if we use $v = \Pi p$ in the definition of the map $\mathfrak{L}_{\alpha,\beta,\gamma}$ from \eqref{eq: bilinear map B}, we arrive at the estimate
	\begin{align} \label{eq: weak p est}
		\norm{p}_{L^2} \lesssim_{n,b} \norm{\Lambda}_{\left( \Hzerotan \right)^{\overline{*}}} \lesssim_{\alpha, n,b} \norm{u}_{\Hzerotan} + \norm{F}_{\left( \Hzerotan \right)^{\overline{*}}}.
	\end{align}
	The injectivity of $ \chi_{\alpha,\beta, \gamma}$ then follows from the estimates \eqref{eq: weak u est} and \eqref{eq: weak p est}. 
\end{proof}

Next we combine the weak isomorphism with standard elliptic regularity to arrive at our well-posedness result.

\begin{thm}\label{thm:classicalexistence}
Let $s \ge 0$ and assume $\beta \in \R^{n \times n}$ is positive definite. For any $\gamma \in \R$, we define the bounded linear operator $\Phi_{\alpha,\beta,\gamma} : \Hzerostan{s+2}(\Omega ; \F^n) \times H^{s+1}(\Omega;\F) \to H^{s} (\Omega; \F^n) \times H^{s+1}(\Omega;\F) \times H^{s+\frac{1}{2}}(\Sigma_b ; \F^n) \times H^{s+\frac{1}{2}}(\Sigma_0 ; \F^{n-1})$ via $\Phi_{\alpha,\beta,\gamma}(u,p) = \left( \diverge S(p,u) - \gamma \p_1 u, \diverge u, S(p,u) e_n, \left[\alpha S(p,u)e_n + \beta u \right]' \right)$.  Then $\Phi_{\alpha,\beta,\gamma}$ is an isomorphism for all $\gamma \in \R$. 
\end{thm}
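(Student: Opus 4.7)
The plan is to combine the weak solvability of Theorem~\ref{thm:weaksol} with an elliptic regularity bootstrap for the Stokes-type system with mixed stress/Robin boundary conditions. Given data $(f,g,k,l) \in H^s(\Omega;\F^n) \times H^{s+1}(\Omega;\F) \times H^{s+1/2}(\Sigma_b;\F^n) \times H^{s+1/2}(\Sigma_0;\F^{n-1})$, I would first package $(f,k,l)$ into an antilinear functional $F \in (\Hzerotan(\Omega;\F^n))^{\bar{*}}$ via
\begin{equation*}
\langle F, v \rangle = \int_\Omega f \cdot \bar v - \int_{\Sigma_b} k \cdot \bar v + \frac{1}{\alpha} \int_{\Sigma_0} l \cdot \bar v'.
\end{equation*}
A direct integration by parts, exploiting $v_n|_{\Sigma_0}=0$ for all test functions, shows that any classical solution of the boundary value problem determined by $(f,g,k,l)$ is a weak solution in the sense of Theorem~\ref{thm:weaksol} with this $F$ and the same $g$. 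Applying that theorem yields a unique $(u,p) \in \Hzerotan(\Omega;\F^n) \times L^2(\Omega;\F)$ satisfying the weak formulation.

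The main step is to upgrade $(u,p)$ to $\Hzerostan{s+2}(\Omega;\F^n) \times H^{s+1}(\Omega;\F)$. My approach would be a difference-quotient argument in the horizontal variables $x'$: these difference operators preserve both boundary conditions, so substituting horizontal difference quotients into the weak formulation and invoking the coercivity bound~\eqref{eq:Bcoercive} produces uniform $L^2$ estimates on all tangential derivatives, giving $\p_{x'}^{\alpha'} u \in \Hzerotan$ and $\p_{x'}^{\alpha'} p \in L^2$ up to order $s+1$. Normal derivatives are then recovered algebraically from the PDE: $\p_n u_n$ from the divergence constraint, $\p_n p$ from the normal component of the momentum equation, and $\p_n^2 u'$ from the horizontal components, each time leveraging the already-bootstrapped tangential regularity. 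Iterating this procedure, and treating $-\gamma \p_1 u$ as a lower-order perturbation of the data at each step, produces the claimed regularity along with a quantitative estimate. Injectivity of $\Phi_{\alpha,\beta,\gamma}$ is then immediate from the uniqueness in Theorem~\ref{thm:weaksol}, and the open mapping theorem delivers continuity of the inverse.

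The principal technical obstacle is the elliptic regularity step. One must verify that the mixed boundary conditions, namely a pure stress (Neumann-type) condition on $\Sigma_b$ together with the Robin-tangential plus no-penetration condition on $\Sigma_0$, form a complementing Lopatinski--Shapiro boundary system for the Stokes operator, so that standard half-space estimates localize and paste into $\Omega$. The infinite horizontal extent of $\Omega$ is then handled either via horizontal Fourier-multiplier estimates or via uniform-in-$\ell$ control on the rectangle decomposition already employed in the proof of Lemma~\ref{lem: korn}. Once uniform tangential control is in hand, promoting $H^s$ to $H^{s+1}$ in the normal direction is purely algebraic, and the induction on $s$ is routine bookkeeping.
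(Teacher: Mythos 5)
Your overall architecture matches the paper's: obtain a weak solution from Theorem~\ref{thm:weaksol}, then bootstrap regularity. The paper disposes of the second step in one line by citing the Agmon--Douglis--Nirenberg theory for elliptic systems, whereas you propose to carry it out by hand via horizontal difference quotients followed by algebraic recovery of normal derivatives from the divergence constraint and the momentum equation. Both routes are legitimate, and yours is in some ways more self-contained since it never invokes potential-theoretic half-space estimates; the trade-off is that you must set up the difference-quotient argument carefully (in particular, $\tau_h u$ fails to be solenoidal, so one must subtract a corrector $\Pi(\tau_h g)$ before testing, and one must check that the Robin boundary term stays coercive under differencing --- both routine but not free). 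Your packaging of $(f,k,l)$ into the functional $F$ is exactly the integration by parts the paper intends, and the signs are right given the orientation $\nu = -e_n$ on $\Sigma_0$ and the constraint $v_n|_{\Sigma_0}=0$.

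One internal inconsistency worth flagging: your final paragraph frames the ``principal technical obstacle'' as verifying the Lopatinski--Shapiro complementing condition so that ADN half-space estimates apply, but if you actually execute the difference-quotient plus algebraic-normal-derivative argument from your second paragraph you never need to invoke ADN at all --- the fact that the boundary operators allow the normal derivatives to be solved for is precisely the constructive witness to the complementing condition. You should commit to one route. If you go ADN, drop the difference-quotient paragraph and simply check complementarity for the symbols of the stress condition on $\Sigma_b$ and the Robin-plus-no-penetration condition on $\Sigma_0$ (this is classical for Stokes-type systems, cf.\ Solonnikov). If you go the energy route, the discussion of half-space localization is superfluous; what you do still need is a uniform constant across the horizontal rectangle decomposition or a tangential Fourier-multiplier argument (the latter is in fact what the paper uses in Proposition~\ref{prop:ind of alpha}), and you correctly identify both options.
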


\begin{proof}
This follows from Theorem~\ref{thm:weaksol} and the regularity theory for elliptic systems (see, for instance, \cite{ADN_1964}).
\end{proof}

Next we prove an important result that will be essential in the analysis to follow. We show that the weak solution map $ \chi_{\alpha,\beta, \gamma}$ and the strong solution map $\Phi_{\alpha,\beta,\gamma}$ commute with tangential multipliers, as defined in Definition~\ref{defn: tangential m}.

\begin{thm}\label{thm: chi commutes with M_omega}
	Let $s \ge 0$ and suppose $\omega \in L^\infty(\R^{n-1}; \C)$. Consider the tangential multiplier $M_\omega$ defined via Definition~\ref{defn: tangential m}. If $(F,g) \in (\Hzerotan(\Omega;\F^n))^{\overline{*}} \times L^2(\Omega;\F)$ and $(u,p) = \chi_{\alpha,\beta, \gamma}^{-1}(F,g)$, then $(M_\omega u, M_\omega p) = \chi_{\alpha,\beta, \gamma}^{-1}(M_\omega F, M_\omega g)$. Furthermore, if $f \in H^s(\Omega ; \F^n)$, $g \in H^{s+\frac{1}{2}}(\Sigma_b; \F^n)$, $k \in H^{s+\frac{1}{2}}(\Sigma_b; \F^{n})$, $l \in H^{s+\frac{1}{2}}(\Sigma_0; \F^{n-1})$, and we set $(u,p) = \Phi_{\alpha,\beta,\gamma}^{-1}(f,g,k,l)$, then $(M_\omega u, M_\omega p) = \Phi_{\alpha,\beta,\gamma}^{-1}(M_\omega f,M_\omega g,M_\omega k,M_\omega l)$.
\end{thm}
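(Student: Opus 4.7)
The plan is to exploit three elementary properties of the tangential multiplier $M_\omega$ with bounded symbol: (i) $M_\omega$ is bounded on $L^2(\Omega;\F)$ (hence on every $H^s$) since its action amounts to pointwise multiplication by $\omega \in L^\infty$ in the horizontal Fourier variable; (ii) $M_\omega$ commutes with every partial derivative $\p_j$ and with the trace operators onto both $\Sigma_0$ and $\Sigma_b$; and (iii) horizontal Parseval gives the adjoint relation $\int_\Omega M_\omega u \cdot \bar v\,dx = \int_\Omega u \cdot \overline{M_{\bar\omega} v}\,dx$, and the analogous identity on $\Sigma_0$. In particular, $M_\omega$ preserves the spaces $\Hzerotan(\Omega;\F^n)$ and $\Hzerostan{s+2}(\Omega;\F^n)$, since the tangential constraint $u_n\vert_{\Sigma_0}=0$ passes through $M_\omega$ by (ii); and its extension to the antidual $(\Hzerotan(\Omega;\F^n))^{\bar *}$ by the duality rule $\langle M_\omega F, v\rangle := \langle F, M_{\bar\omega} v\rangle$ is the one compatible with the $L^2$ pairing.

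For the weak statement, I would fix an arbitrary $v \in \Hzerotan(\Omega;\F^n)$ and set $w := M_{\bar\omega} v$, which again lies in $\Hzerotan(\Omega;\F^n)$ by (ii). Using (ii) to push $M_\omega$ past $\mathbb{D}$, $\diverge$, $\p_1$, and the trace onto $\Sigma_0$, and then (iii) to transfer it onto the test vector, each of the four terms in the expression $\langle \mathfrak{L}_{\alpha,\beta,\gamma}(M_\omega u, M_\omega p), v\rangle$ from \eqref{eq: bilinear map B} matches the corresponding term of $\langle \mathfrak{L}_{\alpha,\beta,\gamma}(u,p), w\rangle$. This yields
\begin{equation*}
\langle \mathfrak{L}_{\alpha,\beta,\gamma}(M_\omega u, M_\omega p), v\rangle \;=\; \langle \mathfrak{L}_{\alpha,\beta,\gamma}(u,p), M_{\bar\omega} v\rangle \;=\; \langle F, M_{\bar\omega} v\rangle \;=\; \langle M_\omega F, v\rangle,
\end{equation*}
while commuting $M_\omega$ past the divergence gives $\diverge(M_\omega u) = M_\omega g$. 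Thus $(M_\omega u, M_\omega p)$ is a weak solution with data $(M_\omega F, M_\omega g)$, and the injectivity half of Theorem~\ref{thm:weaksol} forces $(M_\omega u, M_\omega p) = \chi_{\alpha,\beta,\gamma}^{-1}(M_\omega F, M_\omega g)$.

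For the strong statement the argument is even more direct: I would verify componentwise that $\Phi_{\alpha,\beta,\gamma}(M_\omega u, M_\omega p) = (M_\omega f, M_\omega g, M_\omega k, M_\omega l)$ by commuting $M_\omega$ through the differential operator $\diverge S(\cdot,\cdot) - \gamma\p_1$, through the divergence, and through the trace-based operators $S(\cdot,\cdot)e_n$ on $\Sigma_b$ and $[\alpha S(\cdot,\cdot)e_n + \beta\,\cdot\,]'$ on $\Sigma_0$. All of these are built from partial derivatives, constant coefficients, and traces, so each step reduces to property (ii); the isomorphism from Theorem~\ref{thm:classicalexistence} then yields the claim. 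There is no deep obstacle here; the main bookkeeping point is in the case $\F=\C$, where one must consistently track that the adjoint produces $M_{\bar\omega}$ rather than $M_\omega$, and that this conjugate multiplier likewise preserves $\Hzerotan(\Omega;\C^n)$, so that the test-function manipulation in the weak case is legitimate.
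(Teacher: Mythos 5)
Your proposal is correct and follows essentially the same route as the paper: both proofs commute $M_\omega$ through the differential and trace operators, use the duality definition $\langle M_\omega F, v\rangle = \langle F, M_{\bar\omega} v\rangle$ to transfer the multiplier onto the test function in the weak formulation, and then invoke the isomorphism property of $\chi_{\alpha,\beta,\gamma}$ (resp.\ $\Phi_{\alpha,\beta,\gamma}$) to conclude. The only cosmetic difference is that you run the chain of equalities starting from $\langle \mathfrak{L}_{\alpha,\beta,\gamma}(M_\omega u, M_\omega p), v\rangle$ while the paper's display \eqref{eq: commute F} starts from $\langle M_\omega F, v\rangle$; the computation is the same.
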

\begin{proof}
	Let $\omega \in L^\infty(\R^{n-1}; \C), (F,g) \in (\Hzerotan(\Omega;\F^n))^{\overline{*}} \times L^2(\Omega;\F)$ and $(u,p) = \chi_{\alpha,\beta, \gamma}^{-1}(F,g)$. We first note that by the definition of $M_\omega$ on $L^2(\Omega; \F)$, the multiplier $M_\omega$ commutes with differential operators and therefore we immediately have $M_\omega g = M_\omega \diverge u = \diverge M_\omega u$. We then note that by the definition of $M_\omega$ on $(\Hzerotan(\Omega;\C^n))^{\overline{*}}$, we may compute for all $v \in \Hzerotan(\Omega;\C^n)$
	\begin{multline}\label{eq: commute F}
		 \left\langle M_\omega F, v \right\rangle_{(\Hzerotan)^{\overline{*}}, \Hzerotan} = \left\langle F, M_{\overline{\omega}} v \right\rangle_{(\Hzerotan)^{\overline{*}}, \Hzerotan} = \int_\Omega \frac{\mu}{2} \mathbb{D}M_\omega u : \mathbb{D} \overline{v}- M_\omega p \diverge \overline{v}  - \gamma  \p_1 M_\omega u \cdot \overline{v} +  \frac{1}{\alpha} \int_{\Sigma_0} \beta M_\omega u \cdot \overline{v} \\
		 = \langle \mathfrak{L}_{\alpha,\beta,\gamma}(M_\omega u, M_\omega p), v \rangle.
	\end{multline}
	Combining these then shows that $(M_\omega u, M_\omega p) = \chi_{\beta,\gamma} ^{-1}(M_\omega F, M_\omega g)$. Next we note that again that by the definition of $M_\omega$ on $H^s(U; \F^k)$ and $L^2(\Sigma; \F^k)$ for $\Sigma \in \{\Sigma_b, \Sigma_0\}$ and $k \ge 1$, the tangential multiplier $M_\omega$ commutes with differential operators and therefore 
	\begin{multline}\label{eq: commute f k l}
		 M_\omega f = M_\omega (\diverge S(p,u) - \gamma \p_1 u) = \diverge M_\omega S(p,u) - \gamma \p_1 M_\omega u = \diverge S(M_\omega p, M_\omega u) - \gamma \p_1 M_\omega u, \; \\ M_\omega k = M_\omega S(p,u) e_n = S(M_\omega p, M_\omega u) e_n, \; M_\omega l = M_\omega \left[
			\alpha S(p,u)e_n + \beta u \right]' = \left[
				\alpha S(M_\omega p, M_\omega u)e_n + \beta M_\omega u \right]'.
	\end{multline}
	Thus, $(M_\omega u, M_\omega p) = \Phi_{\alpha,\beta,\gamma} ^{-1} (M_\omega f,M_\omega g,M_\omega k,M_\omega l)$.
\end{proof}

We conclude this section deriving an $\alpha$-independent estimate for the operator from Theorem \ref{thm:classicalexistence}, assuming that $l = 0$ and $\alpha \in (0,1)$.  We only focus on the case when $\F = \R$, as we only consider real-valued solutions later. 

\begin{prop}\label{prop:ind of alpha}
Suppose  $\gamma \in \R$, and $\beta \in \R^{n \times n}$ is positive definite.  Let  $\R \ni s \ge 0$, $f \in H^s(\Omega;\R^n)$, $g \in H^{s+1}(\Omega ; \R)$, and $k \in H^{s+\frac{1}{2}}(\Sigma_b ; \R^n)$.  Then there exists a constant $C>0$ such that if $\alpha \in (0,1)$ and  $u \in \Hzerostan{s+2}(\Omega;\R^n)$ and $p \in H^{s+1}(\Omega ; \R)$ satisfy \eqref{eq: linear w/o eta} with $l = 0$, then
\begin{align}\label{eq: regularity solution est w/o alpha}
	\norm{u}_{\Hzerostan{s+2}} + \norm{p}_{H^{s+1}} \le C \left( \norm{f}_{H^s} + \norm{g}_{H^{s+1}} + \norm{k}_{H^{s+\frac{1}{2}}(\Sigma_b)} \right).
   \end{align}
\end{prop}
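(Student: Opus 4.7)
The plan is to first derive an $\alpha$-independent weak estimate at the base $H^1(\Omega) \times L^2(\Omega)$ level, and then bootstrap to $H^{s+2} \times H^{s+1}$ using the tangential Fourier multipliers of Theorem~\ref{thm: chi commutes with M_omega}.

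\textbf{Stage 1 (velocity $H^1$ bound).} The coercivity computation in the proof of Theorem~\ref{thm:weaksol} yields $\Re B_\alpha(u,u) = \frac{\mu}{2}\|\mathbb{D}u\|^2_{L^2(\Omega)} + \frac{1}{\alpha}\int_{\Sigma_0}\beta u \cdot u$. The constraint $\alpha \in (0,1)$ forces $1/\alpha \ge 1$, and since $\beta$ is positive definite with $u_n$ vanishing on $\Sigma_0$ for $u \in \Hzerotan$, there is a $\lambda > 0$ with $\beta u \cdot u \ge \lambda |u|^2$ on $\Sigma_0$. Thus $\Re B_\alpha(u,u) \ge c(\|\mathbb{D}u\|^2_{L^2(\Omega)} + \|u\|^2_{L^2(\Sigma_0)})$ with $c > 0$ independent of $\alpha$, and Korn's inequality (Lemma~\ref{lem: korn}) then gives $\Re B_\alpha(u,u) \gtrsim \|u\|^2_{\Hzerotan}$ uniformly in $\alpha \in (0,1)$. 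Since $l = 0$, the weak-form data functional has $(\Hzerotan)^{\overline{*}}$-norm bounded by $\|f\|_{L^2} + \|g\|_{L^2} + \|k\|_{H^{1/2}(\Sigma_b)}$ independently of $\alpha$, so the Lax--Milgram argument of Theorem~\ref{thm:weaksol} produces an $\alpha$-independent bound $\|u\|_{H^1(\Omega)} \lesssim \|f\|_{L^2} + \|g\|_{L^2} + \|k\|_{H^{1/2}(\Sigma_b)}$.

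\textbf{Stage 2 (pressure $L^2$ bound).} I test the weak formulation against $v = \Pi p \in {}_0H^1(\Omega; \R^n) \subset \Hzerotan(\Omega; \R^n)$, where $\Pi$ is the right inverse of the divergence from Lemma~\ref{lem:divinverse}. Then $\diverge v = p$, $\|v\|_{H^1} \lesssim \|p\|_{L^2}$, and crucially $v$ vanishes on $\Sigma_0$, so the $\alpha$-dependent boundary integral $\frac{1}{\alpha}\int_{\Sigma_0}\beta u \cdot v$ drops out of the weak form entirely. The remaining identity reads $\|p\|^2_{L^2} = \frac{\mu}{2}\int_\Omega \mathbb{D}u : \mathbb{D}v - \gamma \int_\Omega \p_1 u \cdot v - \int_\Omega f\cdot v + \int_{\Sigma_b}k\cdot v$, and Cauchy--Schwarz together with trace theory yields $\|p\|_{L^2} \lesssim \|u\|_{H^1} + \|f\|_{L^2} + \|k\|_{H^{-1/2}(\Sigma_b)}$ with $\alpha$-independent constant.

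\textbf{Stage 3 (bootstrap).} I apply tangential Fourier multipliers $M_\omega$ with $\omega(\xi) = (1+|\xi|^2)^{j/2}$ for $1 \le j \le s+1$. By Theorem~\ref{thm: chi commutes with M_omega}, $(M_\omega u, M_\omega p)$ solves the same Stokes--Navier-slip problem with data $(M_\omega f, M_\omega g, M_\omega k, 0)$, the last slot being unchanged since $l = 0$. Applying Stages~1--2 to this solution yields $\alpha$-independent control of all horizontal derivatives of $u$ up to order $s+1$ in $H^1$ and of $p$ up to order $s+1$ in $L^2$. Normal derivatives of $u$ are then extracted algebraically from the momentum equation $\mu \p_n^2 u = \nabla p - \gamma \p_1 u - f - \mu \Delta' u$, with the $n$-component $\p_n^2 u_n$ handled through the incompressibility identity $\p_n^2 u_n = \p_n g - \diverge' \p_n u'$ to avoid circularity. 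Induction on $s$ then delivers the full $H^{s+2} \times H^{s+1}$ estimate.

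\textbf{Main obstacle.} The heart of the argument is Stage~2. A direct estimate of $\frac{1}{\alpha}\int_{\Sigma_0}\beta u' \cdot v'$ using only $\|u'\|_{L^2(\Sigma_0)} \lesssim \sqrt{\alpha}$ (extracted from the coercivity inequality) still produces a factor of $1/\sqrt{\alpha}$ that destroys uniformity as $\alpha \to 0$. The key insight is to test against the Bogovskii-type field $\Pi p$, which by Lemma~\ref{lem:divinverse} lies in ${}_0H^1$ and hence vanishes on $\Sigma_0$, eliminating the offending integral outright.
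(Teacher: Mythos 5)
Your overall strategy — obtain an $\alpha$-uniform weak $H^1 \times L^2$ estimate by exploiting $1/\alpha > 1$ in the coercivity and testing the pressure against a $\Sigma_0$-vanishing Bogovskii field, then bootstrap tangentially — is exactly the route the paper takes.  Your Stage~1 actually reorganizes this slightly (running the Lax--Milgram construction on the solenoidal subspace, so that $p$ never appears), whereas the paper tests \eqref{eq: abst weakform} directly with the solution, picks up the coupling term $\int_\Omega \mathfrak{J}^{s+1}_M p \,\mathfrak{J}^{s+1}_M g$, and closes the estimate via a small-$\ve$ absorption between the velocity and pressure bounds; your version avoids that absorption.  Your key observation in Stage~2 that $\Pi p \in {}_0H^1$ kills the $\frac{1}{\alpha}\int_{\Sigma_0}$ term is precisely equation \eqref{eq: p u f k est} of the paper.

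The genuine gap is in Stage 3.  You cite Theorem~\ref{thm: chi commutes with M_omega} for $\omega(\xi) = (1+|\xi|^2)^{j/2}$, but that theorem requires $\omega \in L^\infty(\R^{n-1};\C)$.  An unbounded multiplier does not map $\Hzerotan \to \Hzerotan$, so $M_\omega u$ is not a priori a weak solution and the commutation statement is unavailable.  The paper handles this by working with the truncated multipliers $\mathfrak{J}^{s+1}_M$ of Lemma~\ref{lem: bessel}, which are bounded for each finite $M$, deriving a constant that is independent of both $\alpha$ and $M$, and then sending $M \to \infty$ by monotone convergence.  A second, related issue: once the multiplier has order $s+1$, the raised data $\mathfrak{J}^{s+1}_M f$ is not controlled in $L^2$ by $\|f\|_{H^s}$; the uniformity in $M$ relies on the duality rewrite $\int \mathfrak{J}^{s+1}_M f \cdot \mathfrak{J}^{s+1}_M u = \int \mathfrak{J}^s_M f \cdot \mathfrak{J}^{s+2}_M u$ (so that only $\|f\|_{H^s}$ appears).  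Your statement that Stages 1--2 yield control with the data measured in $\|f\|_{L^2} + \|g\|_{L^2} + \|k\|_{H^{1/2}}$ will not survive the raising of the order unless you distribute the multiplier in this way.

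Finally, your ``algebraic extraction of normal derivatives'' is delicate in a way the proposal does not acknowledge.  If the manual bootstrap ever appeals to the boundary condition on $\Sigma_0$ to pin down $\p_n u'|_{\Sigma_0}$, the relation $\alpha \mu(\p_n u' + \nabla' u_n)' = \beta u'$ re-introduces a factor $1/\alpha$ and destroys uniformity.  The paper avoids this completely by combining \eqref{eq: p L^2 est} with the ADN elliptic estimate \eqref{eq: ellitpic est}, which is formulated with Dirichlet \emph{trace data} on $\Sigma_0$ and $\Sigma_b$ (so its constant does not see $\alpha$ at all), together with the trace--multiplier commutation \eqref{eq: trace est}.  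If you wish to keep the manual bootstrap, you must be careful to use only the trace of $u$ (controlled $\alpha$-uniformly from Stage 1 via $\|\Tr_{\Sigma} u\|_{H^{s+3/2}} \lesssim \|\mathfrak{J}^{s+1} u\|_{H^1}$) and never the slip relation itself.
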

\begin{proof}

Throughout the proof we will use the operators $\mathfrak{J}^t_M$ defined in Lemma \ref{lem: bessel}.  Suppose $(u,p) \in \Hzerostan{s+2}(\Omega ; \R^n) \times H^{s+1}(\Omega ; \R)$ is the solution to \eqref{eq: linear w/o eta} with $f \in H^s(\Omega; \R^n), g \in H^{s+1} (\Omega; \R)$, $k \in H^{s+1/2}(\Sigma_b; \R^n)$ and $l = 0$. Then we note by Theorem~\ref{thm: chi commutes with M_omega}, for any $M > 0$ the tuple $(\mathfrak{J}^{s+1}_M u, \mathfrak{J}^{s+1}_M p)$ $\in \Hzerostan{1}(\Omega;\R^n)$  is the solution to \eqref{eq: linear w/o eta} with data $\mathfrak{J}^{s+1}_M f \in H^s(\Omega; \R^n), \mathfrak{J}^{s+1}_Mg \in H^{s+1}(\Omega; \R)$, $\mathfrak{J}^{s+1}_M k$ $\in H^{s+1/2}(\Sigma_b; \R^n)$ and $l = 0$.

We may then use $\mathfrak{J}^{s+1}_M u \in \Hzerostan{1}(\Omega;\R^n)$ as a test function in the weak formulation \eqref{eq: abst weakform} to obtain
\begin{align}
	 \int_\Omega \frac{\mu}{2} \abs{\mathbb{D} \mathfrak{J}^{s+1}_M u}^2 + \frac{1}{\alpha} \int_{\Sigma_0} \beta \mathfrak{J}^{s+1}_M u : \mathfrak{J}^{s+1}_M u = \int_\Omega \mathfrak{J}^{s}_M f : \mathfrak{J}^{s+2}_M u - \int_{\Sigma_b} \mathfrak{J}^{s+1}_M k : \mathfrak{J}^{s+1}_M u + \int_\Omega \mathfrak{J}^{s+1}_M p \mathfrak{J}^{s+1}_M g.
\end{align}
Since $\alpha \in (0,1)$, by Lemmas \ref{lem: korn} and \ref{lem: bessel} and trace theory, we have that there exists constants $c_1, c_2$ independent of $\alpha$ and $M$ such that 
\begin{multline}\label{eq: absorb ineq 1}
	 c_1 \norm{\mathfrak{J}^{s+1}_M u}_{\Hzerotan}^2 \le \norm{\mathfrak{J}^{s}_M f}_{L^2}  \norm{\mathfrak{J}^{s+2}_M u}_{L^2} + \norm{\mathfrak{J}^{s+1}_M k}_{H^{-1/2}(\Sigma_b)} \norm{\mathfrak{J}^{s+1}_M u}_{H^{1/2}(\Sigma_b)}  + \norm{\mathfrak{J}^{s+1}_M p}_{L^2} \norm{\mathfrak{J}^{s+1}_M g}_{L^2}  \\
	 \le c_2 \left( \norm{\mathfrak{J}^{s}_M f}_{L^2}^2 + \norm{\mathfrak{J}^{s+1}_M k}_{H^{-1/2}(\Sigma_b)}^2 \right)  + \norm{\mathfrak{J}^{s+1}_M p}_{L^2} \norm{\mathfrak{J}^{s+1}_M g}_{L^2}  +\frac{c_1}{2} \norm{\mathfrak{J}^{s+1}_M u}_{H^1(\Omega)}^2.
\end{multline}
By absorbing the last term on the right side of \eqref{eq: absorb ineq 1} and again using Lemma~\ref{lem: bessel}, we have for any $\ve > 0$,
\begin{multline}\label{eq: absorbtion est}
	\norm{\mathfrak{J}^{s+1}_M u}_{\Hzerotan} \lesssim \norm{\mathfrak{J}^{s}_M f}_{L^2} + \frac{1}{\ve} \norm{\mathfrak{J}^{s+1}_M g}_{L^2}  + \norm{\mathfrak{J}^{s+1}_M k}_{H^{-1/2}(\Sigma_b)} + \ve \norm{\mathfrak{J}^{s+1}_M p}_{L^2} \\
	\lesssim \norm{ f}_{H^s} + \frac{1}{\ve} \norm{g}_{H^{s+1}}  + \norm{k}_{H^{s+1/2}(\Sigma_b)} + \ve \norm{\mathfrak{J}^{s+1}_M p}_{L^2}.
\end{multline}

Next, we seek to derive a priori estimates on the pressure.  By Lemma~\ref{lem:divinverse} there exists $v_0 \in {}_0 H^1(\Omega; \F^n)$ such that $\diverge v_0 = \mathfrak{J}^{s+1}_M p$ and $\norm{v_0}_{{}_0 H^1} \lesssim_{n,b} \norm{\mathfrak{J}^{s+1}_M p}_{L^2}$.  Using $v_0$ in the weak formulation \eqref{eq: abst weakform} with the same data, we find that there exists a constant $\mathfrak{C}  = \mathfrak{C} (\mu, \gamma, s,b) >  0$ independent of $\alpha$ and $M$ such that  
\begin{multline}\label{eq: p u f k est}
	 \norm{\mathfrak{J}^{s+1}_M p}_{L^2}^2  = \int_{\Omega}\frac{\mu}{2} \mathbb{D} \mathfrak{J}^{s+1}_M u : \mathbb{D} v_0 - \gamma \p_1 \mathfrak{J}^{s+1}_M u \cdot v_0 - \mathfrak{J}^{s}_M f: \mathfrak{J}^1 v_0 + \int_{\Sigma_b} \mathfrak{J}^{s+1}_M k : v_0 \\
	 \le \frac{\mu}{2} \norm{\mathfrak{J}^{s+1}_M u}_{\Hzerotan} \norm{v_0}_{{}_0H^1} + \abs{\gamma} \norm{\mathfrak{J}^{s+1}_M u}_{\Hzerotan} \norm{v_0}_{L^2}  +  \norm{\mathfrak{J}^{s}_M f}_{L^2} \norm{ v_0}_{{}_0 H^1}   + \norm{\mathfrak{J}^{s+1}_M k}_{H^{-1/2}(\Sigma_b)} \norm{v_0}_{{}_0 H^1}  \\
	 \le  \mathfrak{C}\left( \norm{\mathfrak{J}^{s+1}_M u}_{\Hzerotan}^2 + \norm{f}_{H^s}^2 + \norm{k}_{H^{s+1/2}(\Sigma_b)}^2 \right) +  \frac{1}{2} \norm{\mathfrak{J}^{s+1}_M p}_{L^2}^2.
\end{multline}
Thus by another absorption argument we find that
\begin{align}\label{eq: absorbtion est p}
	\norm{\mathfrak{J}^{s+1}_M p}_{L^2} \lesssim \norm{\mathfrak{J}^{s+1}_M u}_{\Hzerotan} + \norm{f}_{H^s} + \norm{k}_{H^{s+1/2}(\Sigma_b)},
\end{align}
where the universal constant is independent of $\alpha$ and $M$. By combining \eqref{eq: absorbtion est} and \eqref{eq: absorbtion est p} we may then choose $\ve > 0$ sufficiently small so that 
\begin{align}\label{eq: p M est}
	\norm{\mathfrak{J}^{s+1}_M u}_{\Hzerotan}  + \norm{\mathfrak{J}^{s+1}_M p}_{L^2} \lesssim \norm{f}_{H^s} + \norm{g}_{H^{s+1}}+ \norm{k}_{H^{s+1/2}(\Sigma_b)}.
\end{align}
Since the universal constant in \eqref{eq: p M est} is independent of $M$, we may apply the monotone convergence theorem to conclude that 
\begin{align}\label{eq: p L^2 est}
	\norm{\mathfrak{J}^{s+1} u}_{\Hzerotan} + \norm{\mathfrak{J}^{s+1} p}_{L^2} \lesssim \norm{f}_{H^s} + \norm{g}_{H^{s+1}}+ \norm{k}_{H^{s+1/2}(\Sigma_b)}.
\end{align}
Standard elliptic regularity results (see \cite{a-d-n}, for instance) then show that 
\begin{align}\label{eq: ellitpic est}
	 \norm{u}_{H^{s+2}} + \norm{\nabla p}_{H^s} \lesssim \norm{f}_{H^s} + \norm{g}_{H^{s+1}} + \norm{\Tr_{\Sigma_b} u}_{H^{s+3/2}(\Sigma_b)} +  \norm{\Tr_{\Sigma_0} u}_{H^{s+3/2}(\Sigma_0)}.  
\end{align}
Lemma~\ref{lem: bessel}, the identity \eqref{eq: trace multiplier commute}, and trace theory show that $\Sigma \in \{\Sigma_b, \Sigma_0\}$, 
\begin{align}\label{eq: trace est}
	 \norm{\Tr_{\Sigma} u}_{H^{s+3/2}(\Sigma)} = \norm{ \mathfrak{J}^{s+1} \Tr_{\Sigma} u}_{H^{1/2}(\Sigma)} = \norm{\Tr_{\Sigma} \mathfrak{J}^{s+1} u}_{H^{1/2}(\Sigma)} \lesssim \norm{\mathfrak{J}^{s+1} u}_{H^1(\Omega)}.
\end{align}
Thus by combining \eqref{eq: p L^2 est}, \eqref{eq: ellitpic est}, and \eqref{eq: trace est} we find that  
\begin{align}\label{eq: u f k est}
	 \norm{u}_{H^{s+2}} + \norm{p}_{H^{s+1}} \lesssim  \norm{f}_{H^s} + \norm{g}_{H^{s+1}} +  \norm{k}_{H^{s+1/2}(\Sigma_b)},
\end{align}
where the universal constant in \eqref{eq: u f k est} is uniform over $\alpha \in (0,1)$. 
\end{proof}

Combining Theorem~\ref{thm:classicalexistence} and Proposition~\ref{prop:ind of alpha} gives us the following corollary.

\begin{cor}\label{thm: alpha classicalexistence}
	Let $s \ge 0$ and assume $\beta \in \R^{n \times n}$ is positive definite. For any $\gamma \in \R$ and $\alpha \in (0,1)$, we define the bounded linear operator $\Theta_{\alpha,\beta,\gamma} : \alphaHzeros{s+2}(\Omega ; \F^n) \times H^{s+1}(\Omega;\F) \to H^{s} (\Omega; \F^n) \times H^{s+1}(\Omega;\F) \times H^{s+\frac{1}{2}}(\Sigma_b ; \F^n)$
		via $\Theta_{\alpha,\beta,\gamma}(u,p) = \left( \diverge S(p,u) - \gamma \p_1 u, \diverge u, S(p,u) e_n\right)$. 	Then $\Theta_{\alpha,\beta,\gamma}$ is an isomorphism for all $\gamma \in \R$. Furthermore, there exists a constant $M > 0$ such that 
		\begin{align}\label{eq: alpha ind Theta}
			 \sup_{\alpha \in (0,1)} \norm{\Theta_{\alpha,\beta,\gamma}}_{\mathcal{L}(\alphaHzeros{s+2} \times  H^{s+1}; H^{s} \times H^{s+1} \times H^{s+\frac{1}{2}}(\Sigma_b))} \le M. 
		\end{align} 
	\end{cor}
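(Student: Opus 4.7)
The plan is to reduce both assertions to the already-established Theorem~\ref{thm:classicalexistence} and Proposition~\ref{prop:ind of alpha}. The key observation is that $\Theta_{\alpha,\beta,\gamma}$ is simply the restriction of $\Phi_{\alpha,\beta,\gamma}$ to the slice of data where the slip datum $l = 0$: that vanishing condition has been absorbed into the domain by redefining it as $\alphaHzeros{s+2}$, which by Definition~\ref{defn:OPB} enforces the homogeneous slip condition $[-\alpha \mathbb{D} u e_n + \beta u]' = 0$ on $\Sigma_0$ (equivalent to the $l=0$ case of the fourth equation in \eqref{eq: linear w/o eta} after using the normalization $\mu = 1$).

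For the isomorphism property I would argue as follows. Given any $(f,g,k) \in H^s(\Omega;\F^n) \times H^{s+1}(\Omega;\F) \times H^{s+\frac{1}{2}}(\Sigma_b; \F^n)$, apply Theorem~\ref{thm:classicalexistence} to the tuple $(f,g,k,0)$ to produce a unique $(u,p) \in \Hzerostan{s+2}(\Omega;\F^n) \times H^{s+1}(\Omega;\F)$ with $\Phi_{\alpha,\beta,\gamma}(u,p) = (f,g,k,0)$. The vanishing of the fourth component of $\Phi_{\alpha,\beta,\gamma}(u,p)$ exactly encodes that $u \in \alphaHzeros{s+2}$, so $(u,p)$ is a preimage under $\Theta_{\alpha,\beta,\gamma}$; this yields surjectivity, together with the continuity of the inverse via Theorem~\ref{thm:classicalexistence}. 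Injectivity is immediate: any $(u,p) \in \ker \Theta_{\alpha,\beta,\gamma}$ automatically lies in $\ker \Phi_{\alpha,\beta,\gamma}$, since the membership $u \in \alphaHzeros{s+2}$ already forces the slip component of $\Phi_{\alpha,\beta,\gamma}(u,p)$ to vanish, and the latter kernel is trivial.

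For the uniform operator norm bound I would observe that the formula defining $\Theta_{\alpha,\beta,\gamma}$ makes no reference to $\alpha$; the parameter only enters through the choice of domain $\alphaHzeros{s+2}$, which inherits the ambient $H^{s+2}$-norm. The three constituent maps $(u,p) \mapsto \diverge S(p,u) - \gamma \p_1 u$, $(u,p) \mapsto \diverge u$, and $(u,p) \mapsto S(p,u) e_n|_{\Sigma_b}$ are each bounded on the full product $H^{s+2}(\Omega;\F^n) \times H^{s+1}(\Omega;\F)$ with constants controlled only by $\mu$, $\gamma$, $s$, and the trace constant for $\Sigma_b$; restricting to a closed subspace can only decrease the operator norm. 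I anticipate no real obstacle here, since the analytically substantive uniform bound is the dual estimate on $\norm{\Theta_{\alpha,\beta,\gamma}^{-1}}$, which is exactly the content of Proposition~\ref{prop:ind of alpha} and is what will be needed later to feed into the parameter-dependent implicit function theorem from Appendix~\ref{sec: parameter IFT}.
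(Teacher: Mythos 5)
Your proof follows the same route as the paper: deduce the isomorphism by regarding $\Theta_{\alpha,\beta,\gamma}$ as the $l=0$ slice of $\Phi_{\alpha,\beta,\gamma}$ with the slip condition absorbed into the domain via Definition~\ref{defn:OPB}, and obtain the uniform-in-$\alpha$ bound from Proposition~\ref{prop:ind of alpha}. One remark worth flagging: you correctly note that \eqref{eq: alpha ind Theta} as literally written concerns the \emph{forward} operator, whose norm is trivially $\alpha$-independent since $\alpha$ never appears in the defining formula and $\alphaHzeros{s+2}$ carries the ambient $H^{s+2}$-norm; the citation to Proposition~\ref{prop:ind of alpha} in the paper's proof really supplies the analytically substantive uniform bound on $\Theta_{\alpha,\beta,\gamma}^{-1}$ (which is what is used downstream in Theorem~\ref{thm:upsiloniso} and in the parameter-dependent implicit function theorem), so the displayed estimate is most naturally read as intending $\Theta_{\alpha,\beta,\gamma}^{-1}$. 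Either way your argument covers both bounds, so the proposal is correct.
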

	
	\begin{proof}
	The fact that $\Theta_{\alpha,\beta,\gamma}$ is an isomorphism follows immediately from the definition of the space $\alphaHzeros{s+2}(\Omega ; \F^n)$ recorded as the second item of Definition~\ref{defn:OPB} and Theorem~\ref{thm:classicalexistence}, and \eqref{eq: alpha ind Theta} follows immediately from Proposition~\ref{prop:ind of alpha}.
	\end{proof}

\section{The overdetermined $\gamma$-Stokes problem}\label{sec: overdetermined}

Our goal in this section is to extend the linear analysis of the system \eqref{eq: linear w/o eta} to the overdetermined variant 
\begin{align} \label{eq: overdetermined}
	\begin{cases}
		\diverge S(p,u) - \gamma \p_1 u = f, & \text{in} \; \Omega \\
		\diverge u = g, & \text{in} \; \Omega \\
		u_n = h, & \text{on} \; \Sigma_b \\
		S(p,u) e_n = k, & \text{on} \; \Sigma_b \\
		[\alpha S (p,u) e_n + \beta u]' = l, & \text{on} \; \Sigma_0 \\
		u_n = 0, & \text{on} \; \Sigma_0
	\end{cases}
\end{align}
obtained from \eqref{eq: linear w/o eta} by appending the equation $u_n =h$ on $\Sigma_b$.

\subsection{The specified divergence problem and the divergence-trace compatibility condition}

In this subsection we establish results concerning the specified divergence problem 
\begin{align} \label{eq:spdiv}
	\begin{cases}
		\diverge u = g, & \text{in} \; \Omega \\
		u_n = h, & \text{on} \; \Sigma_b \\
		u_n = 0, & \text{on} \; \Sigma_0,
	\end{cases}
\end{align}
over $\F \in \{\R, \C\}$. The system \eqref{eq:spdiv} is overdetermined in the sense that a non-trivial compatibility condition needs to be satisfied by the data $g$ and $h$. We record this condition below. 

\begin{lem}\label{lem:divtrace}
	Let $u \in \Hzerotan(\Omega ; \F^n)$ and let $g = \diverge u \in L^2(\Omega; \F)$ and $h = u_n \rvert_{\Sigma_b} \in H^{\frac{1}{2}}(\Sigma_b; \F)$. Then 
	\begin{align} \label{eq: divtraceestimate}
		h(\cdot) - \int_{0}^b g(\cdot, x_n) \; dx_n \in \dot{H}^{-1} (\R^{n-1}; \F)
	\text{ and }
	\left[ h - \int_0^b g(\cdot, x_n) \; dx_n \right]_{\dot{H}^{-1}} \le 2\pi \sqrt{b} \norm{u}_{L^2}. 
	\end{align}
\end{lem}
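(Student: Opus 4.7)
My plan is to observe that the claimed identity reduces, via the divergence theorem applied in the vertical direction, to an expression involving only the tangential components of $u$, and then pass to Fourier space to extract the $\dot H^{-1}$ regularity.

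Specifically, I would first argue by density of $C^\infty_c(\bar\Omega) \cap \Hzerotan$ in $\Hzerotan$ (so that all boundary traces and integrations by parts are justified) and split $g = \diverge' u' + \p_n u_n$. Integrating the $\p_n u_n$ piece over $(0,b)$ and using both $u_n|_{\Sigma_0} = 0$ and $u_n|_{\Sigma_b} = h$, I obtain the pointwise (in $x'$) identity
\begin{equation}
h(x') - \int_0^b g(x',x_n)\,dx_n \;=\; -\diverge' \!\int_0^b u'(x',x_n)\,dx_n.
\end{equation}
Thus the object of interest is a horizontal divergence of the vertical average of the tangential velocity, which is already a strong hint that it should sit one derivative below the $L^2$ regularity of $u$.

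Next I would take the horizontal Fourier transform, which converts $\diverge'$ into multiplication by $2\pi i \xi$:
\begin{equation}
\widehat{\Bigl(h - \textstyle\int_0^b g\,dx_n\Bigr)}(\xi) \;=\; -2\pi i\, \xi \cdot \!\int_0^b \hat u'(\xi,x_n)\,dx_n.
\end{equation}
Applying Cauchy--Schwarz in $x_n$ to bound $\abs{\int_0^b \hat u'(\xi,x_n)\,dx_n}^2 \le b \int_0^b \abs{\hat u'(\xi,x_n)}^2\,dx_n$, dividing by $\abs{\xi}^2$, and integrating in $\xi$ yields
\begin{equation}
\Bigl[h - \textstyle\int_0^b g\,dx_n\Bigr]_{\dot H^{-1}}^2 \;\le\; 4\pi^2 b \int_{\R^{n-1}}\!\int_0^b \abs{\hat u'(\xi,x_n)}^2\,dx_n\,d\xi.
\end{equation}
A final application of Plancherel in $x'$ (fiberwise in $x_n$) bounds the right-hand side by $4\pi^2 b\, \norm{u'}_{L^2(\Omega)}^2 \le 4\pi^2 b\, \norm{u}_{L^2(\Omega)}^2$, and taking square roots gives exactly the claimed constant $2\pi\sqrt{b}$.

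There is no real obstacle here; the only mild technical point is ensuring that the vertical integration-by-parts identity holds for a general $u \in \Hzerotan$ rather than a smooth $u$. This is handled by the standard density argument together with the continuity of the trace $u \mapsto u_n|_{\Sigma_b}$ from $H^1(\Omega)$ into $H^{1/2}(\Sigma_b)$, both of which are built into the definition of $\Hzerotan$ from Definition~\ref{defn:OPB}.
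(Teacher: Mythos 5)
Your proof is correct, and it is essentially the argument that the paper delegates to Theorem 3.1 of \cite{leonitice}: split $g = \diverge' u' + \p_n u_n$, integrate the $\p_n u_n$ term in $x_n$ using the trace conditions, pass to the horizontal Fourier side, and apply Cauchy--Schwarz in $x_n$ followed by Plancherel. The only difference is that the paper simply cites the reference and remarks that the argument there uses only $u_n|_{\Sigma_0}=0$ rather than the full no-slip condition, whereas you write out the computation directly — which has the advantage of making that remark self-evident, since the vertical integration by parts only picks up boundary terms from $u_n$.
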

\begin{proof}
Theorem 3.1 in \cite{leonitice} establishes this for $u$ that entirely vanish on $\Sigma_0$. However, an inspection of the proof there shows that it really only requires $u_n = 0$ on $\Sigma_0$, so the same argument proves the result for $u \in\Hzerotan(\Omega;\F^n)$.
\end{proof}

The next result constructs a right inverse to \eqref{eq:spdiv}.

\begin{prop}\label{prop:Q}
	Consider the Hilbert space $\mathcal{H}(\Omega; \F) = \{ (g,h) \in L^2(\Omega; \F) \times H^{\frac{1}{2}}(\Omega ; \F) : \norm{(g,h)}_{\mathcal{H}} < \infty \},$ where $\norm{(g,h)}_{\mathcal{H}}$ is defined via $\norm{(g,h)}_{\mathcal{H}}^2 = \norm{g}_{L^2}^2 + \norm{h}_{H^{\frac{1}{2}}}^2 + \left[ h - \int_{0}^b g(\cdot, x_n) \; dx_n \right]^2_{\dot{H}^{-1}}$. 	There exists a bounded linear operator $G: \mathcal{H}(\Omega; \F) \to {}_0H^{1}(\Omega; \F^n)$ such that $u = G(g,h)$ satisfies \eqref{eq:spdiv}.
\end{prop}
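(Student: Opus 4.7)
The plan is to construct $G$ explicitly via the horizontal Fourier transform in $x'$, which reduces \eqref{eq:spdiv} to a family of ODEs on $(0, b)$ parameterized by $\xi \in \R^{n-1}$. After transforming, the divergence equation becomes $2\pi i \xi \cdot \hat u'(\xi, x_n) + \p_n \hat u_n(\xi, x_n) = \hat g(\xi, x_n)$ with boundary conditions $\hat u_n(\xi, 0) = 0$ and $\hat u_n(\xi, b) = \hat h(\xi)$. I would impose the ansatz $\hat u'(\xi, x_n) = \tfrac{i\xi}{|\xi|^2}\hat\phi(\xi)\,\psi(x_n)$ for a fixed profile $\psi \in C_c^\infty((0,b))$ with $\psi(0) = 0$ and $\int_0^b \psi\,ds = 1$, together with $\hat u_n(\xi, x_n) = \int_0^{x_n}\bigl(\hat g(\xi, s) + 2\pi\hat\phi(\xi)\,\psi(s)\bigr)\,ds$. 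Substitution into the ODE is automatic, and enforcing the boundary condition at $x_n = b$ forces the explicit choice $\hat\phi(\xi) = \tfrac{1}{2\pi}\bigl(\hat h(\xi) - \int_0^b \hat g(\xi, s)\,ds\bigr)$.

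Verifying that $u$ satisfies \eqref{eq:spdiv} is then immediate: $\diverge u = g$ follows from the ODE, $u_n|_{\Sigma_0} = 0$ and $u_n|_{\Sigma_b} = h$ from the choice of $\hat\phi$, and $u'|_{\Sigma_0} = 0$ from $\psi(0) = 0$, whence $u \in {}_0 H^1(\Omega;\F^n)$ once we verify the $H^1$ regularity of $u$. The key norm estimate is
\[
\norm{u'}_{L^2(\Omega)}^2 \asymp \int_{\R^{n-1}} |\xi|^{-2}|\hat\phi(\xi)|^2\,d\xi \lesssim \Bigl[\,h - \int_0^b g(\cdot, x_n)\,dx_n\,\Bigr]_{\dot{H}^{-1}}^2,
\]
which is precisely where the $\dot{H}^{-1}$ compatibility term in the $\mathcal{H}$-norm plays its essential role. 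The remaining pieces---the horizontal derivatives of $u'$ and the full $H^1$-norm of $u_n$---are controlled by $\norm{g}_{L^2(\Omega)}$ and $\norm{h}_{H^{1/2}(\Sigma_b)}$ via elementary Fourier multiplier bounds and standard trace theory.

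The main obstacle is the careful treatment of the low-frequency region $|\xi| \to 0$, where the factor $|\xi|^{-2}$ in the ansatz is singular. The $\dot{H}^{-1}$ bound on $h - \int_0^b g\,dx_n$ compensates this singularity at the $L^2$-level for $u'$, but the tangential derivatives $\p_j u'$ with $j \le n-1$ involve a factor $|\xi|^{-1}\hat\phi(\xi)$ that must be absorbed either into the $H^{1/2}$-control of $h$ or the $L^2$-control of $g$. Handling this carefully---via a high/low frequency splitting, or by using the Fourier construction only to produce a partial lift of $h$ and absorbing the residual $L^2$ divergence via Lemma \ref{lem:divinverse}---is the technical heart of the argument and yields the desired bounded linear operator $G: \mathcal{H}(\Omega;\F) \to {}_0 H^1(\Omega;\F^n)$.
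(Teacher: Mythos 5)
The paper's proof is simply a citation to Proposition 2.4 of \cite{noahtice}, so there is no in-text argument to compare against; your blind attempt is a self-contained Fourier construction. Unfortunately, as written, the construction has a genuine gap, and moreover you have misdiagnosed where the obstacle actually lies.

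The bug is in the estimate of $\|\p_j u_n\|_{L^2}$ for $j \le n-1$. With your ansatz $\hat u_n(\xi,x_n) = \int_0^{x_n}\hat g(\xi,s)\,ds + 2\pi \hat\phi(\xi)\int_0^{x_n}\psi(s)\,ds$, the tangential derivative is $\p_j \hat u_n = 2\pi i\xi_j\bigl[\int_0^{x_n}\hat g\,ds + 2\pi \hat\phi \int_0^{x_n}\psi\,ds\bigr]$. The first term then contributes $\int_{\R^{n-1}} |\xi_j|^2 \int_0^b \bigl|\int_0^{x_n}\hat g(\xi,s)\,ds\bigr|^2\,dx_n\,d\xi$ to $\|\p_j u_n\|_{L^2}^2$, and bounding this requires $\int |\xi_j|^2 \|\hat g(\xi,\cdot)\|_{L^2(0,b)}^2\,d\xi < \infty$, i.e.\ $\p_j g \in L^2$. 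Since only $g \in L^2(\Omega)$ is available, this term is uncontrolled. The second term is no better: $|\xi_j|^2|\hat\phi(\xi)|^2$ with $\hat\phi = \frac{1}{2\pi}(\hat h - \int_0^b \hat g\,ds)$ would require $h \in \dot H^1(\Sigma_b)$ and again a horizontal derivative of $g$, neither of which is in the $\mathcal H$-norm. So the "remaining pieces... controlled by $\|g\|_{L^2}$ and $\|h\|_{H^{1/2}}$" claim is false, and the ansatz does \emph{not} land in $H^1$.

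Separately, the obstacle you flag—the "low-frequency singularity" in $\p_j u'$—is not actually a problem. Since $\p_j\hat u' = 2\pi i \xi_j\frac{i\xi}{|\xi|^2}\hat\phi\psi$ and $|\xi_j|\,|\xi|\,|\xi|^{-2} \le 1$, one has $\|\p_j u'\|_{L^2} \lesssim \|\phi\|_{L^2} \lesssim \|h\|_{H^{1/2}} + \|g\|_{L^2}$ with no low-frequency issue at all. The real difficulty is the high-frequency growth in $\p_j u_n$ explained above. Your final sentence does gesture at the correct repair—use the Fourier ansatz only to build a lift of $h$ carrying the low-frequency/compatibility content, and absorb the residual divergence via the operator $\Pi$ from Lemma~\ref{lem:divinverse}—but it is presented as an optional variant rather than the necessary fix, and the main body of your argument should not be read as a working proof.
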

\begin{proof}
This is Proposition 2.4 in \cite{noahtice}. 
\end{proof}

\subsection{Adjoint problem analysis}\label{sec:asymptotics}

Now we are ready to study the $\R$-solvability of the system \eqref{eq: overdetermined}.  We first record its formal adjoint, the underdetermined problem  (here in homogeneous form)
\begin{align} 
	\begin{cases}\label{eq:adjoint}
		\diverge S(q,v) + \gamma \p_1 v = 0, &\text{in} \; \Omega \\
	\diverge v = 0, &\text{in} \; \Omega\\
	(S(q,v) e_n )' = 0, &\text{on} \; \Sigma_b \\
	[\alpha S(q,v)e_n' + \beta^T v]' = 0, &\text{on} \; \Sigma_{0} \\
	v_n = 0, &\text{on} \; \Sigma_{0}.	
	\end{cases}
	\end{align}
We note in particular that since $\beta w \cdot w = \beta^T w \cdot w$ for all $w \in \R^n$, $\beta^T$ is positive definite whenever $\beta$ is.  As a consequence, we can augment the third equation with the extra condition $S(p,u)e_n \cdot e_n = \psi$ for arbitrary $\psi \in H^{s+1/2}(\Sigma_b)$ in order to parameterize the solution space via the  isomorphism $\Phi_{\alpha, \beta^T, -\gamma}$ from Theorem~\ref{thm:classicalexistence}.

Throughout the rest of this subsection we aim to develop the asymptotics of some special functions associated to the map  $\Phi_{\alpha, \beta^T, -\gamma}$ from Theorem~\ref{thm:classicalexistence}, which we call the normal stress to solution map.  First we define symbols of the pseudodifferential operator associated to this map.

\begin{defn}
	Let $\gamma \in \R$, $\beta \in \R^{n \times n}$ be positive definite, and $s \in [-1,\infty)$. We define the \emph{normal stress to velocity} and the \emph{normal stress to pressure} maps to be the bounded linear maps $\mathcal{U}_{\alpha,\beta,\gamma} : H^{s+\frac{1}{2}}(\Sigma_b; \F) \to H^{s+2}\left(\Omega;\F^n \right)$ and $\mathcal{P}_{\alpha,\beta,\gamma} : H^{s+\frac{1}{2} }(\Sigma_b ; \F) \to H^{s+1}(\Omega; \F)$ defined via $(\mathcal{U}_{\alpha,\beta,\gamma}(\psi), \mathcal{P}_{\alpha,\beta,\gamma} (\psi) ) = \Phi^{-1}_{\alpha, \beta^T, -\gamma}(0,0, \psi e_n, 0),$ 	where $\Phi_{\alpha, \beta^T, -\gamma}$ is the isomorphism from Theorem~\ref{thm:classicalexistence}. In other words, $(\mathcal{U}_{\alpha,\beta,\gamma}(\psi), \mathcal{P}_{\alpha,\beta,\gamma}(\psi))$ is the unique solution to the adjoint problem \eqref{eq:adjoint} for a given $\psi \in  H^{s+\frac{1}{2} }(\Sigma_b ; \F)$.
\end{defn}

\begin{thm}\label{thm: V Q symbols}
	There exist bounded, measurable functions $V_{\alpha,\beta}, Q_{\alpha,\beta} : \R \times [0,b] \times \R \to \C$ such that $\overline{V_{\alpha,\beta}(\xi,x_n,\gamma)} = V_{\alpha,\beta}(-\xi,x_n,\gamma), \overline{Q_{\alpha,\beta}(\xi,x_n,\gamma)} = Q_{\alpha,\beta}(-\xi,x_n,\gamma)$ for $\xi \in \R^{n-1}$ a.e., and for $s \in [-1,\infty)$ and all $\psi \in H^{s+\frac{1}{2}}(\Sigma_b ; \R)$, we have $\widehat{\mathcal{U}_{\alpha,\beta,\gamma}(\psi)}(\xi,x_n) = V_{\alpha,\beta}(\xi, x_n, -\gamma) \hat{\psi}(\xi)$  and $\widehat{\mathcal{P}_{\alpha,\beta,\gamma}(\psi)} (\xi,x_n) = Q_{\alpha,\beta}(\xi, x_n, -\gamma) \hat{\psi}(\xi)$. 	Moreover, for all $\alpha \in \R$ there exists a constant $c > 0$ such that for a.e. $\xi \in \R^{n-1}$, we have 
	\begin{align}\label{eq:VQestimates}
		\abs{V_{\alpha,\beta}(\xi,x_n, \gamma)} \le c (1+\abs{\xi}^2)^{-\frac{1}{2}}.
	\end{align}
\end{thm}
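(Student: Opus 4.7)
The plan is to exploit the horizontal translation invariance of problem \eqref{eq:adjoint} to identify $\mathcal{U}_{\alpha,\beta,\gamma}$ and $\mathcal{P}_{\alpha,\beta,\gamma}$ as Fourier multiplier operators in $x'$, and then extract the pointwise decay of the symbols from the $L^2$-type boundedness provided by Theorem~\ref{thm:classicalexistence}. First I would observe that since every coefficient in \eqref{eq:adjoint} is independent of $x'$, horizontal translation $T_\tau \psi(x') = \psi(x'-\tau)$ commutes with the solution operator (this uses uniqueness from $\Phi_{\alpha,\beta^T,-\gamma}$). Applying the horizontal Fourier transform slicewise in $x_n$ together with the standard characterization of translation-invariant bounded operators on $L^2(\R^{n-1})$ then produces bounded measurable functions $V_{\alpha,\beta}(\xi, x_n, -\gamma)$ and $Q_{\alpha,\beta}(\xi, x_n, -\gamma)$ such that $\widehat{\mathcal{U}_{\alpha,\beta,\gamma}(\psi)}(\xi,x_n)=V_{\alpha,\beta}(\xi,x_n,-\gamma)\hat\psi(\xi)$ and the analogous identity for $Q_{\alpha,\beta}$. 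The conjugation symmetry $\overline{V_{\alpha,\beta}(\xi,x_n,\gamma)}=V_{\alpha,\beta}(-\xi,x_n,\gamma)$ then follows from uniqueness: for real-valued $\psi$ the associated $(v,q)$ is real-valued, so $\hat v(-\xi,x_n)=\overline{\hat v(\xi,x_n)}$, and combining this with $\overline{\hat\psi(\xi)}=\hat\psi(-\xi)$ for real $\psi$ forces the claimed identity on $V$ after varying $\psi$; the same argument works for $Q$.

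For the decay estimate I would leverage the $s=0$ case of the definition of $\mathcal{U}_{\alpha,\beta,\gamma}$, namely that it is bounded from $H^{1/2}(\Sigma_b;\R)$ into $H^2(\Omega;\R^n)$. By Plancherel this is equivalent to the weighted estimate
\begin{equation}
\int_{\R^{n-1}}\!\!\int_0^b\!\!\bigl[(1+|\xi|^2)^2|V|^2+(1+|\xi|^2)|\partial_n V|^2+|\partial_n^2 V|^2\bigr]|\hat\psi(\xi)|^2\,dx_n\,d\xi\;\lesssim\;\int_{\R^{n-1}}(1+|\xi|^2)^{1/2}|\hat\psi(\xi)|^2\,d\xi,
\end{equation}
holding for every admissible $\psi$. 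Since $\hat\psi$ is arbitrary, this yields the pointwise-in-$\xi$ bounds $\|V(\xi,\cdot,-\gamma)\|_{L^2([0,b])}^2 \lesssim (1+|\xi|^2)^{-3/2}$ and $\|\partial_n V(\xi,\cdot,-\gamma)\|_{L^2([0,b])}^2 \lesssim (1+|\xi|^2)^{-1/2}$. A one-dimensional Gagliardo-Nirenberg interpolation on $[0,b]$ of the shape $\|u\|_{L^\infty}^2\lesssim \|u\|_{L^2}\|u'\|_{L^2}+\|u\|_{L^2}^2$ then combines these two bounds to give $|V_{\alpha,\beta}(\xi,x_n,-\gamma)|^2 \lesssim (1+|\xi|^2)^{-1}$ uniformly for $x_n\in[0,b]$, which is exactly \eqref{eq:VQestimates}.

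The main obstacle will be the rigorous extraction of the pointwise-in-$\xi$ estimate from the displayed integrated inequality; this requires the standard but delicate argument that $\int A(\xi)|\hat\psi(\xi)|^2 \le C\int B(\xi)|\hat\psi(\xi)|^2$ for all admissible $\psi$ implies $A(\xi)\le CB(\xi)$ for a.e.\ $\xi$, together with Fubini to accommodate the $x_n$-slice structure and the continuous dependence needed to pass from an a.e.\ statement to one valid for all $x_n\in[0,b]$. A conceptually cleaner alternative that I would fall back on if this route encountered measurability complications is to reduce \eqref{eq:adjoint} via horizontal Fourier transform to a constant-coefficient linear boundary value problem on $[0,b]$ parameterized by $\xi$ and perform direct energy estimates on the resulting ODE, keeping careful track of the powers of $|\xi|$ that appear through the horizontal gradient; this would establish existence of the symbols, their measurability, and the decay \eqref{eq:VQestimates} in one stroke.
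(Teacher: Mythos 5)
Your proposal follows the same overall strategy as the paper: translation invariance identifies $\mathcal{U}_{\alpha,\beta,\gamma}$ and $\mathcal{P}_{\alpha,\beta,\gamma}$ as Fourier multipliers (via the standard characterization of translation-invariant bounded Sobolev operators, recorded in the paper as Proposition~\ref{prop: tangential m}), conjugation symmetry comes from real-valuedness of the solution for real $\psi$, and the symbol decay comes from the operator bound supplied by Theorem~\ref{thm:classicalexistence}. The one place you diverge is in passing from the $H^2(\Omega)$ bound on $\mathcal{U}_{\alpha,\beta,\gamma}$ to the pointwise-in-$x_n$ estimate on $V_{\alpha,\beta}$: you first extract $L^2_{x_n}$ bounds on $V$ and $\partial_n V$ and then invoke a one-dimensional Agmon/Gagliardo--Nirenberg inequality on $[0,b]$, whereas the paper instead notes that for each fixed $x_n$ the map $\psi\mapsto\mathcal{U}_{\alpha,\beta,\gamma}(\psi)(\cdot,x_n)$ is already bounded $H^{s+1/2}(\R^{n-1})\to H^{s+3/2}(\R^{n-1})$ uniformly in $x_n$ by trace theory, which makes \eqref{eq:VQestimates} an immediate consequence of the multiplier-norm characterization \eqref{eq: esssup multiplier est} with no interpolation step. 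The two routes are morally equivalent (your Agmon step is essentially a hand-rolled version of the trace estimate), so your argument is correct and only marginally longer; the one subtlety you rightly flag, namely extracting a pointwise-in-$\xi$ bound from the integrated inequality and the joint measurability of $V_{\alpha,\beta}$ in $(\xi,x_n)$, is exactly what the cited multiplier proposition handles in the paper.
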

\begin{proof}
	We note that for fixed $x_n \in [0,b]$, the map $\psi \mapsto \mathcal{U}_{\alpha,\beta,\gamma}(\psi)(\cdot, x_n)$ is a bounded linear translation-invariant map between $H^{s+1/2}(\R^d; \F)$ and $H^{s+3/2}(\R^d; \F^n)$ and the map $\psi \mapsto \mathcal{P}_{\alpha,\beta,\gamma}(\psi)(\cdot, x_n)$ is a bounded linear translation-invariant map between $H^{s+1/2}(\R^d; \F)$ and $H^{s+1/2}(\R^d; \F)$ by Theorem~\ref{thm:classicalexistence}. Thus the existence of $V_{\alpha,\beta}$, $Q_{\alpha,\beta}$, and the estimate \eqref{eq:VQestimates} is guaranteed by Proposition~\ref{prop: tangential m}. Since $\psi$ is assumed to be real-valued, $\mathcal{U}_{\alpha, \gamma}, \mathcal{P}_{\alpha, \gamma}$ are also real-valued, thus it follows that $\overline{V_{\alpha,\beta}(\xi,x_n,\gamma)} = V_{\alpha,\beta}(-\xi,x_n,\gamma), \overline{Q_{\alpha,\beta}(\xi,x_n,\gamma)} = Q_{\alpha,\beta}(-\xi,x_n,\gamma)$ for a.e. $\xi \in \R^{n-1}$. The estimate \eqref{eq:VQestimates} follows from trace theory and the estimate \eqref{eq: esssup multiplier est} recorded as a part of Proposition~\ref{prop: tangential m}.
\end{proof}

We then define $m_{\alpha,\beta}: \R \times \R \to \C$ via 
\begin{align} \label{eq:dtonmap}
	m_{\alpha,\beta}(\xi,\gamma) = V_{\alpha,\beta}(\xi,b,\gamma) \cdot e_n,
\end{align}
which can be viewed as the symbol of the normal stress to Dirichlet pseudodifferential operator $\psi \mapsto u_n \rvert_{\Sigma_b}$.

Recall that by Theorem~\ref{thm:weaksol} and trace theory, we have the equivalence $\|\psi\|_{H^{-\frac{1}{2}}} \asymp \|u\|_{_{0}H^{1}} + \|p\|_{L^2}$. The next theorem shows that if we weaken the control of $\psi$ at low frequencies on the Fourier side, we then have a norm equivalence without the pressure term. Now note in particular that the constant appearing in \eqref{eq: energy equiv} can be made to be uniform in the parameter $\alpha$ if $\alpha \in (0,1)$.  First we need some notation.

\begin{defn}
Let $\F \in \{\R, \C\}, \R \ni \alpha > 0$.  For $s \ge -1$, we define  $\mathscr{O} : H^{s+\frac{1}{2}}(\Sigma_b;\F^n)  \times H^{s+\frac{1}{2}}(\Sigma_0;\F^{n-1}) \to (\Hzerotan(\Omega;\F^n))^{\bar{*}}$ by the action on $v \in \Hzerotan(\Omega; \F^n)$ via 
	  \begin{align}\label{eq: map O}
	  \left\langle \mathscr{O}(k, l ), v \right\rangle_{(\Hzerotan)^{\bar{*}}, \Hzerotan} = \left\langle k,  v \rvert_{\Sigma_b} \right\rangle_{H^{-\frac{1}{2}}, H^{\frac{1}{2}}} -  \frac{1}{\alpha} \left\langle l,  v' \rvert_{\Sigma_0} \right\rangle_{H^{-\frac{1}{2}}, H^{\frac{1}{2}}},
	  \end{align}
	  where $\left\langle k,  v \rvert_{\Sigma_b} \right\rangle_{H^{-\frac{1}{2}}, H^{\frac{1}{2}}}$ denotes the dual paring between $k \in H^{-\frac{1}{2}}(\Sigma_b; \F^n) = ( H^{\frac{1}{2}}(\Sigma_b;\F^n) )^{\overline{*}}$ and $v \rvert_{\Sigma_b} \in H^{1/2} (\Sigma_b ; \F^n)$, and similarly $\left\langle l,  v' \rvert_{\Sigma_0} \right\rangle_{H^{-\frac{1}{2}}, H^{\frac{1}{2}}}$ denotes the dual paring between $l \in( H^{\frac{1}{2}}(\Sigma_0;\F^{n-1}) )^{\overline{*}}$ and $v' \rvert_{\Sigma_b} \in H^{1/2} (\Sigma_0 ; \F^{n-1})$.  Clearly, $\mathscr{O}$ is bounded and linear.
 
\end{defn}

We can now state and prove the previously mentioned result.

\begin{thm}\label{thm:energyequiv}
 Suppose $\R \ni \alpha > 0, \beta \in \R^{n \times n}$ is positive definite, and $\gamma \in \R$. Let $\psi \in H^{-1/2}(\Sigma_b ; \F)$ and consider $(u,p) = \chi_{\beta^T, -\gamma}^{-1}(\mathscr{O}(\psi e_n, 0),0)$. The following hold.

 \begin{enumerate}
	 \item There exists a constant $c > 0$ such that 
	 \begin{align} \label{eq: energy equiv}
		 c^{-1} \|u\|_{\Hzerotan} \le \left( \int_{\R^{n-1}} \min \{ \abs{\xi}^2 , \abs{\xi}^{-1} \} \abs{\hat{\psi}(\xi)}^2 \; d\xi \right)^{\frac{1}{2}} \le c \|u\|_{\Hzerotan} .
	 \end{align}
	 \item Furthermore, there exists a constant $c > 0$ depending on physical parameters and $\gamma$ such that \eqref{eq: energy equiv} holds for all $\alpha \in (0,1)$. In other words, the constant $c$ can be chosen to be independent of $\alpha$ if $\alpha \in (0,1)$.
 \end{enumerate}	
\end{thm}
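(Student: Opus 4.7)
The plan is to derive an energy identity from the weak formulation, translate it into Fourier variables using the multiplier symbol from Theorem~\ref{thm: V Q symbols}, and finally extract the required low/high-frequency asymptotics by explicitly analyzing the one-dimensional boundary value problem that determines $V_{\alpha,\beta}(\xi,\cdot,-\gamma)$.

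First, since $\chi_{\beta^T,-\gamma}(u,p)=(\mathscr{O}(\psi e_n,0),0)$ forces $\diverge u=0$, I will test the weak formulation with $v=u$. The $\gamma\partial_1 u\cdot\bar u$ contribution is purely imaginary by the integration-by-parts identity \eqref{eq: by parts calculation} and the pressure-divergence term vanishes, so taking real parts yields the energy identity
\begin{equation}
\tfrac{1}{2}\|\mathbb{D}u\|^2_{L^2(\Omega)}+\tfrac{1}{\alpha}\int_{\Sigma_0}\beta^T u\cdot\bar u=\Re\int_{\Sigma_b}\psi\bar u_n.
\end{equation}
Combining with the Fourier representation $\hat u(\xi,x_n)=V_{\alpha,\beta}(\xi,x_n,-\gamma)\hat\psi(\xi)$, Plancherel's theorem, and the definition $m_{\alpha,\beta}(\xi,\gamma)=V_{\alpha,\beta}(\xi,b,\gamma)\cdot e_n$ from \eqref{eq:dtonmap}, the right-hand side becomes $\int_{\R^{n-1}}\Re m_{\alpha,\beta}(\xi,-\gamma)|\hat\psi(\xi)|^2\,d\xi$. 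By Lemma~\ref{lem: korn} and the positive-definiteness of $\beta^T$, the left-hand side is comparable to $\|u\|^2_{\Hzerotan}$, so the entire theorem reduces to the pointwise symbol asymptotic $\Re m_{\alpha,\beta}(\xi,-\gamma)\asymp\min\{|\xi|^2,|\xi|^{-1}\}$, with the appropriate uniformity for each of the two items.

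To verify this asymptotic I will analyze the one-dimensional boundary value problem satisfied by $V_{\alpha,\beta}(\xi,\cdot,-\gamma)$ at fixed $\xi$, which is a fourth-order system whose general solution is built from the modes $e^{\pm|\xi|x_n}$ and $x_ne^{\pm|\xi|x_n}$. Imposing the three boundary conditions (prescribed normal stress at $\Sigma_b$; no-penetration and Navier-slip at $\Sigma_0$) produces explicit formulas for $V_{\alpha,\beta}$, and hence $m_{\alpha,\beta}$, as rational expressions in $|\xi|$, $\alpha$, and $e^{\pm|\xi|b}$. In the high-frequency regime $|\xi|\to\infty$, the solution forms a boundary layer of width $|\xi|^{-1}$ near $\Sigma_b$ and one recovers the standard Stokes Dirichlet-to-Neumann behavior $\Re m\sim c|\xi|^{-1}$, with corrections coming from the slip boundary being exponentially small. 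In the low-frequency regime $|\xi|\to 0$, the divergence-trace compatibility encoded in Lemma~\ref{lem:divtrace} forces a cancellation in the leading terms that yields $\Re m\sim c|\xi|^2$.

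The main technical obstacle is to carry out this asymptotic analysis while tracking the $\alpha$-dependence carefully enough to certify the uniform statement in item (ii). The key structural observation I will exploit is that the tangential slip condition at $\Sigma_0$ reads $\beta^T V'(\xi,0,-\gamma)=\alpha\mu\partial_n V'(\xi,0,-\gamma)$ after using $V_n(\xi,0,-\gamma)=0$, giving $|V(\xi,0,-\gamma)|\lesssim\alpha|\partial_n V(\xi,0,-\gamma)|$. This extra factor of $\alpha$, combined with a 1D trace bound controlled by the bulk ODE energy, absorbs the potentially-singular boundary term $\tfrac{1}{\alpha}\int_{\Sigma_0}\beta^T u\cdot\bar u$ in the energy identity into $\|\mathbb{D}u\|^2_{L^2}$ uniformly for $\alpha\in(0,1)$, and likewise shows that the coefficients in the explicit expression for $\Re m_{\alpha,\beta}$ have leading-order asymptotics independent of $\alpha\in(0,1)$. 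This uniform analysis of the parameter-dependent Fourier symbol is precisely the technical obstacle foreshadowed in the introduction.
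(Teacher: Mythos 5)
Your approach—reduce the norm equivalence to the pointwise symbol asymptotic $\Re m_{\alpha,\beta}(\xi,-\gamma)\asymp\min\{|\xi|^2,|\xi|^{-1}\}$ and then verify it by explicitly solving the fourth-order ODE at each fixed $\xi$—is a genuinely different route from the one in the paper. In the paper the logical order is reversed: Theorem~\ref{thm:energyequiv} is established first by soft variational means and is then \emph{used} to derive exactly this symbol asymptotic in Lemma~\ref{lem:aymptotics m}.

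There is, however, a genuine gap in your reduction step. Testing $v=u$ and taking real parts gives $\frac12\|\mathbb{D}u\|_{L^2}^2+\frac1\alpha\int_{\Sigma_0}\beta^T u\cdot\bar u=-\int\Re \overline{m_{\alpha,\beta}}\,|\hat\psi|^2$, and you then assert that the left side is comparable to $\|u\|_{\Hzerotan}^2$. One direction, $\|u\|_{\Hzerotan}^2\lesssim\text{LHS}$, is fine for $\alpha\in(0,1)$ since $\alpha^{-1}>1$. But the other direction, $\frac1\alpha\int_{\Sigma_0}\beta^T u\cdot\bar u\lesssim\|u\|_{\Hzerotan}^2$ uniformly in $\alpha$, is equivalent to $\|\Tr_{\Sigma_0}u\|_{L^2}^2\lesssim\alpha\|u\|_{\Hzerotan}^2$, and this is \emph{not} an energy-level estimate. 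Your proposed absorption via $[\beta^T u]'=\alpha\mu[\mathbb{D}u\,e_n]'$ on $\Sigma_0$ trades $\frac1\alpha|V(\xi,0)|^2$ for $\alpha|\partial_nV(\xi,0)|^2$, but the latter is a pointwise boundary trace of $\partial_n V$, which the $H^1$ energy does \emph{not} control: a 1D trace estimate for $\partial_n V(\xi,0)$ requires $\int_0^b|\partial_n^2 V|^2\,dx_n$, and $\|\mathbb{D}u\|_{L^2}^2$ only provides $\int_0^b|\partial_nV|^2\,dx_n$. You can only justify this by appealing to the explicit form of the solution—but that explicit computation is precisely the step you defer. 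So as written, the argument is circular: the absorption that validates the reduction already needs the fine ODE analysis that you say you will do afterward. Moreover, the upper bound of \eqref{eq: energy equiv} requires precisely this problematic direction of the absorption, so the gap is essential, not cosmetic.

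The remainder of the proposal—the full asymptotic analysis of the boundary-value problem for $V_{\alpha,\beta}(\xi,\cdot,-\gamma)$ with modes $e^{\pm|\xi|x_n}$, $x_ne^{\pm|\xi|x_n}$, high-frequency boundary layers, and careful tracking of $\alpha$ in the resulting rational expressions—is where the actual work of your plan lies, and it is entirely deferred. This is a significantly harder route than the one the paper takes. For the upper bound, the paper never touches the boundary term at $\Sigma_0$: it defines $\hat\phi=\min\{|\xi|^2,|\xi|^{-1}\}\hat\psi$, uses the right inverse of the specified-divergence problem (Proposition~\ref{prop:Q}) to build a divergence-free $w=G(0,\phi)\in{}_0H^1_\sigma$ that \emph{vanishes on} $\Sigma_0$, and then tests the weak formulation with $w$. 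Since $w|_{\Sigma_0}=0$ and $\diverge w=0$, both the $\frac1\alpha$ boundary term and the pressure term drop out, the estimate is $\int\min\{|\xi|^2,|\xi|^{-1}\}|\hat\psi|^2=\langle\psi,w_n|_{\Sigma_b}\rangle\lesssim\|u\|_{\Hzerotan}\|w\|_{\Hzerotan}$, and the required uniformity in $\alpha$ is manifest. You would do well to adopt this test-function trick rather than attempt the explicit symbol computation.
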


\begin{proof}
First we note that by the weak form of the system \eqref{eq: abst weakform} and the definition of the map $\mathscr{O}$ via \eqref{eq: map O}, we have 
\begin{align}\label{eq: energy weak form 1}
	\int_\Omega \frac{\mu}{2} \mathbb{D}u : \mathbb{D} \overline{v }- p\diverge \overline{v} + \gamma \p_1 u \cdot \overline{v} + \frac{1}{\alpha} \int_{\Sigma_0} \beta u' \cdot \overline{v'} = -\langle \psi e_n , v \rvert_{\Sigma b} \rangle_{H^{-1/2}(\Sigma_b),H^{1/2}(\Sigma_b)}
\end{align}
for all $v \in \Hzerotan(\Omega; \F^n)$. By letting $v = u \in \Hzerostanigtan(\Omega; \F^n)$, taking the real part of \eqref{eq: energy weak form 1}, using \eqref{eq: by parts calculation}, Lemma~\ref{lem: korn}, the fact that $\alpha > 0$, \eqref{eq: beta coercive C} and the anti-dual representation of Sobolev spaces (see Proposition A.6 of \cite{noahtice}) we have
\begin{multline} \label{eq: energy equiv est 1}
	\|u\|_{\Hzerotan}^2 \lesssim_\alpha \Re\left( \int_\Omega  \frac{\mu}{2} \abs{\mathbb{D}u}^2 +  \frac{1}{\alpha}\int_{\Sigma_0} \beta u'\cdot \overline{u'} \right)  \\ = -\Re \langle \psi, \Tr_{\Sigma_b} u \cdot e_n \rangle_{H^{-1/2},H^{1/2}} = -\Re \int_{\R^{n-1}} \hat{\psi}(\xi) \overline{\widehat{\Tr_{\Sigma_b} u \cdot e_n}(\xi)} \; d\xi \\
			  \le \left(  \int_{\R^{n-1}} \min \{\abs{\xi}^2, \abs{\xi}^{-1} \} \abs{\hat{\psi}(\xi)}^2\; d\xi \right)^{\frac{1}{2}} \left( \int_{\R^{n-1}} \max \{\abs{\xi}^{-2}, \abs{\xi}^{1} \} \abs{ \widehat{\Tr_{\Sigma_b} u \cdot e_n}(\xi)}^2 \; d\xi  \right)^{\frac{1}{2}}.
\end{multline}
In particular, we note that if $\alpha \in (0,1)$, then $\alpha^{-1} > 1$ and thus we may choose the constant on the left hand side of \eqref{eq: energy equiv est 1} to be independent of $\alpha$. 
By using the divergence-trace compatibility estimate \eqref{eq: divtraceestimate}, we have 
\begin{align} 
	\int_{\R^{n-1}} \max \{\abs{\xi}^{-2}, \abs{\xi}^{1} \} \abs{ \widehat{\Tr_{\Sigma_b} u \cdot e_n}(\xi) }^2\; d\xi \le \|\Tr_{\Sigma_b} u \cdot e_n \|_{\dot{H}^{-1} \cap H^{\frac{1}{2}}  } \lesssim \|u\|_{\Hzerotan}.
\end{align}
This gives us the left hand side of \eqref{eq: energy equiv}. 

For the right hand side, we first define $\phi \in H^{1/2}(\Sigma_b ; \F) \cap \dot{H}^{-1}(\Sigma_b ; \F)$ via $\hat{\phi}(\xi) = \min \{ \abs{\xi}^2, \abs{\xi}^{-1} \} \hat{\psi}(\xi),$ where 
\begin{align} 
	\|\phi \|^2_{H^{\frac{1}{2}} \cap \dot{H}^{-1}} \le 2 \int_{\R^{n-1}} \max \{\abs{\xi}^{-2}, \abs{\xi}^{1} \} \abs{\min \{\abs{\xi}^2, \abs{\xi}^{-1} \} \hat{\psi}(\xi)}^2 \; d\xi = 2 \int_{\R^{n-1}}  \min \{\abs{\xi}^2, \abs{\xi}^{-1} \} \abs{ \hat{\psi}(\xi)}^2 \; d\xi.
\end{align}
Note that for $\abs{\xi} \le 1$, $\abs{\xi}^2 \lesssim (1+\abs{\xi}^2)^{-1/2}$ and for $\abs{\xi} \ge 1$, $\abs{\xi}^{-1} \lesssim (1+\abs{\xi}^2)^{-1/2}$. So we find that $\|\phi \|^2_{H^{\frac{1}{2}} \cap \dot{H}^{-1}} \lesssim \norm{\phi}_{H^{-1/2}}$, and therefore we can apply Proposition~\ref{prop:Q} and consider $w = G(0, \phi) \in {}_0 H_{\sigma}^1(\Omega; \F)$, for which we have the estimate 
\begin{align} \label{est:w}
\|w\|_{\Hzerotan}^2 \lesssim \norm{G(0,\phi)}_{\mathcal{H}(\Omega)} = \|\phi\|^2_{\dot{H}^{-1} \cap H^{1/2}} \lesssim  \int_{\R^{n-1}}  \min \{\abs{\xi}^2, \abs{\xi}^{-1} \} \abs{ \hat{\psi}(\xi)}^2 \; d\xi. 
\end{align}
Now using $w \in {}_0H^1_\sigma(\Omega ; \F^n)$ in the weak formulation of the adjoint problem \eqref{eq:adjoint} gives us 
\begin{align}
     \left\langle \psi, w_n \rvert_{\Sigma_b}\right\rangle_{H^{-1/2} (\Sigma_b), H^{1/2}(\Sigma_b)} = - \int_\Omega \frac{\mu}{2} \mathbb{D}u : \mathbb{D} \overline{w} - \gamma \p_1 u \cdot \overline{w}
\end{align}
and using the anti-dual representation of Sobolev spaces we have 
\begin{align}
    \left\langle \psi, w_n \rvert_{\Sigma_b}\right\rangle_{H^{-1/2}(\Sigma_b), H^{1/2}(\Sigma_b)} = \int_{\R^{n-1}} \hat{\psi} \overline{\hat{\phi}} \; d\xi = \int_{\R^{n-1}}\min \{\abs{\xi}^2, \abs{\xi}^{-1} \} \abs{ \hat{\psi}(\xi)}^2 \; d\xi,
\end{align}
and
\begin{align} \label{est:rhs}
	\int_{\R^{n-1}}  \min \{\abs{\xi}^2, \abs{\xi}^{-1} \} \abs{ \hat{\psi}(\xi)}^2 \; d\xi \le \abs{\int_\Omega \frac{\mu}{2} \mathbb{D}u : \mathbb{D} \overline{w} + \gamma \p_1 u \cdot \overline{w}} \lesssim_{\mu, \gamma,\beta} \|u\|_{\Hzerotan} \norm{w}_{\Hzerotan}.
\end{align}
Combining \eqref{est:rhs} with \eqref{est:w} gives us the desired inequality \eqref{eq: energy equiv}. We note that since the constant appearing in \eqref{est:rhs} does not depend on $\alpha$, the second item follows. 
\end{proof}

Utilizing the energy equivalence established above, we are now ready to establish some key estimates for $V_{\alpha,\beta}$, $Q_{\alpha,\beta}$, $m_{\alpha,\beta}$ as defined in Theorem~\ref{thm: V Q symbols} and \eqref{eq:dtonmap}. 

\begin{thm}\label{thm:VandQ}
	Let $\R \ni \alpha > 0, \gamma \in \R$, and $\beta \in \R^{n \times n}$ be positive definite. The following hold.
	\begin{enumerate}
		 \item There exists a constant $c > 0$  such that for all a.e. $\xi \in \R^{n-1}$, we have 
		 \begin{align} \label{eq: V estimates}
			 \int_0^b \abs{V_{\alpha,\beta}(\xi, x_n, -\gamma)}^2 \; dx_n \le c \min \{ \abs{\xi}^2, \abs{\xi}^{-2}\}, \quad \abs{V_{\alpha,\beta}(\xi, 0, -\gamma)}^2 \le c \min \{ \abs{\xi}^2, \abs{\xi}^{-2} \}. 
		 \end{align}
		 and 
		 \begin{align} \label{eq: Q estimates}
			  \int_0^b \abs{Q_{\alpha,\beta}(\xi, x_n, -\gamma) - 1}^2 \; dx_n \le c \abs{\xi}^2.
		 \end{align}
		 \item There exists a constant $c > 0$ depending on physical parameters and $\gamma$ such that \eqref{eq: V estimates} and \eqref{eq: Q estimates} hold for all $\alpha \in (0,1)$. In other words, the constant $c$ can be chosen to be independent of $\alpha$ if $\alpha \in (0,1)$.
	\end{enumerate}

\end{thm}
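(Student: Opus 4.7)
The plan is to extract pointwise-in-$\xi$ symbol bounds by combining the energy equivalence from Theorem~\ref{thm:energyequiv} with Plancherel identities and the multiplier commutation from Theorem~\ref{thm: chi commutes with M_omega}. Throughout, for $\psi \in H^{-1/2}(\Sigma_b; \R)$ I let $(u, p) = \chi_{\beta^T, -\gamma}^{-1}(\mathscr{O}(\psi e_n, 0), 0)$, so that by Theorem~\ref{thm: V Q symbols} one has $\hat{u}(\xi, x_n) = V_{\alpha,\beta}(\xi, x_n, -\gamma)\hat{\psi}(\xi)$ and $\hat{p}(\xi, x_n) = Q_{\alpha,\beta}(\xi, x_n, -\gamma)\hat{\psi}(\xi)$. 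For arbitrary $\omega \in L^\infty(\R^{n-1}; \C)$, the pair $(M_\omega u, M_\omega p)$ arises from data $M_\omega \psi$ by Theorem~\ref{thm: chi commutes with M_omega}.

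For the estimates \eqref{eq: V estimates} on $V_{\alpha,\beta}$, I apply Theorem~\ref{thm:energyequiv} to $M_\omega \psi$ and combine with $\|M_\omega u\|_{H^1}^2 \lesssim \|M_\omega u\|_{\Hzerotan}^2$ and the Plancherel identity
\begin{equation*}
\|M_\omega u\|_{H^1(\Omega)}^2 \ge \int_{\R^{n-1}} (1+4\pi^2|\xi|^2)\int_0^b |V_{\alpha,\beta}(\xi, x_n, -\gamma)|^2\,dx_n\,|\omega(\xi)|^2|\hat{\psi}(\xi)|^2\,d\xi.
\end{equation*}
Varying $\omega$ by a standard measure-theoretic argument then gives the pointwise a.e.\ bound $(1+|\xi|^2)\int_0^b|V_{\alpha,\beta}|^2\,dx_n \lesssim \min\{|\xi|^2, |\xi|^{-1}\}$, which implies $\int_0^b|V_{\alpha,\beta}|^2\,dx_n \lesssim \min\{|\xi|^2, |\xi|^{-3}\}$ and hence the first claim in \eqref{eq: V estimates} (with room to spare at high frequency). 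For the trace estimate at $\Sigma_0$, I use $\|\Tr_{\Sigma_0} M_\omega u\|_{H^{1/2}}^2 \lesssim \|M_\omega u\|_{H^1(\Omega)}^2$ together with $\widehat{\Tr_{\Sigma_0} u}(\xi) = V_{\alpha,\beta}(\xi, 0, -\gamma)\hat{\psi}(\xi)$ and the same density argument to deduce $(1+|\xi|^2)^{1/2}|V_{\alpha,\beta}(\xi, 0, -\gamma)|^2 \lesssim \min\{|\xi|^2, |\xi|^{-1}\}$, yielding the second bound.

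The estimate \eqref{eq: Q estimates} on $Q_{\alpha,\beta} - 1$ is the most interesting point, since the naive approach bounds $Q$ itself and fails at low frequencies where $Q \to 1$.  To get around this I use a subtraction trick: taking smooth $\psi$ by density, I extend $\psi$ trivially in the vertical variable by $\Psi(x', x_n) := \psi(x')$ and set $\tilde{u} := u$, $\tilde{p} := p - \Psi$. Since $\nabla\Psi = (\nabla'\psi, 0)$, $\Psi|_{\Sigma_b} = \psi$, and $[\Psi e_n]' = 0$, a direct computation shows that $\chi_{\alpha, \beta^T, -\gamma}(\tilde{u}, \tilde{p}) = (F, 0)$, where the stress $\psi e_n$ on $\Sigma_b$ is exactly cancelled by the boundary term generated in $\int \Psi \diverge \bar{v}$ and the source functional is $F(v) := -\int_\Omega (\nabla'\psi, 0)\cdot \bar{v}$, satisfying $\|F\|_{(\Hzerotan)^{\bar{*}}} \le \|\nabla'\psi\|_{L^2(\Omega)} = \sqrt{b}(2\pi)\sp{\int|\xi|^2|\hat{\psi}|^2\,d\xi}^{1/2}$. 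Theorem~\ref{thm:weaksol} then gives $\|\tilde{p}\|_{L^2}^2 \lesssim \int|\xi|^2|\hat{\psi}|^2\,d\xi$, and since $\hat{\tilde{p}}(\xi, x_n) = (Q_{\alpha,\beta}(\xi, x_n, -\gamma) - 1)\hat{\psi}(\xi)$, the multiplier-commutation / varying-$\omega$ argument produces the pointwise bound $\int_0^b |Q_{\alpha,\beta} - 1|^2\,dx_n \lesssim |\xi|^2$ a.e.

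For the second item ($\alpha$-independence on $(0,1)$), I appeal directly to item (2) of Theorem~\ref{thm:energyequiv} for Step~1, and for Step~2 I verify that all constants in the Theorem~\ref{thm:weaksol} estimate can be chosen independent of $\alpha \in (0,1)$: the coercivity $\Re B_\alpha(u,u) \ge \tfrac{1}{2}\|\mathbb{D}u\|_{L^2}^2 + c_\beta \|u\|_{L^2(\Sigma_0)}^2$ uses only that $1/\alpha > 1$ and positive definiteness of $\beta^T$, yielding an $\alpha$-free constant via Korn (Lemma~\ref{lem: korn}), while the pressure bound uses the test function $\Pi \tilde{p}$ which vanishes on all of $\p\Omega$ so no $\alpha$-dependence enters via the slip term. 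The main obstacle I foresee is the bookkeeping in the subtraction trick--pinning down the correct signs and confirming that the boundary and divergence contributions match precisely the $\chi_{\alpha, \beta^T, -\gamma}$ framework--and then tracking $\alpha$-independence through every inequality.
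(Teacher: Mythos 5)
Your proposal is correct and follows the same basic strategy as the paper: combine the energy equivalence (Theorem~\ref{thm:energyequiv}) with Plancherel identities and a test-measure/density argument to extract pointwise symbol bounds, and use the same subtraction trick $\tilde p = p - \Psi$ for the pressure symbol. The genuine difference is in how you obtain the high-frequency decay $|\xi|^{-2}$ in \eqref{eq: V estimates}. The paper works only with the $L^2$ Plancherel identity $\int_0^b\int|V\hat\psi|^2 = \|u\|_{L^2}^2 \lesssim \|u\|_{\Hzerotan}^2$, which by itself yields only $\int_0^b|V|^2 \lesssim \min\{|\xi|^2,|\xi|^{-1}\}$, and then upgrades the high-frequency side by invoking the separate pointwise multiplier estimate \eqref{eq:VQestimates} (and likewise uses the $L^2$ trace for the $\Sigma_0$ bound). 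You instead exploit the full $H^1$ Plancherel identity, which supplies the extra factor $(1+4\pi^2|\xi|^2)$ directly, giving $\int_0^b|V|^2\lesssim\min\{|\xi|^2,|\xi|^{-3}\}$ — stronger than needed — without any appeal to \eqref{eq:VQestimates}, and the $H^{1/2}$ trace theorem plays the analogous role for $V(\xi,0,-\gamma)$. Your route is in fact cleaner for item (2), since the paper's proof of $\alpha$-uniformity addresses only the constants coming from the energy equivalence and silently also needs an $\alpha$-uniform version of \eqref{eq:VQestimates} (whose stated constant is allowed to depend on $\alpha$; one can recover uniformity via Corollary~\ref{thm: alpha classicalexistence}, but this is not made explicit), whereas in your argument every constant traces back to Theorem~\ref{thm:energyequiv}(2) alone. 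For \eqref{eq: Q estimates}, your reformulation of $(u,p-\Psi)$ as the unique solution to $\chi_{\alpha,\beta^T,-\gamma}(\cdot)=(F,0)$ with $F(v)=-\int(\nabla'\psi,0)\cdot\bar v$ is algebraically the same computation as the paper's direct testing of the weak form against $v=\Pi(p-\psi)$, and your verification that the Lax--Milgram and pressure-recovery constants in Theorem~\ref{thm:weaksol} are $\alpha$-uniform on $(0,1)$ (coercivity uses only $1/\alpha>1$; the pressure test function lies in ${}_0H^1$ and so suppresses the slip term entirely) is sound.
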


\begin{proof}
	To prove the first item, we note that by Parseval's theorem, Tonelli's theorem and Theorem~\ref{thm:energyequiv}, we have 
	\begin{align} \label{eq: Vpsi est}
		\int_0^b \int_{\R^{n-1}} \abs{  V_{\alpha,\beta}(\xi,x_n, -\gamma) \hat{\psi}(\xi) }^2 \; d\xi dx_n = \|u\|_{L^2}^2 \lesssim \|u\|_{\Hzerotan}^2 \lesssim_\alpha 	\int_{\R^{n-1}}  \min \{\abs{\xi}^2, \abs{\xi}^{-1} \} \abs{ \hat{\psi}(\xi)}^2 \; d\xi,
	\end{align}
	for all $\psi \in H^{-\frac{1}{2}}(\Sigma_b;\F)$. Let $\varphi \in L^1(\R^{n-1}; \R)$ such that $\varphi(\xi) \ge 0$ a.e. with compact support. Define $\phi \in \bigcap_{s \in \R} H^s (\R^{n-1} ; \C)$ via $\phi = \mathscr{F}^{-1}[ \sqrt{\varphi}]$, and we take $\psi = \phi$ in the inequality above. This gives us
	\begin{align}\label{eq: V alpha est 1}
		\int_{\R^{n-1}} \left( \int_0^b \abs{  V_{\alpha,\beta}(\xi,x_n, -\gamma) }^2 \; dx_n \right)  \varphi(\xi) \; d\xi \lesssim_\alpha 	\int_{\R^{n-1}}  \min \{ \abs{\xi}^2 , \abs{\xi}^{-1}\} \varphi(\xi) \; d\xi.
	\end{align}
	Since this holds for all $\varphi \in L^1(\R^{n-1} ; \R)$, for all $\alpha > 0$ there exists a constant $C > 0$ such that
	\begin{align} 
	\int_0^b \abs{V_{\alpha,\beta}(\xi, x_n, -\gamma)}^2 \; dx_n &\le C \min \{ \abs{\xi}^2, \abs{\xi}^{-1}\} \; \text{for a.e.} \; \xi \in \R^{n-1}.
\end{align}
		Combining this with the estimate \eqref{eq:VQestimates} gives us the first estimate in \eqref{eq: V estimates}. Furthermore, we note that by the second item of Theorem~\ref{thm:energyequiv}, the constants appearing on the right hand side of \eqref{eq: Vpsi est} and \eqref{eq: V alpha est 1} can be chosen to be independent of $\alpha$ if $\alpha \in (0,1)$.

		We note that since
		\begin{align}
		 \int_{\R^{n-1}} \abs{  V_{\alpha,\beta}(\xi,0, -\gamma) \hat{\psi}(\xi) }^2 \; d\xi dx_n = \norm{\Tr_{\Sigma_0} u}_{L^2}^2 \lesssim \norm{u}_{\Hzerotan}^2,  
		\end{align}
	    applying the exact same argument as above gives us the second estimate in \eqref{eq: V estimates}. 
	

    To prove \eqref{eq: Q estimates}, we note that $\hat{p}- \hat{\psi} = (Q_{\alpha,\beta}-1)\hat{\psi}$ and recall that the weak formulation of the system requires 
	\begin{align} 
		 \int_\Omega \frac{\mu}{2} \mathbb{D}u : \mathbb{D}\overline{v} - p \diverge \overline{v} + \gamma \p_1 u \cdot \overline{v} +  \frac{1}{\alpha}\int_{\Sigma_0} \beta u' \cdot \overline{v'} =- \langle \psi e_n, v \rvert_{\Sigma_b} \rangle_{H^{-1/2},H^{1/2}}= -\int_{\R^{n-1}} \psi(x') \overline{v_n}  \; dx',
	\end{align}
	for all $v \in \Hzerotan(\Omega; \F^n)$. Let $v = \Pi( p - \psi(x') ) \in {}_0H^1(\Omega; \F^n)$ where $\Pi : L^2(\Omega; \F) \to {}_0H^1(\Omega; \F^n)$ is the right inverse to the divergence operator appearing in Lemma~\ref{lem:divinverse}. Then by testing $v$ in the weak formulation we find that 
	\begin{align} 
		\int_\Omega \abs{p-\psi}^2  = \int_\Omega \frac{\mu}{2} \mathbb{D} u : \mathbb{D} \overline{v} &+ \gamma \p_1 u \cdot \overline{v} +\int_{\R^{n-1}} \psi(x') \left[ \overline{v_n} - \int_0^b \diverge \overline{v} \; dx_n \right] dx'.
\end{align}
	By applying Cauchy-Schwartz and using the continuity of the trace operator, we have
	\begin{align} \label{eq: p - psi est}
		\int_\Omega \abs{p-\psi}^2  \; dx \lesssim_{\mu,\gamma,\beta} \|u\|_{\Hzerotan} \|v\|_{\Hzerotan} + [\psi]_{\dot{H}^1} \|v\|_{\Hzerotan} + [\psi]_{\dot{H}^1} \left[\int_0^b \diverge \overline{v} \; dx_n\right]_{\dot{H}^{-1}}.
	\end{align}
	Note that 
	\begin{align} \label{eq: p - psi est 2}
		\left[\int_0^b \diverge \overline{v}\; dx_n\right]_{\dot{H}^{-1}} =\int_0^b \int_{\R^{n-1}} \abs{\xi}^{-2} \abs{ 2 \pi i \xi \cdot v }^2 d\xi dx_n \lesssim \int_0^b \int_{\R^{n-1}} \abs{v(\xi,x_n)}^2 \; d\xi dx_n = \|v\|_{L^2}. 
	\end{align}
	Furthermore, since $\hat{\psi} = \sqrt{\varphi}$ has compact support, using the left hand side of the energy equivalence \eqref{eq: energy equiv} we have 
	\begin{align} \label{eq: p - psi est 3}
		\|u\|_{\Hzerotan}^2 \lesssim_\alpha \int_{\R^{n-1}} \abs{\xi}^2 \abs{ \hat{\psi}(\xi) }^2 = [\psi]_{\dot{H}^{1}}^2.
	\end{align}
	Combining the estimates \eqref{eq: p - psi est}, \eqref{eq: p - psi est 2}, \eqref{eq: p - psi est 3} give us 
	\begin{align} 
		\int_\Omega \abs{p(x)-\psi(x')}^2  \; dx \lesssim [\psi]_{\dot{H}^1} \|v\|_{\Hzerotan} \lesssim [\psi]_{\dot{H}^1} \| p - \psi\|_{L^2}.	
	\end{align}
	Then 
\begin{align} 
	\int_0^b \int_{\R^{n-1}} \abs{ (Q_
	\alpha(\xi,x_n,-\gamma) - 1) \hat{\psi}(\xi)}^2 \; d\xi dx_n = \int_\Omega \abs{p(x)-\psi(x') }^2 \; dx  \lesssim \int_{\R^{n-1}} \abs{\xi}^2 \abs{\hat{\psi}(\xi)}^2 \; d\xi.
\end{align}
Following the same argument as before, we arrive at the desired estimate. To prove the second item, we also note that the constants appearing in \eqref{eq: p - psi est}, \eqref{eq: p - psi est 2} does not depend on $\alpha$ and by the second item of Theorem~\ref{thm:energyequiv}, the constant appearing on the right hand side of \eqref{eq: p - psi est 3} can be chosen to be uniform in $\alpha$ if $\alpha \in (0,1)$. The second item then follows.

\end{proof}
We also need the asymptotics of $m_{\alpha,\beta}(\xi,\gamma)$. 

\begin{lem}\label{lem:aymptotics m}
    Let $\R \ni \alpha > 0$ and $\gamma \in \R$. The following hold.
	\begin{enumerate}
		\item For a.e. $\xi \in \R^{n-1}$, $\Re \overline{m_{\alpha,\beta}(\xi,-\gamma)}$ is strictly negative and there exists a constant $C > 0$ for which 
		\begin{align}\label{eq:realm}
			\min\{\abs{\xi}^2, \abs{\xi}^{-1} \} \le -C  \Re \overline{m_{\alpha,\beta}(\xi,-\gamma)}.
		\end{align}
		\item There exists a constant $c > 0$ such that for a.e. $\xi \in \R^{n-1}$, we have 
        \begin{align}\label{eq:masymp}
        c^{-1}\min \{ \abs{\xi}^2 , \abs{\xi}^{-1} \} \le \abs{m_{\alpha,\beta}(\xi,-\gamma)} &\le c \min \{ \abs{\xi}^2 , \abs{\xi}^{-1} \}.   
    \end{align}
		\item There exists constants $C, c > 0$ such that \eqref{eq:realm} and \eqref{eq:masymp} hold for all $\alpha \in (0,1)$. In other words, the constants can be chosen to be independent of $\alpha$ if $\alpha \in (0,1)$. 
	\end{enumerate}
	
\end{lem}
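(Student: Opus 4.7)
The plan is to exploit the variational characterization of $m_{\alpha,\beta}$ via the adjoint problem together with the two-sided energy equivalence established in Theorem~\ref{thm:energyequiv}, and then to extract pointwise bounds from integral inequalities by a standard test-function argument.

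\textbf{Step 1 (sign and lower bound on $-\Re\overline{m}$).} Let $(u,p) = \chi^{-1}_{\alpha,\beta^T,-\gamma}(\mathscr{O}(\psi e_n,0),0)$, so $\diverge u = 0$ and $u$ satisfies the weak form of the adjoint problem. Testing against $v = u$, taking real parts, using the identity \eqref{eq: by parts calculation}, Korn's inequality (Lemma~\ref{lem: korn}), and positive-definiteness of $\beta^T$, produces
\begin{equation*}
c\,\|u\|_{\Hzerotan}^2 \;\le\; \Re\!\int_\Omega \tfrac{\mu}{2}|\mathbb{D}u|^2 + \tfrac{1}{\alpha}\!\int_{\Sigma_0}\!\beta^T u\cdot\overline{u} \;=\; \mp \int_{\R^{n-1}} \Re\!\left[\overline{m_{\alpha,\beta}(\xi,-\gamma)}\right]|\hat\psi(\xi)|^2 \, d\xi,
\end{equation*}
where the right-hand side comes from Plancherel applied to $\langle \psi e_n, u|_{\Sigma_b}\rangle = \int \hat\psi\,\overline{m_{\alpha,\beta}(\xi,-\gamma)\hat\psi}$ (the overall sign being determined by the convention in the definition of $\mathscr{O}$). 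Combining with the lower bound half of Theorem~\ref{thm:energyequiv} yields
\begin{equation*}
\int_{\R^{n-1}}\!\min\{|\xi|^2,|\xi|^{-1}\}|\hat\psi(\xi)|^2\, d\xi \;\lesssim\; -\!\int_{\R^{n-1}}\!\Re\!\left[\overline{m_{\alpha,\beta}(\xi,-\gamma)}\right]|\hat\psi(\xi)|^2\, d\xi
\end{equation*}
for every $\psi \in H^{-1/2}(\Sigma_b)$. Exactly as in the proof of Theorem~\ref{thm:VandQ}, choosing $\hat\psi = \sqrt{\varphi}$ for arbitrary nonnegative $\varphi \in L^1(\R^{n-1})$ of compact support extracts the pointwise bound $\min\{|\xi|^2,|\xi|^{-1}\} \le -C\,\Re\overline{m_{\alpha,\beta}(\xi,-\gamma)}$ a.e., and in particular $\Re\overline{m_{\alpha,\beta}(\xi,-\gamma)} < 0$ a.e. This is \eqref{eq:realm}.

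\textbf{Step 2 (lower bound on $|m|$).} Since $|m_{\alpha,\beta}(\xi,-\gamma)| \ge |\Re\overline{m_{\alpha,\beta}(\xi,-\gamma)}|$, the bound $|m_{\alpha,\beta}(\xi,-\gamma)| \gtrsim \min\{|\xi|^2,|\xi|^{-1}\}$ follows at once from Step~1.

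\textbf{Step 3 (upper bound on $|m|$).} This is the main subtlety: the naive trace bound $\|u|_{\Sigma_b}\|_{H^{1/2}}^2 \lesssim \|u\|_{\Hzerotan}^2$, combined with Theorem~\ref{thm:energyequiv} and the test-function trick, only yields $(1+|\xi|^2)^{1/2}|V_{\alpha,\beta}(\xi,b,-\gamma)|^2 \lesssim \min\{|\xi|^2,|\xi|^{-1}\}$, which gives $|m| \lesssim |\xi|^{-1}$ for $|\xi|\ge 1$ but only $|m|\lesssim|\xi|$ for $|\xi|\le 1$. To upgrade the low-frequency behavior I invoke the divergence-trace compatibility Lemma~\ref{lem:divtrace}: because $\diverge u = 0$, one has $[u_n|_{\Sigma_b}]_{\dot H^{-1}} \lesssim \|u\|_{L^2} \lesssim \|u\|_{\Hzerotan}$, giving
\begin{equation*}
\int_{\R^{n-1}} |\xi|^{-2}|m_{\alpha,\beta}(\xi,-\gamma)|^2|\hat\psi|^2\, d\xi \;\lesssim\; \int_{\R^{n-1}}\min\{|\xi|^2,|\xi|^{-1}\}|\hat\psi|^2\, d\xi,
\end{equation*}
so that $|m_{\alpha,\beta}(\xi,-\gamma)|^2 \lesssim |\xi|^2\min\{|\xi|^2,|\xi|^{-1}\}$ a.e., which yields $|m|\lesssim |\xi|^2$ for $|\xi|\le 1$. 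Splitting into the two regimes $|\xi|\le 1$ and $|\xi|\ge 1$ and taking the better of the two estimates in each delivers $|m_{\alpha,\beta}(\xi,-\gamma)|\lesssim \min\{|\xi|^2,|\xi|^{-1}\}$ a.e., completing \eqref{eq:masymp}.

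\textbf{Step 4 ($\alpha$-uniformity).} The only place where $\alpha$ enters the quantitative estimates is through the coercivity constant of $B_\alpha$ (which is controlled uniformly for $\alpha\in(0,1)$ since $\alpha^{-1}>1$) and through the constant in Theorem~\ref{thm:energyequiv}, whose second assertion already supplies uniform control for $\alpha\in(0,1)$. Standard trace theory and Lemma~\ref{lem:divtrace} have no $\alpha$ dependence. Tracking constants through Steps~1--3 therefore gives the uniformity claimed in (3). The main obstacle in executing this plan is the bookkeeping in Step~3, namely noticing that neither trace theory alone nor the divergence-trace compatibility alone is sharp enough, and that one must combine them in complementary frequency regimes.
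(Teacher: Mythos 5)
Your proof is correct and follows essentially the same route as the paper: a test-function argument applied to the weak form of the adjoint problem, combined with the two-sided energy equivalence of Theorem~\ref{thm:energyequiv} and the divergence--trace compatibility of Lemma~\ref{lem:divtrace}, with the $\alpha$-uniformity tracked via the second item of Theorem~\ref{thm:energyequiv}. The only cosmetic deviation is in Step~3, where you obtain the high-frequency bound $\abs{m_{\alpha,\beta}}\lesssim\abs{\xi}^{-1}$ via the $H^{1/2}$ trace estimate plus the energy equivalence, whereas the paper simply cites the a priori multiplier bound \eqref{eq:VQestimates}; both routes give $\abs{\xi}^{-1}$ in the regime $\abs{\xi}\ge 1$ and are then combined with the low-frequency bound from the divergence--trace argument.
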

\begin{proof}
    First we prove the second item and the right hand side of \eqref{eq:masymp}. Note that by the divergence-trace compatibility condition and the energy equivalence \eqref{eq: energy equiv}, we have 
	\begin{align} 
		\int_{\R^{n-1}} \abs{\xi}^{-2} \abs{m_{\alpha,\beta}(\xi,-\gamma) \hat{\psi}(\xi) }^2 \;d\xi = \| \Tr u \cdot e_n\|_{\dot{H}^{-1}}^2 \lesssim \|u\|_{L^2}^2 \lesssim_\alpha \int_{\R^{n-1}}  \min \{\abs{\xi}^2, \abs{\xi}^{-1} \} \abs{ \hat{\psi}(\xi)}^2.
	\end{align}
	 Setting $\psi = \phi = \mathscr{F}^{-1}[ \sqrt{\varphi}] \in  \bigcap_{s \in \R} H^s (\R^{n-1} ; \C)$ as in the proof for Theorem~\ref{thm:VandQ}, we have 
	\begin{align} 
		\int_{\R^{n-1}} \abs{\xi}^{-2} \abs{m_{\alpha,\beta}(\xi,-\gamma) }^2 \varphi(\xi) \; d\xi \lesssim_\alpha	\int_{\R^{n-1}}  \min \{\abs{\xi}^2, \abs{\xi}^{-1} \} \varphi(\xi) \; d\xi.	
	\end{align}
	Repeating the same argument as in the proof for Theorem~\ref{thm:VandQ}, we can conclude that $\abs{m_{\alpha,\beta}(\xi, -\gamma)} \lesssim_\alpha \min \{ \abs{\xi}^2 , \abs{\xi}^{\frac{1}{2}} \}$. Combining this with the estimate \eqref{eq:VQestimates}, we reach the desired conclusion that $\abs{m_{\alpha,\beta}(\xi, -\gamma)} \lesssim_\alpha \min \{ \abs{\xi}^2 , \abs{\xi}^{-1}\}$. 
	
To prove the left side of \eqref{eq:masymp}, we let $(u,p) = \chi_{\beta^T, -\gamma}^{-1}(\mathscr{O}(\psi e_n, 0),0)$ be the unique weak solution to \eqref{eq:adjoint} and test $u \in \Hzerotan(\Omega; \F^n)$ in the weak formulation to find 
\begin{align}
     - \left\langle \phi, \Tr u \cdot e_n  \right\rangle_{H^{-1/2}(\Sigma_b), H^{1/2}(\Sigma_b)} = \int_\Omega \frac{\mu}{2} \abs{\mathbb{D}u}^2 - \gamma \p_1 u \cdot \overline{u} +  \frac{1}{\alpha} \int_{\Sigma_0} \beta u' \cdot \overline{u'}.
\end{align}
By taking the real part on both sides, using \eqref{eq: by parts calculation}, Lemma~\ref{lem: korn}, the fact that $\alpha > 0$, $\beta$ satisfies \eqref{eq: beta coercive C}, and the anti-dual representation of Sobolev spaces gives us  gives us 
\begin{multline}
        \int_{\R^{n-1}}  \min \{ \abs{\xi}^2 , \abs{\xi}^{-1}\} \varphi(\xi) \; d\xi \lesssim \|u\|_{\Hzerotan}^2 \lesssim_\alpha -\Re \langle \phi, u_n \rvert_{\Sigma_b} \rangle_{H^{-1/2}(\Sigma_b), H^{1/2}(\Sigma_b)} = -\Re \int_{\R^{n-1}} \widehat{\phi}(\xi) \overline{\widehat{\Tr u \cdot e_n}(\xi)} \; d\xi \\
         = - \Re \int_{\R^{n-1}} \overline {m_{\alpha,\beta}(\xi,-\gamma)} \abs{\hat{\phi}(\xi)}^2 \; d\xi = -\Re \int_{\R^{n-1}} \overline {m_{\alpha,\beta}(\xi,-\gamma)} \varphi(\xi) \; d\xi.
\end{multline}
Thus we have $\min \{ \abs{\xi}^2 , \abs{\xi}^{-1}\} \lesssim_\alpha - \Re \overline{m_{\alpha,\beta}(\xi,-\gamma)} \le \abs{m_{\alpha,\beta}(\xi,-\gamma)}$ for a.e. $\xi \in \R^{n-1}$. This proves the first item and also the left side of the inequality in the second. 

To prove the third item, we note that throughout the proof for the first and the second items, by the second item of Theorem~\ref{thm:energyequiv} and the fact that $\alpha^{-1} > 0$ if $\alpha \in (0,1)$, the constants in the estimates above can be chosen to be independent of $\alpha$ if $\alpha \in (0,1)$, therefore the third item follows.
\end{proof}

We conclude this subsection by recording the properties of an auxiliary function defined in terms of $m_{\alpha,\beta}$. 

\begin{lem}\label{lem:rho}
	Suppose $\R \ni \alpha >0$ and $\gamma \in \R \setminus \{0\}$, and define 
	\begin{align}\label{eq:rho}
		\rho_{\alpha,\beta,\gamma}(\xi) = 2\pi i \gamma \xi_1 + (1+4\pi^2  \abs{\xi}^2\sigma) \overline{m_{\alpha,\beta}(\xi,-\gamma)}.
   \end{align}
   Then the following hold.
   \begin{enumerate}
	    \item $\rho_{\alpha,\beta,\gamma}(\xi) = 0$ if and only if $\xi = 0$, and $\overline{\rho_{\alpha,\beta,\gamma}(\xi)} = \rho_{\alpha,\beta,\gamma}(-\xi)$ for all $\xi \in \R^{n-1}$. 
		\item For $\sigma > 0$, there exists a constant $C = C(\alpha, n,\gamma,\sigma, b)> 0$  such that for all $\xi \in \R^{n-1}$, we have 
		\begin{align}\label{eq: rho est 1}
		  C^{-1} \abs{ \rho_{\alpha,\beta,\gamma}(\xi)}^2 \le (\xi_1^2 + \abs{\xi}^4) \mathbbm{1}_{B(0,1)}(\xi) + (1+\abs{\xi}^2)  \mathbbm{1}_{B(0,1)^c}(\xi) \le C \abs{\rho_{\alpha,\beta,\gamma}(\xi)}^2. 
		\end{align}
		\item For $\sigma = 0$ and $n=2$, there exists a constant $C = C(\alpha,\gamma, b)> 0$ such that for all $\xi \in \R^{n-1}$, we have 
		\begin{align}\label{eq: rho est 2}
			C^{-1}\abs{ \rho_{\alpha,\beta,\gamma}(\xi)}^2 \le \abs{\xi}^2 \mathbbm{1}_{B(0,1)}(\xi) + (1 + \abs{\xi}^2) \mathbbm{1}_{B(0,1)^c}(\xi) \le C \abs{\rho_{\alpha,\beta,\gamma}(\xi)}^2. 
		  \end{align}
		  \item Furthermore, there exists a constant $C = C(n,\gamma,\sigma, b)> 0$ such that \eqref{eq: rho est 1} holds for all $\alpha \in (0,1)$ and a constant $C = C(\gamma,b)> 0$ such that \eqref{eq: rho est 2} holds for all $\alpha \in (0,1)$. In other words, the constants in \eqref{eq: rho est 1} and \eqref{eq: rho est 2} can be chosen to be independent of $\alpha$. 
   \end{enumerate}
\end{lem}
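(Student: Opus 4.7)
The plan is to analyze $\rho_{\alpha,\beta,\gamma}(\xi) = 2\pi i \gamma \xi_1 + c_\xi \overline{m_{\alpha,\beta}(\xi,-\gamma)}$, with $c_\xi := 1+4\pi^2|\xi|^2\sigma > 0$, via the algebraic identity
\begin{equation*}
|\rho_{\alpha,\beta,\gamma}(\xi)|^2 = c_\xi^2 \bigl(\Re \overline{m_{\alpha,\beta}(\xi,-\gamma)}\bigr)^2 + \bigl(2\pi\gamma\xi_1 + c_\xi \Im \overline{m_{\alpha,\beta}(\xi,-\gamma)}\bigr)^2,
\end{equation*}
combined with the pointwise asymptotics from Lemma \ref{lem:aymptotics m}. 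The conjugate symmetry $\overline{\rho(\xi)}=\rho(-\xi)$ claimed in item 1 is a direct consequence of the identity $\overline{m_{\alpha,\beta}(\xi,-\gamma)} = m_{\alpha,\beta}(-\xi,-\gamma)$ coming from Theorem \ref{thm: V Q symbols}; that $\rho(0) = 0$ follows from $|m_{\alpha,\beta}(\xi,-\gamma)| \lesssim |\xi|^2$, while non-vanishing of $\rho$ on $\xi \neq 0$ will be a consequence of the lower bounds in items 2 and 3. The upper bounds in \eqref{eq: rho est 1} and \eqref{eq: rho est 2} follow from $|\rho|^2 \le 2(2\pi\gamma\xi_1)^2 + 2c_\xi^2|m_{\alpha,\beta}|^2$ together with $|m_{\alpha,\beta}| \lesssim \min\{|\xi|^2, |\xi|^{-1}\}$, separated into $|\xi| \le 1$ and $|\xi|\ge 1$.

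For the lower bounds I subdivide the frequency space. On $|\xi|\ge 1$ with $\sigma>0$, $c_\xi \gtrsim |\xi|^2$ combines with $|\Re\overline{m_{\alpha,\beta}}|\gtrsim |\xi|^{-1}$ to give $c_\xi^2(\Re\overline{m_{\alpha,\beta}})^2\gtrsim |\xi|^2 \gtrsim 1+|\xi|^2$, so the real part alone suffices. On $|\xi|\le 1$ for either choice of $\sigma$, I perform a case split on the ratio $|\xi_1|/|\xi|^2$: since $|c_\xi\Im\overline{m_{\alpha,\beta}}| \lesssim |\xi|^2$ on this range, there is a threshold $K>0$ (depending only on $\gamma$ and the constants of Lemma \ref{lem:aymptotics m}) such that $|\xi_1|\ge K|\xi|^2$ forces $|c_\xi\Im\overline{m_{\alpha,\beta}}|\le \tfrac12|2\pi\gamma\xi_1|$, yielding $|\rho|^2 \gtrsim \xi_1^2$; in the complementary regime $|\xi_1|\le K|\xi|^2$ we have $\xi_1^2 \le K^2|\xi|^4$, and $c_\xi^2(\Re\overline{m_{\alpha,\beta}})^2\gtrsim|\xi|^4$ closes the estimate. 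This handles item 2 entirely and item 3 on $|\xi|\le 1$ since in dimension $n=2$ one has $\xi_1^2 = |\xi|^2$. The remaining range in item 3, namely $|\xi|\ge 1$ with $\sigma=0$ and $n=2$, again requires a threshold split: for $|\xi|\ge R_0$ with $R_0$ large enough, the bound $|\Im\overline{m_{\alpha,\beta}}|\lesssim|\xi|^{-1}$ yields $|c_\xi \Im\overline{m_{\alpha,\beta}}|\le\tfrac12|2\pi\gamma\xi_1|$ (using $|\xi_1|=|\xi|$) and hence $|\rho|^2 \gtrsim |\xi|^2$, while on the annular range $1 \le |\xi|\le R_0$ the quantity $(\Re\overline{m_{\alpha,\beta}})^2$ is uniformly bounded below using $\min\{|\xi|^2,|\xi|^{-1}\}^2 \ge R_0^{-2}$, giving $|\rho|^2\gtrsim 1 \gtrsim 1+|\xi|^2$.

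Item 4 is then automatic: every constant appearing in the argument depends only on the constants of Lemma \ref{lem:aymptotics m} together with $\gamma,\sigma,n,b$, and item 3 of that lemma allows them all to be taken uniformly for $\alpha \in (0,1)$. The main obstacle is controlling the potentially destructive cancellation between $2\pi\gamma\xi_1$ and $c_\xi\Im\overline{m_{\alpha,\beta}}$ inside the imaginary component of $\rho$; the threshold splits on $|\xi_1|/|\xi|^2$ (for low frequencies) and on $|\xi|$ (for high frequencies in the non-capillary $n=2$ case) are the essential devices that separate the transport contribution $\xi_1^2$ from the dissipative contribution $(\Re\overline{m_{\alpha,\beta}})^2$. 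The thresholds must be chosen to depend only on $\alpha$-independent quantities, which is precisely what the uniform asymptotics of Lemma \ref{lem:aymptotics m} afford.
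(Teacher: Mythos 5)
Your proof is correct, but the mechanism by which you extract the lower bound $|\xi_1| \lesssim |\rho_{\alpha,\beta,\gamma}(\xi)|$ on low frequencies (and $|\xi| \lesssim |\rho_{\alpha,\beta,\gamma}(\xi)|$ on high frequencies in the non-capillary case) is genuinely different from the paper's. You run a threshold split: when $|\xi_1| \ge K|\xi|^2$ the transport term $2\pi\gamma\xi_1$ absorbs the bounded perturbation $c_\xi\Im\overline{m_{\alpha,\beta}}$ so that $|\Im \rho| \gtrsim |\xi_1|$, while when $|\xi_1| \le K|\xi|^2$ the contribution $\xi_1^2$ is dominated by $|\xi|^4 \lesssim |\Re\rho|^2$. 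The paper instead exploits the algebraic identity obtained by multiplying $\rho_{\alpha,\beta,\gamma}$ against $m_{\alpha,\beta}$: since $c_\xi\,\overline{m_{\alpha,\beta}}\,m_{\alpha,\beta}$ is real and $2\pi i \gamma\xi_1$ is purely imaginary, $\Im\bigl(\rho_{\alpha,\beta,\gamma}\,m_{\alpha,\beta}\bigr) = 2\pi\gamma\xi_1\Re m_{\alpha,\beta}$, which with $|\Re m_{\alpha,\beta}| \asymp |m_{\alpha,\beta}|$ immediately yields $|\xi_1| \lesssim |\rho_{\alpha,\beta,\gamma}|$ in one line, without any regime split and with a trick that applies uniformly for $|\xi| \lesssim 1$ and $|\xi| \gtrsim 1$ alike. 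Your argument is more elementary and makes the absorption structure visible, at the cost of a slightly longer case analysis (particularly for the annulus $1 \le |\xi| \le R_0$ in the $\sigma = 0$, $n=2$ case where you have to appeal separately to a uniform positive lower bound on $|\Re m_{\alpha,\beta}|$); the paper's multiplicative trick is shorter and uniform across cases. Both arguments ultimately pull their constants out of Lemma \ref{lem:aymptotics m}, so your conclusion for item 4 is also sound.
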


\begin{proof}
	To prove the first item, we note that the identity $\overline{\rho_{\alpha,\beta,\gamma}(\xi)} = \rho_{\alpha,\beta,\gamma}(-\xi)$ follows from Theorem~\ref{thm: V Q symbols}, therefore $\rho_{\alpha,\beta,\gamma}(0) = 0$. Furthermore, $\Re \rho_{\alpha,\beta,\gamma}(\xi) =  (1+4\pi^2 \abs{\xi}^2\sigma)  \Re \overline{m_{\alpha,\beta}(\xi,-\gamma)} < 0$ for $\xi \neq 0$ by the first item of Lemma~\ref{lem:aymptotics m}. Thus $\rho_{\alpha,\beta,\gamma}(\xi) = 0$ if and only if $\xi = 0$. This proves the first item. 
	
	Next we prove the second item, and we first prove the left hand side of \eqref{eq: rho est 1}. Recall that by Lemma~\ref{lem:aymptotics m}, $\abs{m_{\alpha,\beta}(\xi, -\gamma)}$ satisfies $
	 \abs{m_{\alpha,\beta}(\xi,-\gamma)} \asymp \min \{ \abs{\xi}^2 , \abs{\xi}^{-1} \}$.
	This implies that  
	\begin{align}
		 \abs{\rho_{\alpha,\beta,\gamma}(\xi)} \lesssim \abs{\xi_1} + \min \{ \abs{\xi}^2 , \abs{\xi}^{-1} \}  + \min \{ \abs{\xi}^4 , \abs{\xi} \}.
	\end{align}
Then it immediately follows that 
	\begin{multline}\label{eq:rho1}
		\abs{\rho_{\alpha,\beta,\gamma}(\xi)}^2 \lesssim  (\abs{\xi_1}^2 + \abs{\xi}^2 + \abs{\xi}^4) \mathbbm{1}_{B(0,1)}(\xi)  + (\abs{\xi_1}^2 + \abs{\xi}^2 + \abs{\xi}^4) \mathbbm{1}_{B(0,1)^c}(\xi) \\ \lesssim (\abs{\xi_1}^2 + \abs{\xi}^2) \mathbbm{1}_{B(0,1)}(\xi)+ \abs{\xi}^2\mathbbm{1}_{B(0,1)^c}(\xi).
	\end{multline}
	Next we prove the right hand side of \eqref{eq: rho est 1}. We first note that since $2\pi i \gamma \xi_1$ is purely imaginary and $1+4\pi^2 \abs{\xi}^2 \sigma$ is real, we have 
	\begin{align}\label{eq:realrho}
		 \Re \rho_{\alpha,\beta,\gamma}(\xi) = (1+4\pi^2 \abs{\xi}^2 \sigma) \Re m_{\alpha,\beta}(\xi,-\gamma), \quad \Im \rho_{\alpha,\beta,\gamma}(\xi) = 2 \pi \gamma \xi_1 + (1+4\pi^2 \abs{\xi}^2 \sigma) \Im m_{\alpha,\beta}(\xi,-\gamma).
	\end{align}
	Next we call that by \eqref{eq:realm}, we have $\abs{\xi}^2 \lesssim - \Re \overline{m_{\alpha,\beta}(\xi,-\gamma)}$ for a.e. $\abs{\xi} \le 1$ and $\abs{\xi}^{-1} \lesssim - \Re \overline{m_{\alpha,\beta}(\xi,-\gamma)}$ for a.e. $\abs{\xi} \ge 1$; by \eqref{eq:masymp}, $\abs{m_\alpha(\xi,-\gamma)} \asymp \abs{\xi}^2$ for a.e. $\abs{\xi} \le 1$ and $\abs{m_{\alpha,\beta}(\xi,-\gamma)} \asymp \abs{\xi}^{-1}$ for a.e. $\abs{\xi} \ge 1$. Then for a.e. $\abs{\xi} \le 1$, since $2\pi i \gamma \xi_1$ is purely imaginary and $m_{\alpha,\beta}(\xi,-\gamma) \overline{m_{\alpha,\beta}(\xi,-\gamma)}$ is real, we have  
	\begin{multline}\label{eq:rho2}
		 \abs{\xi_1} \abs{\xi}^2 \lesssim \abs{2 \pi \gamma \xi_1 \Re m_{\alpha,\beta}(\xi, -\gamma)} = \abs{\Im[ 2 \pi i \gamma \xi_1 m_{\alpha,\beta}(\xi,-\gamma) + (1+4\pi^2 \abs{\xi}^2 \sigma)m_{\alpha,\beta}(\xi,-\gamma) \overline{m_\alpha(\xi,-\gamma)}]}\\
		 \le \abs{\rho_{\alpha,_{\alpha,\beta}\beta,\gamma}(\xi) m_{\alpha,\beta}(\xi,-\gamma)} \lesssim \abs{\rho_{\alpha,\beta,\gamma}(\xi)} \abs{\xi}^2 \implies \abs{\xi_1} \lesssim \abs{\rho_{\alpha,\beta,\gamma}(\xi)},
	\end{multline}
and also by \eqref{eq:realrho},
\begin{align}\label{eq:rho3}
	 \abs{\xi}^2 \lesssim  \abs{(1 + 4\pi^2 \abs{\xi}^2 \sigma) \Re \overline{m_{\alpha,\beta}(\xi,-\gamma)}} = \abs{\Re \rho_{\alpha,\beta,\gamma}(\xi)} \lesssim \abs{\rho_{\alpha,\beta,\gamma}(\xi)}.
\end{align}
For a.e. $\abs{\xi} \ge 1$, we have 
\begin{align}\label{eq:rho4}
	 \abs{\xi} \lesssim \abs{\xi}^2 \abs{\Re m_{\alpha,\beta}(\xi,-\gamma)} \lesssim \abs{\Re \rho_{\alpha,\beta,\gamma}(\xi)} \lesssim \abs{\rho_{\alpha,\beta,\gamma}(\xi)}.
\end{align}
\eqref{eq: rho est 1} then follows by combining \eqref{eq:rho1}, \eqref{eq:rho2}, \eqref{eq:rho3}, and \eqref{eq:rho4}. We also note that by the third item of Lemma~\ref{lem:aymptotics m}, the constants appearing in the estimates above can be chosen to be independent of $\alpha$ if $\alpha \in (0,1)$. This proves the second item.

The third item follows from a similar set of arguments. 
\end{proof}

\subsection{Data compatibility and the associated isomorphism}

Now we are ready to discuss compatibility conditions associated to the solvability of \eqref{eq: linear w/o eta}. To do so we first define some spaces associated to the data. 

\begin{defn}\label{defn:dataY}
    Let $s \ge 0$.
	\begin{enumerate}
		 \item We define the Hilbert space 
		 \begin{multline}
			  \mathcal{Y}^s = \big\{ (f,g,h,k,l) \in H^s(\Omega ;\R^{n}) \times H^{s+1}(\Omega; \R )  \times  H^{s+3/2}(\Sigma_b ; \R)  \\ \times H^{s+1/2}(\Sigma_b ; \R^{n})  \times  H^{s+1/2}(\Sigma_0 ; \R^{n-1}) \mid \norm{(f,g,h,k,l)}_{\mathcal{Y}^s} < \infty \big \},
		 \end{multline}
		 where we equip $\mathcal{Y}^s$ with the norm defined via 
		 \begin{align}
			  \norm{(f,g,h,k,l)}_{\mathcal{Y}^s}^2 = \norm{f}_{H^s}^2 + \norm{g}_{H^{s+1}}^2 + \norm{h}_{H^{s+3/2}}^2  + \norm{k}_{H^{s+1/2}}^2 + \norm{l}_{H^{s+1/2}}^2  + \left[ h - \int_0^b g(\cdot, x_n) \; dx_n\right]_{\dot{H}^{-1}}^2.   
		 \end{align}
		 \item We define the Hilbert space 
		 \begin{multline}
			  \mathcal{Z}^s = \big\{ (f,g,h,k) \in H^s(\Omega ;\R^{n}) \times H^{s+1}(\Omega; \R )  \times  H^{s+3/2}(\Sigma_b ; \R) \times H^{s+1/2}(\Sigma_b ; \R^{n}) \mid \norm{(f,g,h,k)}_{\mathcal{Y}^s} < \infty \big \},
		 \end{multline}
		 where we equip $\mathcal{Z}^s$ with the norm defined via 
		 \begin{align}
			  \norm{(f,g,h,k)}_{\mathcal{Z}^s}^2 = \norm{f}_{H^s}^2 + \norm{g}_{H^{s+1}}^2 + \norm{h}_{H^{s+3/2}}^2  + \norm{k}_{H^{s+1/2}}^2 + \norm{l}_{H^{s+1/2}}^2  + \left[ h - \int_0^b g(\cdot, x_n) \; dx_n\right]_{\dot{H}^{-1}}^2.   
		 \end{align}
	\end{enumerate}
	
    \end{defn}

 Next we define the bilinear maps associated to the data spaces $\mathcal{Y}^s$ and $\mathcal{Z}^s$. 

\begin{defn}
Let $\R \ni \alpha > 0$, $\beta \in \R^{n \times n}$ be positive definite,  $\gamma \in \R$, and $\R \ni s \ge 0$. We define the bilinear map 
	 \begin{align}
		  \mathscr{B}_{\alpha,\beta,\gamma} : [H^s(\Omega ;\R^{n}) \times H^{s+1}(\Omega; \R )  \times H^{s+3/2}(\Sigma_b ; \R) \times H^{s+1/2}(\Sigma_b ; \R^n)   \times H^{s+1/2}(\Sigma_0 ; \R^{n-1}) ] \times [H^{s+1/2}(\Sigma_b ; \R) ] \to \R 
	 \end{align}
	 via 
	 \begin{align}\label{eq: bilinear B}
		  \mathscr{B}_{\alpha,\beta,\gamma} ( (f,g,h,k,l), \psi) = \int_{\Omega} (f \cdot v - gq)- \int_{\Sigma_b} (k \cdot v -  h\psi) + \frac{1}{\alpha} \int_{\Sigma_0} l \cdot v',
	 \end{align}
	 where $(v,q) = \Phi^{-1}_{\alpha, -\gamma, \beta^T}(0,0,\psi e_n,0)\in \Hzerostan{s+2}(\Omega;\R^n) \times H^{s+1}(\Omega ; \R)$ is the unique solution to the normal stress problem \eqref{eq:adjoint} guaranteed by Theorem~\ref{thm:classicalexistence}. Since $\Phi_{\alpha, \beta^T, -\gamma}^{-1}$ is an isomorphism, we have that $\mathscr{B}_{\alpha,\beta,\gamma}$ is continuous. We define the left kernel of $\mathscr{B}_{\alpha,\beta,\gamma}$ as
	 \begin{multline}
		 \overleftarrow{\ker} \mathscr{B}_{\alpha,\beta,\gamma} = \{ (f,g,h,k,l) \in H^s(\Omega ;\R^{n})  \times H^{s+1}(\Omega; \R )    \times  H^{s+3/2}(\Sigma_b ; \R)   \\  \times H^{s+1/2}(\Sigma_b ; \R^n) \times H^{s+1/2}(\Sigma_0 ; \R^{n-1})      \mid \mathscr{B}_{\alpha,\beta,\gamma} ( (f,g,h,k,l), \psi) =0 \; \forall \psi \in H^{s+1/2}(\Sigma_b ; \R) \}.
	 \end{multline}
	 Since $\overleftarrow{\ker} \mathscr{B}_{\alpha,\beta,\gamma}$ is a closed subspace of $H^s(\Omega ;\R^{n}) \times H^{s+1}(\Omega; \R ) \times H^{s+3/2}(\Sigma_b ; \R) \times H^{s+1/2}(\Sigma_b ; \R^n) \times H^{s+1/2}(\Sigma_0 ; \R^{n-1}) $ and $\mathcal{Y}^s \cap \overleftarrow{\ker} \mathscr{B}_{\alpha,\beta,\gamma}$ is a closed subspace of $\mathcal{Y}^s$ inheriting the topology of $\mathcal{Y}^s$, we may regard $\mathcal{Y}^s \cap \overleftarrow{\ker} \mathscr{B}_{\alpha,\beta,\gamma}$ as a Hilbert space equipped with the inner product coming from $\mathcal{Y}^s$. 
\end{defn}

Now we are ready to record the isomorphism associated to the overdetermined problem \eqref{eq: overdetermined}.

\begin{thm} \label{thm: overdeterminediso}
Let $\R \ni \alpha > 0$, $\beta \in \R^{n \times n}$ be positive definite, $\gamma \in \R$ and $\R \ni s \ge 0$. Consider the bounded linear map $\Psi_{\alpha,\beta,\gamma} : \Hzerostan{s+2}(\Omega ; \R^n) \times H^{s+1}(\Omega;\R) \to \mathcal{Y}^s \cap \overleftarrow{\ker} \mathscr{B}_{\alpha,\beta,\gamma}$ defined via $\Psi_{\alpha,\beta,\gamma}(u,p) = \mathcal{P}(\Phi_{\alpha,\beta,\gamma}(u,p), u_n)$, 	 where $\Phi_{\alpha,\beta,\gamma}$ is defined in Theorem \ref{thm:classicalexistence} and $\mathcal{P}:  H^s(\Omega ;\R^{n}) \times H^{s+1}(\Omega; \R )   \times H^{s+1/2}(\Sigma_b ; \R^{n})  \times  H^{s+1/2}(\Sigma_0 ; \R^{n-1}) \times  H^{s+3/2}(\Sigma_b ; \R)  \to \mathcal{Y}^s$ is a permutation map defined via $\mathcal{P}(f,g,k,l,h) = (f,g,h,k,l)$.  Then $\Psi_{\alpha,\beta,\gamma}$ is an isomorphism. 
\end{thm}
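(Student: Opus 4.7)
The plan is to show that $\Psi_{\alpha,\beta,\gamma}$ is a bounded bijection by combining the isomorphism $\Phi_{\alpha,\beta,\gamma}$ from Theorem~\ref{thm:classicalexistence} with a primal-dual integration-by-parts identity that encodes the extra equation $u_n|_{\Sigma_b} = h$ as the compatibility $\overleftarrow{\ker}\mathscr{B}_{\alpha,\beta,\gamma}$. Continuity of $\Psi_{\alpha,\beta,\gamma}$ into the ambient product space is immediate from the continuity of $\Phi_{\alpha,\beta,\gamma}$ together with the trace inequality $\norm{u_n|_{\Sigma_b}}_{H^{s+3/2}(\Sigma_b)} \lesssim \norm{u}_{H^{s+2}(\Omega)}$, and the additional $\dot{H}^{-1}$ seminorm in the $\mathcal{Y}^s$ norm is controlled by Lemma~\ref{lem:divtrace} applied to $u \in \Hzerotan(\Omega;\R^n)$.

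The technical core is the verification that $\Psi_{\alpha,\beta,\gamma}(u,p) \in \overleftarrow{\ker}\mathscr{B}_{\alpha,\beta,\gamma}$. Given $\psi \in H^{s+1/2}(\Sigma_b;\R)$, I would let $(v,q) = \Phi_{\alpha,\beta^T,-\gamma}^{-1}(0,0,\psi e_n, 0)$, dot the momentum equation for $(u,p)$ against $v$, dot the adjoint momentum equation against $u$, and subtract after integrating by parts. In the difference: the symmetric viscous contributions $\tfrac{\mu}{2}\mathbb{D}u : \mathbb{D}v$ cancel; the slip boundary terms cancel thanks to $\beta u' \cdot v' = \beta^T v' \cdot u'$, which is precisely why the adjoint is built with $\beta^T$ in place of $\beta$; the transport terms combine into $-\gamma \int_\Omega \p_1 (u \cdot v) = 0$ by decay at infinity; the $p\,\diverge v$ contribution drops because the adjoint is divergence-free; and finally $\diverge u = g$, $u_n|_{\Sigma_b} = h$, $u_n|_{\Sigma_0} = 0$ together with $S(q,v)e_n = \psi e_n$ assemble the remaining terms into exactly
\begin{equation*}
\int_\Omega (f\cdot v - gq) - \int_{\Sigma_b}(k \cdot v - h\psi) + \frac{1}{\alpha} \int_{\Sigma_0} l \cdot v' = 0,
\end{equation*}
which is the identity $\mathscr{B}_{\alpha,\beta,\gamma}((f,g,h,k,l),\psi) = 0$. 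I expect this duality identity, with its careful bookkeeping of signs and boundary contributions for the Navier-slip term, to be the main obstacle.

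Injectivity of $\Psi_{\alpha,\beta,\gamma}$ follows immediately from that of $\Phi_{\alpha,\beta,\gamma}$, since the first four components of $\Psi_{\alpha,\beta,\gamma}(u,p)$ already reconstruct $\Phi_{\alpha,\beta,\gamma}(u,p)$. For surjectivity, given $(f,g,h,k,l) \in \mathcal{Y}^s \cap \overleftarrow{\ker}\mathscr{B}_{\alpha,\beta,\gamma}$, I would set $(u,p) := \Phi_{\alpha,\beta,\gamma}^{-1}(f,g,k,l)$ and $h_0 := u_n|_{\Sigma_b}$, so $\Psi_{\alpha,\beta,\gamma}(u,p) = (f,g,h_0,k,l)$. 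The previous paragraph supplies $(f,g,h_0,k,l) \in \overleftarrow{\ker}\mathscr{B}_{\alpha,\beta,\gamma}$, and subtracting from the hypothesis gives $\mathscr{B}_{\alpha,\beta,\gamma}((0,0,h-h_0,0,0),\psi) = \int_{\Sigma_b}(h-h_0)\psi = 0$ for every $\psi \in H^{s+1/2}(\Sigma_b)$. Choosing $\psi = h - h_0 \in H^{s+3/2}(\Sigma_b) \subset H^{s+1/2}(\Sigma_b)$ forces $h = h_0$, so the image equals $(f,g,h,k,l)$, and the open mapping theorem then upgrades this continuous bijection to a topological isomorphism.
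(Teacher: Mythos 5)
Your proposal is correct and follows essentially the same route as the paper's proof: continuity via trace theory and Lemma~\ref{lem:divtrace}, the kernel membership via the primal-dual integration-by-parts identity with the adjoint solution $(v,q)=\Phi_{\alpha,\beta^T,-\gamma}^{-1}(0,0,\psi e_n,0)$, injectivity inherited from $\Phi_{\alpha,\beta,\gamma}$, and surjectivity by applying $\Phi_{\alpha,\beta,\gamma}^{-1}$ to the first four entries and using the compatibility condition to recover the fifth. Your surjectivity argument is phrased slightly more cleanly than the paper's (using linearity of $\mathscr{B}_{\alpha,\beta,\gamma}$ to reduce directly to $\int_{\Sigma_b}(h-h_0)\psi = 0$), but it is the same idea.
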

\begin{proof}
To prove the first item, we first show that the map $\Psi_{\alpha,\beta,\gamma}$ is well-defined. Let $(u,p) \in \Hzerostan{s+2}(\Omega ; \R^n) \times H^{s+1}(\Omega;\R)$. By Theorem~\ref{thm:classicalexistence}, Lemma~\ref{lem:divtrace} and trace theory, we have $\Psi_{\alpha,\beta,\gamma}(u,p) \in \mathcal{Y}^s$. To show that $\Psi_{\alpha,\beta,\gamma}(u,p) \in \overleftarrow{\ker} \mathscr{B}_{\alpha,\beta,\gamma}$, we let $(f,g,k,l,h) = \Psi_{\alpha,\beta,\gamma}(u,p)$, and for any $\psi \in H^{s+1/2}(\Sigma_b;\R)$ we let $(v,q) = \Phi^{-1}_{\alpha, \beta^T, -\gamma}(0,0,\psi e_n, 0)$. Then  
\begin{multline}
	\mathscr{B}_{\alpha,\beta,\gamma} ( (f,g,h,k,l), \psi) = \int_{\Omega} (f \cdot v - gq)- \int_{\Sigma_b} (k \cdot v -  h \psi) + \frac{1}{\alpha} \int_{\Sigma_0} l \cdot v' \\ = \int_\Omega ( \diverge S(p,u) -\gamma \p_1 u  ) \cdot v - (\diverge u)q - \int_{\Sigma_b}S(p,u) e_n \cdot v -  u \cdot \psi e_n + \frac{1}{\alpha} \int_{\Sigma_0} [\alpha S(p,u) e_n + \beta u]' \cdot v' \\
	= \int_{\Omega} u \cdot (\diverge S(v,q) + \gamma \p_1 v) + p \diverge v + \frac{1}{\alpha} \int_{\Sigma_0} u' \cdot [\alpha S(q,v) e_n + \beta^T v]' = 0.
\end{multline}
This shows that the map $\Psi_{\alpha,\beta,\gamma}(u,p)$ is well-defined, and it is clearly linear and bounded. 

The injectivity of $\Psi_{\alpha,\beta,\gamma}$ follows from Theorem~\ref{thm:classicalexistence}. To prove that $\Psi_{\alpha,\beta,\gamma}$ is surjective, for any $(f,g,h,k,l) \in \mathcal{Y}^s \cap \overleftarrow{\ker} \mathscr{B}_{\alpha,\beta,\gamma}$ we let $(u,p) = \Phi_{\alpha, \beta, \gamma}^{-1}(f,g,k,l)$, and for any $\psi \in H^{s+1/2}(\Sigma_b;\R)$ we let $(v,q) = \Phi^{-1}_{\alpha, \beta^T, -\gamma}(0,0,\psi e_n,0)$. Since $(f,g,h,k,l) \in  \overleftarrow{\ker} \mathscr{B}_{\alpha,\beta,\gamma}$, we then have 
\begin{align}
	- \int_{\Sigma_b} u_n \psi = \int_\Omega (f \cdot v - g q) - \int_{\Sigma_b} k \cdot v + \frac{1}{\alpha} \int_{\Sigma_0} l \cdot v' = - \int_{\Sigma_0} h \psi,
\end{align}
and so  $u_n = h$ on $\Sigma_b$. This shows that $\Psi_{\alpha, \beta, \gamma}$ is surjective, and the desired conclusion follows. 

To prove the second item we follow a similar set of arguments as above, where we use the isomorphism $\Theta_{\alpha,\beta,\gamma}$ in place of $\Phi_{\alpha, \beta, \gamma}$, Corollary~\ref{thm: alpha classicalexistence} in place of Theorem~\ref{thm:classicalexistence},the bilinear map $\mathscr{B}_\gamma$ in place of  $\mathscr{B}_{\alpha,\beta,\gamma}$ and the Hilbert space $\mathcal{Z}^s$ in place of $\mathcal{Y}^s$.  The fact that the operator norm of $\Psi_{\alpha, \beta, \gamma}$ is independent of $\alpha$ for $\alpha \in (0,1)$ follows from Proposition~\ref{prop:ind of alpha}.
\end{proof}

Next we would like to introduce a quantitative way of measuring how close a data tuple $(f,g,h,k,l)$ is to being compatible. To do so we introduce the linear map $\Lambda_{\alpha,\beta,\gamma}: H^s(\Omega ;\R^{n}) \times H^{s+1}(\Omega; \R ) \times H^{s+3/2}(\Sigma_b ; \R) \times H^{s+1/2}(\Sigma_b ; \R^n) \times H^{s+1/2}(\Sigma_0 ; \R^{n-1}) \to L^2(\Sigma_b; \R)$ induced by the bilinear map $\mathscr{B}_{\alpha,\beta,\gamma} $. The induced linear map $\Lambda_{\alpha,\beta,\gamma}$ is defined via 
\begin{align}\label{eq: Lambda gamma}
	 \left\langle \Lambda_{\alpha,\beta,\gamma}(f,g,h,k,l) , \psi \right\rangle_{L^2} = \mathscr{B}_{\alpha,\beta,\gamma}((f,g,h,k,l), \psi),
\end{align} 
where we use the canonical injection $i : H^{s+1/2}(\Sigma_b ; \F) \hookrightarrow L^2(\Sigma_  b; \F)$ to identify $\psi$ with an element of $L^2$. First we show that $\Lambda_{\alpha,\beta,\gamma}(f,g,h,k,l)$ commutes with tangential multipliers defined in Definition~\ref{defn: tangential m}.

\begin{prop}\label{prop:lambda commutes}
	Suppose $\omega \in L^\infty(\R^{n-1}; \C)$ and consider the tangential multiplier $M_\omega$ defined in Definition~\ref{defn: tangential m}. Then 
	\begin{align}
		 M_\omega \Lambda_{\alpha,\beta,\gamma}(f,g,h,k,l) = \Lambda_{\alpha,\beta,\gamma}(M_\omega f,M_\omega g,M_\omega h,M_\omega k,M_\omega l).
	\end{align}
\end{prop}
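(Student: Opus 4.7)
The plan is to unravel the $L^2$ pairing defining $\Lambda_{\alpha,\beta,\gamma}$ in the form \eqref{eq: Lambda gamma}, push the tangential multiplier $M_\omega$ through to the test function via the Plancherel-type adjoint identity, and then invoke Theorem~\ref{thm: chi commutes with M_omega} to commute $M_{\bar\omega}$ with the solution map for the adjoint stress problem.

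First, I would fix an arbitrary $\psi \in H^{s+1/2}(\Sigma_b ; \F)$ and let $(v,q) = \Phi^{-1}_{\alpha, \beta^T, -\gamma}(0,0,\psi e_n, 0)$ be the associated solution to the adjoint problem. Using the identification built into the $L^2$ pairing and the fact that Fourier multipliers obey the adjoint relation $\langle M_\omega F, G\rangle_{L^2} = \langle F, M_{\bar\omega} G\rangle_{L^2}$ (whether on $\Omega$, $\Sigma_b$, or $\Sigma_0$; this follows from Plancherel applied tangentially together with Fubini in the $x_n$-direction), I compute
\begin{equation*}
\bigl\langle M_\omega \Lambda_{\alpha,\beta,\gamma}(f,g,h,k,l), \psi \bigr\rangle_{L^2} = \bigl\langle \Lambda_{\alpha,\beta,\gamma}(f,g,h,k,l), M_{\bar\omega}\psi \bigr\rangle_{L^2} = \mathscr{B}_{\alpha,\beta,\gamma}\bigl((f,g,h,k,l), M_{\bar\omega}\psi\bigr).
\end{equation*}

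Next, by Theorem~\ref{thm: chi commutes with M_omega} the tangential multiplier commutes with the strong solution map $\Phi^{-1}_{\alpha, \beta^T, -\gamma}$, so the solution associated to the test stress $M_{\bar\omega}\psi e_n$ is precisely $(M_{\bar\omega}v, M_{\bar\omega}q)$. Substituting this into the definition \eqref{eq: bilinear B} yields
\begin{equation*}
\mathscr{B}_{\alpha,\beta,\gamma}\bigl((f,g,h,k,l), M_{\bar\omega}\psi\bigr) = \int_\Omega \bigl(f\cdot M_{\bar\omega}v - g\, M_{\bar\omega}q\bigr) - \int_{\Sigma_b}\bigl(k\cdot M_{\bar\omega}v - h\,M_{\bar\omega}\psi\bigr) + \frac{1}{\alpha}\int_{\Sigma_0} l\cdot (M_{\bar\omega}v)'.
\end{equation*}
Applying the adjoint identity once more to each of these integrals (noting that $M_\omega$ commutes with restriction to coordinate components and with tangential trace, as established in the proof of Theorem~\ref{thm: chi commutes with M_omega}) allows me to transfer $M_{\bar\omega}$ from the test factor onto the data factor, picking up an $M_\omega$ on the data side.

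The resulting expression is exactly $\mathscr{B}_{\alpha,\beta,\gamma}\bigl((M_\omega f, M_\omega g, M_\omega h, M_\omega k, M_\omega l), \psi\bigr) = \langle \Lambda_{\alpha,\beta,\gamma}(M_\omega f, M_\omega g, M_\omega h, M_\omega k, M_\omega l), \psi \rangle_{L^2}$, and since $\psi$ was arbitrary, the desired identity follows. The only real subtlety is the careful bookkeeping of conjugations: one must verify that the adjoint of $M_\omega$ is $M_{\bar\omega}$ in each of the $L^2$ pairings appearing in $\mathscr{B}_{\alpha,\beta,\gamma}$ (bulk over $\Omega$, trace over $\Sigma_b$, and tangential trace over $\Sigma_0$), but each of these follows routinely from Plancherel on $\R^{n-1}$ together with Fubini, so I do not expect any serious obstacle beyond this bookkeeping.
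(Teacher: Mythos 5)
Your proposal is correct and matches the paper's strategy exactly: test against an arbitrary $\psi$, push $M_\omega$ onto $\psi$ by the Plancherel adjoint, commute the resulting $M_{\overline{\omega}}$ through the adjoint solution operator via Theorem~\ref{thm: chi commutes with M_omega}, and then transfer it back onto the data. Your explicit tracking of the conjugate symbol $\overline{\omega}$ as the $L^2$-adjoint of $M_\omega$ is in fact more careful than the printed proof, which writes $M_\omega$ in both slots of the pairing --- this is harmless there only because the symbols actually used later, $\mathbbm{1}_{B(0,1)}$ and $\mathbbm{1}_{B(0,1)^c}$, are real-valued, whereas your bookkeeping is the correct one for general $\omega$ satisfying $\overline{\omega}=\delta_{-1}\omega$.
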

\begin{proof}
For a given $\psi \in H^{s+1/2}(\Sigma_b ; \R)$ we define $(v,q) = \Phi^{-1}_{\alpha, \beta^T, -\gamma}(0,0,\psi e_n, 0)$. Then by Theorem~\ref{thm: chi commutes with M_omega},  we have 
\begin{multline}
	\left\langle  M_\omega \Lambda_{\alpha,\beta,\gamma}(f,g,h,k,l) , \psi \right\rangle_{L^2} = \left\langle \Lambda_{\alpha,\beta,\gamma}(f,g,h,k,l) ,  M_\omega  \psi \right\rangle_{L^2}   = \mathscr{B}_{\alpha,\beta,\gamma}((f,g,h,k,l),  M_\omega  \psi) \\
	=  \int_{\Omega} (f \cdot M_\omega v - g M_\omega q)- \int_{\Sigma_b} (k \cdot M_\omega v -  h M_\omega \psi)  + \frac{1}{\alpha} \int_{\Sigma_0} l \cdot M_\omega v' 
	=  \int_{\Omega} ( M_\omega f \cdot  v - M_\omega g  q)  \\ - \int_{\Sigma_b} ( M_\omega k \cdot  v -  M_\omega h  \psi) + \frac{1}{\alpha} \int_{\Sigma_0} M_\omega l \cdot v'  = \langle \Lambda_{\alpha,\beta,\gamma}(M_\omega f,M_\omega g,M_\omega k,M_\omega l, M_\omega h), \psi \rangle.
\end{multline}
Since this is true for all $\psi \in H^{s+1/2}(\Sigma_b ; \R)$, the desired conclusion follows.

\end{proof}
Next we prove the main theorem of this section, which describes the low-frequency behavior of the images of $\mathcal{Y}^s$ and $\mathcal{Z}^s$ under $\Lambda_{\alpha,\beta,\gamma}$. 

\begin{thm}\label{thm: regularity of Lambda}
Suppose $\R \ni \alpha > 0,  \R \ni s \ge 0$. The following hold.
\begin{enumerate}
	 \item If $(f,g,h,k,l) \in \mathcal{Y}^s$, then $\Lambda_{\alpha,\beta,\gamma}(f,g,h,k,l) \in \dot{H}^{-1}(\Sigma_b ; \R) \cap H^{s+3/2}(\Sigma_b;\R)$ and there exists a constant $c > 0$ for which 
	 \begin{align}\label{eq: lambda est}
		  \norm{\Lambda_{\alpha,\beta,\gamma}(f,g,h,k,l)}_{\dot{H}^{-1} \cap H^{s+3/2}} \le c \norm{(f,g,h,k,l)}_{\mathcal{Y}^s}. 
	 \end{align}
	 \item If $(f,g,h,k) \in \mathcal{Z}^s$, then $\Lambda_{\alpha,\beta,\gamma}(f,g,h,k,0) \in \dot{H}^{-1}(\Sigma_b ; \R) \cap H^{s+3/2}(\Sigma_b;\R)$ and there exists a constant $c > 0$ for which 
	 \begin{align}\label{eq: lambda est l = 0}
		  \norm{\Lambda_{\alpha,\beta,\gamma}(f,g,h,k,0)}_{\dot{H}^{-1} \cap H^{s+3/2}} \le c \norm{(f,g,h,k)}_{\mathcal{Z}^s}. 
	 \end{align}
	 \item Furthermore, there exists a constant $c > 0$ for which the estimate \eqref{eq: lambda est} holds for all $\alpha \in (0,1)$. In other word, the constant $c > 0$ can be chosen to be independent of $\alpha$ if $\alpha \in (0,1)$.
\end{enumerate}

\end{thm}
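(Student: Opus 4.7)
The plan is to attack $\Lambda_{\alpha,\beta,\gamma}$ directly on the Fourier side. I would first unpack the defining bilinear form $\mathscr{B}_{\alpha,\beta,\gamma}$ and apply Parseval in the tangential variable, using that the adjoint solution $(v, q) = \Phi^{-1}_{\alpha, \beta^T, -\gamma}(0, 0, \psi e_n, 0)$ is realized via the Fourier multipliers $V_{\alpha,\beta}(\cdot, \cdot, -\gamma)$ and $Q_{\alpha,\beta}(\cdot, \cdot, -\gamma)$ from Theorem~\ref{thm: V Q symbols}. This produces an explicit formula for $\widehat{\Lambda_{\alpha,\beta,\gamma}(f,g,h,k,l)}(\xi)$ as a sum of five summands involving $V$, $Q$, $Q$ evaluated at $x_n=b$, $V'$ evaluated at $x_n=0$, and the data. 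The crucial algebraic trick is then to split $\overline{Q} = \overline{Q-1} + 1$ inside the pressure integral, which surfaces the compatibility combination $\hat h(\xi) - \int_0^b \hat g(\xi, x_n)\,dx_n$ as an explicit summand; this is exactly the Fourier symbol of the quantity whose $\dot H^{-1}$-seminorm appears in the $\mathcal{Y}^s$-norm from Definition~\ref{defn:dataY}.

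To derive the $\dot H^{-1}$ bound I would use the low-frequency decay $\|V(\xi, \cdot, -\gamma)\|_{L^2_{x_n}}$, $|V(\xi, 0, -\gamma)|$, $|V(\xi, b, -\gamma)| \lesssim \min\{|\xi|, |\xi|^{-1}\}$ and $\|Q-1\|_{L^2_{x_n}} \lesssim |\xi|$ from Theorems~\ref{thm: V Q symbols} and~\ref{thm:VandQ} (plus trace theory) to extract a factor of $|\xi|$ on every summand of $\widehat{\Lambda}(\xi)$ other than the compatibility one, so that $|\xi|^{-2}|\widehat{\Lambda}(\xi)|^2$ is pointwise bounded by the Parseval integrand of $\|(f,g,h,k,l)\|_{\mathcal{Y}^s}^2$; integration in $\xi$ then closes the seminorm bound. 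For the $H^{s+3/2}$ bound I would invoke the tangential-multiplier commutation of Proposition~\ref{prop:lambda commutes}, applying $\mathfrak{J}^{s+3/2}_M$ to reduce the problem to a uniform-in-$M$ $L^2(\Sigma_b)$ estimate, and then combine the high-frequency $|\xi|^{-1}$ smoothing of $V$ and $Q-1$ with the staggered regularity offsets in the $\mathcal{Y}^s$-norm ($H^s$ for $f$, $H^{s+1}$ for $g$, $H^{s+3/2}$ for $h$, $H^{s+1/2}$ for $k$ and $l$) to match Sobolev exponents term-by-term and in $M$. I expect the main technical obstacle to be precisely this last step: the $|\xi|^{-1}$ symbol decay provides only tangential smoothing while the data regularity is isotropic, so matching exponents at high frequencies will likely require splitting into low- and high-tangential-frequency regimes and performing a careful anisotropic interpolation between tangential and isotropic Sobolev norms.

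Items (2) and (3) then follow with essentially no extra work. Item (2), the bound against the $\mathcal{Z}^s$-norm, is obtained by setting $l = 0$ throughout the above analysis and reading off the estimate; item (3), the $\alpha$-uniform bound over $\alpha \in (0,1)$, comes from invoking the second items of Theorems~\ref{thm: V Q symbols} and~\ref{thm:VandQ}, which assert that the symbol bounds on $V$ and $Q-1$ are $\alpha$-uniform on $(0,1)$. The key subtlety is that the Fourier formula for $\widehat{\Lambda}$ carries an explicit $\alpha^{-1}$ prefactor on the $\hat l$-contribution, so $\alpha$-uniform control over $\alpha \in (0,1)$ is only available once $l = 0$; this is precisely why item (3) must be paired with the $\mathcal{Z}^s$-setting of item (2) rather than the general $\mathcal{Y}^s$-setting of item (1).
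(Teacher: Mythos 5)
Your framework for the low-frequency $\dot H^{-1}$ bound is sound and matches the paper in substance: the paper also computes the pairing $\langle \Lambda(\text{data}), \psi\rangle$ on the Fourier side, isolates the compatibility combination $\hat h - \int_0^b \hat g\,dx_n$ via the $\overline{Q} = \overline{Q-1}+1$ split, and then uses the low-frequency decay of $V$, $V(\cdot,0)$, and $Q-1$ from Theorems~\ref{thm: V Q symbols} and~\ref{thm:VandQ} together with the divergence-trace estimate; the only cosmetic difference is that the paper phrases it as a duality bound $\sup\{|\int_{B(0,1)}\hat\Lambda\,\hat\psi\,| : [M_{\mathbbm{1}_{B(0,1)}}\psi]_{\dot H^1}\le 1\}$ rather than a pointwise symbol estimate. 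Your reading of item (3) is also correct and sharp: the $\alpha^{-1}$ prefactor on the $\hat l$ contribution blocks $\alpha$-uniformity unless $l=0$, and the paper's proof confirms this even though the statement of item (3) references \eqref{eq: lambda est} rather than \eqref{eq: lambda est l = 0}.

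The genuine gap is in your plan for the high-frequency $H^{s+3/2}$ bound, and the obstacle you flagged is not closable by the anisotropic interpolation you propose. Once you have the Fourier formula
\begin{align}
    \hat\Lambda(\xi) = \Bigl(\hat h - \int_0^b \hat g\,dx_n\Bigr) + \int_0^b \hat f\cdot\overline{V}\,dx_n - \int_0^b \hat g\,\overline{(Q-1)}\,dx_n - \hat k\cdot\overline{V(\xi,b)} + \tfrac{1}{\alpha}\hat l\cdot \overline{V(\xi,0)'},
\end{align}
the summand $\int_0^b\hat g\,dx_n$ would need $s+3/2$ tangential derivatives worth of decay to close, but $g\in H^{s+1}(\Omega)$ only supplies $s+1$; the extra $1/2$ derivative on the trace of $u_n$ comes from the isotropic $H^{s+2}$ gain of $\Phi^{-1}$ together with the trace embedding $H^{s+2}(\Omega)\hookrightarrow H^{s+3/2}(\Sigma_b)$, and this is not visible in the purely tangential multipliers $V, Q-1$. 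Moreover the compatibility combination $\hat h-\int_0^b\hat g$ only carries a $\dot H^{-1}$ seminorm in $\mathcal{Y}^s$, so at high frequencies there is hidden cancellation among your five summands (they sum to $\hat h-\hat u_n(\xi,b)$) that a term-by-term estimate destroys. The paper avoids this by commuting $\Lambda$ with $M_{\mathbbm{1}_{B(0,1)^c}}$ via Proposition~\ref{prop:lambda commutes} and estimating the high-frequency piece directly through the identity $\Lambda(f,g,h,k,l) = h - u_n\rvert_{\Sigma_b}$ with $(u,p) = \Phi_{\alpha,\beta,\gamma}^{-1}(f,g,k,l)$: the isomorphism of Theorem~\ref{thm:classicalexistence} gives $\|u\|_{H^{s+2}}\lesssim \|f\|_{H^s}+\|g\|_{H^{s+1}}+\|k\|_{H^{s+1/2}}+\|l\|_{H^{s+1/2}}$, and trace theory then gives $\|u_n\rvert_{\Sigma_b}\|_{H^{s+3/2}}\lesssim\|u\|_{H^{s+2}}$, with no anisotropic bookkeeping required. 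You should replace your proposed high-frequency strategy with this identity.
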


\begin{proof}
	We first note that the second item follows immediately from the first item. To prove the first item, we first note that by Proposition~\ref{prop:lambda commutes},
	\begin{multline}\label{eq: Lambda gamma split}
		\norm{\Lambda_{\alpha,\beta,\gamma}(f,g,h,k,l)}_{\dot{H}^{-1} \cap H^{s+3/2}} \lesssim \norm{M_{\mathbbm{1}_{B(0,1)}} \Lambda_{\alpha,\beta,\gamma}(f,g,h,k,l)}_{\dot{H}^{-1}} + \norm{M_{\mathbbm{1}_{B(0,1)^c}} \Lambda_{\alpha,\beta,\gamma}(f,g,h,k,l)}_{H^{s+3/2}} \\
		 = \norm{M_{\mathbbm{1}_{B(0,1)}} \Lambda_{\alpha,\beta,\gamma}(f,g,h,k,l)}_{\dot{H}^{-1}} + \norm{\Lambda_{\alpha,\beta,\gamma}(M_{\mathbbm{1}_{B(0,1)^c}}  f, M_{\mathbbm{1}_{B(0,1)^c}}  g, M_{\mathbbm{1}_{B(0,1)^c}}  k, M_{\mathbbm{1}_{B(0,1)^c}} l, M_{\mathbbm{1}_{B(0,1)^c}} h)}_{H^{s+3/2}} \\
		 \lesssim_\alpha \norm{M_{\mathbbm{1}_{B(0,1)}} \Lambda_{\alpha,\beta,\gamma}(f,g,h,k,l)}_{\dot{H}^{-1}} + \norm{M_{\mathbbm{1}_{B(0,1)^c}} f}_{H^s} + \norm{M_{\mathbbm{1}_{B(0,1)^c}} g}_{H^{s+1}}   \\ + \norm{M_{\mathbbm{1}_{B(0,1)^c}}k}_{H^{s+1/2}} + \norm{M_{\mathbbm{1}_{B(0,1)^c}} l}_{H^{s+1/2}} + \norm{M_{\mathbbm{1}_{B(0,1)^c}} h}_{H^{s+3/2}} \\
		 \lesssim_\alpha \norm{M_{\mathbbm{1}_{B(0,1)}} \Lambda_{\alpha,\beta,\gamma}(f,g,h,k,l)}_{\dot{H}^{-1}} + \norm{f}_{H^s} + \norm{g}_{H^{s+1}} + \norm{k}_{H^{s+1/2}} + \norm{l}_{H^{s+1/2}}  + \norm{h}_{H^{s+3/2}}. 
	\end{multline}
	To arrive at the desired estimate it then suffices to control $\norm{M_{\mathbbm{1}_{B(0,1)}} \Lambda_{\alpha,\beta,\gamma}(f,g,h,k,l)}_{\dot{H}^{-1}}$. We note that for any $\psi \in L^2(\Sigma; \R)$, we may let $(v,q) = \Phi_{\alpha, \beta^T, -\gamma}^{-1}(0,0,\psi e_n,0)$ and compute
   \begin{multline}
   \int_{B(0,1)} \mathscr{F}[\Lambda_{\alpha,\beta,\gamma}(f,g,h,k,l)](\xi) \cdot \mathscr{F}[\psi](\xi) \; d\xi 
   = \langle M_{\mathbbm{1}_{B(0,1)}} \Lambda_{\alpha,\beta,\gamma}(f,g,h,k,l) , \psi \rangle_{L^2} \\
   = \langle  \Lambda_{\alpha,\beta,\gamma}(f,g,h,k,l) , M_{\mathbbm{1}_{B(0,1)}} \psi \rangle_{L^2} 
   = \mathscr{B}_{\alpha,\beta,\gamma}((f,g,h,k,l), M_{\mathbbm{1}_{B(0,1)}} \psi) \\
   =  \int_{\Omega} (f \cdot M_{\mathbbm{1}_{B(0,1)}} v - g M_{\mathbbm{1}_{B(0,1)}}  q)- \int_{\Sigma_b} (k \cdot M_{\mathbbm{1}_{B(0,1)}}  v -  h M_{\mathbbm{1}_{B(0,1)}} \psi) + \frac{1}{\alpha} \int_{\Sigma_0} l \cdot M_{\mathbbm{1}_{B(0,1)}} v' \\
   =  \int_{\Omega} f \cdot M_{\mathbbm{1}_{B(0,1)}} v - g M_{\mathbbm{1}_{B(0,1)}} (  q - \psi)- \int_{\Sigma_b} k \cdot M_{\mathbbm{1}_{B(0,1)}}  v  + \int_{\Sigma_b}M_{\mathbbm{1}_{B(0,1)}} \psi \left(h - \int_0^b g \right) +  \frac{1}{\alpha}  \int_{\Sigma_0} l \cdot M_{\mathbbm{1}_{B(0,1)}} v'.
   \end{multline}
   Therefore,
   \begin{multline}
	\abs{ \int_{B(0,1)} \mathscr{F}[\Lambda_{\alpha,\beta,\gamma}(f,g,h,k,l)](\xi) \cdot \mathscr{F}[\psi](\xi) \; d\xi } \lesssim_\alpha (\norm{f}_{H^s} + \norm{k}_{H^{s+1/2}} + \norm{l}_{H^{s+1/2}}  ) \norm{M_{\mathbbm{1}_{B(0,1)} }v}_{L^2}  \\ + \norm{g}_{L^2} \norm{ M_{\mathbbm{1}_{B(0,1)}} (  q - \psi)}_{L^2}  + \norm{M_{\mathbbm{1}_{B(0,1)}} \psi}_{\dot{H}^1} \left[ h - \int_0^b g\right]_{\dot{H}^{-1}}, 
\end{multline}
where we have used the second estimate in \eqref{eq: V estimates} on $V_{\alpha,\beta}(\xi,0,-\gamma)$ to handle to integral involving $l$. 

By Theorem~\ref{thm: chi commutes with M_omega}, we have $(M_{\mathbbm{1}_{B(0,1)} }v,M_{\mathbbm{1}_{B(0,1)} }q) = \Phi_{\alpha, \beta^T, -\gamma}^{-1}(0,0,M_{\mathbbm{1}_{B(0,1)} } \psi e_n,0)$. Then by \eqref{eq: Vpsi est} we have the bound $\norm{M_{\mathbbm{1}_{B(0,1)} }v}_{L^2} \lesssim_\alpha \norm{M_{\mathbbm{1}_{B(0,1)} }\psi}_{\dot{H}^1}$, and by Plancherel's theorem and the second estimate on $Q_{\alpha,\beta}$ in \eqref{eq: Q estimates}, we have $\norm{ M_{\mathbbm{1}_{B(0,1)}} (  q - \psi)}_{L^2} \lesssim_\alpha \norm{M_{\mathbbm{1}_{B(0,1)} }\psi}_{\dot{H}^1}$. By combining the previous estimates and the divergence-trace estimate \eqref{eq: divtraceestimate} we then have 
\begin{align}
	\abs{ \int_{B(0,1)} \mathscr{F}[\Lambda_{\alpha,\beta,\gamma}(f,g,h,k,l)](\xi) \cdot \mathscr{F}[\psi](\xi) \; d\xi }  \lesssim_\alpha \norm{(f,g,h,k,l)}_{\mathcal{Y}^s} \norm{M_{\mathbbm{1}_{B(0,1)} }\psi}_{\dot{H}^1}. 
\end{align}
Thus by duality, 
   \begin{multline}
	\norm{M_{\mathbbm{1}_{B(0,1)}} \Lambda_{\alpha,\beta,\gamma}(f,g,h,k,l)}_{\dot{H}^{-1}} \\ = \sup \bigg\{ \abs{ \int_{B(0,1)}  \mathscr{F}[\Lambda_{\alpha,\beta,\gamma}(f,g,h,k,l)](\xi) \cdot \mathscr{F}[\psi](\xi) \; d\xi } \mid [M_{\mathbbm{1}_{B(0,1)}} \psi]_{\dot{H}^1} \le 1 \bigg\} \lesssim_\alpha \norm{(f,g,h,k,l)}_{\mathcal{Y}^s}.
   \end{multline}
   To prove the third item, we note that if $l = 0$ then $\alpha$ does not appear in \eqref{eq: Lambda gamma split} and by the second items of Theorem~\ref{thm:energyequiv} and Theorem~\ref{thm:VandQ}, the constants in the estimates above can be chosen to be uniform in $\alpha$. The third item then follows.
\end{proof}

\section{Linear analysis with $\eta$} \label{sec: analysis with eta}
In this section we would like to establish the $\R$-solvability of the $\gamma-$Stokes system with gravity capillary boundary conditions 
\begin{align} \label{eq:gammacapillary}
	\begin{cases}
		\diverge S(p,u) - \gamma \p_1 u + (\nabla' \eta, 0)= f & \text{in} \; \Omega \\
	\diverge u = g, & \text{in} \; \Omega \\
	u_n + \gamma \p_1 \eta= h, & \text{on} \; \Sigma_b \\
	S(p,u)e_n + \sigma \Delta' \eta e_n = k, & \text{on} \; \Sigma_b \\
	(\alpha S(p,u)e_n)' + \beta u' = l , & \text{on} \; \Sigma_{0}\\
	u_n= 0, & \text{on} \; \Sigma_{0}.
\end{cases}
\end{align}

\subsection{Preliminaries}
First, we introduce the container space for the free surface function $\eta$. 

\begin{defn} 
Let $0 \le s \in \R$.  We define the specialized anisotropic Sobolev space  $X^s(\R^{d})$ to consist of $f\in\mathscr{S}'(\R^{d};\R)$ such that $\hat{f}\in L^1_{\loc}(\R^{d};\C)$ and 
\begin{equation}
 \norm{f}_{X^s}^2 := \int_{B(0,1)} \frac{\xi_1^2 + \abs{\xi}^4}{\abs{\xi}^2} \abs{\hat{f}(\xi)}^2 d\xi + \int_{B(0,1)^c} (1+\abs{\xi}^2)^{s}  \abs{\hat{f}(\xi)}^2 d\xi < \infty.
\end{equation}
\end{defn}

The following proposition summarizes the important properties of this space.

\begin{thm}\label{thm: X^s}
	Suppose $\R \ni s \ge 0$ and $d \ge 1$. The following hold. 
	\begin{enumerate}
		\item $X^s(\R^d)$ is a separable Hilbert space, and if $t \in \R$ and $s < t$, then we have the continuous inclusion $X^t(\R^d) \hookrightarrow X^s(\R^d)$. 
		
		\item If $d = 1$, we have $H^s(\R^d) = X^s(\R^d)$ and $\norm{\cdot}_{H^s}$ and $\norm{\cdot}_{X^s}$ are equivalent norms.  For $d \ge 2$, we have the continuous inclusion $H^s(\R^d) \hookrightarrow X^s(\R^d)$.
		
		\item If $s \ge 1$, then $\norm{\nabla f}_{H^{s-1}} \lesssim \norm{f}_{X^s}$ for $f \in X^s(\R^d)$. In particular, the map $\nabla: X^s(\R^d) \to H^{s-1}(\R^d; \R^d)$ is continuous. 

		\item  For every $f \in X^s(\R^{d})$ and $t > 0$, we can write $f=f_{l,t}+f_{h,t}$, where $f_{l,t}=\mathscr{F}^{-1} [\mathbbm{1}_{B(0,t)}\mathscr{F}[f] ] \in C^\infty_0(\R^{d} )$ and $f_{h,t}=\mathscr{F}^{-1} [\mathbbm{1}_{\R^d\setminus B(0,t)}\mathscr{F}[f] ]\in H^s(\R^{d} )$. Furthermore, we have the estimates
		\begin{align}\label{eq: low high Xs estimates}
			 \norm{f_{l,t}}_{C^k_b} = \sum_{\abs{\alpha} \le k} \norm{\p^\alpha f_{l,R}}_{L^\infty} \lesssim \norm{f_{l,t}}_{X^s} \; \text{for each} \; k \in \N \; \text{and} \; \norm{f_{h,t}}_{H^s} \lesssim \norm{f_{h,t}}_{X^s}.     
		\end{align}
		
		\item If $k \in \N$ and $s > k + d/2$, then we have the continuous inclusion $X^s(\R^{d}) \hookrightarrow C^k_0(\R^d; \R)$.
		
		\item  If $s > d/2$, then for any $f \in X^s(\R^{d}), g \in H^s(\R^{d})$ we have $fg \in H^s(\R^{d} )$;  moreover, $\norm{fg}_{H^s} \lesssim \norm{f}_{X^s} \norm{g}_{H^s}$  for all $f \in X^s(\R^{d})$ and $g \in H^s(\R^{d})$.
		
		\item  If $s \ge 1$, then $[\p_1 \eta]_{\dot{H}^{-1}} \lesssim  \norm{f}_{X^s}$ for all $f \in X^s(\R^{d})$. In particular, the map $\p_1 : X^{s}(\R^{d}) \to \dot{H}^{-1}(\R^{d}) \cap H^{s-1}(\R^{d} )$ is continuous and injective.
	\end{enumerate}
\end{thm}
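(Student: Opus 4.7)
The proof proceeds item by item, exploiting that the $X^s$ norm is a weighted $L^2$ on the Fourier side, with the anisotropic low-frequency weight $w(\xi) = (\xi_1^2 + |\xi|^4)/|\xi|^2$ on $B(0,1)$ and the isotropic weight $(1+|\xi|^2)^s$ on $B(0,1)^c$. Items (1)--(3) and (7) reduce to elementary weight comparisons. For (1), $X^s$ is separable and Hilbert as the Fourier preimage of a weighted $L^2$, and $X^t \hookrightarrow X^s$ for $s<t$ follows because the low-frequency weight is $s$-independent while $(1+|\xi|^2)^s \le (1+|\xi|^2)^t$ on $B(0,1)^c$. For (2), in $d=1$ we have $w(\xi_1) = 1+\xi_1^2 \asymp (1+\xi_1^2)^s$ on $B(0,1)$ so $X^s=H^s$, while in $d \ge 2$ we only have $w(\xi) \le 1+|\xi|^2$, giving $H^s \hookrightarrow X^s$. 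For (3) and (7), split $\int(1+|\xi|^2)^{s-1}|\xi|^2|\hat f|^2$ and $\int (\xi_1^2/|\xi|^2)|\hat f|^2$ across $B(0,1)$ and its complement and bound the integrands pointwise by the $X^s$ weight (using $|\xi|^2 \le w(\xi)$ and $\xi_1^2/|\xi|^2 \le w(\xi)$ on $B(0,1)$). Injectivity of $\p_1$ comes from $\xi_1 \hat f = 0$, which forces $\hat f = 0$ a.e.\ since $\hat f \in L^1_{\loc}$.

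The heart of the argument is item (4). With $\hat{f}_{l,t} = \mathbbm{1}_{B(0,t)}\hat f$, Fourier inversion and Cauchy--Schwarz yield, for $|\alpha| \le k$,
\begin{equation*}
 \|\p^\alpha f_{l,t}\|_{L^\infty} \le (2\pi)^{|\alpha|}\int_{B(0,t)}|\xi|^{|\alpha|}|\hat f(\xi)|\,d\xi \le (2\pi)^{|\alpha|}\biggl(\int_{B(0,t)}\frac{|\xi|^{2|\alpha|+2}}{\xi_1^2+|\xi|^4}\,d\xi\biggr)^{\!1/2}\|f_{l,t}\|_{X^s}.
\end{equation*}
For $|\alpha| \ge 1$ the weight integrand is dominated by $|\xi|^{2|\alpha|-2}$ and integrability is immediate; for $|\alpha|=0$ the integrand blows up along the axis $\xi_1 = 0$, and verifying its integrability on $B(0,t)$ is \emph{the main obstacle}. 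I would handle this by passing to cylindrical coordinates $\xi=(\xi_1,\rho\omega)$ with $\rho = |\xi''|$ and $\omega \in S^{d-2}$: in $d=2$ the explicit evaluation $\int_0^{2\pi} d\theta/(\cos^2\theta+r^2) = 2\pi/(r\sqrt{1+r^2})$ reduces the integral to $\int_0^t 2\pi/\sqrt{1+r^2}\,dr < \infty$, and higher dimensions follow from the same computation together with the extra factor $\rho^{d-2}$ in the measure. The bound $\|f_{h,t}\|_{H^s} \lesssim_t \|f_{h,t}\|_{X^s}$ follows by comparing $(1+|\xi|^2)^s$ and $w(\xi)$ on the annulus $\{t \le |\xi|<1\}$ (when $t<1$; both weights are bounded above and below there) and by the direct identity on $\{|\xi|\ge 1\}$. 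Finally, $\p^\alpha f_{l,t} \in C_0$ for every $\alpha$ by Riemann--Lebesgue applied to the $L^1$ function $(2\pi i \xi)^\alpha \hat f_{l,t}$, so $f_{l,t} \in C^\infty_0$.

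Items (5) and (6) are then consequences of (4). For (5), decompose $f = f_{l,1}+f_{h,1}$; the high-frequency piece lies in $H^s \hookrightarrow C^k_0$ by the standard Sobolev embedding when $s>k+d/2$, and the low-frequency piece lies in $C^\infty_0$ by item (4) applied with arbitrary $k$. For (6), use the same decomposition to bound $\|f_{h,1}g\|_{H^s} \lesssim \|f_{h,1}\|_{H^s}\|g\|_{H^s} \lesssim \|f\|_{X^s}\|g\|_{H^s}$ using that $H^s$ is a Banach algebra for $s>d/2$, and $\|f_{l,1}g\|_{H^s} \lesssim \|f_{l,1}\|_{C^N_b}\|g\|_{H^s} \lesssim \|f\|_{X^s}\|g\|_{H^s}$ via a standard Moser--Leibniz estimate with $N$ a sufficiently large integer depending on $s$. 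Summing the two contributions completes the proof.
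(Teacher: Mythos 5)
Your proposal gives a correct, self-contained proof, whereas the paper's own proof is simply a citation: it delegates all items except separability to Proposition 5.3 and Theorem 5.6 of \cite{leonitice}, and separability to a calculation in \cite{noahtice2}. So there is no in-paper argument to compare against; what you have supplied is essentially a reconstruction of the cited material.

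Your argument correctly identifies the one genuinely nontrivial technical point, namely the convergence of
\begin{equation*}
\int_{B(0,t)}\frac{|\xi|^{2}}{\xi_1^2+|\xi|^4}\,d\xi,
\end{equation*}
which governs the $|\alpha|=0$ case of the $L^\infty$ estimate for the low-frequency piece. The polar evaluation $\int_0^{2\pi}(\cos^2\theta+r^2)^{-1}\,d\theta = 2\pi/(r\sqrt{1+r^2})$ is correct, and the extra radial power $r^{d-2}$ picked up in higher dimensions indeed preserves integrability: the angular integral over $S^{d-1}$ still behaves like $1/r$ as $r\to0$, and $\int_0^t r^{d-1}\cdot r^{-1}\,dr<\infty$ for all $d\ge 2$. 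Items (1)--(3) and (7) are straightforward weight comparisons as you describe; the pointwise inequalities $|\xi|^2\le w(\xi)$ and $\xi_1^2/|\xi|^2\le w(\xi)$ on $B(0,1)$ (with $w(\xi)=(\xi_1^2+|\xi|^4)/|\xi|^2$) do the work. Items (5) and (6) then follow cleanly from the low-high decomposition of item (4), as you outline, with (6) using the Banach algebra property of $H^s$ for the high piece and a $C^N_b$-multiplier estimate for the low piece.

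One small detail worth flagging: your Cauchy--Schwarz step
\begin{equation*}
\int_{B(0,t)}|\xi|^{|\alpha|}|\hat f|\,d\xi \le \biggl(\int_{B(0,t)}\frac{|\xi|^{2|\alpha|+2}}{\xi_1^2+|\xi|^4}\,d\xi\biggr)^{1/2}\|f_{l,t}\|_{X^s}
\end{equation*}
implicitly assumes $t\le 1$, since it uses the anisotropic weight $w(\xi)$ over all of $B(0,t)$, while the $X^s$-norm uses the isotropic weight $(1+|\xi|^2)^s$ on $B(0,1)^c$. For $t>1$ you should split $B(0,t)$ into $B(0,1)$ and the annulus $\{1\le|\xi|<t\}$, applying Cauchy--Schwarz with the appropriate weight on each; the annulus contribution is trivially finite. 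This only affects the constants (which are allowed to depend on $t$) and does not break the argument, but the written form as stated needs this split for $t>1$.
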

\begin{proof}
All of these except the separability assertion from the first item are proved in Proposition 5.3 and Theorems 5.6 in \cite{leonitice}.  Separability follows from the calculations leading up to equation (B.1.20) in the proof for the second item of Proposition B.2 in \cite{noahtice2}. 
\end{proof}

Next, we introduce the container space for the solution tuple $(u,p,\eta)$.

\begin{defn} \label{defn:Xsforsol}
	For $\R \ni s \ge 0$.
	\begin{enumerate}
		 \item We define the separable Hilbert space 
		 \begin{align}\label{eq: solution space X^s}
			  \mathcal{X}^s = 
				 \{ (u,p,\eta) \in \Hzerostan{s+2}(\Omega ;\R^n) \times H^{s+1}(\Omega; \R) \times X^{s+5/2}(\R^{n-1}; \R)  \}
		 \end{align}
	   endowed with the norm $\norm{(u,p,\eta)}_{\mathcal{X}^s}^2 = \norm{u}_{\Hzerostan{s+2}}^2 + \norm{p}_{H^{s+1}}^2 + \norm{\eta}_{X^{s+5/2}}^2$.
	   
	   \item We define the separable Hilbert space 
		 \begin{align}\label{eq: solution space X^s with alpha}
			  \mathcal{X}_\alpha^s = 
				 \{ (u,p,\eta) \in \alphaHzeros{s+2}(\Omega ;\R^n) \times H^{s+1}(\Omega; \R) \times X^{s+5/2}(\R^{n-1}; \R)  \}
		 \end{align}
	   endowed with the norm $\norm{(u,p,\eta)}_{\mathcal{X}_\alpha^s}^2 = \norm{u}_{\alphaHzeros{s+2}}^2 + \norm{p}_{H^{s+1}}^2 + \norm{\eta}_{X^{s+5/2}}^2$.
	\end{enumerate}

	\end{defn}

Next, we record an embedding result for $\mathcal{X}^s$ and $\mathcal{X}_\alpha^s$.
\begin{prop}\label{prop:embed X}
	Suppose $\R \ni s \ge 0$ and $\mathcal{X}^s,\mathcal{X}_\alpha^s $ is the Banach space in Definition~\ref{defn:Xsforsol}. If $s > n/2$, then we have the continuous inclusion 
		\begin{align}
			 \mathcal{X}^s, \mathcal{X}_\alpha^s \subseteq C_b^{s + 1 - \lf n/2 \rf}(\Omega ; \R^n) \times C_b^{s - \lf n/2\rf}(\Omega ; \R)\times C_0^{s + 1 -\lf (n-1)/2 \rf}(\R^{n-1}; \R).
		\end{align}
		Moreover, if $(u,p,\eta) \in \mathcal{X}^s$ or $(u,p,\eta) \in \mathcal{X}_\alpha^s$, then 
		\begin{align} 
			\lim_{\abs{x'} \to \infty} \p^\alpha u(x) &= 0 \; \text{for all} \; \alpha \in \N^n \; \text{such that} \; \abs{\alpha} \le s + 1 - \lf n/2 \rf \\
			\lim_{\abs{x'} \to \infty} \p^\alpha p(x) &= 0 \; \text{for all} \; \alpha \in \N^n \; \text{such that} \; \abs{\alpha} \le s - \lf n/2 \rf.
		\end{align}
	\end{prop}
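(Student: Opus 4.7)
The strategy is to handle each of the three components of $\mathcal{X}^s$ separately. Observe first that $\mathcal{X}_\alpha^s$ is a closed subspace of $\mathcal{X}^s$ with the inherited norm, since $\alphaHzeros{s+2}(\Omega;\R^n)$ is a closed subspace of $\Hzerostan{s+2}(\Omega;\R^n)$ by Definition~\ref{defn:OPB}; any continuous embedding of $\mathcal{X}^s$ therefore restricts to one of $\mathcal{X}_\alpha^s$, so it suffices to prove everything for $\mathcal{X}^s$. For the free surface component, I would invoke item (5) of Theorem~\ref{thm: X^s} directly with $d = n-1$ and order $k = s + 1 - \lfloor (n-1)/2 \rfloor$. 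The required condition $s + 5/2 > k + (n-1)/2$ reduces to $3/2 + \lfloor (n-1)/2 \rfloor > (n-1)/2$, which holds unconditionally. This yields the continuous inclusion $X^{s+5/2}(\R^{n-1}) \hookrightarrow C_0^{s+1-\lfloor (n-1)/2 \rfloor}(\R^{n-1};\R)$, which already provides both the embedding and the vanishing of $\eta$ and all its relevant derivatives at spatial infinity.

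For the velocity $u$ and pressure $p$, the plan is to combine a total Sobolev extension with the classical embedding on all of $\R^n$. Since $\Omega = \R^{n-1} \times (0,b)$ is a Lipschitz domain (indeed, a horizontal strip), there is a bounded linear extension operator $E : H^t(\Omega) \to H^t(\R^n)$ for each $t \ge 0$, constructed, for instance, by reflection across $\Sigma_0$ and $\Sigma_b$ following a suitable cutoff. Composing $E$ with the classical embedding $H^t(\R^n) \hookrightarrow C_0^k(\R^n)$, which is valid whenever $t > k + n/2$, and then restricting the result back to $\Omega$, produces bounded linear maps $H^t(\Omega) \to C_b^k(\Omega)$ under the same numerical hypothesis. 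Applying this with $(t,k) = (s+2,\, s+1-\lfloor n/2\rfloor)$ for $u \in H^{s+2}(\Omega;\R^n)$ reduces the admissibility inequality to $1 + \lfloor n/2 \rfloor > n/2$, which holds for every integer $n \ge 2$; applying it with $(t,k) = (s+1,\, s-\lfloor n/2\rfloor)$ for $p \in H^{s+1}(\Omega;\R)$ demands the identical inequality. This delivers the stated continuous embeddings into $C_b^{s+1-\lfloor n/2\rfloor}(\Omega;\R^n)$ and $C_b^{s-\lfloor n/2\rfloor}(\Omega;\R)$.

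Finally, the ``moreover'' horizontal decay statement for $u$ and $p$ comes along at no extra cost from the extension argument: the extensions $Eu$ and $Ep$ actually lie in $C_0^{s+1-\lfloor n/2\rfloor}(\R^n;\R^n)$ and $C_0^{s-\lfloor n/2\rfloor}(\R^n;\R)$ respectively, so every derivative of order at most the indicated one tends to zero as $|x| \to \infty$ in $\R^n$. Restricting back to $\Omega$ and using that $x_n \in (0,b)$ is bounded, we have $|x| \to \infty$ in $\Omega$ if and only if $|x'| \to \infty$, which gives precisely the stated pointwise decay at horizontal infinity. The entire proof is standard Sobolev bookkeeping; the only technical step is confirming the three numerical inequalities relating $s$, $n$, $\lfloor n/2 \rfloor$, and $\lfloor (n-1)/2 \rfloor$, all of which are trivially true. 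I do not foresee any substantive obstacle.
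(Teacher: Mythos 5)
Your argument is correct, and it is essentially what the paper invokes by citing Proposition 6.3 of \cite{leonitice} together with the continuous injections $\Hzerostan{s+2}, \alphaHzeros{s+2} \hookrightarrow H^{s+2}(\Omega;\R^n)$: the standard extension-plus-Sobolev-embedding argument for $u$ and $p$, and item (5) of Theorem~\ref{thm: X^s} for $\eta$. Your worked-out version, including the reduction of the horizontal decay of $u$ and $p$ to the $C_0^k(\R^n)$ embedding of the Sobolev extension, fills in exactly the content the paper leaves to the reference.
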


\begin{proof}
This follows from Proposition 6.3 in \cite{leonitice} and the continuous injections $\Hzerostan{s+2}(\Omega;\R^n),\alphaHzeros{s+2}(\Omega;\R^n) \hookrightarrow H^{s+2}(\Omega; \R^n)$. 
\end{proof}

Next we study the linear maps $\Upsilon_{\alpha, \beta, \gamma, \sigma} : \mathcal{X}^s \to \mathcal{Y}^s$ and $\mathfrak{T}_{\alpha, \beta, \gamma, \sigma} : \mathcal{X}_\alpha^s \to \mathcal{Z}^s$ defined via 
\begin{multline} \label{eq:upsilon}
	\Upsilon_{\alpha, \beta, \gamma, \sigma}(u,p,\eta) = (\diverge S(p,u) - \gamma \p_1 u + (\nabla' \eta , 0), \diverge u, u_n \rvert_{\Sigma_b} + \gamma \p_1 \eta, S(p,u) e_n \rvert_{\Sigma_b} + \sigma \Delta'\eta  e_n, [\alpha S(p,u) e_n + \beta u]'),
\end{multline}
and 
\begin{align} \label{eq: mathfrak T}
	\mathfrak{T}_{\alpha, \beta, \gamma, \sigma}(u,p,\eta) = (\diverge S(p,u) - \gamma \p_1 u + (\nabla' \eta , 0), \diverge u, u_n \rvert_{\Sigma_b} + \gamma \p_1 \eta, S(p,u) e_n \rvert_{\Sigma_b} + \sigma \Delta'\eta  e_n),
\end{align}
which are the solution operators corresponding to the system \eqref{eq:gammacapillary} with generic $l \in H^{1/2}(\Sigma_0; \R^{n-1})$ and with $l =0$, respectively. The next result shows that these maps are well-defined, bounded, and also injective. 

\begin{prop}\label{prop:Upsiloninjective}
Suppose $\R \ni \alpha > 0, \beta \in \R^{n \times n}$ is positive definite, $\gamma \in \R \setminus \{0\}$, $\R \ni \sigma \ge 0$, and $\R \ni s \ge 0$. The following the hold.
\begin{enumerate}
	 \item The linear map $\Upsilon_{\alpha, \beta, \gamma, \sigma}: \mathcal{X}^s \to \mathcal{Y}^s$ defined in \eqref{eq:upsilon} is well-defined, continuous, and injective.
	 \item The linear map $\mathfrak{T}_{\alpha, \beta, \gamma, \sigma}: \mathcal{X}_\alpha^s \to \mathcal{Z}^s$ defined in \eqref{eq: mathfrak T} is well-defined, continuous, and injective.
	 \item Furthermore, there exists a constant $c > 0$ for which $\norm{\mathfrak{T}_{\alpha, \beta, \gamma, \sigma}}_{\mathcal{L}(\mathcal{X}_\alpha^s ; \mathcal{Z}^s)} \le c$ for all $\alpha >0$.
\end{enumerate} 
\end{prop}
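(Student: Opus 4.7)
The plan is to verify each claim component-by-component. For well-definedness and continuity of $\Upsilon_{\alpha,\beta,\gamma,\sigma}\colon \mathcal{X}^s \to \mathcal{Y}^s$, the $(u,p)$-contributions $\diverge S(p,u) - \gamma \p_1 u$, $\diverge u$, $S(p,u) e_n|_{\Sigma_b}$, $[\alpha S(p,u) e_n + \beta u]'|_{\Sigma_0}$, and $u_n|_{\Sigma_b}$ are controlled by Theorem~\ref{thm:classicalexistence} together with standard trace theory, while the $\eta$-contributions $(\nabla'\eta, 0)$, $\gamma \p_1\eta|_{\Sigma_b}$, and $\sigma \Delta'\eta\, e_n|_{\Sigma_b}$ are controlled by items~3 and~7 of Theorem~\ref{thm: X^s}, which give $\nabla \eta \in H^{s+3/2}(\R^{n-1})$ and $\p_1 \eta \in \dot{H}^{-1}$. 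The divergence-trace seminorm in $\|\cdot\|_{\mathcal{Y}^s}$ splits as $(u_n|_{\Sigma_b} - \int_0^b \diverge u\, dx_n) + \gamma \p_1 \eta$, whose pieces are placed in $\dot{H}^{-1}$ by Lemma~\ref{lem:divtrace} and by item~7 of Theorem~\ref{thm: X^s} respectively. For items~2 and~3, the key observation is that $\mathfrak{T}_{\alpha,\beta,\gamma,\sigma}$ is obtained from $\Upsilon_{\alpha,\beta,\gamma,\sigma}$ by dropping the $\Sigma_0$-component, so $\alpha$ never appears in its defining formula; since the norm on $\mathcal{X}_\alpha^s$ is just the inherited $H^{s+2} \times H^{s+1} \times X^{s+5/2}$ norm, the resulting operator bound for $\mathfrak{T}$ is automatically independent of $\alpha$.

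The substantive step is injectivity. Assuming $\Upsilon_{\alpha,\beta,\gamma,\sigma}(u,p,\eta) = 0$, one reads off that $(u,p)$ classically solves the overdetermined system \eqref{eq: overdetermined} with data
\begin{equation*}
(f,g,h,k,l) = \bigl(-(\nabla'\eta, 0),\; 0,\; -\gamma \p_1\eta,\; -\sigma \Delta'\eta\, e_n,\; 0\bigr).
\end{equation*}
By Theorem~\ref{thm: overdeterminediso} the compatibility $\mathscr{B}_{\alpha,\beta,\gamma}((f,g,h,k,l), \psi) = 0$ holds for every $\psi \in H^{s+1/2}(\Sigma_b;\R)$. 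I would expand this using the adjoint solution $(v,q) = \Phi^{-1}_{\alpha,\beta^T,-\gamma}(0,0,\psi e_n,0)$: the $\Sigma_0$-term vanishes because $l = 0$, and integration by parts in the bulk (using $\diverge v = 0$ together with $v_n|_{\Sigma_0} = 0$) gives the identity $\int_\Omega \nabla'\eta \cdot v'\, dx = \int_{\Sigma_b} \eta\, v_n\, dx'$. Applying Parseval together with the multiplier identity $\hat v_n(\xi, b) = m_{\alpha,\beta}(\xi,-\gamma)\hat\psi(\xi)$ from \eqref{eq:dtonmap} collapses the compatibility to
\begin{equation*}
\int_{\R^{n-1}} \Bigl[(1+4\pi^2\sigma|\xi|^2)\overline{m_{\alpha,\beta}(\xi,-\gamma)} + 2\pi i \gamma \xi_1\Bigr]\hat\eta(\xi)\, \overline{\hat\psi(\xi)}\, d\xi = 0,
\end{equation*}
and the bracket is precisely $\rho_{\alpha,\beta,\gamma}(\xi)$ from \eqref{eq:rho}. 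Ranging $\psi$ over sufficiently many real-valued test functions and exploiting the symmetries $\overline{\rho_{\alpha,\beta,\gamma}(\xi)} = \rho_{\alpha,\beta,\gamma}(-\xi)$ and $\overline{\hat\eta(\xi)} = \hat\eta(-\xi)$ forces $\rho_{\alpha,\beta,\gamma}\hat\eta = 0$ almost everywhere. Lemma~\ref{lem:rho} supplies $\rho_{\alpha,\beta,\gamma}(\xi) \neq 0$ for $\xi \neq 0$ (using $\gamma \neq 0$), and $\hat\eta \in L^1_{\loc}$ by the definition of $X^{s+5/2}$, so $\hat\eta = 0$ a.e., hence $\eta = 0$. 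With $\eta$ trivial the linear data vanishes and Theorem~\ref{thm:classicalexistence} gives $(u,p) = (0,0)$. The identical argument with $l = 0$ throughout establishes injectivity of $\mathfrak{T}_{\alpha,\beta,\gamma,\sigma}$.

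The main technical obstacle is the clean derivation of the Fourier identity for $\rho_{\alpha,\beta,\gamma}\hat\eta$: one must carefully bookkeep the bulk-to-boundary conversion (which relies only on $\diverge v = 0$ and $v_n|_{\Sigma_0} = 0$, both automatic from $v \in \Hzerostanigtan$), and verify that the resulting boundary integrals $\int_{\Sigma_b} \eta v_n$, $-\sigma \int_{\Sigma_b} \Delta'\eta v_n$, and $\gamma \int_{\Sigma_b} \p_1 \eta \psi$ reassemble in Fourier into exactly the symbol $(1+4\pi^2\sigma|\xi|^2)\overline{m_{\alpha,\beta}(\xi,-\gamma)} + 2\pi i \gamma \xi_1$ defining $\rho_{\alpha,\beta,\gamma}$. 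Once this symbolic identification is in hand, the nondegeneracy of $\rho_{\alpha,\beta,\gamma}$ away from the origin provided by Lemma~\ref{lem:rho} closes the argument.
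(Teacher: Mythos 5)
Your proof is correct, but for injectivity it takes a genuinely different route from the paper's. The paper's approach is direct and self-contained: it applies the horizontal Fourier transform to the homogeneous system, reduces each Fourier mode to a one-dimensional ODE system in $x_n$, and then performs an explicit integration-by-parts energy identity on each mode, using the coercivity of $\beta$ and the boundary conditions (including $w_n + 2\pi i \xi_1\gamma\hat\eta = 0$ to transfer the conclusion from $w_n$ to $\hat\eta$) to force $w \equiv 0$, $q \equiv 0$, $\hat\eta = 0$ a.e.\ $\xi$. Your approach instead feeds $\Upsilon_{\alpha,\beta,\gamma,\sigma}(u,p,\eta)=0$ into the overdetermined compatibility machinery already set up in Section~3: writing $(u,p)$ as a solution of \eqref{eq: overdetermined} with data $\bigl(-(\nabla'\eta,0),0,-\gamma\p_1\eta,-\sigma\Delta'\eta e_n,0\bigr)$, invoking Theorem~\ref{thm: overdeterminediso} to land in $\overleftarrow{\ker}\mathscr{B}_{\alpha,\beta,\gamma}$, and then carrying out the bulk-to-boundary integration by parts and Parseval computation to identify $\Lambda_{\alpha,\beta,\gamma}$ of the $\eta$-data with $-\mathscr{F}^{-1}[\rho_{\alpha,\beta,\gamma}\hat\eta]$, so that the compatibility forces $\rho_{\alpha,\beta,\gamma}\hat\eta = 0$ a.e.\ and Lemma~\ref{lem:rho} then kills $\hat\eta$ away from the origin. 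This symbolic identification is essentially the same calculation the paper performs inside Lemma~\ref{lem:etaconstruct1} for surjectivity, and it carries no circular dependence since that lemma relies only on Sections~2--3. Your route is shorter and reuses work that must be done anyway for surjectivity (the nonvanishing and asymptotics of $\rho_{\alpha,\beta,\gamma}$ from Lemma~\ref{lem:rho}), at the cost of being less self-contained and implicitly routing the coercivity of $\beta$ through the weak adjoint analysis behind Lemma~\ref{lem:aymptotics m} rather than seeing it directly in the Fourier-mode energy identity. One small wrinkle worth stating precisely: Lemma~\ref{lem:aymptotics m} only gives $\Re\overline{m_{\alpha,\beta}}<0$ for a.e.\ $\xi$, so $\rho_{\alpha,\beta,\gamma}(\xi)\neq 0$ only a.e., which is of course sufficient to conclude $\hat\eta=0$ a.e.\ and hence $\eta=0$. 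The well-definedness/continuity part and the $\alpha$-independence observation in items~2--3 match the paper's argument in substance.
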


\begin{proof}
	To prove the first and second items, we first note that by Proposition 3.13 in \cite{tice} and standard trace theory, the maps $\Upsilon_{\alpha, \beta, \gamma, \sigma}$ and $\mathfrak{T}_{\alpha, \beta, \gamma, \sigma}$ are well-defined and continuous. To show that $\Upsilon_{\alpha, \beta, \gamma, \sigma}$ is injective, we suppose $(u, p, \eta) \in \mathcal{X}^s$ and  $\Upsilon_{\alpha, \beta, \gamma, \sigma} \left( u, p, \eta \right) = 0$. We note that if $\tilde{p} = p - \eta$, then $\nabla \tilde{p} = \nabla p - (\nabla' \eta,0)$ and $\tilde{p}I = p I - \eta I$. Therefore $\Upsilon_{\alpha, \beta, \gamma, \sigma} (u,p,\eta) =0$ if and only if $(u,\tilde{p},\eta)$ satisfies 
		\begin{align}\label{eq:upsilon=0}
			\begin{cases}
				\diverge S(\tilde{p},u) - \gamma \p_1 u = 0, & \text{in} \; \Omega \\
				\diverge u = 0, & \text{in} \; \Omega\\
				S(\tilde{p},u) e_n = (\eta - \sigma \Delta' \eta) e_n, & \text{on} \; \Sigma_b\\
				u_n + \gamma \p_1 \eta = 0, & \text{on} \; \Sigma_b \\
				\left[\alpha S(p,u)e_n + \beta u\right]' = 0 , & \text{on} \; \Sigma_{0}\\
				u_n= 0, & \text{on} \; \Sigma_{0}.
			\end{cases}
		\end{align}
		We note that by Tonelli's theorem, Parseval's theorem, and the fifth item of Theorem~\ref{thm: X^s} we have $\hat{u}(\xi, \cdot) \in H^{s}((0,b);\C^n)$ and $\widehat{\tilde{p}}(\xi,\cdot) \in H^1((0,b);\C)$, for a.e. $\xi \in \R^{n-1}$. By the second item in Theorem~\ref{thm: X^s}, $\hat{\eta} \in L^1(\R^{n-1}; \R) + L^2(\R^{n-1}, (1+\abs{\xi}^2)^{(s+5/2)/2} d\xi; \R)$. Thus, we may apply the horizontal Fourier transform to \eqref{eq:upsilon=0} to deduce for  a.e. $\xi \in \R^{n-1}$, $w = \hat{u}(\xi,\cdot), q = \widehat{\tilde{p}}(\xi,\cdot)$ satisfies
		\begin{align} \label{eq:odesys}
		\begin{cases}
		\left( - \p_n^2 + 4\pi^2 \abs{\xi}^2 \right) w' + 2\pi i \xi q - 2\pi i \xi_1 \gamma w' = 0, & \text{in} \; (0, b) \\
		\left( - \p_n^2 + 4\pi^2 \abs{\xi}^2 \right) w_n + \p_n q - 2\pi i \xi_1 \gamma w_n = 0, & \text{in} \; (0, b)  \\
		2\pi i \xi \cdot w' + \p_n w_n = 0, & \text{in} \; (0, b)  \\
		-\p_n w' - 2\pi i \xi w_n = 0,  & \text{for} \; x_n = b \\
		q - 2 \p_n w_n = (1+4\pi^2 \abs{\xi}^2 \sigma) \hat{\eta}, & \text{for} \; x_n = b \\
		w_n + 2\pi i \xi_1 \gamma \hat{\eta} = 0,  & \text{for} \; x_n = b \\
		[( \p_n  - \frac{1}{\alpha} \beta) w] ' = 0,  &\text{for} \; x_n = 0 \\
		w_n = 0, &\text{for} \; x_n = 0.
		\end{cases}
		\end{align}
		For a.e. $\xi \in \R^{n-1}$, by the first three equations in \eqref{eq:odesys} we have 
	\begin{multline}\label{ode 1}
		 2\pi i \xi_1 \gamma w' = \left( - \p_n^2 + 4\pi^2 \abs{\xi}^2 \right) w' + 2\pi i \xi q - 2\pi i \xi (2\pi i \xi \cdot w' + \p_n w_n) \\
		 = 2 \pi i \xi q - (2\pi i \xi \otimes w' + w' \otimes 2\pi i \xi) 2\pi i \xi - \p_n (\p_n w' + 2\pi i \xi w_n)
	\end{multline}
	and 
	\begin{multline}\label{ode 2}
		 2 \pi i \xi_1 \gamma w_n = \left( - \p_n^2 + 4\pi^2 \abs{\xi}^2 \right) w_n + \p_n q - \p_n (2\pi i \xi \cdot w' + \p_n w_n) \\ 
		 = - 2\pi i \xi \cdot (\p_n w' + 2\pi i \xi w_n) + \p_n (q - 2 \p_n w_n).
	\end{multline}
	Using \eqref{ode 1}, \eqref{ode 2}, integration by parts and the boundary conditions in \eqref{eq:odesys}, for a.e. $\xi \in \R^{n-1}$ we have 
	\begin{multline}\label{ode 3}
		 \int_0^b 2 \pi i \xi_1 \gamma w' \cdot \overline{w'} \; dx_n = \int_0^b -q \overline{2 \pi i \xi \cdot w'} + (2 \pi i \xi \otimes w' + w' \otimes w\pi i \xi) : \overline{v' \otimes 2\pi i \xi} - \p_n (\p_n w' + 2\pi i \xi w_n) \cdot \overline{w'} \; dx_n \\
		 = \int_0^b -q \overline{2 \pi i \xi \cdot w'} + (2 \pi i \xi \otimes w' + w' \otimes w\pi i \xi) : \overline{v' \otimes 2\pi i \xi} +  (\p_n w' + 2\pi i \xi w_n) \cdot \overline{\p_n w'} \; dx_n + \frac{1}{\alpha} \beta w(0) \cdot \overline{w(0)}
	\end{multline}
	and 
	\begin{multline}\label{ode 4}
		\int_0^b 2 \pi i \xi_1 \gamma w_n \overline{w_n} \; dx_n = \int_0^b (\p_n w' + 2\pi i \xi w_n) \cdot \overline{2\pi i \xi w_n} + \p_n (q - 2 \p_n w_n) \overline{w_n} \; dx \\ = \int_0^b (\p_n w' + 2\pi i \xi w_n) \cdot \overline{2\pi i \xi w_n} -  (q - 2 \p_n w_n) \overline{\p_nw_n} \; dx + (1+4\pi^2 \abs{\xi}^2 \sigma) \hat{\eta}\overline{w_n(b)}.
	\end{multline}
	We also note that by exploiting the symmetry of $2\pi i \xi \otimes w' + w' \otimes 2 \pi i \xi$, we can write 
	\begin{align}\label{ode 5}
	(2\pi i \xi \otimes w' + w' \otimes 2 \pi i \xi) : \overline{v' \otimes 2\pi i \xi}	 = \frac{1}{2} (2\pi i \xi \otimes w' + w' \otimes 2 \pi i \xi) : \overline{2 \pi i \xi \otimes w' + w' \otimes 2 \pi i \xi}.
	\end{align}
	Upon rearranging \eqref{ode 3}, \eqref{ode 4}, and \eqref{ode 5}, and the third to last equation $w_n + 2\pi i \xi_1 \gamma \hat{\eta} = 0$, we can deduce that  
\begin{multline}
		\int_0^b - \gamma 2\pi i \xi_1 \abs{w}^2 + 2 \abs{\p_n w_n}^2 + \abs{\p_n w' + 2\pi i \xi w_n}^2 + \frac{1}{2} \abs{2 \pi i \xi \otimes w' + w' \otimes 2\pi i \xi}^2 \; dx_n +  \frac{1}{\alpha} \beta w(0) \cdot \overline{w(0)}\\ = \int_0^b - \gamma 2\pi i \xi_1 w \cdot \overline{w} + 2 \p_n w_n \overline{\p_n w_n}  + (\p_n w' + 2\pi i \xi w_n) \cdot \overline{\p_n w'+ 2\pi i \xi w_n}   +(2\pi i \xi \otimes w' + w' \otimes 2 \pi i \xi) : \overline{v' \otimes 2\pi i \xi}\; dx_n \\= -(1+4\pi^2\abs{\xi}^2 \sigma)\hat{\eta}(\xi) \overline{w_n(\xi,b)} = - 2\pi i \xi_1 \gamma(1+4\pi^2\abs{\xi}^2 \sigma) \abs{\eta(\xi)}^2. 
\end{multline}	
			By taking the real part of this expression and applying the coercivity condition \eqref{eq: beta coercive}, we see that we must have for a.e. $\xi \in \R^{n-1}$, $\p_n w_n \equiv 0$ in $(0,b)$, $\p_n w' + 2\pi i \xi w_n \equiv 0$ in $(0,b)$, and $w(0) = 0$. This implies that $w_n \equiv 0$ in $[0,b]$, which in turn implies that we must have $w \equiv 0$ in $[0,b]$. Then by the first equation, we must have $q \equiv 0$. By the third to last equation, we find that $\eta \equiv 0$. From this we find that $(u,p,\eta) = (0,0,0)$, so we can conclude that $\Upsilon_{\alpha, \beta, \gamma, \sigma}$ is injective. The same argument shows that $\mathfrak{T}_{\alpha, \beta, \gamma, \sigma}$ is injective.
	The last item follows from the observation that $\alpha$ does not appear on the right hand side of \eqref{eq: mathfrak T}.
\end{proof}
Next we show that $\Upsilon_{\alpha, \beta, \gamma, \sigma}$ and $\mathfrak{T}_{\alpha, \beta, \gamma, \sigma}$ surjective. To do so we must construct the free surface function $\eta$ from a given data tuple $(f,g,h,k,l) \in \mathcal{Y}^s$ or $(f,g,h,k) \in \mathcal{Z}^s$ in the case when $l = 0$. We record this set of constructions in the next subsection. 

\subsection{Construction of the free surface function and the isomorphism associated to \eqref{eq:gammacapillary}}

\begin{lem} \label{lem:etaconstruct1}
	Suppose $\R \ni \alpha > 0$, $\beta \in \R^{n \times n}$ is positive definite, $\gamma \in \R \setminus \{0\}$, $\sigma > 0$, $\N \ni n \ge 2$, $\R \ni s \ge 0$, and let $\mathcal{Y}^s, \mathcal{Z}^s$ be the Banach spaces defined in Definition~\ref{defn:dataY}. The following hold.

	\begin{enumerate}
		 \item For every $(f,g,h,k,l) \in \mathcal{Y}^s$, there exists an $\eta_\alpha \in X^{s+\frac{5}{2}}(\R^{n-1};\R)$ for which the modified data tuple
		 \begin{multline}\label{eq:moddata}
				 (f - (\nabla' \eta_\alpha, 0), g,h - \gamma \p_1 \eta_\alpha,  k - \sigma \Delta'\eta_\alpha e_n, l )
				  \in H^{s} (\Omega;\R^n) \times H^{s+1}(\Omega)  \\ \times H^{s+\frac{3}{2}}(\Sigma_b; \R^n) \times H^{s+\frac{1}{2}}(\Sigma_b; \R^n) \times H^{s+\frac{1}{2}}(\Sigma_0; \R)  
		 \end{multline}
		 belongs to the range of $\Upsilon_{\alpha, \beta, \gamma, \sigma}$ defined in \eqref{eq:upsilon} and there exists a constant $C > 0$ for which 
		 \begin{align}\label{eq: eta X bound}
			  \norm{\eta_\alpha}_{X^{s+\frac{5}{2}}} \le C \norm{(f,g,h,k,l)}_{\mathcal{Y}^s}. 
		 \end{align}
		 \item For every $(f,g,h,k) \in \mathcal{Z}^s$, there exists an $\eta_\alpha \in X^{s+\frac{5}{2}}(\R^{n-1};\R)$ for which the modified data tuple
		 \begin{align}\label{eq:moddata alpha}
				 (f - (\nabla' \eta_\alpha, 0), g,h - \gamma \p_1 \eta,  k - \sigma \Delta'\eta_\alpha e_n)
				  \in H^{s} (\Omega;\R^n) \times H^{s+1}(\Omega) \times H^{s+\frac{3}{2}}(\Sigma_b; \R^n) \times H^{s+\frac{1}{2}}(\Sigma_b; \R^n)
		 \end{align}
		 belongs to the range of $\mathfrak{T}_{\alpha, \beta, \gamma, \sigma}$ defined in \eqref{eq: mathfrak T} and there exists a constant $C > 0$ for which 
		 \begin{align}\label{eq: eta X bound alpha}
			  \norm{\eta_\alpha}_{X^{s+\frac{5}{2}}} \le C \norm{(f,g,h,k)}_{\mathcal{Z}^s}. 
		 \end{align}
		 \item Furthermore, there exists a constant $C > 0$ for which \eqref{eq: eta X bound} holds for all $\alpha \in (0,1)$. In other words, the constant $C > 0$ can be chosen to be independent of $\alpha$ if $\alpha \in (0,1)$.
	\end{enumerate}

	\end{lem}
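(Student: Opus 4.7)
The plan is to reduce the construction of $\eta_\alpha$ to a compatibility condition on the modified data tuple. The key observation is that setting $\tilde{\eta} = 0$ in \eqref{eq:upsilon} yields $\Upsilon_{\alpha,\beta,\gamma,\sigma}(u,p,0) = \Psi_{\alpha,\beta,\gamma}(u,p)$, so by Theorem~\ref{thm: overdeterminediso} the range of $\Upsilon_{\alpha,\beta,\gamma,\sigma}$ contains $\mathcal{Y}^s \cap \overleftarrow{\ker}\mathscr{B}_{\alpha,\beta,\gamma}$. It therefore suffices to construct $\eta_\alpha \in X^{s+5/2}(\R^{n-1};\R)$ for which the modified tuple lies in $\mathcal{Y}^s$ and $\Lambda_{\alpha,\beta,\gamma}$ of it (in the sense of \eqref{eq: Lambda gamma}) vanishes; membership in $\mathcal{Y}^s$ will follow automatically from the embeddings of $X^{s+5/2}$ recorded in Theorem~\ref{thm: X^s} and the divergence-trace condition controlled by item 7 there.

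To derive the equation for $\eta_\alpha$, I would expand $\mathscr{B}_{\alpha,\beta,\gamma}$ from \eqref{eq: bilinear B} on the modified tuple and separate $\mathscr{B}_{\alpha,\beta,\gamma}((f,g,h,k,l),\psi)$ from the three $\eta_\alpha$-dependent corrections $-\int_\Omega \nabla'\eta_\alpha \cdot v' + \sigma\int_{\Sigma_b}\Delta'\eta_\alpha\, v_n - \gamma\int_{\Sigma_b}\p_1\eta_\alpha\, \psi$, where $(v,q)$ is the adjoint solution appearing in the definition of $\mathscr{B}_{\alpha,\beta,\gamma}$. Horizontal integration by parts combined with $\diverge v = 0$ and $v_n|_{\Sigma_0} = 0$ from \eqref{eq:adjoint} collapses these corrections to $\int_{\Sigma_b}\eta_\alpha\,[-v_n + \sigma\Delta'v_n + \gamma\p_1\psi]$. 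Invoking $\widehat{v_n}(\xi,b) = m_{\alpha,\beta}(\xi,-\gamma)\hat{\psi}(\xi)$, Parseval's theorem, and the conjugate symmetry from Theorem~\ref{thm: V Q symbols} rewrites this as the pairing of $\psi$ against the tangential Fourier multiplier of symbol $\rho_{\alpha,\beta,\gamma}(\xi)$ from \eqref{eq:rho} applied to $\eta_\alpha$. The compatibility condition thereby reduces to the pointwise Fourier identity $\rho_{\alpha,\beta,\gamma}(\xi)\hat{\eta}_\alpha(\xi) = \widehat{\Lambda_{\alpha,\beta,\gamma}(f,g,h,k,l)}(\xi)$ for a.e. $\xi \in \R^{n-1}$.

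Since $\rho_{\alpha,\beta,\gamma}$ vanishes only at $\xi = 0$ by Lemma~\ref{lem:rho}, I would define $\eta_\alpha$ by this Fourier division, with real-valuedness coming from the conjugate symmetries of $\rho_{\alpha,\beta,\gamma}$ and $\Lambda_{\alpha,\beta,\gamma}$. Writing $\Lambda = \Lambda_{\alpha,\beta,\gamma}(f,g,h,k,l)$ for brevity, I would bound $\norm{\eta_\alpha}_{X^{s+5/2}}$ by splitting the integral into the low-frequency region $B(0,1)$ and its complement: \eqref{eq: rho est 1} yields $\int_{B(0,1)}\frac{\xi_1^2+\abs{\xi}^4}{\abs{\xi}^2\abs{\rho_{\alpha,\beta,\gamma}}^2}\abs{\widehat{\Lambda}}^2 \lesssim [\Lambda]_{\dot{H}^{-1}}^2$ at low frequency, and $\abs{\rho_{\alpha,\beta,\gamma}}^2 \gtrsim 1 + \abs{\xi}^2$ at high frequency yields $\int_{B(0,1)^c}\frac{(1+\abs{\xi}^2)^{s+5/2}}{\abs{\rho_{\alpha,\beta,\gamma}}^2}\abs{\widehat{\Lambda}}^2 \lesssim \norm{\Lambda}_{H^{s+3/2}}^2$. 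Combining with Theorem~\ref{thm: regularity of Lambda} proves \eqref{eq: eta X bound}. The second item of the lemma follows from the same argument with $l = 0$ using item 2 of Theorem~\ref{thm: regularity of Lambda} in place of item 1, and the third item follows from the $\alpha$-uniform bounds provided by item 4 of Lemma~\ref{lem:rho} and item 3 of Theorem~\ref{thm: regularity of Lambda}. The principal technical obstacle is the second step: consolidating the three corrections from $f$, $h$, and $k$ into a single tangential Fourier multiplier with symbol exactly $\rho_{\alpha,\beta,\gamma}$, keeping careful track of the conjugations that arise because the adjoint velocity is genuinely complex in Fourier representation even though the final identity is real.
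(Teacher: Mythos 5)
Your proposal is correct and takes essentially the same route as the paper: define $\hat{\eta}_\alpha = \rho_{\alpha,\beta,\gamma}^{-1}\mathscr{F}[\Lambda_{\alpha,\beta,\gamma}(f,g,h,k,l)]$, bound $\|\eta_\alpha\|_{X^{s+5/2}}$ via the low/high-frequency estimates on $\rho_{\alpha,\beta,\gamma}$ from Lemma~\ref{lem:rho} together with Theorem~\ref{thm: regularity of Lambda}, and then verify membership of the modified tuple in $\overleftarrow{\ker}\mathscr{B}_{\alpha,\beta,\gamma}$ so that Theorem~\ref{thm: overdeterminediso} applies. The only cosmetic difference is that you consolidate the three $\eta_\alpha$-corrections in physical space (horizontal integration by parts, $\diverge v = 0$, $v_n|_{\Sigma_0}=0$) into a single $\Sigma_b$-boundary pairing, whereas the paper works directly on the Fourier side by decomposing $\rho_{\alpha,\beta,\gamma}$ and matching terms using the identity $\overline{m_{\alpha,\beta}} = \int_0^b 2\pi i\xi\cdot\overline{V'}\,dx_n$; these are the same computation expressed differently, and your version is arguably the more transparent of the two.
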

	\begin{proof}
		We proceed to prove the first item. Given $(f,g,h,k,l) \in \mathcal{Y}^s$, we propose to define $\eta_\alpha \in X^{s+\frac{5}{2}}(\R^{n-1}; \R)$ via $\hat{\eta}_\alpha =
				\rho_{\alpha,\beta,\gamma}^{-1}\mathscr{F}\{\Lambda_{\alpha,\beta,\gamma}(f,g,h,k,l)\}$,
		 where the operator $\Lambda_{\alpha,\beta,\gamma}$ is defined in \eqref{eq: Lambda gamma} and $\rho_{\alpha,\beta,\gamma}$ is defined in \eqref{eq:rho}. 
		
		Note that $\hat{\eta}_\alpha = \overline{\hat{\eta}_\alpha}$, so $\eta_\alpha$ is real-valued. Furthermore, by using Lemma~\ref{lem:rho} and the continuity of the operator $\Lambda_{\alpha,\beta,\gamma}$ established in Theorem~\ref{thm: regularity of Lambda} we have the estimate 
		\begin{multline}\label{eq: eta construction est}
				 \int_{\R^{n-1}} \left(  \frac{\xi_1^2 + \abs{\xi}^4}{\abs{\xi}^2} \mathbbm{1}_{B(0,1)}(\xi)  + (1+\abs{\xi}^2)^{s+\frac{5}{2}}\mathbbm{1}_{B(0,1)^c}(\xi)  \right)  \abs{\hat{\eta_\alpha}(\xi)}^2  \; d\xi  \\
				  \lesssim_\alpha \int_{\R^{n-1}} \max \{ \abs{\xi}^{-2}, \abs{\xi}^{2s+3} \} \abs{\mathscr{F}[\Lambda_{\alpha,\beta,\gamma}(f,g,h,k,l) ](\xi)}^2 d\xi   \lesssim_\alpha \norm{(f,g,h,k,l)}_{\mathcal{Y}^s}^2.
		\end{multline}
		This shows that if we define $\eta_\alpha = (\hat{\eta}_\alpha)^\vee$, $\eta_\alpha$ is a well-defined real-valued tempered distribution that belongs to $X^{s+\frac{5}{2}}(\R^{n-1})$. 
		
		Next we show that the modified data given in \eqref{eq:moddata} belongs to the range of $\Upsilon_{\alpha, \beta, \gamma, \sigma}$. To show this we invoke Theorem~\ref{thm: overdeterminediso} and show that it belongs to $\overleftarrow{\ker} \mathscr{B}_{\alpha,
		\gamma}$. For any $\psi \in H^{s+1/2}(\Sigma_b ; \R)$, by Plancherel's theorem we have  
		\begin{multline}
			 \left\langle  \Lambda_{\alpha,\beta,\gamma}(f - (\nabla' \eta_\alpha, 0),g, h - \gamma \p_1 h, k - \sigma \Delta' \eta_\alpha e_n,l) , \psi \right \rangle_{L^2} \\ = \int_{\R^{n-1}} \mathscr{F}[\Lambda_{\alpha,\beta,\gamma}(f - (\nabla' \eta_\alpha, 0),g,h - \gamma \p_1 \eta_\alpha, k - \sigma \Delta' \eta_\alpha e_n,l)](\xi) \overline{ \mathscr{F}[\psi](\xi)}
			 \\ =  \int_{\R^{n-1}} \mathscr{F}[\Lambda_{\alpha,\beta,\gamma}(f ,g,h,k,l )](\xi) \overline{ \mathscr{F}[\psi](\xi)} + \int_{\R^{n-1}} \mathscr{F}[\Lambda_{\alpha,\beta,\gamma}( - (\nabla' \eta_\alpha, 0),0,0 - \gamma \p_1 \eta_\alpha, - \sigma \Delta' \eta_\alpha e_n,0)](\xi) \overline{ \mathscr{F}[\psi](\xi)}.
		\end{multline}
		Furthermore, by letting $(v,q) = \Phi_{\alpha, \beta^T, -\gamma}^{-1}(0, 0, \psi e_n, 0)$ we have 
		\begin{multline}
			\int_{\R^{n-1}} \mathscr{F}[\Lambda_{\alpha,\beta,\gamma}(f,g,h,k,l)](\xi) \overline{ \mathscr{F}[\psi](\xi)} = \int_{\R^{n-1}}  \rho_{\alpha,\beta,\gamma}(\xi) \hat{\eta}_\alpha(\xi) \overline{\mathscr{F}[\psi](\xi)}\; d\xi   \\
			  =  \int_{\R^{n-1}}   \overline{m_\alpha(\xi,-\gamma)} \hat{\eta}_\alpha(\xi)  \overline{\mathscr{F}[\psi](\xi)}\; d\xi +  \int_{\R^{n-1}}  4\pi^2 \abs{\xi}^2 \sigma \overline{m_\alpha(\xi,-\gamma)}  \hat{\eta}_\alpha(\xi) \overline{\mathscr{F}[\psi](\xi)} \; d\xi  + \int_{\R^{n-1}} 2 \pi i \gamma \xi_1  \hat{\eta}_\alpha(\xi) \overline{\mathscr{F}[\psi](\xi)}
			  \\
			 =\int_{\R^{n-1}}   \overline{m_\alpha(\xi,-\gamma)} \hat{\eta}_\alpha(\xi)  \overline{\mathscr{F}[\psi](\xi)}\; d\xi  + \int_0^b  \sigma \Delta' \eta_\alpha(\xi) e_n \cdot \overline{v} \; dx_n +  \int_{\R^{n-1}} \mathscr{F}[\Lambda_{\alpha,\beta,\gamma}(0,0,\gamma \p_1 \eta_\alpha,0,0)](\xi) \overline{\mathscr{F}[\psi](\xi)} \\
			 = \int_{\R^{n-1}}   \overline{m_\alpha(\xi,-\gamma)} \hat{\eta}_\alpha(\xi)  \overline{\mathscr{F}[\psi](\xi)}\; d\xi  +  \int_{\R^{n-1}} \mathscr{F}[\Lambda_{\alpha,\beta,\gamma}(0,0,0,\sigma \Delta'\eta_\alpha e_n,0)](\xi) \overline{\mathscr{F}[\psi](\xi)} \\  +  \int_{\R^{n-1}} \mathscr{F}[\Lambda_{\alpha,\beta,\gamma}(0,0,\gamma \p_1 \eta_\alpha, 0,0)](\xi) \overline{\mathscr{F}[\psi](\xi)}.
		\end{multline}
		By the second and last equations in \eqref{eq:adjoint}, we have
		\begin{align}
			 \overline{m_\alpha(\xi,-\gamma)} = \int_0^b \p_n \overline{V_n(\xi,x_n,-\gamma)}\; d\xi = \int_0^b 2 \pi i \xi \cdot \overline{V'(\xi,x_n,-\gamma)} d\xi, 
		\end{align}
		therefore 
		\begin{multline}
			\int_{\R^{n-1}}   \overline{m_\alpha(\xi,-\gamma)} \hat{\eta}_\alpha(\xi) \overline{\mathscr{F}[\psi](\xi)} \; d\xi = \int_{\R^{n-1}}  \int_0^b  (2 \pi i \xi, 0 )  \hat{\eta}(\xi) \cdot \overline{\mathscr{F}[v(\cdot, x_n)](\xi)} \; \; dx_n d\xi \\
			= \int_{\R^{n-1}}  \mathscr{F}[\Lambda_{\alpha,\beta,\gamma}((\nabla' \eta_\alpha, 0),0,0,0,0)](\xi) \overline{\mathscr{F}[\psi](\xi)}.
		\end{multline}
		Thus upon rearranging, we have 
		\begin{align}
			\left\langle  \Lambda_{\alpha,\beta,\gamma}(f - (\nabla' \eta_\alpha, 0),g, h - \gamma \p_1 h, k - \sigma \Delta' \eta_\alpha e_n,l) , \psi \right \rangle_{L^2}  = 0,
		\end{align}
		and the first item follows immediately. 

		The second item follows similarly from the first item, where given $(f,g,h,k) \in \mathcal{Z}^s$ we propose to define $\eta_\alpha \in X^{s+\frac{5}{2}}(\R^{n-1}; \R)$ via $\hat{\eta}_\alpha = \rho_{\alpha,\beta,\gamma}^{-1}\mathscr{F}\{\Lambda_{\alpha,\beta,\gamma}(f,g,h,k,0)\}$. For the last item, we note that by the last items of Lemma~\ref{lem:rho}, and Theorem~\ref{thm: regularity of Lambda}, the constants appearing on the right hand side of \eqref{eq: eta construction est} can be chosen to be independent of $\alpha$ if $\alpha \in (0,1)$. The third item then follows.
	\end{proof}
	
	For the special case of $n =2$, we can also construct the free surface function $\eta$ in the case without surface tension.
	
	\begin{lem}\label{lem:etaconstruct2}
		Suppose $\gamma \in \R \setminus \{0\}$,   $\sigma = 0$ and $n = 2$, $s \ge 0$, and let $\mathcal{Y}^s, \mathcal{Z}^s$ be the Banach space defined in Definition~\ref{defn:dataY}. The following hold.
		\begin{enumerate}
			 \item For every $(f,g,h,k, l) \in \mathcal{Y}^s$, there exists an $\eta \in H^{s+\frac{5}{2}}(\R^{n-1};\R)$ for which the modified data tuple
			 \begin{align}
				 (f - \p_1 \eta e_1 , g, h - \gamma \p_1 \eta, k + \eta e_2, l)
				  \in H^{s} (\Omega;\R^2) \times H^{s+1}(\Omega; \R)  \times H^{s+\frac{3}{2}}(\Sigma_b; \R)  \times H^{s+\frac{1}{2}}(\Sigma_b; \R^2) \times H^{s+\frac{1}{2}}(\Sigma_0; \R) 
		 \end{align}
			 belongs to the range of $\Upsilon_{\alpha, \beta, \gamma, \sigma}$ defined in \eqref{eq:upsilon}. Moreover, there exists a constant $C > 0$ for which 
			 $\norm{\eta}_{H^{s+\frac{5}{2}}} \le C \norm{(f,g,h,k,l)}_{\mathcal{Y}^s}$.
			 
			 \item For every $(f,g,h,k) \in \mathcal{Z}^s$, there exists an $\eta \in H^{s+\frac{5}{2}}(\R^{n-1};\R)$ for which the modified data tuple
			 \begin{align}
				 (f - \p_1 \eta e_1 , g, h - \gamma \p_1 \eta, k + \eta e_2)
				  \in H^{s} (\Omega;\R^2) \times H^{s+1}(\Omega ; \R)  \times H^{s+\frac{3}{2}}(\Sigma_b; \R)  \times H^{s+\frac{1}{2}}(\Sigma_b; \R^2) 
		 \end{align}
			 belongs to the range of $\mathfrak{T}_{\alpha, \beta, \gamma, \sigma}$ defined in \eqref{eq: mathfrak T}. Moreover, there exists a constant $C > 0$ for which 
			 \begin{align}\label{eq: eta bound sigma = 0}
				  \norm{\eta}_{H^{s+\frac{5}{2}}} \le C \norm{(f,g,h,k)}_{\mathcal{Z}^s}. 
			 \end{align}
			 \item Furthermore, there exists a constant $C > 0$ for which \eqref{eq: eta bound sigma = 0} holds for all $\alpha \in (0,1)$. In other words, the constant $C > 0$ can be chosen to be independent of $\alpha$ if $\alpha \in (0,1)$.
		\end{enumerate}

	\end{lem}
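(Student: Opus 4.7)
The plan is to parallel the proof of Lemma~\ref{lem:etaconstruct1}, adapting to the case $\sigma = 0$, $n = 2$, and exploiting the coincidence that $n - 1 = 1$ forces $X^{s+5/2}(\R^{n-1}) = H^{s+5/2}(\R)$ (as equivalent Hilbert spaces) by item (2) of Theorem~\ref{thm: X^s}. Given $(f,g,h,k,l) \in \mathcal{Y}^s$ (respectively $(f,g,h,k) \in \mathcal{Z}^s$ for the second item) I define the candidate $\eta \in \mathscr{S}'(\R;\R)$ through its Fourier transform by
\begin{equation}
\hat{\eta}(\xi) = \rho_{\alpha,\beta,\gamma}(\xi)^{-1}\, \mathscr{F}\{\Lambda_{\alpha,\beta,\gamma}(f,g,h,k,l)\}(\xi),
\end{equation}
using the symbol $\rho_{\alpha,\beta,\gamma}$ from \eqref{eq:rho} (with $\sigma = 0$) and the linear map $\Lambda_{\alpha,\beta,\gamma}$ from \eqref{eq: Lambda gamma}. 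Reality of $\eta$ follows from the conjugation identities $\overline{\rho_{\alpha,\beta,\gamma}(\xi)} = \rho_{\alpha,\beta,\gamma}(-\xi)$ and the analogous relation for $\mathscr{F}\Lambda$ coming from Proposition~\ref{prop:lambda commutes}.

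To verify $\eta \in H^{s+5/2}(\R)$ with the norm bound \eqref{eq: eta bound sigma = 0}, I split the integration at $\abs{\xi} = 1$ and apply the third item of Lemma~\ref{lem:rho}, which in the $\sigma = 0$, $n = 2$ regime gives $\abs{\rho_{\alpha,\beta,\gamma}(\xi)}^{-2} \lesssim \abs{\xi}^{-2}$ on $B(0,1)$ and $\abs{\rho_{\alpha,\beta,\gamma}(\xi)}^{-2} \lesssim (1+\abs{\xi}^2)^{-1}$ on $B(0,1)^c$. Combined with Theorem~\ref{thm: regularity of Lambda} this yields
\begin{equation}
\norm{\eta}_{H^{s+5/2}}^2 \lesssim \int_{B(0,1)} \abs{\xi}^{-2}\abs{\mathscr{F}\Lambda}^2 \, d\xi + \int_{B(0,1)^c}(1+\abs{\xi}^2)^{s+3/2}\abs{\mathscr{F}\Lambda}^2\, d\xi \lesssim \norm{\Lambda_{\alpha,\beta,\gamma}(f,g,h,k,l)}_{\dot{H}^{-1}\cap H^{s+3/2}}^2,
\end{equation}
which is controlled by the $\mathcal{Y}^s$- (or $\mathcal{Z}^s$-) norm of the data. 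For the third item, I invoke the uniform-in-$\alpha$ versions recorded in item (4) of Lemma~\ref{lem:rho} and item (3) of Theorem~\ref{thm: regularity of Lambda}; the latter requires restricting to data with $l = 0$, which is precisely why the uniform bound is asserted only for the second item.

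To show that the modified data tuple lies in the range of $\Upsilon_{\alpha,\beta,\gamma,0}$ (respectively $\mathfrak{T}_{\alpha,\beta,\gamma,0}$), I invoke Theorem~\ref{thm: overdeterminediso} and verify membership in $\mathcal{Y}^s \cap \overleftarrow{\ker}\mathscr{B}_{\alpha,\beta,\gamma}$. This is done by testing against an arbitrary $\psi \in H^{s+1/2}(\Sigma_b;\R)$, setting $(v,q) = \Phi^{-1}_{\alpha,\beta^T,-\gamma}(0,0,\psi e_n,0)$, and expanding $\rho_{\alpha,\beta,\gamma}(\xi)\hat{\eta}(\xi)\overline{\hat{\psi}(\xi)} = [2\pi i \gamma \xi_1 + \overline{m_{\alpha,\beta}(\xi,-\gamma)}]\hat{\eta}(\xi)\overline{\hat{\psi}(\xi)}$. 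The $2\pi i\gamma\xi_1$ piece reassembles as $\Lambda_{\alpha,\beta,\gamma}(0,0,\gamma\p_1\eta,0,0)$ paired with $\psi$, exactly as in Lemma~\ref{lem:etaconstruct1}. The $\overline{m_{\alpha,\beta}}$ piece is handled by writing $\overline{m_{\alpha,\beta}(\xi,-\gamma)} = \int_0^b 2\pi i \xi \cdot \overline{V'(\xi,x_n,-\gamma)} \, dx_n$, which via integration by parts in $x_n$ and use of the adjoint momentum equation converts into a combination of $\Lambda_{\alpha,\beta,\gamma}(\p_1\eta e_1, 0, 0, 0, 0)$ and $\Lambda_{\alpha,\beta,\gamma}(0,0,0,-\eta e_2,0)$; it is precisely here, with $\sigma = 0$ and no surface-tension contribution, that the stress-boundary correction $+\eta e_2$ appears in place of the $-\sigma\Delta'\eta e_n$ term from Lemma~\ref{lem:etaconstruct1}.

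The main obstacle is carrying out the $\overline{m_{\alpha,\beta}}$ decomposition cleanly: unlike the $\sigma > 0$ case where the capillary-scaled factor $(1 + 4\pi^2\abs{\xi}^2\sigma)$ cleanly splits off a $\sigma\Delta'\eta$ term, for $\sigma = 0$ the entire contribution of $\overline{m_{\alpha,\beta}}\hat{\eta}$ must be absorbed into the stress-BC modification, which is why this case is dimension-restricted — in higher dimensions the resulting $\eta$ would only land in $X^{s+5/2}$ rather than $H^{s+5/2}$, and the second item of Theorem~\ref{thm: X^s} is what saves us for $n = 2$.
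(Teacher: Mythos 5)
Your construction of $\eta$, the use of item (2) of Theorem~\ref{thm: X^s} to identify $X^{s+5/2}(\R)$ with $H^{s+5/2}(\R)$, the norm bound via the third item of Lemma~\ref{lem:rho} together with Theorem~\ref{thm: regularity of Lambda}, and the discussion of $\alpha$-uniformity (including the correct observation that the $\alpha$-uniform version of Theorem~\ref{thm: regularity of Lambda} requires $l=0$) all match the paper's argument. The gap is in your final step, the verification that the modified tuple lies in $\overleftarrow{\ker}\mathscr{B}_{\alpha,\beta,\gamma}$.

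You assert that the $\overline{m_{\alpha,\beta}}$ piece of $\rho_{\alpha,\beta,\gamma}\hat\eta\,\overline{\hat\psi}$ ``converts into a combination of $\Lambda_{\alpha,\beta,\gamma}(\p_1\eta e_1,0,0,0,0)$ and $\Lambda_{\alpha,\beta,\gamma}(0,0,0,-\eta e_2,0)$.'' These are not complementary pieces; they are \emph{the same} functional. Indeed, writing $(v,q) = \Phi^{-1}_{\alpha,\beta^T,-\gamma}(0,0,\psi e_n,0)$, one has $\mathscr{B}\big(((\nabla'\eta,0),0,0,0,0),\psi\big) = \int_\Omega \nabla\eta\cdot v$ (extending $\eta$ constantly in $x_n$), and since $\diverge v = 0$ in $\Omega$ and $v_n = 0$ on $\Sigma_0$, the divergence theorem gives $\int_\Omega \nabla\eta\cdot v = \int_{\Sigma_b}\eta\, v_n = \mathscr{B}\big((0,0,0,-\eta e_2,0),\psi\big)$. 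Equivalently, $\int_{\R^{n-1}}\overline{m_{\alpha,\beta}}\,\hat\eta\,\overline{\hat\psi}$ is directly $\int_{\Sigma_b}\eta\, v_n$ because $\overline{m_{\alpha,\beta}(\xi,-\gamma)} = \overline{V_n(\xi,b,-\gamma)}$. So the $\overline{m_{\alpha,\beta}}$ term can be attributed to the bulk correction $-\p_1\eta e_1$ \emph{or} to the boundary correction $+\eta e_2$, but not to both; assigning it to both double-counts, and testing the tuple $(f-\p_1\eta e_1,\,g,\,h-\gamma\p_1\eta,\,k+\eta e_2,\,l)$ against $\psi$ leaves a nonzero residue $-\int_{\Sigma_b}\eta\,v_n$ rather than zero. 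Your claim that ``the stress-boundary correction $+\eta e_2$ appears in place of the $-\sigma\Delta'\eta e_n$ term'' is also off: that term came from the $4\pi^2|\xi|^2\sigma\,\overline{m_{\alpha,\beta}}$ piece of $\rho$, which simply vanishes when $\sigma=0$.

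This gap tracks an apparent typo in the lemma statement itself. Specializing the computation from Lemma~\ref{lem:etaconstruct1} to $\sigma=0$, $n=2$ (which is what the paper's own proof does by declaring ``follow the same calculations as the previous lemma'') yields the modified tuple $(f-\p_1\eta e_1,\,g,\,h-\gamma\p_1\eta,\,k,\,l)$, with no $\eta e_2$ correction, and this is also the form needed for the surjectivity argument in Theorem~\ref{thm:upsiloniso}, where one requires $\Psi_{\alpha,\beta,\gamma}(u,p)$ to equal the modified tuple and then $\Upsilon_{\alpha,\beta,\gamma,0}(u,p,\eta)=(f,g,h,k,l)$. Your proof should drop the $+\eta e_2$ modification and handle the $\overline{m_{\alpha,\beta}}$ piece exactly as in Lemma~\ref{lem:etaconstruct1} (attributing it solely to the bulk term $\p_1\eta e_1$), rather than inventing a split that cannot occur.
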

	\begin{proof}
	To prove the first item, we note that by Theorem~\ref{thm: X^s}, in dimension $n =2$ the specialized space $X^s(\R^{n-1} ; \R)$ is the standard Sobolev space $H^s(\R^{n-1};\R)$. So given $(f,g,h,k,l) \in \mathcal{Y}^s$, we similarly define $\eta \in H^{s+5/2}(\R^{n-1} ; \R)$ via $\eta_\alpha = (\hat{\eta})^\vee$ where  $\hat{\eta}_\alpha =
		   \rho_{\alpha,\beta,\gamma}^{-1}\mathscr{F}\{\Lambda_{\alpha,\beta,\gamma}(f,g,h,k,l)\}$.
	Lemma~\ref{lem:rho} and Theorem~\ref{thm: regularity of Lambda} imply that
	\begin{multline}
		 \norm{\eta_\alpha}_{H^{s+5/2}}^2 = \int_{\R^{n-1}} (1+\abs{\xi}^2)^{s+ 5/2} \abs{\hat{\eta}(\xi)}^2 \; d\xi \\ 
		 \lesssim \int_{\R^{n-1}} (1+\abs{\xi}^2)^{s+5/2} \abs{\xi}^{-2} \abs{\mathscr{F}\{\Lambda_{\alpha,\beta,\gamma}(f,g,h,k,l)\}(\xi)}^2 \lesssim_\alpha \norm{(f,g,h,k)}_{\mathcal{Y}^s}^2.  
	\end{multline}
	This shows that $\eta_\alpha = (\hat{\eta}_\alpha)^\vee \in H^{s+5/2}(\R^{n-1})$. To conclude the first item we follow the same calculations as the previous lemma to show that the modified data tuple belongs to the range of $\Upsilon_{\alpha, \beta, \gamma, 0}$. The second and third items follow from a similar set of arguments presented in the proof of Lemma~\ref{lem:etaconstruct1}.
	\end{proof}

Now we are ready to prove that $\Upsilon_{\alpha, \beta, \gamma, \sigma}: \mathcal{X}^s \to \mathcal{Y}^s$ and $\mathfrak{T}_{\alpha, \beta, \gamma, \sigma}: \mathcal{X}_\alpha^s \to \mathcal{Z}^s$ are isomorphisms when $\sigma > 0$ and $n \ge 2$, and when $\sigma = 0$ and $n = 2$.

\begin{thm}\label{thm:upsiloniso}
Suppose $\R \ni \alpha > 0$, $\beta \in \R^{n \times n}$ is positive definite, $\gamma \in \R \setminus \{0\}$, and  $s \ge 0$. The following hold.
\begin{enumerate}
	 \item   If $\sigma > 0$ and $n \ge 2$, then the bounded linear maps $\Upsilon_{\alpha, \beta, \gamma, \sigma} : \mathcal{X}^s \to \mathcal{Y}^s$ defined in \eqref{eq:upsilon} and $\mathfrak{T}_{\alpha, \beta, \gamma, \sigma}: \mathcal{X}_\alpha^s \to \mathcal{Z}^s$ defined in \eqref{eq: mathfrak T} are isomorphisms.
	 \item   If $\sigma = 0$ and $n = 2$, then the bounded linear maps $\Upsilon_{\alpha, \beta, \gamma, \sigma} : \mathcal{X}^s \to \mathcal{Y}^s$ defined in \eqref{eq:upsilon} and $\mathfrak{T}_{\alpha, \beta, \gamma, \sigma}: \mathcal{X}_\alpha^s \to \mathcal{Z}^s$ defined in \eqref{eq: mathfrak T} are isomorphisms.
	 \item  If $\sigma > 0$ and $n \ge 2$, then there exists a constant $C > 0$ for which 
	 \begin{align}
		 \sup_{\alpha \in (0,1)} \left( \norm{\mathfrak{T}_{\alpha, \beta, \gamma, \sigma}}_{\mathcal{L}(\mathcal{X}_\alpha^s ; \mathcal{Z}^s)} + \norm{\mathfrak{T}^{-1}_{\alpha, \beta, \gamma, \sigma}}_{\mathcal{L}( \mathcal{Z}^s; \mathcal{X}_\alpha^s)}\right) \le C.
	 \end{align}
	 If $\sigma = 0$ and $n = 2$, then there exists a constant $c > 0$ for which 
	 \begin{align}
		 \sup_{\alpha \in (0,1)} \left( \norm{\mathfrak{T}_{\alpha, \beta, \gamma, 0}}_{\mathcal{L}(\mathcal{X}_\alpha^s ; \mathcal{Z}^s)} + \norm{\mathfrak{T}^{-1}_{\alpha, \beta, \gamma, 0}}_{\mathcal{L}( \mathcal{Z}^s ; \mathcal{X}_\alpha^s)} \right) \le c.
	 \end{align}
\end{enumerate}

\end{thm}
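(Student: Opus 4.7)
The plan is to leverage Proposition~\ref{prop:Upsiloninjective}, which already supplies continuity and injectivity of both $\Upsilon_{\alpha,\beta,\gamma,\sigma}$ and $\mathfrak{T}_{\alpha,\beta,\gamma,\sigma}$, together with the $\alpha$-uniform continuity bound on the forward direction of $\mathfrak{T}$. What remains is surjectivity (under each of the two dimension/capillarity regimes) and an $\alpha$-uniform estimate on the inverse of $\mathfrak{T}$. In each case I would reduce surjectivity to the overdetermined Stokes isomorphism of Theorem~\ref{thm: overdeterminediso}, using the explicit construction of the free surface $\eta_\alpha$ furnished by Lemma~\ref{lem:etaconstruct1} when $\sigma > 0$ and $n \ge 2$, or Lemma~\ref{lem:etaconstruct2} when $\sigma = 0$ and $n = 2$.

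For surjectivity of $\Upsilon_{\alpha,\beta,\gamma,\sigma}$, given $(f,g,h,k,l) \in \mathcal{Y}^s$, I would apply the first item of the appropriate $\eta$-construction lemma to obtain $\eta_\alpha \in X^{s+5/2}(\R^{n-1};\R)$ such that the modified tuple
\[
 \bigl(f - (\nabla' \eta_\alpha, 0),\, g,\, h - \gamma \p_1 \eta_\alpha,\, k - \sigma \Delta' \eta_\alpha e_n,\, l\bigr)
\]
lies in $\mathcal{Y}^s \cap \overleftarrow{\ker}\mathscr{B}_{\alpha,\beta,\gamma}$. Theorem~\ref{thm: overdeterminediso} then yields a unique $(u,p) \in \Hzerostan{s+2}(\Omega;\R^n) \times H^{s+1}(\Omega;\R)$ with $\Psi_{\alpha,\beta,\gamma}(u,p)$ equal to this tuple up to the permutation $\mathcal{P}$. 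Unwinding the definition of $\Upsilon_{\alpha,\beta,\gamma,\sigma}$ then verifies that $\Upsilon_{\alpha,\beta,\gamma,\sigma}(u,p,\eta_\alpha) = (f,g,h,k,l)$, so $(u,p,\eta_\alpha) \in \mathcal{X}^s$ is the required preimage. Since $\mathcal{X}^s$ and $\mathcal{Y}^s$ are Hilbert and $\Upsilon_{\alpha,\beta,\gamma,\sigma}$ is bounded and bijective, the bounded inverse theorem upgrades this to a topological isomorphism.

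The argument for $\mathfrak{T}_{\alpha,\beta,\gamma,\sigma}$ runs along the same lines, with data in $\mathcal{Z}^s$ and the second items of Lemmas~\ref{lem:etaconstruct1} and~\ref{lem:etaconstruct2}, and with the $\mathcal{Z}^s$-variant of Theorem~\ref{thm: overdeterminediso} built from Corollary~\ref{thm: alpha classicalexistence} in place of Theorem~\ref{thm:classicalexistence}. Because $l \equiv 0$ now, the slip condition $[\alpha S(p,u)e_n + \beta u]' = 0$ is built into the domain $\alphaHzeros{s+2}$, so the solution $(u,p)$ automatically lies in $\alphaHzeros{s+2}(\Omega;\R^n) \times H^{s+1}(\Omega;\R)$ and $(u,p,\eta_\alpha) \in \mathcal{X}_\alpha^s$.

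For the uniform-in-$\alpha$ estimate of item (3), the plan is simply to track constants through the construction above. The $\alpha$-uniform version of each ingredient is already in hand: the uniform $\eta$-bound in the third item of Lemma~\ref{lem:etaconstruct1} (respectively~\ref{lem:etaconstruct2}), the uniform solvability of the base Stokes system in Corollary~\ref{thm: alpha classicalexistence} and Proposition~\ref{prop:ind of alpha}, and the uniform compatibility estimate in the third item of Theorem~\ref{thm: regularity of Lambda}. The $\alpha$-uniform upper bound on $\norm{\mathfrak{T}_{\alpha,\beta,\gamma,\sigma}}$ is immediate from the third item of Proposition~\ref{prop:Upsiloninjective}. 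The only step where I expect one has to be vigilant rather than clever is verifying that when $\eta_\alpha$ is substituted back to form the modified data, its $\mathcal{Z}^s$ norm remains dominated by $\norm{(f,g,h,k)}_{\mathcal{Z}^s}$ with a constant independent of $\alpha \in (0,1)$; this follows from \eqref{eq: eta X bound alpha} combined with the continuous embeddings collected in Theorem~\ref{thm: X^s}.
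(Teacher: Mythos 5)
Your proposal follows the paper's proof essentially verbatim: reduce isomorphy to surjectivity via Proposition~\ref{prop:Upsiloninjective}, construct $\eta_\alpha$ from Lemma~\ref{lem:etaconstruct1} (respectively~\ref{lem:etaconstruct2}), solve the modified data via the overdetermined isomorphism of Theorem~\ref{thm: overdeterminediso}, and for item (3) trace the $\alpha$-uniform constants through Proposition~\ref{prop:ind of alpha}, Corollary~\ref{thm: alpha classicalexistence}, and the third items of Lemma~\ref{lem:etaconstruct1}/\ref{lem:etaconstruct2}. The argument is correct and matches the paper's in both structure and ingredients.
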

\begin{proof}
To prove the first item, by Proposition~\ref{prop:Upsiloninjective}, it suffices to show that $\Upsilon_{\alpha, \beta, \gamma, \sigma}$ and $\mathfrak{T}_{\alpha, \beta, \gamma, \sigma}$ are surjective. To prove that $\Upsilon_{\alpha, \beta, \gamma, \sigma}$ is surjective, we suppose $(f,g,h,k,l) \in \mathcal{Y}^s$ and define the free surface function $\eta \in X^{s+5/2}(\R^{n-1}; \R)$ by the construction in Lemma~\ref{lem:etaconstruct1}. By Theorem~\ref{thm: overdeterminediso}, there exists $(u,p) \in \Hzerostan{s+2}(\Omega ; \R^n) \times H^{s+1}(\Omega ; \R)$ such that $\Psi_{\alpha,\beta,\gamma} (u,p) = (\Phi_{\alpha, \beta, \gamma}(u,p),u_n \rvert_{\Sigma_b}) = (f - (\nabla' \eta, 0), g, k - \sigma \Delta'\eta e_n, l, h - \gamma \p_1 \eta)$. Therefore, we find that $\Upsilon_{\alpha, \beta, \gamma, \sigma} (u,p, \eta) = (f,g,h,k,l)$. This shows that $\Upsilon_{\alpha, \beta, \gamma, \sigma}$ is surjective, and it follows that $\Upsilon_{\alpha, \beta, \gamma, \sigma}$ is an isomorphism. The surjectivity of $\mathfrak{T}_{\alpha, \beta, \gamma, \sigma}$ follows from a similar set of arguments.To prove the second item we follow the same argument as above, using Lemma~\ref{lem:etaconstruct2} in place of Lemma~\ref{lem:etaconstruct1}, $\Upsilon_{\alpha, \beta, \gamma,0}$ in place of $\Upsilon_{\alpha, \beta, \gamma, \sigma}$, and $\mathfrak{T}_{\alpha, \beta, \gamma,0}$ in place of $\mathfrak{T}_{\alpha, \beta, \gamma, \sigma}$

The third item follows the last item of Proposition~\ref{prop:Upsiloninjective}, the $\alpha$-independent estimate \eqref{eq: regularity solution est w/o alpha} recorded in Proposition~\ref{prop:ind of alpha} and the last items in Lemma~\ref{lem:etaconstruct1} and Lemma~\ref{lem:etaconstruct2}.
\end{proof}

\section{Nonlinear analysis} \label{sec: nonlinear}

\subsection{Preliminaries}\label{sec: product est}

We begin by discussion some assumptions about the slip map $A$.  We set $\beta = DA(0) \in \R^{n \times n}$ and note since $A$ is smooth we have $A(w) = A(0) + \beta w + O(\abs{w}^2)$, so by \eqref{eq: A monotone} and a simple scaling argument, we have 
\begin{align}\label{eq: beta coercive}
	\beta w \cdot w \ge \theta' \abs{w}^2 > 0, \; \forall w \in \R^n \setminus \{0\},
\end{align} 
for some positive constant $\theta' > 0$. We also note that if $w = u + i v$ for $u,v \in \R^n$, then 
\begin{align}\label{eq: beta coercive C}
	 \Re( \beta w \cdot \overline{w}) = \beta u \cdot u + \beta v \cdot v \ge \theta' \abs{w}^2 > 0, \forall w \in \C^n \setminus \{0\}.
\end{align}

Next, we record a set of results on the smoothness of various maps defined in terms of $\eta$ that we will use in the subsequent analysis.

\begin{thm}\label{thm: flattening maps est}
Let $\N \ni n \ge 2$, $\R \ni s > n/2$, and $V$ be a real finite dimensional inner product space. 

\begin{enumerate} 
	\item Suppose $\varphi \in C^\infty_b(\R ; \R)$. Then for $0 \le r \le s$, $f \in H^r(\R^n; V)$, $\eta \in X^s(\R^{n-1}; \R)$ and $\varphi \eta f: \R^n \to V$ defined via $(\varphi \eta f)(x) = \varphi(x_n) \eta(x') f(x)$, we have $\varphi \eta f \in H^r(\R^n; V)$ and $\norm{\varphi \eta f}_{H^r} \lesssim \norm{\eta}_{X^s} \norm{f}_{H^r}$. 
	
	\item Let $\varphi \in C^\infty_b(\R ; \R)$ be such that $\varphi \ge 0$. Then there exists $r_1 > 0$ depending on $n,b,s, \varphi$ such that the maps $\Gamma_1, \Gamma_2: B_{X^s}(0,r_1) \times H^s(\Omega; V) \to H^s(\Omega; V)$ given by $\Gamma_1 (\eta,f) = \frac{f}{1+\eta \varphi}$ and $\Gamma_2(\eta, f) = \frac{\eta f }{1+\eta\varphi}$
	are well-defined and smooth. 
	\item There exists a constant $r_2 > 0$ depending on $d,s$ such that the map $\Gamma : B_{H^s}(0,r_2) \to H^{s}(\R^n ; \R^n)$ given by $\Gamma(f) = f/\sqrt{1 + \abs{f}^2}$
	is well-defined and smooth. 
\end{enumerate}
\end{thm}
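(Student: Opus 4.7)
The plan is to establish item~(1) as the main product estimate and then deduce items~(2) and~(3) from it via composition with analytic scalar functions.

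For item~(1), I would first reduce to the case $\varphi \equiv 1$: since $\varphi \in C^\infty_b(\R)$, multiplication by $\varphi(x_n)$ is a bounded operator on $H^r(\R^n)$ for every $r \ge 0$ (integer $r$ by the Leibniz rule, fractional $r$ by interpolation). It then remains to show $\|\eta f\|_{H^r(\R^n)} \lesssim \|\eta\|_{X^s(\R^{n-1})} \|f\|_{H^r(\R^n)}$, where $\eta$ is read as independent of $x_n$. I would invoke Theorem~\ref{thm: X^s}(4) with $t = 1$ to write $\eta = \eta_{l,1} + \eta_{h,1}$, where $\eta_{l,1} \in C^\infty_0(\R^{n-1})$ satisfies $\|\eta_{l,1}\|_{C^k_b} \lesssim \|\eta\|_{X^s}$ for every $k$, and $\eta_{h,1} \in H^s(\R^{n-1})$ satisfies $\|\eta_{h,1}\|_{H^s} \lesssim \|\eta\|_{X^s}$. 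Viewed as a function on $\R^n$, the low-frequency piece $\eta_{l,1}(x')$ sits in $C^\infty_b(\R^n)$ with all derivatives controlled by $\|\eta\|_{X^s}$, so $\|\eta_{l,1} f\|_{H^r} \lesssim \|\eta\|_{X^s}\|f\|_{H^r}$ by the same Leibniz/interpolation bound. For the high-frequency piece, since $s > n/2 > (n-1)/2$, a tame product estimate (proved by a Littlewood-Paley decomposition in the $x'$ variables, combined with the Sobolev embedding $H^s(\R^{n-1}) \hookrightarrow L^\infty$ and Bernstein's inequality) gives $\|\eta_{h,1}(x') f(x)\|_{H^r(\R^n)} \lesssim \|\eta_{h,1}\|_{H^s}\|f\|_{H^r}$ for all $0 \le r \le s$. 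Summing produces item~(1).

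For item~(2), the key consequence of item~(1) is that $X^s(\R^{n-1})$ acts as a space of bounded multipliers on $H^s(\Omega;V)$, say with constant $C$. I would write the formal Neumann series
\begin{align}
(1+\eta\varphi)^{-1} = \sum_{k=0}^\infty (-\eta\varphi)^k,
\end{align}
which converges absolutely in the operator norm on $H^s(\Omega;V)$ as soon as $C\|\eta\|_{X^s} < 1$; taking $r_1 = 1/(2C)$ suffices. Each partial sum is a polynomial in $\eta$ with values in $\mathcal{L}(H^s;H^s)$, hence smooth as a map of $\eta$, and the standard fact that $z \mapsto (1+z)^{-1}$ is real-analytic on $\{|z|<1\}$ transfers smoothness to the full series. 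This yields smoothness of $\Gamma_1$, and another application of item~(1) (now multiplying by $\eta$) produces smoothness of $\Gamma_2$. For item~(3), I would write $\Gamma(f) = f \cdot \Psi(|f|^2)$ with $\Psi(t) = (1+t)^{-1/2}$ real-analytic near $0$. Since $s > n/2$ makes $H^s(\R^n)$ a Banach algebra, $f \mapsto |f|^2$ is a smooth map from $H^s(\R^n;\R^n)$ to $H^s(\R^n;\R)$, and composition with the Nemitskii operator associated to $\Psi$ is smooth on a small enough ball by standard composition results for smooth maps on Banach algebras. Multiplying by $f$ then completes the argument.

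The main obstacle is item~(1), because of the combination of a dimension mismatch between $\eta$ (on $\R^{n-1}$) and $f$ (on $\R^n$) with the non-$L^2$ low-frequency behavior encoded by the $X^s$ topology: $\eta$ need not lie in $H^s(\R^n)$, or even in $L^2(\R^n)$, so no single product estimate applied to $\eta$ itself can deliver the bound. Decoupling $\eta$ into a pointwise-smooth bounded piece and a genuine $H^s(\R^{n-1})$ piece via Theorem~\ref{thm: X^s}(4) is essential, after which each piece falls under a standard $L^\infty$-multiplier or Moser-type estimate on $\R^n$.
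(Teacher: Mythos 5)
Your approach to item (2), which is the only item the paper proves from scratch, is essentially the same as the paper's: the paper defines $T\colon X^s(\R^{n-1};\R)\to\mathcal{L}(H^s(\Omega;V))$ by $T(\eta)f=\varphi\eta f$, notes $T$ is bounded linear (hence smooth) using item (1), and then composes with the smooth power series $F(L)=\sum_{k\ge 0}L^k$ on the unit ball of the unital Banach algebra $\mathcal{L}(H^s(\Omega;V))$; your Neumann series $(1+\eta\varphi)^{-1}=\sum(-\eta\varphi)^k$ is exactly this, phrased slightly less formally. Likewise, both you and the paper get $\Gamma_2=\eta\,\Gamma_1$ for free from item (1). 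One small stylistic note: your argument that ``partial sums are smooth and analyticity transfers to the series'' is slightly informal compared to the paper's clean factorization $\Gamma_1(\eta,f)=F(T(\eta))f$, but it rests on the same analytic functional calculus fact and is not a genuine gap.

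For items (1) and (3) the paper does not actually supply proofs: it cites Theorem 5.13 and Theorem A.14 of Leoni--Tice, respectively. You instead sketch the proofs. Your sketch for item (1) (reduce to $\varphi\equiv 1$, split $\eta=\eta_{l,1}+\eta_{h,1}$ via Theorem~\ref{thm: X^s}(4), treat the $C^\infty_b$ low-frequency part by Leibniz/interpolation and the $H^s(\R^{n-1})$ high-frequency part by an anisotropic product estimate) is the right strategy and essentially what the cited lemma does; the one place I would want more care is the anisotropic product estimate $\|\eta_{h,1}(x')f(x)\|_{H^r(\R^n)}\lesssim\|\eta_{h,1}\|_{H^s(\R^{n-1})}\|f\|_{H^r(\R^n)}$, where the dimension mismatch means the claim does not follow immediately from the usual $H^s$-algebra estimate; justifying it does require the Littlewood--Paley/paraproduct machinery you gesture at. Your sketch for item (3), writing $\Gamma(f)=f\,\Psi(|f|^2)$ with $\Psi$ analytic near zero and invoking the Banach-algebra structure of $H^s$ for $s>n/2$, is likewise the standard route and consistent with the cited reference. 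So: correct and aligned with the paper's method on the part the paper proves, and providing reasonable in-line arguments where the paper defers to the literature.
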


\begin{proof}
	We first note that the first item follows from Theorem 5.13 in \cite{leonitice}, the third item follows from Theorem A.14 in \cite{leonitice}, so it suffices to only prove the second item. 
	
	To prove the second item, we first note that since $\Gamma_2(\eta, f) = \eta \Gamma_1(\eta,f)$, if $\Gamma_1$ is well-defined and smooth and then so is $\Gamma_2$ by the first item. Therefore it suffices to show that $\Gamma_1$ is well-defined and smooth. By the eighth item of Theorem~\ref{thm: X^s}, $\norm{\eta}_{C^0_b} \lesssim \norm{\eta}_{X^s}$, and by the ninth item of Theorem~\ref{thm: X^s} and an induction argument, we have $\norm{f \eta^k}_{X^s} \lesssim \norm{f}_{H^s} \norm{\eta}_{X^s}^k$ for all $k \ge 1, f \in H^s(\Omega; V)$ and  $\eta \in X^s(\R^{n-1}; \R)$. The first aforementioned estimate implies that there exists a constant $r > 0$ such that for $\eta \in B_{X^s}(0,r)$ we have $\sum_{k=0}^\infty \norm{\eta \varphi}_{C^0_b}^k \lesssim \sum_{k=0}^{\infty} \norm{\eta}_{X^s}^k < \infty$, and the second aforementioned estimate implies that $\sum_{k=1}^\infty \norm{f \eta^k}_{H^s} \lesssim \norm{f}_{H^s}  \sum_{k=1}^\infty  \norm{\eta}_{X^s}^k < \infty$. This shows that the series $\sum_{k=0}^\infty (-1)^k (\eta \varphi)^k$
converges uniformly to $\frac{1}{1+\eta\varphi}$ in $\Omega$, and the series $\sum_{k=1}^\infty (-1)^k f \eta^k$ converges in $H^s(\Omega;\R)$. Now we note that $\Gamma_1(\eta,f) = \frac{f}{1+\eta \varphi} = f + \sum_{k=1}^\infty (-1)^k f (\eta \varphi)^k \in H^s(\Omega; \R)$, and therefore the map $\Gamma_1$ is well-defined. To show that $\Gamma_1$ is smooth, we consider the map $T: X^s(\R^{n-1};\R) \to \mathcal{L}(H^s(\Omega;V))$ defined via $T(\eta)f = \varphi \eta  f$. By the first item of Theorem~\ref{thm: flattening maps est}, the map $T$ is bounded. Furthermore, in the unital Banach algebra $\mathcal{L}(H^s(\Omega;V))$, the power series $F(L) = \sum_{k=0}^\infty L^k$ converges and defines a smooth function in the unit ball $B_{\mathcal{L}(H^s(\Omega; V))}(0,1)$, thus the composition $F \circ T : X^s(\R^{n-1}; \R) \to  \mathcal{L}(H^s(\Omega;V))$ defines a smooth function. Since $\Gamma(f,g) = F(T(\eta)) f$, we may deduce that there exists a constant $r_1 > 0$ for which $\Gamma_1$ is smooth on $B_{X^s}(0,r_1) \times H^s(\Omega)$. 
\end{proof}

Now we can synthesize the aforementioned results to show that all the nonlinear maps appearing in \eqref{eq: main flattened} are well-defined and $C^2$. 

\begin{thm}\label{thm:smoothnessnonlinear}
	Suppose $ n \ge 2$ and $\sigma > 0$, or $n=2$ and $\sigma=0$. Let $\N \ni s \ge 1 + \tfloor{n/2}$. The following hold. 
	
	\begin{enumerate}
		 \item For any $\delta, M > 0$, define the open set $U^s_{\delta,M}$ of $\mathcal{X}^s$ via
		 \begin{align} \label{eq: ball for solution}
			 U^s_{\delta,M} = \{ (u, p ,\eta) \in \mathcal{X}^s \mid \norm{u}_{H^{s+2}} + \norm{p}_{H^{s+1}} < M,  \norm{\eta}_{X^{s+\frac{5}{2}}} < \delta\}.
		 \end{align}
		 Consider the Hilbert space 
		 \begin{align}\label{eq: E^s}
			 \mathcal{E}^s = \R \times \R \times H^{s+3}(\R^n ; \R^{n \times n}_{\sym}) \times H^{s+\frac{1}{2}}(\R^{n-1}  ; \R^{n\times n}_{\sym}) \times H^{s+2}(\R^n  ; \R^n) \times H^s(\R^{n-1}  ; \R^n).
		 \end{align}
		 Let $\mathfrak{F}$ be as defined in \eqref{eq:flattening}, $\mathcal{J}, \mathcal{A},  \mathcal{H}$ be as defined in \eqref{eq:JandK}, \eqref{eq:A}, and the $\mathcal{A}$-dependent operators be defined as in Section \ref{sec:flatten}. We define the solution operator $\Xi : \mathcal{E}^s \times U^s_{\delta,M} \to \mathcal{Y}^s$ associated to \eqref{eq: main flattened}  via 
	 \begin{multline} \label{eq: Xi}
		 \Xi(\alpha, \gamma,  \mathcal{T}, T, \mathfrak{f}, f, u,p, \eta) = 
		 (\diverge_\mathcal{A} S_\mathcal{A}(p,u) + (u- \gamma e_1)\cdot \nabla_\mathcal{A} u  + u \cdot \nabla_{\mathcal{A}} u - \mathfrak{f} \circ \mathfrak{F} - L_{\Omega_b} f, \mathcal{J}\diverge_{\mathcal{A}}u, \\ u \cdot \mathcal{N} + \gamma\p_1 \eta, S_{\mathcal{A}}(p,u)\mathcal{N} - (\sigma \mathcal{H}(\eta)I + \mathcal{T} \circ \mathfrak{F} + S_b T\rvert_{\Sigma_b}) \mathcal{N}  , [\alpha S_{\mathcal{A}}(p,u)\nu - A(u)]') 
	 \end{multline}
	 where
	 \begin{align}\label{eq: L_Omega and S_b}
		 L_{\Omega_b} f(x) = f(x') \text{ and } S_b T(x',b) = T(x').
	 \end{align} 
	 Then there exists a $\delta > 0$ for which $\Xi$ is well-defined and belongs to $C^2_b(\mathcal{E}^s \times U^s_{\delta,M}; \mathcal{Y}^s)$. Furthermore, we have the estimate 
	 \begin{align}
		  \sup_{\alpha > 0} \norm{\Xi(\alpha,\cdot) \rvert_{\mathcal{E}^s \times U^s_{\delta,M}}}_{C^2_b} < \infty.
	 \end{align}
	 \item Similarly, for any $\delta, M > 0$, define the open set $U^s_{\alpha, \delta,M}$ of $\mathcal{X}^s$ via
	 \begin{align} 
		 U^s_{\alpha,\delta,M} = \{ (u, p ,\eta) \in \mathcal{X}_\alpha^s \mid \norm{u}_{H^{s+2}} + \norm{p}_{H^{s+1}} < M,  \norm{\eta}_{X^{s+\frac{5}{2}}} < \delta\}.
	 \end{align}
	 Consider the Hilbert space $\mathcal{E}^s$ defined via \eqref{eq: E^s}. We define the solution operator $\mathfrak{X} : \mathcal{E}^s \times U^s_{\alpha, \delta,M} \to \mathcal{Z}^s$ associated to \eqref{eq: main flattened} with $A(\cdot) = \beta \cdot$ where $\beta \in \R^{n \times n}$ satisfies \eqref{eq: beta coercive} via 
 \begin{multline} \label{eq: Xi alpha}
	 \mathfrak{X}(\alpha, \gamma,  \mathcal{T}, T, \mathfrak{f}, f, u,p, \eta) = 
	 (\diverge_\mathcal{A} S_\mathcal{A}(p,u) + (u- \gamma e_1)\cdot \nabla_\mathcal{A} u  + u \cdot \nabla_{\mathcal{A}} u - \mathfrak{f} \circ \mathfrak{F} - L_{\Omega_b} f, \mathcal{J}\diverge_{\mathcal{A}}u, \\ u \cdot \mathcal{N} + \gamma\p_1 \eta, S_{\mathcal{A}}(p,u)\mathcal{N} - (\sigma \mathcal{H}(\eta)I + \mathcal{T} \circ \mathfrak{F} + S_b T\rvert_{\Sigma_b}) \mathcal{N}).
 \end{multline}
 Then there exists a $\delta > 0$ for which $\mathfrak{X}$ is well-defined and belongs to $C^2_b(\mathcal{E}^s \times U^s_{\alpha,\delta,M}; \mathcal{Z}^s)$. Furthermore, we have the estimate 
 \begin{align}\label{eq: mathfrak X norm}
	  \sup_{\alpha > 0 } \norm{\mathfrak{X} \rvert_{\mathcal{E}^s \times U^s_{\alpha,\delta,M}}}_{C^2_b} < \infty.
 \end{align}
\end{enumerate}

\end{thm}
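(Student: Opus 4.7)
The plan is to decompose $\Xi$ and $\mathfrak{X}$ into a finite list of building blocks and show that each is $C^2$ on an appropriate neighborhood, with quantitative bounds that combine to give the $C^2_b$ claims. Most of these blocks are handled by the no-slip analyses in \cite{leonitice, tice, noahtice}, so I would invoke those results for the interior $\mathcal{A}$-dependent nonlinearities, the free-surface geometric maps $\mathcal{N}$ and $\mathcal{H}$, and the flattening-composition terms, concentrating the fresh work on three new ingredients: the slip boundary contribution $[\alpha S_\mathcal{A}(p,u)\nu - A(u)]'|_{\Sigma_0}$ appearing in $\Xi$; the adaptation of the standard product and composition lemmas to the modified flattening using the cutoff $\varphi$ (which vanishes on $[0,b/4]$); and the uniformity statements in $\alpha$.

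First I would fix $\delta \in (0, \min\{r_1, r_2, b/2\})$ with $r_1, r_2$ from Theorem~\ref{thm: flattening maps est}, so that $\mathfrak{F}$ is a diffeomorphism and the coefficient maps $\eta \mapsto \mathcal{A}, \mathcal{J}, \mathcal{K}$ are smooth on $B_{X^{s+5/2}}(0,\delta)$ via $\Gamma_1, \Gamma_2$. The interior terms $\diverge_\mathcal{A} S_\mathcal{A}(p,u)$, $u \cdot \nabla_\mathcal{A} u$, $\gamma e_1 \cdot \nabla_\mathcal{A} u$, and $\mathcal{J} \diverge_\mathcal{A} u$ then map $C^2$ into $H^s(\Omega) \times H^{s+1}(\Omega)$ by combining Theorem~\ref{thm: flattening maps est} with the Sobolev algebra $H^s \cdot H^s \hookrightarrow H^s$ (valid since $s > n/2$) and the multiplier estimate $X^s \cdot H^s \hookrightarrow H^s$ from item 6 of Theorem~\ref{thm: X^s}. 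The compositions $\mathfrak{f} \circ \mathfrak{F}$ and $\mathcal{T} \circ \mathfrak{F}|_{\Sigma_b}$ are handled via the standard flattening-composition lemma (cf.~Appendix A of \cite{leonitice}), which gives smoothness of $(\mathfrak{f}, \eta) \mapsto \mathfrak{f} \circ \mathfrak{F}_\eta$ from $H^{s+2}(\R^n) \times B_{X^{s+5/2}}(0,\delta)$ into $H^s(\Omega)$; the one-order regularity loss is the standard cost of admitting a merely $X^{s+5/2}$ diffeomorphism. The mean-curvature term reduces via $\mathcal{H}(\eta) = \diverge'(\Gamma(\nabla'\eta))$ with $\Gamma$ from Theorem~\ref{thm: flattening maps est}(3) and item 3 of Theorem~\ref{thm: X^s} to a smooth map into $H^{s+1/2}(\R^{n-1})$, and the linear embeddings $L_{\Omega_b}, S_b$ are trivial.

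The principal new piece is the slip boundary term $[\alpha S_\mathcal{A}(p,u)\nu - A(u)]'|_{\Sigma_0}$ in $\Xi$. Here the design of $\varphi$ is crucial: because $\varphi \equiv 0$ on $[0, b/4]$, we have $\mathcal{A} \equiv I$ and $\nu = e_n$ in a neighborhood of $\Sigma_0$, so $[\alpha S_\mathcal{A}(p,u)\nu]'|_{\Sigma_0} = \alpha [S(p,u) e_n]'|_{\Sigma_0}$, which is a bounded $\alpha$-linear function of $(u,p)$ into $H^{s+1/2}(\Sigma_0;\R^{n-1})$ by trace theory alone. The remaining piece $u \mapsto [A(u)]'|_{\Sigma_0}$ is a Nemytskii operator applied to the trace of $u$ in $H^{s+3/2}(\Sigma_0;\R^n)$; since the hypothesis $s \ge 1 + \lfloor n/2 \rfloor$ implies $H^{s+3/2}(\Sigma_0)$ is a Banach algebra, smoothness of $A \in C^\infty(\R^n; \R^n)$ transfers to smoothness of this Nemytskii map into $H^{s+1/2}(\Sigma_0; \R^n)$ with bounded finite-order derivatives on $U^s_{\delta,M}$ via standard Moser-type estimates. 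For $\mathfrak{X}$, no such term enters the formula because the linearized slip condition $[\alpha S(p,u)e_n - \beta u]'|_{\Sigma_0} = 0$ is built directly into the domain $\alphaHzeros{s+2}$, which is consistent with the $\mathcal{A}$-dependent map precisely because $\mathcal{A} = I$ on $\Sigma_0$.

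The uniform bounds then follow from the explicit structure of the maps. For $\Xi$, the parameters $\alpha$ and $\gamma$ enter only linearly, so second derivatives in those variables vanish and mixed derivatives are controlled by the bounded-set estimates already established for the remaining variables. For $\mathfrak{X}$, the formula is entirely $\alpha$-independent, so its restriction to the subspace $\mathcal{X}_\alpha^s \hookrightarrow \mathcal{X}^s$ inherits the fixed $C^2$-bound of $\mathfrak{X}$ on $\mathcal{X}^s$, yielding \eqref{eq: mathfrak X norm} immediately. The main technical obstacle I anticipate is the re-verification of the flattening-composition and $\mathcal{A}$-multiplier lemmas with the present cutoff $\varphi$: the arguments from \cite{leonitice, tice} are structurally identical (Taylor expansion of $\eta \mapsto f \circ \mathfrak{F}_\eta$ together with multilinear estimates via Sobolev algebra), but because those papers used a simple affine profile for the flattening, every intermediate estimate must be rederived in the current setting, which is routine but lengthy. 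Once this is in place the $C^2_b$ conclusion follows by assembling the piecewise estimates above.
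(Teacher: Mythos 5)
Your decomposition of $\Xi$ into interior, free-surface, flattening-composition, and slip-boundary blocks matches the paper's strategy, and your key observation that $\varphi\equiv0$ on $[0,b/4]$ forces $\mathcal{A}\equiv I$, $\nu=e_n$ near $\Sigma_0$ (so that $[\alpha S_\mathcal{A}(p,u)\nu]'|_{\Sigma_0}$ degenerates to the linear expression $\alpha[S(p,u)e_n]'$) is exactly what the paper exploits. Your treatment of $[A(u)]'|_{\Sigma_0}$ as a Nemytskii operator on the trace is a legitimate alternative to the paper's route, which instead composes $A$ with $u$ on $\Omega$ via the Sobolev-composition result (Theorem~\ref{thm: smooth Sobolev composition}) and then traces; both work for the stated range of $s$.

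However, there is a genuine gap: you never verify that $\Xi$ actually lands in $\mathcal{Y}^s$ (and $\mathfrak{X}$ in $\mathcal{Z}^s$). Recall from Definition~\ref{defn:dataY} that the norm on $\mathcal{Y}^s$ contains the extra seminorm $\bigl[h-\int_0^b g(\cdot,x_n)\,dx_n\bigr]_{\dot H^{-1}}$, so well-definedness of $\Xi$ into $\mathcal{Y}^s$ requires showing that the specific combination
\[
u\cdot\mathcal{N}+\gamma\p_1\eta-\int_0^b\mathcal{J}\,\diverge_\mathcal{A}u\,(\cdot,x_n)\,dx_n
\]
belongs to $\dot H^{-1}(\R^{n-1})$ with a bound controlled by $\norm{(u,p,\eta)}_{\mathcal{X}^s}$ and that the resulting map into $\dot H^{-1}\cap H^{s+3/2}$ is $C^2$. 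Showing each component individually has the right Sobolev regularity does not suffice, because $h$ and $g$ must be coupled. This requires the Piola-type rewriting $\mathcal{J}\diverge_\mathcal{A}u=\diverge(\mathcal{J}\mathcal{A}^\intercal u)$ and $u\cdot\mathcal{N}=(\mathcal{J}\mathcal{A}^\intercal u)\cdot e_n$ on $\Sigma_b$ so that Lemma~\ref{lem:divtrace} can be applied, together with item 7 of Theorem~\ref{thm: X^s} for $\p_1\eta$. The paper explicitly handles this step (the calculation mirroring Theorem 7.3 of \cite{leonitice}); your proposal enumerates the ``new'' ingredients as the slip term, the $\varphi$-cutoff adaptation, and the $\alpha$-uniformity, but omits this divergence-trace compatibility entirely. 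Without it, the claim that $\Xi$ is well-defined as a map into $\mathcal{Y}^s$ is unproved. A secondary, minor point: your choice of $\delta$ should also enforce the threshold $\delta_*$ from the composition result Theorem~\ref{comp_C2}, which your proposal does not mention, and the regularity drop for the composition $\mathfrak{f}\circ\mathfrak{F}$ is by two orders (needed for $C^2$), not one.
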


\begin{proof}
We proceed to prove the first item. Let $\delta = \min \{r_1, r_2/c_1, \delta_*\}$, where $r_1, r_2$ are the radii from the second and third items of Theorem~\ref{thm: flattening maps est}, $c_1$ is the embedding constant from $X^s(\R^d) \to H^{s-1}(\R^d; \R^d)$ and $0 < \delta_* < 1$ is from Theorem~\ref{comp_C2}. We note that since $\varphi \in C^\infty_b(\R;\R)$, by the first and second items of Theorem~\ref{thm: flattening maps est}, the maps $\Gamma_1, \Gamma_2 : B_{X^r}(0,\delta) \times H^r(\Omega ; \R) \to H^r(\Omega ; \R)$ given by $\Gamma_1(\eta, f) = \frac{f \varphi}{1+\eta \varphi'}$ and $\Gamma_2(\eta,f) = \frac{\eta f \varphi}{1+\eta \varphi'}$ are well-defined and smooth for $r > n /2$. Utilizing this observation, the definition of the $\mathcal{A}$ and the $\mathcal{A}$-dependent operators in Section \ref{sec:flatten}, the fifth and ninth items of Theorem~\ref{thm: X^s}, the fact that $H^r(\R^d;\R)$ is an algebra for $r > d/2$, trace theory and the assumption that $A$ is smooth, the map 
\begin{multline}
	\R  \times \R \times U^s_{\delta,M}  \ni (\alpha, \gamma, T, u, p , \eta)  \mapsto \\
	(\diverge_\mathcal{A} S_\mathcal{A}(p,u) + (u- \gamma e_1)\cdot \nabla_\mathcal{A} u  + u \cdot \nabla_{\mathcal{A}} u, \mathcal{J}\diverge_{\mathcal{A}}u, u \cdot \mathcal{N} + \gamma\p_1 \eta, S_{\mathcal{A}}(p,u)\mathcal{N} \rvert_{\Sigma_b}, \alpha [S_{\mathcal{A}}(p,u)\nu]' \rvert_{\Sigma_0})   \\
	\in  H^s(\Omega ;\R^{n}) \times H^{s+1}(\Omega; \R )  \times  H^{s+3/2}(\Sigma_b ; \R)  \times H^{s+1/2}(\Sigma_b ; \R^{n})  \times  H^{s+1/2}(\Sigma_0 ; \R^{n-1})
\end{multline}
is well-defined and smooth. 

By the supercritical Sobolev embedding $H^{1+\tfloor{n/2}}(\Omega;\R^n) \hookrightarrow C^0_b(\Omega;\R^n)$, the map $A \in C^\infty(\R^n ; \R^n)$ agrees with the map $\tilde{A} = \psi A \in C^\infty_b(\R^n ;\R^n)$ on $B_{H^{s+2}(\Omega;\R^n)}(0,M)$ since $s+ 2 \ge 3 + \tfloor{n/2}$, where $\psi$ is a smooth cutoff function on $B_{\R^n}(0,r(M))$, $r(M)$ depends on $M$ and the embedding constant from $H^{1+\tfloor{n/2}}(\Omega;\R^n) \hookrightarrow C^0_b(\Omega;\R^n)$. Since $\tilde{A} \in C^\infty_b(\R^n ;\R^n)$ and $\tilde{A}(0) = 0$, by Theorem~\ref{thm: smooth Sobolev composition} we may then conclude that the map $U^s_{\delta,M} \ni (u,p,\eta) \mapsto A(u) \rvert_{\Sigma_0} \in H^{s+3/2}(\Sigma_0; \R)$ is well-defined and $C^2$. 

By the fifth item of Theorem~\ref{thm: X^s}, the third item of Theorem~\ref{thm: flattening maps est}, and the fact that $H^{s+1/2}(\R^{n-1}; \R)$ is an algebra, the map 
\begin{align}
	 B_{X^{s+5/2}}(0,\delta) \ni \eta \mapsto \sigma \mathcal{H}(\eta) I \mathcal{N} = \sigma \diverge' \left( \frac{\nabla' \eta}{\sqrt{1+ \abs{\nabla' \eta}^2}} \right) I_{n \times n} (-\nabla'\eta, 1)  \in H^{s+1/2}(\R^{n-1} ; \R) 
\end{align}
is well-defined and smooth. 

By Theorem 7.3 and Lemma A.10 in \cite{leonitice}, the map 
\begin{align}
	 H^{s+1/2}(\R^{n-1} ; \R^{n\times n}_{\sym}) \times H^s(\Omega ; \R^n) \ni  (T, f) \mapsto ( S_b T, L_{\Omega_b} f ) \in H^{s+1/2}(\Sigma_b ; \R^n) \times H^s(\Omega; \R^n) 
\end{align}
is well-defined and smooth. 

By Theorem~\ref{comp_C2}, we may conclude that the map 
\begin{multline}
	H^{s+3}(\R^n ; \R^{n\times n}_{\sym}) \times H^{s+2}(\R^n  ; \R^n) \times B_{X^{s+5/2}(\R^{n-1}; \R)}(0,\delta) \ni  (\mathcal{T},\mathfrak{f}, \eta) \mapsto (\mathfrak{f}\circ \mathfrak{F}, \mathcal{T} \circ \mathfrak{F} \rvert_{\Sigma_b}) \\
	\in H^s(\R^{n} ;\R^{n}) \times H^{s+1/2}(\R^{n-1}; \R^n)
\end{multline}
is well-defined and $C^2$. 

Finally, following the same calculations as Theorem 7.3 in \cite{leonitice}, we find that 
\begin{align}
	\R \times \mathcal{U}^s_\delta \ni (\gamma, u, p, \eta)  \mapsto u \cdot \mathcal{N} + \gamma\p_1 \eta - \int_0^b \mathcal{J} \diverge u (\cdot,x_n) \; dx_n \in H^{s+3/2}(\R^{n-1}; \R) \cap \dot{H}^{-1}(\R^{n-1}; \R) 
\end{align}
is well-defined and smooth. Combining the aforementioned results then shows that the map $\Xi: \mathcal{E}^s \times \mathcal{U}^s_{\delta,M} \to \mathcal{Y}^s$ is well-defined and $C^2$.

Next we note that by the form of the matrix $\mathcal{A}: \Omega \to \R^{n \times n}$ defined in Section \ref{sec:flatten}, the nonlinear terms in the map $\Xi$ are either products between standard Sobolev functions or products between specialized Sobolev and standard Sobolev functions. The same is true for $D\Xi$ and $D^2 \Xi$. Then by utilizing this observation and the ninth item of Theorem~\ref{thm: X^s}, we may conclude that the restriction of the solution map $\Xi \rvert_{ \mathcal{E}^s \times U^s_{\delta,M}} : \mathcal{E}^s \times U^s_{\delta,M} \to \mathcal{Y}^s$ is $C^2_b(\mathcal{E}^s \times U^s_{\delta,M}; \mathcal{Y}^s)$. Furthermore, since we assume that $\alpha \in (0,1)$ and $\alpha$ only appears in the linear terms of the last component of $\Xi$, we may conclude that the $C^2_b$ norm of $\Xi \rvert_{ \mathcal{E}^s \times U^s_{\delta,M}}$ is independent of $\alpha$.

The second item and in particular \eqref{eq: mathfrak X norm} follows from a similar set of arguments as above and the observation that $\alpha$ does not appear on the right hand side of \eqref{eq: Xi alpha}.
\end{proof}

\subsection{Solvability of the flattened system \eqref{eq: main flattened}}\label{sec: main flattened}
Now we are ready to construct solutions to \eqref{eq: main flattened} by using the implicit function theorem.

\begin{proof}[Proof of Theorem~\ref{thm:main1}]
We first consider the case with surface tension, $\sigma > 0$ and $n \ge 2$. Let $\delta$ be the minimum of the $\delta_1 > 0$ from Theorem~\ref{thm:smoothnessnonlinear}, $\delta_* > 0$ from the third item of Theorem~\ref{thm: flattening maps est}, and $\delta_A > 0$ in \eqref{eq: beta coercive}. We fix $M > 0$ and consider the open subset $U^s_{\delta,M}$ of $\mathcal{X}^s$ defined via \eqref{eq: ball for solution}. Using Proposition~\ref{prop:embed X} and standard Sobolev embedding, any open subset of $U^s_{\delta,M}$ containing $(0,0,0)$ satisfies the first assertion of the theorem. This proves the first item.

To prove the remaining items, we consider the Hilbert space $\mathcal{E}^s$ defined in \eqref{eq: E^s} and the solution map $\Xi : \mathcal{E}^s \times U^s_{\delta,M} \to \mathcal{Y}^s$ defined in \eqref{eq: Xi}. By Theorem~\ref{thm:smoothnessnonlinear}, the map $\Xi$ is well-defined and $C^2$. By the product structure of $\mathcal{E}^s \times U^s_{\delta,M}$, we can define $D_1 \Xi : \mathcal{E}^s \times U^s_{\delta,M} \to \mathcal{L}(\mathcal{E}^s ; \mathcal{Y}^s )$ and $D_2 \Xi : \mathcal{E}^s \times U^s_{\delta,M} \to \mathcal{L}(\mathcal{X}^s; \mathcal{Y}^s )$ to be the derivatives of $\Xi$ with respect to $\mathcal{E}^s$ and $U^s_{\delta,M}$, respectively. Note that by the second item of Theorem~\ref{thm: flattening maps est}, we have $D_2\mathfrak{S}_b(0,0) = 0$ and $D_2\Lambda_\Omega(0,0) = 0$. Therefore, for any $\alpha \in \R, \gamma \in \R$, $\Xi(\alpha, \gamma,0,0,0,0, 0,0,0) = (0, 0, 0, 0, 0)$ and $D_2 \Xi (\alpha, \gamma, 0, 0,0,0,0, 0,0,0)(u,p,\eta) = \Upsilon_{\alpha, \beta, \gamma, \sigma}(u,p,\eta)$ where $\Upsilon_{\alpha, \beta, \gamma, \sigma}$ is defined in \eqref{eq:upsilon}. By Theorem~\ref{thm:upsiloniso}, for every $\alpha_* > 0$ and $\gamma_* \neq 0$ the map $D_2 \Xi (\alpha_*, \gamma_*, 0, 0,0,0,0, 0,0,0)$ is a linear isomorphism. Thus, by the implicit function theorem there exists an open sets $\mathcal{U}(\alpha_*, \gamma_*) \subseteq \mathcal{E}^s$ and $\mathcal{O}(\alpha_*, \gamma_*) \subseteq U^s_{\delta,M}$ such that $(\alpha_*, \gamma_*, 0, 0, 0, 0) \in \mathcal{U}(\alpha_*, \gamma_*)$, $(0,0,0) \in \mathcal{O}(\alpha_*, \gamma_*)$, and there exists a $C^1$ Lipschitz map $\varpi_{\alpha_*, \gamma_*} : \mathcal{U}(\alpha_*, \gamma_*) \to \mathcal{O}(\alpha_*, \gamma_*) \subseteq U^s_{\delta,M}$ such that $\Xi(\alpha, \gamma, \mathcal{T}, T, \mathfrak{f}, f, \varpi_{\alpha_*, \gamma_*}(\alpha, \gamma, \mathcal{T}, T, \mathfrak{f}, f) ) = (0, 0, 0, 0, 0)$ for all $(\alpha, \gamma, \mathcal{T}, T, \mathfrak{f}, f) \in \mathcal{U}(\alpha_*, \gamma_*)$. Moreover, $(u,p,\eta) = \varpi_{\alpha_*, \gamma_*}(\alpha, \gamma, \mathcal{T}, T, \mathfrak{f}, f)$ is the unique solution to $\Xi(\gamma, \mathcal{T}, T, \mathfrak{f}, f, u,p,\eta ) = (0, 0, 0, 0, 0)$ in $\mathcal{O}(\alpha_*, \gamma_*)$. 

Next, we define the open sets 
\begin{align}\label{eq: final open}
	 \mathcal{U}^s = \bigcup_{\alpha_* \in \R^+, \gamma_* \in \R \setminus \{0\}}\mathcal{U}(\alpha_*, \gamma_*) \subseteq \mathcal{E}^s \; \text{and} \; \mathcal{O}^s = \bigcup_{\alpha_* \in \R^+, \gamma_* \in \R \setminus \{0\}} \mathcal{O}(\alpha_*, \gamma_*) \subseteq U^s_{\delta,M}.
 \end{align}
 We note that by construction, $\R^+ \times (\R \setminus \{0\}) \times \{ 0\} \times \{ 0 \} \times \{ 0 \} \times \{0 \} \subset \mathcal{U}^s$. Furthermore, for every 
$(\alpha, \gamma,  \mathcal{T},T,\mathfrak{f},f) \in \mathcal{U}^s$, there exists an $\alpha_* > 0, \gamma_* \in \R \setminus \{0\}$ for which $(\alpha, \gamma,  \mathcal{T},T,\mathfrak{f},f) \in \mathcal{U}(\alpha_*, \gamma_*)$ and $(u,p,\eta) = \varpi_{\alpha_*, \gamma_*}(\alpha, \gamma,  \mathcal{T},T,\mathfrak{f},f) \in \mathcal{O}(\alpha_*, \gamma_*)$. By the observation above and the implicit function theorem, the map $\overline{\varpi}: \mathcal{U}^s \to \mathcal{O}^s$ defined via  $\overline{\varpi}(\alpha, \gamma,  \mathcal{T},T,\mathfrak{f},f) = \varpi_{\alpha_*, \gamma_*}(\alpha, \gamma,  \mathcal{T},T,\mathfrak{f},f)$, where $\alpha_* > 0, \gamma_* \in \R \setminus \{ 0 \}$ is such that $(\alpha, \gamma,  \mathcal{T},T,\mathfrak{f},f) \in \mathcal{U}(\alpha_*, \gamma_*)$, is well-defined, $C^1$, and locally Lipschitz. This proves the remaining items for $\sigma > 0$ and $n \ge 3$. 

To prove the remaining items in the case without surface tension and $n=2$, we argue along the same lines but use the second item of Theorem~\ref{thm:upsiloniso} instead of the first and use the isomorphism $\Upsilon_{\alpha, \beta, \gamma,0}$. 
\end{proof}

\subsection{The solutions to \eqref{eq: main unflattened} as $\alpha \to 0$} \label{sec: alpha to zero}

\begin{proof}[Proof of Theorem~\ref{thm: alpha to zero}]

We first note that Theorem~\ref{thm: X^s} shows that the space $Z = H^{s+2}(\Omega ; \R^n) \times H^{s+1}(\Omega; \R) \times X^{s+5/2}(\R^{n-1}; \R)$ is a separable Hilbert space, and therefore the ball $B_{Z} (0,M)$ is metrizable in the weak topology for any $M > 0$ (see Theorem 3.29 in \cite{brezis}).

Let $\delta > 0$ be the same as in the proof for Theorem~\ref{thm:main1} above and for a fixed $M > 0$ consider the solution map $\mathfrak{X} : \mathcal{E}^s \times U^s_{\alpha,\delta,M} \to \mathcal{Z}^s$ defined via \eqref{eq: Xi alpha}. We note that if $\alpha \in (0,1)$, the last item of Theorem~\ref{thm:upsiloniso} and the second item of Theorem~\ref{thm:smoothnessnonlinear} implies that $\mathfrak{X}$ satisfies the $\alpha$-independent estimate \eqref{eq: implicit ests}. Furthermore, the arguments presented above in the proof of Theorem~\ref{thm:main1} show that $\mathfrak{X}$ also satisfies the rest of the requirements of Theorem~\ref{thm: implicitFT}. Thus by applying Theorem~\ref{thm: implicitFT}, for every $\gamma_* \in \R \setminus \{0\}$, there exists an $\alpha_*$-independent open set $V(\gamma_*) \subseteq (\R \setminus \{0\}) \times H^{s+3}(\R^n ; \R^{n \times n}_{\sym}) \times H^{s+\frac{1}{2}}(\R^{n-1}  ; \R^{n\times n}_{\sym}) \times H^{s+2}(\R^n  ; \R^n) \times H^s(\R^{n-1}  ; \R^n)$ and an $\alpha_*$-independent constant $M > 0$ such that for every $\alpha_* \in (0,1)$, there exists an open set $\mathcal{O}(\alpha_*, \gamma_*) \subseteq U^s_{\alpha, \delta,M}$ such that $(\alpha, \gamma_*, 0, 0, 0, 0) \in (0,1) \times  V(\gamma_*)$, $(0,0,0) \in \mathcal{O}(\alpha_*, \gamma_*)$, and there exists a $C^1$ Lipschitz map $\varpi_{\alpha_*,\gamma_*} : (0,1) \times  V(\gamma_*) \to \mathcal{O}(\alpha_*, \gamma_*) \subseteq U^s_{\alpha,\delta,M}$ such that $\mathfrak{X}(\alpha, \gamma, \mathcal{T}, T, \mathfrak{f}, f, \varpi_{\alpha_*,\gamma_*}(\alpha, \gamma, \mathcal{T}, T, \mathfrak{f}, f) ) = (0, 0, 0, 0)$ for all $(\alpha, \gamma, \mathcal{T}, T, \mathfrak{f}, f) \in (0,1) \times  V(\gamma_*)$. Moreover, $(u_\alpha,p_\alpha,\eta_\alpha) = \varpi_{\alpha_*,\gamma_*}(\alpha, \gamma, \mathcal{T}, T, \mathfrak{f}, f)$ is the unique solution to $\mathfrak{X}(\gamma, \mathcal{T}, T, \mathfrak{f}, f, u,p,\eta ) = (0, 0, 0, 0)$ in $\mathcal{O}(\alpha_*, \gamma_*)$ and satisfies $\sup_{\alpha \in (0,1)} \norm{(u_\alpha,p_\alpha,\eta_\alpha)}_{\mathcal{X}_\alpha^s} \le M$.  

\begin{figure}[!ht]
	\includegraphics[scale=0.8]{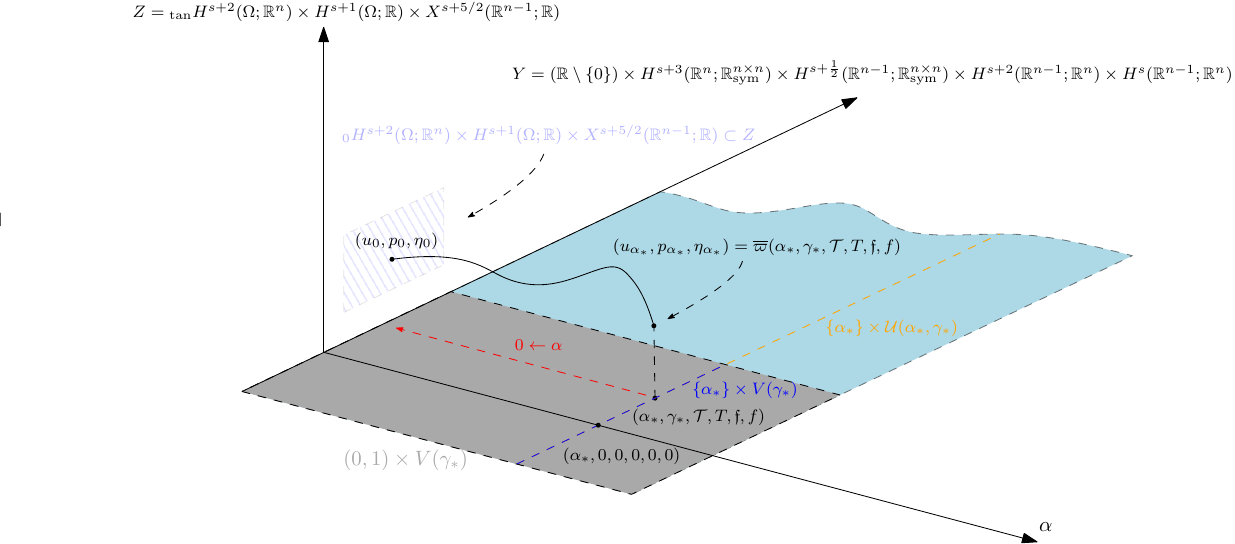}
	\caption{A toy picture of the schematics of the proof}
	\label{fig:alphatozero}
\end{figure}

We now fix $\gamma_* \in \R \setminus \{0\}$, $(\gamma_*, \mathcal{T},T,\mathfrak{f},f) \in V(\gamma_*)$ and consider the function $f: (0,1) \to B_Z(0,M)$ defined via $f(\alpha) = (u_\alpha, p_\alpha, \eta_\alpha) = \varpi_{\alpha_*,\gamma_*}(\alpha, \gamma, \mathcal{T}, T, \mathfrak{f}, f)$. Let $\{\alpha_j\}_{j=1}^\infty \subset (0,1)$ be any sequence such that $\alpha_j \to 0$ as $j \to \infty$ and let $\{\alpha_{j_k}\}_{k=1}^\infty \subset \{\alpha_j\}_{j=1}^\infty$ be any subsequence of the original sequence. We note that since $\sup_{k} \norm{f(\alpha_{j_k})}_{\mathcal{X}^s} < \infty$, there exists a further subsequence $\{ f(\alpha_{j_{k_l}}) \}_{l=1}^\infty$ such that $ f(\alpha_{j_{k_l}})$ converges weakly to $f_0 \in B_Z(0,M)$ in $Z$ as $l \to \infty$.

By the sixth item of Theorem~\ref{thm: X^s}, we may decompose any element $\eta_l \in X^{s+5/2}(\R^{n-1} ; \R)$ as $\eta = \eta_{\text{low}} + \eta_{\text{high}}$, where $\eta_{\text{low}} \in C^\infty_0(\R^{n-1}; \R)$ and $ \eta_{\text{high}} \in H^{s+5/2}(\R^{n-1};\R)$ satisfy the bounds \eqref{eq: low high Xs estimates}. Thus we may define for every $l \ge 0$,
\begin{multline}
	 g_l = \left(u_{\alpha_{j_{k_l}}}, p_{\alpha_{j_{k_l}}}, \left(\eta_{\alpha_{j_{k_l}}}\right)_{\text{high}} \right) \in H^{s+2}(\Omega; \R^n) \times H^{s+1}(\Omega ; \R) \times H^{s+5/2}(\R^{n-1} ; \R), \\  h_l = \left(\eta_{\alpha_{j_{k_l}}} \right)_{\text{low}} \in C^\infty_0(\R^{n-1} ; \R),
\end{multline} 
satisfying $ \sup_{l} ( \norm{g_l}_{H^{s+2}(\Omega) \times H^{s+1}(\Omega) \times H^{s+5/2}(\R^{n-1})} + \norm{h_l}_{C_b^k(\R^{n-1})}) < \infty$ for any $k \ge 1$. 

Now consider the nested sequence of compact sets $\{E_m\}_{m=1}^\infty$ defined via $E_m = [-m, m]^{n-1} \times [0,b] \subset \Omega$. On $E_1$, we consider the restriction of the sequence $\{ g_l \rvert_{E_1} \}_{l=1}^\infty$ and $\{ h_l \rvert_{E_1} \}_{l=1}^\infty$ and note that since the restriction operator is continuous, $ g_l \rvert_{E_1}$ belongs to $\mathscr{B}(E_1) := H^{s+2}(E_1; \R^n) \times H^{s+1}(E_1; \R) \times H^{s+5/2}([-1,1]^{n-1} ; \R)$ and $h_l \rvert_{E_1}$ belongs to $C^\infty([-1,-1]^{n-1} ; \R)$. Furthermore, we have $\sup_l (\norm{g_l \rvert_{E_1}}_{\mathscr{B}(E_1)} + \norm{h_l \rvert_{E_1}}_{C^k_b([-1,-1]^{n-1})} ) < \infty$ for any $k \ge 1$. Since $\mathscr{B}(E_1)$ is also a Hilbert space, we may conclude that up to passing to a subsequence, there exists $g_{1,l} \in \mathscr{B}(E_1)$ for which $\{g_l \rvert_{E_1}) \}_{l=1}^\infty$ converges weakly to $g_{1,l}$ in $ \mathscr{B}(E_1)$ as $l \to \infty$.

We next note that since for any $r \ge 0$, the identity operator $id: H^{s+r}(K; V) \to H^{s}(K; V)$ is compact for any compact Lipschitz domain $K \subseteq \R^{d}$ and any finite dimensional vector space $V$. Furthermore, for sufficiently small $\ve > 0$ we also have $H^{s+k-\ve}(E_1; V) \hookrightarrow C^k(E_1;V)$ by standard Sobolev embedding, therefore we may conclude that up to passing to a subsequence, $\{ g_l \rvert_{E_1}) \}_{l=1}^\infty$ converges strongly to $(g_{1,l})$ in $C_b^2(E_1;\R^n) \times C_b^1(E_1; \R) \times C_b^1 ([-1,1]^{n-1}; \R)$ as $l \to \infty$. By the Arzel\`{a}-Ascoli theorem, we may also conclude that up to passing to a subsequence, there exists an $h_{1,l} \in C^1([-1,1]^{n-1} ; \R)$ for which $h_l \rvert_{E_1}\to h_{1,l}$ strongly in $C^1_b([-1,1]^{n-1}; \R)$.

Now we consider the subsequences of the original sequences $\{g_l\}_{l=1}^\infty, \{h_l\}_{l=1}^\infty$ constructed in the previous step that converge strongly to $g_{1,l}$ and $h_{1,l}$ respectively. We note that we may repeat the same argument as above to obtain a further subsequence converging strongly to some $g_{2,l}$ and $h_{2,l}$ in $C_b^2(E_2;\R^n) \times C_b^1(E_2; \R) \times C_b^1 ([-2,2]^{n-1}; \R)$ and $C^1_b([-2,-2]^{n-1} ; \R)$, respectively. Furthermore, $g_{2,l}, h_{2,l}$ must coincide with $g_{1,l}, h_{1,l}$ respectively on $E_1$. 

Thus, by continuing this procedure ad infinitum and employing a standard diagonal argument, we may upon relabeling identify a subsequence  $\{(u_{\alpha_{j_{k_{l}}}}, p_{\alpha_{j_{k_{l}}}}, (\eta_{\alpha_{j_{k_{l}}}})_{\text{low}}, (\eta_{\alpha_{j_{k_{l}}}})_{\text{high}}) \}_{l=1}^\infty \subseteq \{(u_{\alpha_{j_{k}}}, p_{\alpha_{j_{k}}}, (\eta_{\alpha_{j_{k}}})_{\text{low}}, (\eta_{\alpha_{j_{k}}})_{\text{high}}) \}_{k=1}^\infty$ converging strongly to some $(u_0, p_0, (\eta_0)_{\text{low}}, (\eta_0)_{\text{high}})$ in $C_b^2(\Omega;\R^n) \times C_b^1(\Omega; \R) \times C_b^1 (\R^{n-1}; \R) \times C_b^1 (\R^{n-1}; \R) $. Since $\beta \in \R^{n \times n}$ is assumed to satisfy \eqref{eq: beta coercive}, we have 
\begin{multline}
	u \in \alphaHzeros{s+2}(\Omega; \R^n) \hookrightarrow \Hzerostan{s+2}(\Omega; \R^n)  \; \text{and} \; [\beta u]' = 0 \; \text{on} \; \Sigma_0 \\ \implies \beta u \cdot u = [\beta u]' \cdot u' + [\beta u]_n \cdot u_n = 0 \; \text{on} \; \Sigma_0 \implies u = 0 \; \text{on} \; \Sigma_0.
\end{multline}
Therefore as $\alpha \to 0$, by passing to the limit in we may conclude that $(u_0, p_0, \eta_0)$ (where $\eta_0 = (\eta_0)_{\text{low}} + (\eta_0)_{\text{high}}$) solve the incompressible Navier-Stokes system \eqref{eq: noslip} with the no-slip condition on $\Sigma_0$ classically. 

Finally, by invoking the uniqueness part of Theorem~\ref{thm: no-slip thm}, we may conclude that $(u_0, p_0, \eta_0) \in {}_0H^{s+2}(\Omega; \R^n) \times H^{s+1}(\Omega; \R) \times X^{s+5/2}(\R^{n-1} ; \R)$. Thus, every subsequence $\{f(\alpha_{j_{k}})\}_{k=1}^\infty= \{ (u_{\alpha_{j_{k}}}, p_{\alpha_{j_{k}}},  \eta_{\alpha_{j_{k}}})\}_{k=1}^\infty$ of the sequence $\{f(\alpha_j)\}_{j=1}^\infty = \{ (u_{\alpha_j}, p_{\alpha_j},\eta_{\alpha_j}) \}_{j=1}^\infty$ has a further subsequence converging weakly to $f_0 := (u_0, p_0, \eta_0)$ in $Z$. Since the ball $B_Z(0,M)$ is metrizable in the weak topology for any $M > 0$, we may then conclude that $(u_\alpha, p_\alpha, \eta_\alpha) \weakar (u_0, p_0, \eta_0)$ weakly in $H^{s+2}(\Omega; \R^n) \times H^{s+1}(\Omega; \R) \times X^{s+5/2}(\R^{n-1} ; \R)$ as $\alpha \to 0$. 
\end{proof}

\appendix

\section{Analysis tools}
In this section we record some tools utilized in our analysis.

\subsection{The incompressible Navier Stokes system with no-slip conditions}

In this subsection we record a result from \cite{leonitice} adapted to the flattening map \eqref{eq:flattening}. 

\begin{thm}\label{thm: no-slip thm}
Suppose that either $\sigma > 0$ and $n \ge 2$ or $\sigma = 0$ and $n= 0$. Assume that $\N \ni s \ge 1 + \tfloor{n/2}$, and let $\mathcal{X}^s$ be as defined by \eqref{eq: solution space X^s}, $L_{\Omega_b}$ and $S_b$ be as defined as in \eqref{eq: L_Omega and S_b}. Then there exists open sets
\begin{align} 
	\mathcal{V}^s \subset \R^+ \times (\R \setminus \{0\}) \times H^{s+2}(\R^{n} ; \R^{n \times n}_{\sym}) \times H^{s+\frac{1}{2}}(\R^{n-1} ; \R^{n\times n}_{\sym}) \times H^{s+1}(\R^{n-1}; \R^n) \times H^s(\R^{n-1} ; \R^n) 
\end{align}
and $\mathcal{O}^s \subset \mathcal{X}^s$ such that for each $(\gamma, \mathcal{T}, T, \mathfrak{f}, f)$, there exists a unique $(u,p,\eta) \in \mathcal{O}^s$ classically solving \eqref{eq: noslip} with the flattening map defined via \eqref{eq:flattening}.
\end{thm}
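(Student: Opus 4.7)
The plan is to mimic the implicit function theorem strategy developed to prove Theorem~\ref{thm:main1}, specialized to the no-slip boundary condition. Since the formulation of \eqref{eq: noslip} differs from the problem treated in \cite{leonitice, tice} only through the cutoff $\varphi$ appearing in the flattening map \eqref{eq:flattening}, the substantive work amounts to verifying that the analytical machinery already established in this paper yields the same smoothness and isomorphism properties as in the classical no-cutoff setting.

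First I would introduce the no-slip analogues of the spaces used in Sections~\ref{sec: gammaStokes}--\ref{sec: analysis with eta}. In particular, replace $\Hzerotan(\Omega;\F^n)$ by ${}_0 H^1(\Omega;\F^n)$ (functions vanishing entirely on $\Sigma_0$), replace $\alphaHzeros{s+2}$ by $H^{s+2}(\Omega;\R^n) \cap {}_0H^1(\Omega;\R^n)$, and define the no-slip data space $\mathcal{Y}^s_{\m{ns}}$ as $\mathcal{Z}^s$ from Definition~\ref{defn:dataY}. I would then define the nonlinear solution operator $\Xi_0 : \mathcal{E}^s \times U^s_{\delta,M} \to \mathcal{Y}^s_{\m{ns}}$ exactly as in \eqref{eq: Xi alpha}, except that the container spaces for $u$ now include the vanishing trace on $\Sigma_0$. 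Theorem~\ref{thm: flattening maps est} ensures that the maps $\Gamma_1(\eta,\cdot) = \cdot/(1 + \eta\varphi')$ and $\Gamma_2(\eta,\cdot) = \eta \cdot / (1+\eta \varphi')$ are smooth on $B_{X^s}(0, r_1) \times H^s(\Omega;V)$, so the same argument as in Theorem~\ref{thm:smoothnessnonlinear} shows $\Xi_0 \in C^2_b(\mathcal{E}^s \times U^s_{\delta,M}; \mathcal{Y}^s_{\m{ns}})$.

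Next, I would establish that the linearization of $\Xi_0$ at $(u,p,\eta,\mathcal{T},T,\mathfrak{f},f) = 0$ is an isomorphism from $\mathcal{X}^s$ onto $\mathcal{Y}^s_{\m{ns}}$ for any $\gamma \in \R \setminus \{0\}$. This is the no-slip analogue of Theorem~\ref{thm:upsiloniso} and is carried out by repeating the development of Sections~\ref{sec: gammaStokes}--\ref{sec: analysis with eta} with the Dirichlet condition $u = 0$ on $\Sigma_0$ replacing the Navier-slip condition throughout. The weak formulation uses ${}_0H^1_\sigma$ as test space; Korn's inequality (which is trivial for ${}_0H^1$) and Lax-Milgram give weak solvability; the elliptic regularity, adjoint problem analysis, Fourier symbol asymptotics for the normal-stress-to-Dirichlet map, and the construction of $\eta$ via an analogue of the multiplier $\rho_{\alpha,\beta,\gamma}$ all proceed with $\alpha$-dependent boundary terms simply dropped. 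In fact, these are already carried out in \cite{leonitice, tice}; the arguments go through verbatim provided we use the version of the flattening map with the cutoff $\varphi$.

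The one point that requires care is exactly this last caveat: in \cite{leonitice, tice} the flattening map does not contain the cutoff $\varphi$, so the algebraic identities for $\mathcal{A}$, $\mathcal{J}$, $\mathcal{K}$ are slightly different. This is the main technical obstacle, but it is already addressed in the present paper: the first and second items of Theorem~\ref{thm: flattening maps est} give the necessary product and division estimates for expressions involving $\eta \varphi$ and $(1 + \eta \varphi')^{-1}$, which are precisely the new terms introduced by the cutoff. With these estimates in hand, every composition, product, and trace estimate used in \cite{leonitice, tice} has a direct analogue here, and the proof of the linear isomorphism and of the $C^2$ smoothness of $\Xi_0$ follows mutatis mutandis. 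Applying the implicit function theorem at each $\gamma_* \in \R \setminus \{0\}$ and taking the union of the resulting neighborhoods, exactly as in the proof of Theorem~\ref{thm:main1}, yields the open sets $\mathcal{V}^s$ and $\mathcal{O}^s$ with the asserted properties.
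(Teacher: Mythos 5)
Your proposal is correct and takes essentially the same route as the paper: both reduce the statement to the no-slip traveling wave theory of \cite{leonitice,tice} and identify the cutoff $\varphi$ in the flattening map \eqref{eq:flattening} as the only genuinely new ingredient, handled via the smoothness and product estimates already established in the present paper (Theorem~\ref{thm: flattening maps est} and Theorem~\ref{comp_C2}). The paper's own proof is simply a terse citation of \cite{tice} together with the observation that $\mathfrak{F}$ and the flattening map used there are both small diffeomorphisms satisfying the relevant $C^1$ composition lemma, whereas you have spelled out the functional-analytic scaffolding in more detail; the underlying argument is identical.
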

\begin{proof}
This essentially follows from the work in \cite{tice} for $\kappa = 0$ and the proof for the third item of Theorem 1.2 in the same paper, though we note that the flattening map $\mathfrak{F}$ defined via \eqref{eq:flattening} is slightly different from the one employed in \cite{tice}, which is given by $	 \mathfrak{G}_\eta (x', x_n) = x + \frac{x_n \eta(x')}{b} e_n.$  Though, since $\mathfrak{F}$ and $\mathfrak{G}_\eta$ are both diffeomorphisms for $\eta \in X^{s+5/2}(\R^{n-1};\R)$ such that $\norm{\eta}_{X^{s+5/2}}$ is sufficiently small, and both maps satisfy the $C^1$ $\omega$-lemma, the flattening map $\mathfrak{F}$ can be used in the arguments in \cite{tice} to arrive at the desired result.
\end{proof}



\subsection{Tangential Fourier multipliers}
In this subsection we record a few essential results concerning bounded translation invariant operators on Sobolev spaces and  tangential Fourier multipliers. Recall that the reflection operator $\delta_{-1}: \mathcal{F}(\R^{d_1};\C^{d_2}) \to  \mathcal{F}(\R^{d_1};\C^{d_2})$ is defined via $\delta_{-1} f(x) = f(-x)$.

The first proposition gives a characterization of bounded linear maps on Sobolev spaces that commute with tangential multipliers. 
\begin{prop}\label{prop: tangential m}
Let $\F \in \{\R , \C\}$, $s,t \in \R$, and $T \in \mathcal{L}(H^s(\R^d; \F) ; H^t(\R^d; \F))$. The following are equivalent. 
\begin{enumerate}
	 \item $T$ commutes with translation operators. 
	 \item There exists a measurable function $\omega: \R^d \to \C$ such that $\overline{\omega} = \delta_{-1} \omega$ if $\F = \R$, $Tf = \mathscr{F}^{-1}[\omega \mathscr{F}[f] ]$, and
	 \begin{align}\label{eq: esssup multiplier est}
		s_{\omega} = \esssup \{ (1+ \abs{\xi}^2)^{t-s} \abs{\omega(\xi)} : \xi \in \R^d \} < \infty.
	 \end{align}
	 Furthermore, we have the estimate $\norm{T}_{\mathcal{L}(H^s; H^t)} \le s_\omega \le 2 \norm{T}_{\mathcal{L} (H^s; H^t)}$.
\end{enumerate}
\end{prop}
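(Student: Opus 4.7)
The plan is to handle the two implications separately, with the bulk of the work going into $(1) \Rightarrow (2)$.

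For the direction $(2) \Rightarrow (1)$ and the upper bound $\|T\|_{\mathcal{L}(H^s;H^t)} \leq s_\omega$, translation $\tau_h f(x) = f(x-h)$ corresponds on the Fourier side to modulation $\widehat{\tau_h f}(\xi) = e^{-2\pi i h \cdot \xi} \hat f(\xi)$, and pointwise multiplication by $\omega$ trivially commutes with multiplication by $e^{-2\pi i h \cdot \xi}$. A short Plancherel computation using the weighted $L^2$ representation of the Sobolev norms then yields the claimed operator bound once the weight $(1+|\xi|^2)^{t-s}|\omega|$ appearing in $s_\omega$ is reconciled with the Sobolev norm factors. The symmetry condition $\bar\omega = \delta_{-1}\omega$ required when $\F = \R$ is exactly the statement that $\omega \hat f$ inherits the Hermitian symmetry characterizing Fourier transforms of real-valued functions.

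For the harder direction $(1) \Rightarrow (2)$, the strategy is to reduce to the classical $L^2$ Fourier multiplier theorem via Bessel potentials. First focus on $\F = \C$ and let $\Lambda^r := \mathscr{F}^{-1}[(1+|\xi|^2)^{r/2} \mathscr{F}[\cdot]]$, which is an isometric isomorphism $H^a \to H^{a-r}$ for every $a \in \R$ and is itself a Fourier multiplier, hence translation-invariant. Setting $S := \Lambda^t \circ T \circ \Lambda^{-s} : L^2(\R^d;\C) \to L^2(\R^d;\C)$ gives a bounded operator with $\|S\|_{\mathcal{L}(L^2)} = \|T\|_{\mathcal{L}(H^s;H^t)}$ that is translation-invariant as a composition of translation-invariant maps. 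The classical multiplier theorem then produces $m \in L^\infty(\R^d;\C)$ with $S g = \mathscr{F}^{-1}[m \hat g]$ and $\|m\|_{L^\infty} = \|S\|_{\mathcal{L}(L^2)}$; this theorem itself follows from observing that $\mathscr{F} S \mathscr{F}^{-1}$ commutes with multiplication by every character $e^{2\pi i h \cdot \xi}$, hence by a density argument with multiplication by every bounded measurable function, and so must itself be multiplication by an $L^\infty$ symbol. Unfolding the definition of $S$ then exhibits $T$ as the Fourier multiplier with symbol $\omega$ built from $m$ by the appropriate Bessel weight, and the $L^\infty$ identity for $m$ transfers directly into the bound for $s_\omega$ after reconciling weight conventions.

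To handle $\F = \R$, I would complexify $T$ to $T_\C(u+iv) := Tu + iTv$, apply the complex case to obtain the multiplier, and then observe that the constraint that $T_\C$ maps real-valued functions to real-valued functions is exactly equivalent via Fourier to the symmetry $\bar\omega = \delta_{-1}\omega$. The factor of $2$ in the inequality $s_\omega \leq 2\|T\|$ is loose and is there to absorb the real-to-complex passage together with the weight bookkeeping; I expect no further subtlety to arise here. The one genuine obstacle in the argument is the classical $L^2$ multiplier theorem invoked in the reduction step, but this is a well-known result that may simply be cited.
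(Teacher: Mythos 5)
Your sketch is mathematically sound and is the standard argument, but there is nothing in the paper to compare it against: the paper's entire proof of Proposition~\ref{prop: tangential m} reads ``This follows from Proposition A.10 in \cite{noahtice},'' so the result is imported rather than proved. What you have reconstructed is essentially the proof that lives in the cited reference (or any textbook treatment): conjugate by the Bessel potentials $\Lambda^r = \mathscr{F}^{-1}[(1+|\xi|^2)^{r/2}\mathscr{F}[\cdot]]$ to reduce to a bounded translation-invariant operator $S = \Lambda^t T \Lambda^{-s}$ on $L^2$, invoke the classical $L^2$ multiplier characterization for $S$, and unwind to get $\omega(\xi) = m(\xi)(1+|\xi|^2)^{(s-t)/2}$; then handle $\F = \R$ by complexifying and observing that reality-preservation forces the Hermitian symmetry $\overline{\omega} = \delta_{-1}\omega$.

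Two small inaccuracies, neither of which affects correctness of the outline. First, your explanation of the factor of $2$ as absorbing ``the real-to-complex passage'' is not quite right: the complexification $T_\C(u+iv) = Tu + iTv$ has the same operator norm as $T$ (computing $\|T_\C(u+iv)\|^2 = \|Tu\|^2 + \|Tv\|^2$ gives $\|T_\C\| = \|T\|$), so no factor is lost there. The loose constant more plausibly comes from whichever equivalent realization of the $H^s$-norm the cited source uses, as you already half-suspect in deferring to ``reconciling weight conventions.'' Second, the density step in the $L^2$ multiplier theorem (from commuting with characters to commuting with all of $L^\infty$) uses weak-$*$ density, not norm density, since characters are nowhere norm-dense in $L^\infty$; you would need to say this if you chose to prove rather than cite the theorem. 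Finally, note that with the standard Bessel-weighted Sobolev norm your computation produces the weight $(1+|\xi|^2)^{(t-s)/2}$ in $s_\omega$, whereas the statement as printed has exponent $t-s$; your derivation is the internally consistent one, and this is worth flagging as a likely typo in the statement rather than something to ``reconcile'' silently.
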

\begin{proof}
This follows from Proposition A.10 in \cite{noahtice}.
\end{proof}

\begin{lem}\label{lem: bessel}
	Suppose $\N \ni d, k \ge 1$ and let $\F \in \{\R, \C\}$.   Let $s \ge 0, M > 0$ and $U = \R^d \times (0,b)$. Define the operator $\mathfrak{J}^{s}_M : H^{s}(U; \F^k) \to H^s(U; \F^k)$ via $\mathfrak{J}^{s}_M f (\cdot, x_n) = \mathscr{F}^{-1} [\chi_{B(0,M)}(\cdot) (1+ \abs{\cdot}^2)^{s/2} \mathscr{F}[f(\cdot,x_n )]]$. Then $\mathfrak{J}^{s}_M$ is well-defined, and for all $f \in H^s(U; \F^k)$ and $t \in \R$ such that $t \le s$, we have the $M$-independent estimate $\norm{\mathfrak{J}^{s}_M f}_{L^2(U)} \lesssim_{d,s,b} \norm{\mathfrak{J}_M^{s-t} f}_{H^t(U)}$.
\end{lem}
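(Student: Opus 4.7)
The plan is to treat $\mathfrak{J}^s_M$ as a horizontal Fourier multiplier with bounded, compactly-supported-in-$\xi$ symbol $\omega_s(\xi) := \chi_{B(0,M)}(\xi)(1+|\xi|^2)^{s/2}$, and reduce everything to slice-wise Plancherel computations. First, for well-definedness, I would observe that $\|\omega_s\|_{L^\infty(\R^d)} \le (1+M^2)^{s/2}$, so Plancherel applied to each horizontal slice $f(\cdot,x_n)$ produces the estimate $\|\mathfrak{J}^s_M f(\cdot,x_n)\|_{L^2(\R^d)} \le (1+M^2)^{s/2}\|f(\cdot,x_n)\|_{L^2(\R^d)}$. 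Because $\mathfrak{J}^s_M$ commutes with $\partial_{x_n}$ and, on the multiplier side, horizontal derivatives correspond to multiplying $\omega_s$ by the bounded factor $(2\pi i\xi)^\alpha$ (still supported in $B(0,M)$), the same slice-wise argument controls mixed derivatives $\partial_{x'}^\alpha\partial_{x_n}^j \mathfrak{J}^s_M f$ in $L^2(U)$ for $|\alpha|+j \le s$. Integration over $x_n$ then gives the desired $H^s(U)\to H^s(U)$ boundedness in the integer case; the non-integer case follows by interpolation between adjacent integer orders.

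Second, for the main estimate, the proof will hinge on the key algebraic identity $\chi_{B(0,M)}^2 = \chi_{B(0,M)}$ applied on the Fourier side. Invoking Plancherel in the horizontal variable,
\begin{align*}
\|\mathfrak{J}^s_M f\|_{L^2(U)}^2
&= \int_0^b \int_{\R^d} \chi_{B(0,M)}(\xi)(1+|\xi|^2)^s\,\abs{\hat{f}(\xi,x_n)}^2\,d\xi\,dx_n \\
&= \int_0^b \int_{\R^d} (1+|\xi|^2)^t\,\chi_{B(0,M)}(\xi)(1+|\xi|^2)^{s-t}\,\abs{\hat{f}(\xi,x_n)}^2\,d\xi\,dx_n \\
&= \int_0^b \int_{\R^d} (1+|\xi|^2)^t\,\bigl|\widehat{\mathfrak{J}^{s-t}_M f}(\xi,x_n)\bigr|^2\,d\xi\,dx_n,
\end{align*}
which is precisely the square of the tangential $H^t$ norm of $\mathfrak{J}^{s-t}_M f$. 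This is the $M$-independent chain we seek, and it reduces the lemma to the universal comparison between the tangential-$H^t$ norm and the full $H^t(U)$ norm.

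Finally, for $t\ge 0$ integer, the elementary inequality
\[
\int_0^b \int_{\R^d}(1+|\xi|^2)^t\abs{\hat g(\xi,x_n)}^2\,d\xi\,dx_n \asymp \sum_{|\alpha|\le t}\|\partial_{x'}^\alpha g\|_{L^2(U)}^2 \le \|g\|_{H^t(U)}^2
\]
applied to $g=\mathfrak{J}^{s-t}_M f$ closes the argument. Non-integer $t\ge 0$ is recovered by interpolation between neighbouring integer orders, and $t<0$ by standard duality (or equivalently by directly using the tangential negative-order characterization consistent with the usage in Proposition~\ref{prop:ind of alpha}). The only technical subtlety, and the one worth writing carefully, is precisely this comparison between the tangential-frequency Bessel norm arising from Plancherel and the full Sobolev norm on the strip $U=\R^d\times(0,b)$ across all admissible values of $t$; everything else is a direct Fourier computation.
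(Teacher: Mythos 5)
Your argument is correct and essentially identical to the paper's: compute $\|\mathfrak{J}^s_M f\|_{L^2(U)}^2$ slice-wise via horizontal Plancherel, use idempotence of $\chi_{B(0,M)}$ to factor the integrand as $(1+|\xi|^2)^t\,|\widehat{\mathfrak{J}^{s-t}_M f}(\xi,x_n)|^2$, and then bound the resulting tangential Bessel-potential quantity by the full $H^t(U)$ norm of $\mathfrak{J}^{s-t}_M f$, with a constant independent of $M$. The paper delegates this last slice-wise-to-strip comparison to Corollary~A.7 of \cite{leonitice}, whereas you supply the elementary argument for it directly (integer $t$ via sub-sum of derivatives, non-integer via interpolation), so there is no substantive difference.
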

\begin{proof}
Using Corollary A.7 in \cite{leonitice}, we estimate
\begin{equation}
	\norm{\mathfrak{J}^{s}_M f}_{L^2(U)}^2  
	= \int_0^b \norm{\mathfrak{J}^{s-t}_Mf(\cdot,x_n)}_{H^s(\R^d)}^2 \; dx_n \lesssim_{d,s,b} \norm{\mathfrak{J}^{s-t}_Mf}_{H^s(U)}^2. 
\end{equation}
\end{proof}

We conclude this subsection by recalling some results on tangential multipliers from \cite{noahtice}.

\begin{lem} \label{defn: tangential m}
Suppose $\N \ni d, k \ge 1$, $\F \in \{ \R , \C \}$, and let $\omega \in L^\infty(\R^d; \C^{k \times k})$ be a Fourier multiplier such that if $\F = \R$, then $\overline{\omega} = \delta_{-1} \omega$. Let $U = \R^d \times (0,b)$ and $s \ge 0$.

\begin{enumerate}
	\item We define the tangential Fourier multiplier on $L^2(\R^d ; \K^k)$ as the operator $M_\omega: L^2(\R^d ; \K^k) \to L^2(\R^d ; \K^k)$ defined via $M_\omega f(\cdot) = \mathscr{F}^{-1}[\omega \mathscr{F}[f(\cdot)]]$.
	 \item We define the tangential Fourier multiplier on $H^s(U ; \F^k)$ as the operator $M_\omega: H^s(U ; \F^k) \to H^s(U; \F^k)$ defined via $M_\omega f(\cdot, x_n) = \mathscr{F}^{-1}[\omega \mathscr{F}[f(\cdot,x_n)]]$ for all $x_n \in (0,b)$. Then $M_\omega$ is well-defined and satisfies the estimate 
	 \begin{align}
		\norm{M_\omega f}_{H^s(U)} \lesssim_{d,s} \norm{\omega}_{L^\infty(U)} \norm{f}_{H^s(U)} \; \text{for all} \; f \in H^s(U; \F^k).
	 \end{align}
	 Furthermore, if $s > 1/2$ and $\Sigma \in \{\Sigma_b, \Sigma_0\}$ then
	 \begin{align}\label{eq: trace multiplier commute}
		  \Tr_\Sigma M_\omega f = M_\omega \Tr_\Sigma f \; \text{for all} \; f \in  H^s(U; \F^k).
	 \end{align}
	 \item We extend the notion of tangential Fourier multipliers to $( {}_0 H^1(U; \F^k))^{\overline{*}}$ by defining the operator $M_\omega: ( {}_0 H^1(U; \F^k))^{\overline{*}} \to ( {}_0 H^1(U; \F^k))^{\overline{*}}$ using the action of the anti-linear functional acting on test functions via 
	 \begin{align}
		 \langle M_\omega F, \varphi \rangle_{({}_0 H^1)^{\overline{*}}, {}_0 H^1} =  \langle  F, M_{\overline{\omega}}\varphi \rangle_{({}_0 H^1)^{\overline{*}}, {}_0 H^1} \; \text{for all} \; \varphi \in {}_0 H^1(U; \F^k), F \in ( {}_0 H^1(U; \F^k))^{\overline{*}}.
	 \end{align}
	 Then $M_\omega$ is well-defined and satisfies the estimate 
	 \begin{align}
		 \norm{M_\omega F}_{({}_0 H^1(U))^{\overline{*}}} \lesssim \norm{\omega}_{L^\infty(U)} \norm{F}_{({}_0 H^1(U))^{\overline{*}}} \; \text{for all} \; F \in ( {}_0 H^1(U; \F^k))^{\overline{*}}.
	 \end{align}
\end{enumerate}

\end{lem}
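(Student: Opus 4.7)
The proof proceeds item by item, with each part reducing to a standard Fourier-multiplier calculation once the $x_n$ slicing is set up correctly. For item 1, $M_\omega$ on $L^2(\R^d;\F^k)$ is simply the matrix-valued Fourier multiplier; Plancherel gives $\norm{M_\omega f}_{L^2} \le \norm{\omega}_{L^\infty} \norm{f}_{L^2}$ and well-definedness follows. The realness constraint $\overline{\omega}=\delta_{-1}\omega$ when $\F=\R$ forces $M_\omega f$ to be real-valued when $f$ is real-valued, via the symmetry $\hat{f}(-\xi)=\overline{\hat{f}(\xi)}$.

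For item 2, I would first establish the $s=0$ bound. Applying Tonelli and the item-1 estimate slice-by-slice gives
\begin{equation}
\norm{M_\omega f}_{L^2(U)}^2 = \int_0^b \norm{M_\omega f(\cdot, x_n)}_{L^2(\R^d)}^2 \, dx_n \le \norm{\omega}_{L^\infty}^2 \norm{f}_{L^2(U)}^2.
\end{equation}
For integer $s\ge1$, I would note that $M_\omega$ commutes trivially with $\p_{x_n}$ (acting only in tangential variables) and with $\p_{x'_j}$ (via $\mathscr{F}[\p_{x'_j} M_\omega f]=2\pi i \xi_j \omega(\xi)\hat{f}=\mathscr{F}[M_\omega \p_{x'_j} f]$). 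Summing the $L^2$ bounds over all derivative orders $\le s$ yields $\norm{M_\omega f}_{H^s(U)}\lesssim_{d,s}\norm{\omega}_{L^\infty}\norm{f}_{H^s(U)}$ in the integer case. For general real $s\ge 0$, I would interpolate between the consecutive integer estimates using the standard real interpolation identification of $H^s(U)$. The trace-commutation identity $\Tr_\Sigma M_\omega f = M_\omega \Tr_\Sigma f$ I would establish by density: on $C^\infty_c(\overline{U};\F^k)\cap H^s$ both sides coincide with $\mathscr{F}^{-1}[\omega \mathscr{F}[f(\cdot,x_n^\#)]]$ for $x_n^\#\in\{0,b\}$; then continuity of $\Tr_\Sigma\colon H^s(U)\to H^{s-1/2}(\Sigma)$ and of $M_\omega$ on each space promotes the identity to all of $H^s(U;\F^k)$.

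Item 3 is then purely formal. By item 2 and the trace-commutation identity, $M_{\overline{\omega}}$ maps ${}_0H^1(U;\F^k)$ into itself with norm $\lesssim\norm{\omega}_{L^\infty}$, since vanishing traces are preserved. Hence for every $F\in ({}_0H^1)^{\overline{*}}$ and $\varphi\in {}_0H^1$,
\begin{equation}
\tabs{\langle M_\omega F,\varphi\rangle} = \tabs{\langle F, M_{\overline{\omega}}\varphi\rangle} \le \norm{F}_{({}_0H^1)^{\overline{*}}}\norm{M_{\overline{\omega}}\varphi}_{{}_0H^1} \lesssim \norm{\omega}_{L^\infty}\norm{F}_{({}_0H^1)^{\overline{*}}}\norm{\varphi}_{{}_0H^1},
\end{equation}
so the anti-dual operator is well-defined and bounded with the claimed norm.

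The only genuinely delicate step is the non-integer $s$ case for item 2. If interpolation feels unwieldy, an alternative is to use a Stein-type extension $E\colon H^s(U;\F^k)\to H^s(\R^{d+1};\F^k)$, apply the full-space tangential-multiplier result from Proposition~\ref{prop: tangential m} (viewing $\omega(\xi')$ as a multiplier on $\R^{d+1}$ that is trivial in the $(d{+}1)$-st frequency), and then restrict back to $U$, checking that the restriction commutes with $M_\omega$ because the extension is built by reflection in $x_n$.
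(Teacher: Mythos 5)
Your argument is correct, but it is worth noting that the paper does not actually prove this lemma at all: its ``proof'' consists of a single citation to Lemma A.12 of the Stevenson--Tice paper \cite{noahtice}, where the slice-wise multiplier theory is developed. What you have written is essentially a self-contained reconstruction of that cited argument: Plancherel on each horizontal slice plus Tonelli for $s=0$, commutation of $M_\omega$ with both tangential and normal derivatives for integer $s$, interpolation (or extension to $\R^{d+1}$ with a symbol trivial in the last frequency) for fractional $s$, a density argument for the trace identity, and pure duality for the anti-dual extension. Each of these steps is sound; the only points that deserve an explicit word are (i) in the density argument for \eqref{eq: trace multiplier commute} you implicitly also use that $M_\omega$ is bounded on $H^{s-1/2}(\Sigma;\F^k)$, which follows from the same Plancherel computation since $(1+\abs{\xi}^2)^{(s-1/2)/2}$ commutes with multiplication by $\omega(\xi)$, and (ii) in the extension route the commutation of restriction with $M_\omega$ is immediate from the slice-wise definition (for $x_n\in(0,b)$ one has $Ef(\cdot,x_n)=f(\cdot,x_n)$), so no structural property of the reflection extension is really needed. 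In item 3 you correctly observe that $M_{\overline{\omega}}$ preserves the vanishing-trace subspace ${}_0H^1(U;\F^k)$ precisely because of the trace-commutation identity, which is the one place the second item genuinely feeds the third. So your proposal buys a self-contained proof where the paper only offers a pointer to the literature, at the modest cost of the interpolation/extension bookkeeping for non-integer $s$.
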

\begin{proof}
This follows from Lemma A.12 in \cite{noahtice}.
\end{proof}

\subsection{A parameter dependent implicit function theorem}\label{sec: parameter IFT}

In this subsection we aim to prove a variant of the implicit function theorem, for functions of the form $f_\alpha(\cdot) = f(\alpha,\cdot)$ where $\alpha \in \R$ and where the underlying spaces are allowed to vary with the parameter $\alpha$. First, we establish a variant of the inverse function theorem.
\begin{thm}\label{thm: inverseFT}
	Let $X,Y$ be Banach spaces and suppose $\{X_\alpha \}_{\alpha \in (0,1)} \subset X$ is a one-parameter family of closed subspaces of $X$. For any $\alpha \in (0,1)$, suppose $f_\alpha \in C^2(U_\alpha ; Y)$ for a non-empty open set $U_\alpha \subseteq X_\alpha$ containing $0$,  $f_\alpha( 0 ) = 0$, and $D f_\alpha(0) \in \mathcal{L}(X_\alpha; Y)$ is a linear homeomorphism. Furthermore, we suppose there exists constants $\ve > 0$, $C > 2$ such that $B_{X_\alpha}(0,\ve)  \subseteq U_\alpha$ and 
	\begin{align}\label{eq: inverse fn est}
		\sup_{\alpha \in (0,1)} \left( \norm{Df_\alpha(0)}_{\mathcal{L}(X_\alpha;Y)} + \norm{Df_\alpha(0)^{-1}}_{\mathcal{L}(Y;X_\alpha)}  + \sup_{z \in B_{X_\alpha}(0,\ve)} \norm{D^2 f_\alpha(z)}_{\mathcal{L}^2(X ; Y)} \right) \le C.
	\end{align}
	Then the following hold.

\begin{enumerate}
		 \item There exists a $\delta > 0$ such that for all $\alpha \in (0,1)$, there exists an open set $V_\alpha$ such that $
			  B_{X_\alpha} \left(0,  \frac{\delta}{3C^2}\right) \subset V_\alpha \subset B_X(0,\delta)$, $f_\alpha(V_\alpha) = B_Y\left(0, \frac{\delta}{2C} \right)$, and the restriction $f_\alpha \rvert_{V_\alpha}: V_\alpha \to f_\alpha(V_\alpha)$ is a bi-Lipschitz homeomorphism. 
		 \item There exists a constant $K > 0$ such that 
		 \begin{align}\label{eq: f C^0_b est alpha ind}
			 \sup_{\alpha \in (0,1)} \left( \norm{f_\alpha}_{C^0_b(V_\alpha;Y)}  + \norm{f_{\alpha}^{-1}}_{C^0_b (f_\alpha(V_\alpha); X_\alpha)} \right) \le K.
		 \end{align}
		 \item We have $f_\alpha \in C^1_b(V_\alpha ;Y)$ and $f_\alpha^{-1} \in C^1_b(f_\alpha(V_\alpha) ; X_\alpha)$. Furthermore, $Df_\alpha(x) \in \mathcal{L}(X_\alpha ; Y)$ is a linear homeomorphism for every $x \in V_\alpha$ and $D f_\alpha^{-1}(y) \in \mathcal{L}(Y; X_\alpha)$ is a linear homeomorphism for every $y \in f_\alpha(V_\alpha)$, and the two are related via 
		 \begin{align}\label{eq: inverse dervative}
			 D f_\alpha^{-1}(y) = ( D f_\alpha(f_\alpha^{-1}( y)))^{-1}
		 \end{align}
		 for every $y \in f_\alpha(V_\alpha)$. 
		 \item If $f \in C^k(U_\alpha; Y)$ for some $k \ge 2$, then $f \in C^k(V_\alpha; Y)$ and $f \in C^k(f_\alpha(V_\alpha) ; X_\alpha)$. 
	\end{enumerate}
	\end{thm}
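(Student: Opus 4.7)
The plan is to run the classical Banach fixed point proof of the inverse function theorem while meticulously tracking constant dependence, invoking the uniform bound \eqref{eq: inverse fn est} at every step to guarantee $\alpha$-independence. Fix $\alpha \in (0,1)$, write $L_\alpha := Df_\alpha(0) \in \mathcal{L}(X_\alpha;Y)$, and set $\delta := \min\{\ve, 1/(2C^2)\}$. For each $y \in Y$ with $\|y\|_Y \le \delta/(2C)$ I would consider the map
\[
T_{\alpha,y}(x) := x - L_\alpha^{-1}(f_\alpha(x) - y), \qquad x \in \overline{B_{X_\alpha}(0,\delta)}.
\]
Since $DT_{\alpha,y}(x) = L_\alpha^{-1}(L_\alpha - Df_\alpha(x))$ and the mean value theorem together with \eqref{eq: inverse fn est} yield $\|L_\alpha - Df_\alpha(x)\|_{\mathcal{L}(X_\alpha;Y)} \le C\|x\|_{X_\alpha} \le C\delta \le 1/(2C)$, one obtains $\|DT_{\alpha,y}(x)\|_{\mathcal{L}(X_\alpha;X_\alpha)} \le 1/2$. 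Combined with $\|T_{\alpha,y}(0)\|_{X_\alpha} = \|L_\alpha^{-1}y\|_{X_\alpha} \le \delta/2$, this shows that $T_{\alpha,y}$ is a $1/2$-contraction sending $\overline{B_{X_\alpha}(0,\delta)}$ into itself, so the Banach fixed point theorem furnishes a unique $g_\alpha(y) \in \overline{B_{X_\alpha}(0,\delta)}$ with $f_\alpha(g_\alpha(y)) = y$. Setting $V_\alpha := g_\alpha(B_Y(0,\delta/(2C)))$ then gives the desired preimage, and its openness in $X_\alpha$ follows from expressing it as the continuous preimage $V_\alpha = (f_\alpha|_{\overline{B_{X_\alpha}(0,\delta)}})^{-1}(B_Y(0,\delta/(2C)))$.

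For the remaining assertions of item (1), I would use the fundamental theorem of calculus to write $f_\alpha(x) = L_\alpha x + \int_0^1 (Df_\alpha(tx) - L_\alpha)x\,dt$, whence for $x \in B_{X_\alpha}(0,\delta/(3C^2))$ the bound $\|f_\alpha(x)\|_Y \le C\|x\|_{X_\alpha} + \tfrac12 C\|x\|_{X_\alpha}^2 \le \delta/(3C) + \delta^2/(18 C^3) < \delta/(2C)$ (using $\delta \le 1/(2C^2)$) places $x$ into $V_\alpha$. The bi-Lipschitz property follows directly from the contraction: with $x_i := g_\alpha(y_i)$, the identity $x_1 - x_2 = T_{\alpha,y_1}(x_1) - T_{\alpha,y_1}(x_2) + L_\alpha^{-1}(y_1 - y_2)$ produces $\|x_1 - x_2\|_{X_\alpha} \le 2C\|y_1 - y_2\|_Y$, while $\sup_{V_\alpha}\|Df_\alpha\|_{\mathcal{L}(X_\alpha;Y)} \le C + C\delta \le C+1$ shows that $f_\alpha|_{V_\alpha}$ has a uniform forward Lipschitz constant. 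Item (2) is then immediate from $\|f_\alpha(x)\|_Y \le \delta/(2C)$ on $V_\alpha$ and $\|g_\alpha(y)\|_{X_\alpha} \le \delta$ on $f_\alpha(V_\alpha)$.

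For item (3), the Neumann series applied to $Df_\alpha(x) = L_\alpha\bigl(I - L_\alpha^{-1}(L_\alpha - Df_\alpha(x))\bigr)$ with $\|L_\alpha^{-1}(L_\alpha - Df_\alpha(x))\|_{\mathcal{L}(X_\alpha;X_\alpha)} \le 1/2$ shows that $Df_\alpha(x)$ is a linear homeomorphism for every $x \in V_\alpha$, with $\|Df_\alpha(x)^{-1}\|_{\mathcal{L}(Y;X_\alpha)} \le 2C$. Given $y_0 \in f_\alpha(V_\alpha)$ with $x_0 = g_\alpha(y_0)$, the expansion $f_\alpha(x) - f_\alpha(x_0) = Df_\alpha(x_0)(x-x_0) + o(\|x-x_0\|_{X_\alpha})$ combined with the reverse Lipschitz bound $\|x - x_0\|_{X_\alpha} \le 2C\|y-y_0\|_Y$ yields $g_\alpha(y) - g_\alpha(y_0) = Df_\alpha(x_0)^{-1}(y - y_0) + o(\|y - y_0\|_Y)$, confirming \eqref{eq: inverse dervative}. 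Continuity of $Dg_\alpha$ follows from writing it as the composition $y \mapsto g_\alpha(y) \mapsto Df_\alpha(g_\alpha(y)) \mapsto (\cdot)^{-1}$, whose last factor is the smooth operator-inversion map on the open set of isomorphisms in $\mathcal{L}(X_\alpha;Y)$; the uniform $C^1_b$ control then follows from $\|Df_\alpha\| \le C+1$ and $\|Dg_\alpha\| \le 2C$.

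Finally, item (4) proceeds by induction on $k \ge 2$: assuming $f_\alpha \in C^k$ and that $g_\alpha \in C^{k-1}$ has already been established, the identity \eqref{eq: inverse dervative} presents $Dg_\alpha$ as a composition of $C^{k-1}$ maps with the real-analytic operator-inversion map, so $Dg_\alpha \in C^{k-1}$ and hence $g_\alpha \in C^k$. The substantive obstacle throughout is pure bookkeeping: each constant emerging from the standard argument must be controlled only by $C$ and $\ve$, never by some implicit $\alpha$-dependent quantity. This is guaranteed because the iteration $T_{\alpha,y}$ lives intrinsically inside $X_\alpha$ (the operator $L_\alpha^{-1}$ maps into $X_\alpha$) even though the ambient ball $B_X(0,\delta)$ sits in $X$, and every norm estimate above uses only the three quantities bounded uniformly in \eqref{eq: inverse fn est}.
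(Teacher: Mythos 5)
Your proposal is correct and takes essentially the same route as the paper: the iteration $T_{\alpha,y}(x) = x - L_\alpha^{-1}(f_\alpha(x)-y)$ is literally identical to the paper's fixed-point map $h_\alpha(x) = Df_\alpha(0)^{-1}y + F_\alpha(x)$, just written without the intermediate auxiliary function $F_\alpha$, and the same mean-value and Neumann-series bounds are invoked. The only genuine (small) difference is that for the inclusion $B_{X_\alpha}(0,\delta/(3C^2)) \subset V_\alpha$ you use a Taylor-quadratic estimate $\norm{f_\alpha(x)} \le C\norm{x} + \tfrac{C}{2}\norm{x}^2$ where the paper uses the cruder Lipschitz bound $\norm{f_\alpha(x)} \le \tfrac{3C}{2}\norm{x}$; your sharper version happens to give the radius exactly as stated in the theorem, whereas the paper's proof, read literally, yields $\delta/(9C^2)$ and thus implicitly relies on the strictness of the open-ball inequality or carries a minor typographical slip. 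You also spell out items (3) and (4) where the paper simply cites the standard inverse function theorem.
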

\begin{figure}[!ht]
	\includegraphics[scale=0.8]{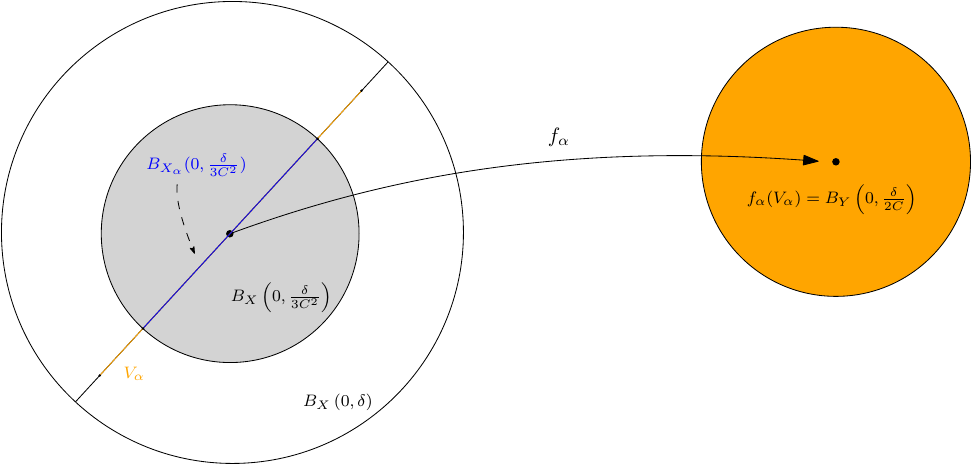}
	\caption{A depiction of the sets $V_\alpha,f_\alpha(V_\alpha)$ and the $\alpha$-independent ``core'' $B_X(0,\delta/(3C)^2)$}
\end{figure}
\begin{proof}
It suffices to only prove the first two items as the third and fourth items follow from the standard inverse function theorem applied to each $\alpha \in (0,1)$, see Theorem 2.5.2 in \cite{abraham}. 

For any $\alpha \in (0,1)$ we consider the function $F_\alpha: U_\alpha \to X$ defined via $F_\alpha(x) = x - Df_\alpha(0)^{-1} f_\alpha(x)$. Then for all $\alpha \in (0,1)$, $F_\alpha \in C^2(U_\alpha; X)$, $F_\alpha(0) = 0$, and $DF_\alpha(0) = 0$. Furthermore, by applying the mean value inequality over $B_{X_{\alpha}}(0,\ve)$ and \eqref{eq: inverse fn est} we may conclude that for all $x \in B_{X_{\alpha}}(0,\ve)$,
\begin{align}\label{eq: first derivative est F}
	\norm{DF_\alpha(x)}_{\mathcal{L}(X)} \le \sup_{t \in [0,1]} \norm{D^2 F_\alpha(tx)}_{\mathcal{L}^2(X_\alpha)} \norm{x}_{X_\alpha} \le \norm{Df_\alpha(0)^{-1}}_{\mathcal{L}(Y;X_\alpha)} \norm{D^2 f_\alpha(x)}_{\mathcal{L}^2(X_\alpha;Y)} \norm{x}_{X_\alpha} \le C^2 \norm{x}_{X_\alpha}.    
\end{align}
By \eqref{eq: inverse fn est}, we may  choose $\delta > 0$ sufficiently small and independent of $\alpha$ for which $\delta < (2C^2)^{-1}$ and $\norm{DF_\alpha(x)}_{\mathcal{L}(X)} \le \frac{1}{2}$ for all $x \in B_{X_\alpha}(0,\delta) \subseteq E_\alpha$. By the mean value inequality, we also have 
\begin{align}\label{eq: g one half est}
	\norm{F_\alpha(x) - F_\alpha(y)}_{X_\alpha} \le \norm{x-y}_{X_\alpha} \sup_{z \in B_{X_\alpha}(0,\delta)} \norm{D F_\alpha( z)}_{\mathcal{L}(X_\alpha)} \le \frac{1}{2} \norm{x-y}_{X_\alpha} \; \text{for all} \; x,y \in B_{X_\alpha}(0,\delta).
\end{align}
Fix $y \in B_Y(0,\delta(2C)^{-1})$ and define the function $h_\alpha :B_{X_\alpha}[0,\delta] \to B_{X_\alpha}(0,\delta) \subset B_{X_\alpha}[0,\delta]$ via $h_\alpha(x) = Df_\alpha(0)^{-1}(y + Df_\alpha(0) F_\alpha(x))$, where $B_{X_\alpha}[0,\delta]$ denotes the closed ball in $X_\alpha$ with radius $\delta$. To check that the map is well-defined, we note that since $F_\alpha(0) = 0$, by the writing $h_\alpha(x) = Df_\alpha(0)^{-1} y + F_\alpha(x)$ and using \eqref{eq: g one half est} we have $	\norm{h_\alpha(x)}_{X_\alpha} \le C\norm{y}_{Y} + \frac{1}{2} \norm{x}_{X_\alpha}   < \delta$ for all $x \in B_{X_\alpha}[0,\delta]$.  This shows that the map is well-defined. Next we note that since $h_\alpha$ and $F_\alpha$ differ by a constant, by the estimate on $DF_\alpha$ we also have $ \norm{ Dh_\alpha(x)}_{\mathcal{L}(X_\alpha)} \le \frac{1}{2}$ for all $x \in B_{X_\alpha}[0,\delta]$, which implies that $h_\alpha$ is a contraction on the complete metric space $B_{X_\alpha}[0,\delta]$. Therefore by the contraction mapping theorem, there exists a unique $x \in B_{X_\alpha}[0,\delta]$ for which $h_\alpha(x) = x$, but since $h_\alpha(B_{X_\alpha}[0,\delta]) \subseteq B_{X_\alpha}(0,\delta)$, we get the inclusion $x = h_\alpha(x) \in B_{X_\alpha}(0,\delta)$. Since $h_\alpha(x) = x$ is equivalent to $f_\alpha(x) = y$, we find that for every $y \in B_Y(0,\delta(2C)^{-1})$ there exists a unique $x \in B_{X_\alpha}(0,\delta)$ such that $f_\alpha(x) = y$. 

Now we define the set $V_\alpha = f_\alpha^{-1}(B_Y(0,\delta(2C)^{-1})) \cap B_X(0,\delta)$, which by the contraction mapping argument above is an open subset of $B_{X_\alpha}(0,\delta) \subset B_X(0,\delta)$. We first note that by \eqref{eq: g one half est}, we have
\begin{multline}\label{eq: f lip est 1}
	\norm{f_\alpha(x) - f_\alpha(y)}_{X_\alpha} \le \norm{Df_\alpha(0)}_{\mathcal{L}(X_\alpha;Y)} \left( \norm{x - y}_{X_\alpha} + \norm{F_\alpha(x) - F_\alpha(y)}_{X_\alpha}\right) \\ \le \frac{3}{2} C \norm{x-y}_{X_\alpha} \; \text{for all} \; x,y \in B_{X_\alpha}(0,\delta).
\end{multline}
In particular, since $f_\alpha(0) = 0$ we have $\norm{f_\alpha(x)}_Y \le 3C/2  \norm{x}_{X_\alpha}$ for all $x \in  B_X(0,\delta)$. This implies the inclusion $f_\alpha(B_{X_\alpha}(0,\delta (3C)^{-2})) \subseteq B_Y(0,\delta(2C)^{-1} )$, and subsequently $B_{X_\alpha}(0,\delta (3C)^{-2}) \subseteq V_\alpha$.

By the contradiction mapping argument above, the restriction $f_\alpha \rvert_{V_\alpha}: V_\alpha \to f_\alpha(V_\alpha) =B_Y(0,\delta(2C)^{-1})$ is invertible. Next we note that for all $x_1, x_2 \in V_\alpha$, we have 
\begin{multline}
	 \norm{x_1 - x_2}_{X_\alpha} \le \norm{F_\alpha(x_1) - F_\alpha(x_2)}_{X_\alpha} + \norm{Df_\alpha(0)^{-1}}_{\mathcal{L}(Y;X)} \norm{f_\alpha(x_1) - f(x_2)}_{X_\alpha} \\ 
	 \le \frac{1}{2} \norm{x_1 - x_2}_{X_\alpha} +  C \norm{f_\alpha(x_1) - f(x_2)}_{X_\alpha}.
\end{multline}
This then implies  
\begin{align}\label{eq: f lip est 2}
	 \norm{f_\alpha^{-1}(y_1) - f_\alpha^{-1}(y_2)}_{X_\alpha} = \norm{x_1 - x_2}_{X_\alpha} \le 2 C \norm{f(x_1) - f(x_2)}_{Y} = 2 C \norm{y_1 - y_2}_{Y},    
\end{align}
for all $y_1, y_2 \in f_\alpha(V_\alpha) =B_Y(0,\delta(2C)^{-1})$. From \eqref{eq: f lip est 1} and \eqref{eq: f lip est 2} we may conclude that $f_\alpha \rvert_{V_\alpha} : V_\alpha \to f_\alpha(V_\alpha)$ is a bi-Lipschitz homeomorphism and the estimate \eqref{eq: f C^0_b est alpha ind} holds.


\end{proof}

Now we are ready to prove a parameter dependent implicit function theorem.

\begin{thm}\label{thm: implicitFT}
Let $X,Y,Z$ be Banach spaces over $\F$ and let $\{ Y_\alpha \}_{\alpha \in (0,1)} \subset Y$ be a one-parameter family of closed subspaces of $Y$. We equip the Cartesian products $X \times Y, X \times Z$ with the $\infty$-norm defined via $\norm{(x,x')}_{X \times X'} = \max\{  \norm{x}_{X}, \norm{x'}_{X'}  \}$, and we equip the Cartesian products $X \times Y_\alpha$ with the norm inherited from $X \times Y$. 

For all $\alpha \in (0,1)$, we suppose $U_\alpha \subseteq X \times Y_\alpha$ is a non-empty open set containing $0$, $f_\alpha \in C^2(U_\alpha ; Z)$, $f_\alpha(0, 0) = 0$, $D_2f_\alpha(0, 0) \in \mathcal{L}(Y; Z)$ is a linear homeomorphism, and there exists a constant $C > 2$ and a non-empty open set $E_\alpha \subseteq U_\alpha$ containing 0 such that
\begin{align}\label{eq: implicit ests}
	\sup_{\alpha \in (0,1)} \left( \norm{Df_\alpha(0,0)}_{\mathcal{L}(X; Y)} + \norm{D f_\alpha(0,0)^{-1}}_{\mathcal{L}(Y ; X)}  + \sup_{(x,y) \in E_\alpha} \norm{D^2 f_\alpha(x,y)}_{\mathcal{L}(X ; Y)}  \right) \le C, 
\end{align}
Then there exists a $\delta_1 > 0$ such that for all $\alpha \in (0,1)$, there exists  $g_\alpha \in C^2_b(B_X(0,\delta_1)  ; Y_\alpha) \cap C_b^{0,1}(B_X(0,\delta_1) ; Y_\alpha)$ such that the following hold.
\begin{enumerate}
	 \item $g_\alpha(0) = 0$ and $(x,g_\alpha(x)) \in B_X(0,\delta_1) \times B_{Y_\alpha}(0,\delta_1) \subseteq U_\alpha$ for all $x \in B_X(0,\delta_1)$.
	 \item $f_\alpha(x,g_\alpha(x)) = 0$ for all $x \in B_X(0,\delta_1)$, and if $(x,y) \in B_X(0,\delta_1) \times B_{Y_\alpha}(0,\delta_1)$ satisfy $f_\alpha(x,y) = 0$, then $y = g_\alpha(x)$. Furthermore, there exists a constant $M > 0$ for which
	 \begin{align}\label{eq: uniform solution bound}
		\sup_{\alpha \in (0,1)}  \sup_{x \in B_X(0,\delta_1)} \norm{g_\alpha(x)}_{Y} \le M.
	 \end{align}
	
\end{enumerate}
\end{thm}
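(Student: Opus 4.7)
The plan is to reduce this parameter-dependent implicit function theorem to the parameter-dependent inverse function theorem established in Theorem~\ref{thm: inverseFT}. For each $\alpha \in (0,1)$, define the map $F_\alpha : U_\alpha \to X \times Z$ by $F_\alpha(x, y) = (x, f_\alpha(x, y))$. Since $Y_\alpha$ is a closed subspace of $Y$, the product $X \times Y_\alpha$ is a closed subspace of $X \times Y$, and $F_\alpha$ is a $C^2$ map defined on an open neighborhood of the origin in $X \times Y_\alpha$, with $F_\alpha(0, 0) = 0$. Its derivative at the origin has block form $DF_\alpha(0,0)(u,v) = (u,\, D_1 f_\alpha(0,0) u + D_2 f_\alpha(0,0) v)$, whose inverse is given explicitly by $DF_\alpha(0,0)^{-1}(u, z) = (u,\, D_2 f_\alpha(0,0)^{-1}(z - D_1 f_\alpha(0,0) u))$ thanks to the invertibility of $D_2 f_\alpha(0,0)$.

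Next I would verify that $F_\alpha$ inherits the uniform estimate \eqref{eq: inverse fn est} from the hypothesis \eqref{eq: implicit ests} on $f_\alpha$. Using the maximum norm on $X \times Y$ and $X \times Z$, one checks by direct computation that $\|DF_\alpha(0,0)\|$, $\|DF_\alpha(0,0)^{-1}\|$, and $\sup_{E_\alpha} \|D^2 F_\alpha\|$ are all controlled by a constant $C'$ depending only on $C$. Applying Theorem~\ref{thm: inverseFT} then produces a uniform $\delta' > 0$ and open sets $V_\alpha$ satisfying $B_{X \times Y_\alpha}(0, \delta'/(3C')^2) \subseteq V_\alpha \subseteq B_{X \times Y}(0, \delta')$ on which $F_\alpha\rvert_{V_\alpha}$ is a bi-Lipschitz homeomorphism onto $B_{X \times Z}(0, \delta'/(2C'))$, with $\alpha$-independent Lipschitz constants from the second item of that theorem.

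With the inverse in hand, define $g_\alpha(x) := \pi_{Y_\alpha} F_\alpha^{-1}(x, 0)$ for $x \in B_X(0,\delta_1)$, where $\pi_{Y_\alpha}$ is the second projection and $\delta_1 > 0$ is chosen (uniformly in $\alpha$) so that $(x,0)$ lies in the image and the product ball $B_X(0,\delta_1) \times B_{Y_\alpha}(0,\delta_1)$, which equals $B_{X \times Y_\alpha}(0, \delta_1)$ in the $\infty$-norm, sits inside $V_\alpha$; the choice $\delta_1 \le \delta'/(3C')^2$ suffices. The identities $g_\alpha(0) = 0$ and $f_\alpha(x, g_\alpha(x)) = 0$ are immediate, the uniqueness assertion on the box follows from injectivity of $F_\alpha\rvert_{V_\alpha}$, and the $\alpha$-uniform bound \eqref{eq: uniform solution bound} together with the Lipschitz property $g_\alpha \in C^{0,1}_b$ come from the $\alpha$-independent Lipschitz constant for $F_\alpha^{-1}$.

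The remaining point — and what I expect to be the most delicate part of the write-up — is establishing the $C^2_b$ bound on $g_\alpha$ uniformly in $\alpha$. The standard implicit differentiation identities $D g_\alpha(x) = -D_2 f_\alpha(x, g_\alpha(x))^{-1} D_1 f_\alpha(x, g_\alpha(x))$ and the analogous second-order expression for $D^2 g_\alpha$ reduce the problem to showing that $D_2 f_\alpha(x, g_\alpha(x))$ remains invertible with $\alpha$-uniform bound on a neighborhood of the origin. For this one writes $D_2 f_\alpha(x, y) = D_2 f_\alpha(0,0) + R_\alpha(x,y)$, where the remainder $R_\alpha$ is controlled by $\sup_{E_\alpha} \|D^2 f_\alpha\| \cdot \|(x,y)\|$, and invokes a Neumann series argument on a small enough ball whose radius is forced by \eqref{eq: implicit ests} to be uniform in $\alpha$. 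Once this uniform inversion is secured, the explicit formulas for $Dg_\alpha$ and $D^2 g_\alpha$ combined with \eqref{eq: implicit ests} immediately yield uniform $C^2_b$ bounds, completing the proof.
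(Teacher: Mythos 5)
Your proof is correct and follows essentially the same route as the paper's: reduce to the parameter-dependent inverse function theorem for $F_\alpha(x,y) = (x, f_\alpha(x,y))$, verify via the block structure of $DF_\alpha$ that \eqref{eq: implicit ests} transfers to \eqref{eq: inverse fn est}, apply Theorem~\ref{thm: inverseFT}, and read off $g_\alpha = \pi_2 \circ F_\alpha^{-1}(\cdot, 0)$. Your final paragraph on establishing uniform-in-$\alpha$ $C^2_b$ control of $g_\alpha$ is not needed for the stated conclusion: the only $\alpha$-uniform bound asserted is the $C^0$ estimate \eqref{eq: uniform solution bound}, which in the paper comes directly from the second item of Theorem~\ref{thm: inverseFT}, while the per-$\alpha$ $C^2$ regularity of each $g_\alpha$ is already furnished by the fourth item of that theorem.
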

\begin{proof}

For any $\alpha \in (0,1)$ consider the function $F_\alpha: U_\alpha \to  X \times Z$ defined via $F_\alpha(x, y) = (x, f_\alpha(x,y))$. Then $F_\alpha \in C^2(U_\alpha; X \times Z)$ and $DF_\alpha \in \mathcal{L}( X \times Y_\alpha ; X \times Z)$ may be represented in matrix form by  
\begin{align}
	 DF_\alpha(x,y) = \begin{pmatrix}
		 I_{X} & 0_{Y_\alpha}  \\
		 D_1f_\alpha(x,y) & D_2 f_\alpha(x,y)
	 \end{pmatrix}.
\end{align}
Since $D_2f_\alpha(0,0)$ is a linear homeomorphism, we readily conclude that $DF_\alpha(0,0)$ is also a linear homeomorphism. Thus, we may apply the standard inverse function theorem to conclude that $F_\alpha$ is a local $C^2$-diffeomorphism around 0. Note that $DF_\alpha$ is then locally invertible with 
\begin{align}\label{eq: DF inverse}
	 (DF_\alpha(x,y))^{-1} = \begin{pmatrix}
		 I_X & 0_{Y_\alpha}  \\
		 - (D_2 f_\alpha(x,y))^{-1} D_1 f_\alpha(x,y) & (D_2 f_\alpha(x,y))^{-1}
	 \end{pmatrix}
\end{align}
for all $(x,y)$ in a sufficiently small neighborhood of $(0,0)$. Combining the expression \eqref{eq: DF inverse} with \eqref{eq: implicit ests}, we may then conclude that $F_\alpha$ also satisfies the estimate \eqref{eq: inverse fn est} and the rest of the hypothesis of Theorem~\ref{thm: inverseFT}. Thus, by Theorem~\ref{thm: inverseFT} there exists $\delta_1, \delta_2, \delta_3 > 0$ such that for all $\alpha \in (0,1)$, there exists an open set $V_\alpha$ such that we have $(0,0) \in B_X(0,\delta_1) \times B_{Y_\alpha}(0,\delta_1) \subseteq V_\alpha \subseteq B_X(0,\delta_2) \times B_{Y_\alpha}(0,\delta_2) \subseteq U_\alpha$ and $F_\alpha \rvert_{V_\alpha}: V_\alpha \to F_\alpha(V_\alpha) = B_X(0,\delta_3) \times B_Z(0,\delta_3)$ is a $C^2_b$-diffeomorphism and a bi-Lipschitz homeomorphism. Furthermore, the $C^0_b(V_\alpha; X \times Z)$ norm of $F_\alpha$ and the $C^0_b(F_\alpha(V_\alpha); X\times Y_\alpha)$ norm of $F^{-1}_\alpha$ are independent of $\alpha$. 

\begin{figure}[!ht]
	\includegraphics[scale=0.8]{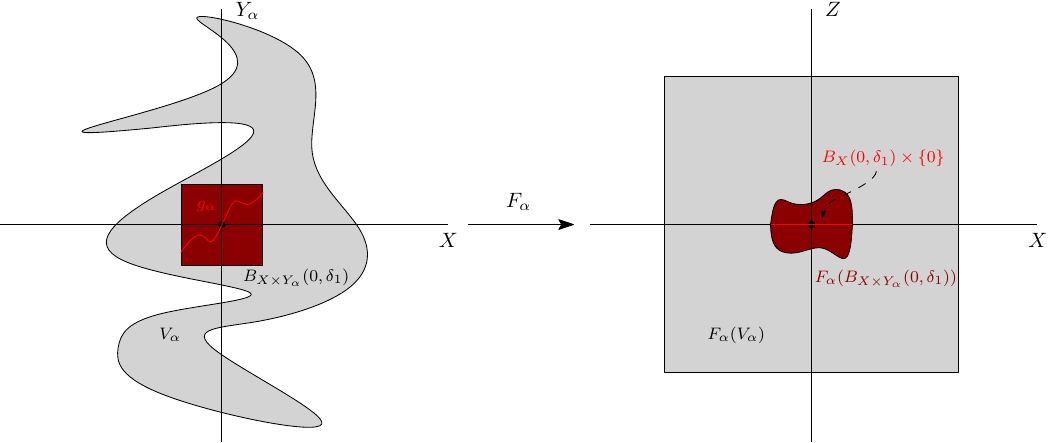}
	\caption{A toy picture of the sets $V_\alpha, F_\alpha(V_\alpha)$ and the $\alpha$-independent ``core'' $B_{X}(0,\delta_1)$}
\end{figure}

Now we propose to define the function $g_\alpha \in C^2_b(B_X(0,\delta_1)  ; Y_\alpha) \cap C_b^{0,1}(B_X(0,\delta_1) ; Y_\alpha)$ by $g_\alpha(\cdot) = G_\alpha(\cdot, 0)$, where the function $G_\alpha: F_\alpha(V_\alpha) \to Y_\alpha$ is defined via $G_\alpha = \pi_2 \circ F_\alpha^{-1} \in C^2_b(F_\alpha(V_\alpha); Y_\alpha) \cap C_b^{0,1} (F_\alpha(V_\alpha); Y_\alpha)$.  To prove the first item, we note that since $(0,0) \in V_\alpha$ and $F_\alpha(0,0) = (0, f_\alpha(0,0)) = (0,0)$, this immediately implies that $g_\alpha(0) = G_\alpha(0,0) = \pi_2 \circ F_\alpha^{-1}(0,0) = 0$. Next we note that by the definition of $F_\alpha$ we have $(x,0) \in F_\alpha(B_X(0,\delta_1) \times B_{Y_\alpha}(0,\delta_1))$ for all $x \in B_X(0,\delta_1)$. This implies that $g_\alpha(x) = G_\alpha(x,0) \in \pi_2(B_X(0,\delta_1) \times B_{Y_\alpha}(0,\delta_1)) =  B_{Y_\alpha}(0,\delta_1)$ for all $x \in B_X(0,\delta_1)$.

To prove the second item, we note that by construction we have $F_\alpha^{-1}(x,0) = (x,G_\alpha(x,0))$ for all $x \in B_X(0,\delta_1)$. Therefore
\begin{align}
	(x, f_\alpha(x,g_\alpha(x))) = (x, f_\alpha(x,G_\alpha(x,0))) = F_\alpha(x, G_\alpha(x,0)) = F_\alpha \circ F_\alpha^{-1} (x,0) = (x,0) \; \text{for all} \; x \in B_X(0,\delta_1),
\end{align}
which implies that $f_\alpha(x,g_\alpha(x)) = 0$ for all $ x\in B_X(0,\delta_1)$. Moreover, if $f_\alpha(x,y) = 0$ for $x \in B_X(0,\delta_1)$ and $y \in B_{Y_\alpha}(0,\delta_1)$, then $F_\alpha(x,y) = (x, f_\alpha(x,y)) = (x,0)$ and so $(x,y) = F_\alpha^{-1}(x,0)$. This in turn implies that $y = G_\alpha(x,0) = g_\alpha(x)$. 

Furthermore, since the $C^0_b(F_\alpha(V_\alpha); X\times Y_\alpha)$ norm of $F^{-1}_\alpha$ is independent of $\alpha$ and $Y_\alpha$ is a closed subspace of $Y$, we may conclude that there exists a positive constant $M$ for which \eqref{eq: uniform solution bound} holds.
\end{proof}

\subsection{Smoothness of composition operators between Sobolev spaces}

In this subsection we record composition results involving the flattening map $\mathfrak{F}$ defined via \eqref{eq:flattening}. 

\begin{thm}\label{comp_C2}
	Let $\N \ni k \ge 1 + \lfloor n/2 \rfloor$, $d \ge 1$, and $m \in \{0,1,2\}$. Let $\varphi \in C^\infty_b(\R;\R)$ be as in \eqref{eq:flattening}, and for every $\eta \in X^{k+1/2}(\R ; \R)$ define the map $\mathfrak{F}: \R^n \to \R^n$ via $\mathfrak{F}(x) = x + \varphi(x_n)\eta(x') e_n$. Then there exists a $0 < \delta_* < 1$ for which the map $
		\Lambda : H^{k+m}(\R^n;\R^d) \times B_{X^{k+1/2}(\R^{n-1}; \R)}(0,\delta)
		\to H^k(\R^n; \R^d)$
	defined via $\Lambda(f,\eta) = f\circ \mathfrak{F}$ is well-defined and the following hold.
	\begin{enumerate}
		\item For all $m \in \{0,1,2\}$, $\Lambda$ is continuous.
		\item If $m=1$, then $\Lambda$ is $C^1$ and satisfies $D \Lambda(f,\eta) (f_1,\eta_1) = (\p_n f \circ \mathfrak{F})\varphi \eta_1 + f_1 \circ \mathfrak{F}$.
		\item If $m=2$, then  $\Lambda$ is $C^2$ and satisfies $D^2 \Lambda(f,\eta) [(f_1,\eta_1),(f_2,\eta_2)] =  (D^2 f \circ \mathfrak{F}) (\varphi \eta_1 e_n, \varphi \eta_2 e_n) + (\p_n f_1\circ \mathfrak{F}) \varphi \eta_2 + (\p_n f_2 \circ \mathfrak{F}) \varphi \eta_1$.
	\end{enumerate}
\end{thm}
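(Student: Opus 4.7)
The plan is to treat the three items in order, using the $m=0$ assertion as the structural backbone and bootstrapping $C^1$ and $C^2$ from it via Taylor expansion in the $\eta$-variable.

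\textbf{Preliminaries.} For $\delta_*$ chosen smaller than $1/(\|\varphi'\|_{L^\infty} c_0)$, where $c_0$ is the embedding constant of $X^{k+1/2}(\R^{n-1}) \hookrightarrow C^0_b(\R^{n-1})$ coming from item 5 of Theorem~\ref{thm: X^s} (applicable since $k+1/2 > (n-1)/2$), the Jacobian $\mathcal{J}(x) = 1 + \varphi'(x_n)\eta(x')$ is bounded below by $1/2$. Hence $\mathfrak{F}$ is a bi-Lipschitz $C^1$-diffeomorphism of $\R^n$, and the change of variables formula yields $\|F\circ \mathfrak{F}\|_{L^2}^2 \le 2\|F\|_{L^2}^2$ for all $F \in L^2(\R^n)$.

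\textbf{Well-definedness and continuity ($m=0$).} By the Faà di Bruno chain rule, for any multi-index $|\alpha|\le k$ the derivative $\p^\alpha(f\circ \mathfrak{F})$ is a finite sum of terms $(\p^\beta f \circ \mathfrak{F})\cdot Q_\beta(\nab\mathfrak{F},\ldots,\nab^{|\alpha|-|\beta|+1}\mathfrak{F})$. The nontrivial entries of $\nab\mathfrak{F}$ are smooth-bounded-$\varphi$-derivative factors times either $\eta$ or $\p'\eta$, so each $Q_\beta$ is a polynomial whose monomials have the form $\prod_j (\varphi^{(s_j)} \p'^{\gamma_j}\eta)$ with $\sum |\gamma_j| \le |\alpha|-|\beta|$. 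Control of $\|(\p^\beta f\circ\mathfrak{F})\, Q_\beta\|_{L^2}$ is then achieved by iterating the product estimate in item 1 of Theorem~\ref{thm: flattening maps est} (which handles one $\varphi\eta$-factor times an $H^r$-function using the $X^{k+1/2}$ norm of $\eta$) together with the algebra property in item 6 of Theorem~\ref{thm: X^s} (for the multiple $\p'\eta$-factors, which live in $X^{k-1/2}$), finally combined with change of variables to pass from $\p^\beta f\circ \mathfrak{F}$ to $\p^\beta f$ in $L^2$. This yields $\|f\circ\mathfrak{F}\|_{H^k}\lesssim \|f\|_{H^k}$ uniformly in $\|\eta\|_{X^{k+1/2}}\le \delta_*$. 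For continuity, split $\Lambda(f_2,\eta_2)-\Lambda(f_1,\eta_1) = (f_2-f_1)\circ\mathfrak{F}_{\eta_2} + (f_1\circ\mathfrak{F}_{\eta_2}-f_1\circ\mathfrak{F}_{\eta_1})$; the first term is handled by the uniform estimate just obtained, while for the second we approximate $f_1$ in $H^k$ by $C^\infty_c$ functions and use the identity
\[
f_{1,\varepsilon}\circ\mathfrak{F}_{\eta_2}-f_{1,\varepsilon}\circ\mathfrak{F}_{\eta_1} = \int_0^1 (\p_n f_{1,\varepsilon}\circ\mathfrak{F}_{\eta_1+t(\eta_2-\eta_1)})\,\varphi(\eta_2-\eta_1)\,dt,
\]
bounded in $H^k$ by the same product estimate times $\|\eta_2-\eta_1\|_{X^{k+1/2}}$.

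\textbf{Fréchet differentiability ($m=1$) and $C^2$ case.} For $f\in H^{k+1}$, the fundamental theorem of calculus in $t\in[0,1]$ gives $f\circ\mathfrak{F}_{\eta+\eta_1}-f\circ\mathfrak{F}_\eta = \int_0^1 (\p_n f\circ \mathfrak{F}_{\eta+t\eta_1})\,\varphi\eta_1\,dt$. Subtracting the candidate derivative, the remainder
\[
R = \int_0^1 \bigl[(\p_n f\circ\mathfrak{F}_{\eta+t\eta_1})-(\p_n f\circ\mathfrak{F}_\eta)\bigr]\varphi\eta_1\,dt + (f_1\circ\mathfrak{F}_{\eta+\eta_1}-f_1\circ\mathfrak{F}_\eta)
\]
is $o(\|(f_1,\eta_1)\|)$ in $H^k$: the first piece uses the $m=0$ continuity applied to $\p_n f\in H^k$ combined with the $X^{k+1/2}\cdot H^k\to H^k$ product estimate, the second is dominated by $\|f_1\|_{H^{k+1}}\|\eta_1\|_{X^{k+1/2}}$ via the same chain-rule expansion as above. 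Continuity of $D\Lambda$ in operator norm reduces again to $m=0$ continuity applied to $\p_n f$ together with bilinear continuity of $(\eta,\psi)\mapsto \varphi\eta\psi$. For $m=2$, write $D\Lambda(f,\eta)(f_1,\eta_1) = (\p_n f\circ\mathfrak{F})\varphi\eta_1 + f_1\circ\mathfrak{F}$: the first summand is the $m=1$ composition $\Lambda(\p_n f,\eta)$ (with $\p_n f\in H^{k+1}$) multiplied by the linear factor $\varphi\eta_1$, the second is $\Lambda(f_1,\eta)$ with $f_1\in H^{k+1}$. Applying the product rule together with the just-established $C^1$ regularity of $\Lambda$ from $H^{k+1}\times B_{X^{k+1/2}}(0,\delta_*)$ into $H^k$ yields the explicit formula for $D^2\Lambda$ and its continuity.

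\textbf{Main obstacle.} The crux is the $m=0$ well-definedness: one must absorb the worst chain-rule monomials involving up to $k$ tangential derivatives of $\eta$ into the $H^k$ norm, where only $X^{k+1/2}$-control of $\eta$ is available. This forces careful balancing between the composition factor $\p^\beta f\circ\mathfrak{F}$ (treated by change of variables) and the polynomial factor $Q_\beta$ in $\p'\eta$-monomials (treated via item 1 of Theorem~\ref{thm: flattening maps est} iterated through item 6 of Theorem~\ref{thm: X^s}). Once this combinatorial estimate is secured, the remaining continuity, $C^1$, and $C^2$ statements follow by standard Taylor arguments that reuse only the $m=0$ content.
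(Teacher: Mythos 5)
The paper does not actually prove this result directly: its proof is a citation of Theorem 5.20 in \cite{leonitice} for the first two items, plus the remark that the argument there extends to the third. Your self-contained route is therefore genuinely different, and its architecture is sound: the Jacobian lower bound and bi-Lipschitz change of variables, the Fa\`a di Bruno bookkeeping for $m=0$, and the bootstrapping of $C^1$ and $C^2$ from the $m=0$ case via the fundamental theorem of calculus in the $\eta$-slot (using that $\mathfrak{F}_\eta$ is affine in $\eta$) is exactly how one would extend the cited result to item 3. The differentiability part is essentially fine, with the small caveat that continuity/differentiability of the component $f_1 \mapsto f_1 \circ \mathfrak{F}_\eta$ must be established in \emph{operator norm}, i.e.\ uniformly over the unit ball of $H^{k+1}$ (resp.\ $H^{k+2}$); your own FTC identity, applied once more to produce a quadratic remainder in $\norm{\eta_2}_{X^{k+1/2}}$, supplies this, but it should be said.

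The genuine gap sits exactly at the step you label the crux, and the tools you invoke do not close it. The worst Fa\`a di Bruno monomial (all $k$ derivatives tangential, $\beta = e_n$) is $(\p_n f \circ \mathfrak{F})\,\varphi\,\p'^{\gamma}\eta$ with $\abs{\gamma} = k$, where $\p_n f \in H^{k-1}(\R^n)$ and $\p'^{\gamma}\eta \in H^{1/2}(\R^{n-1})$ only. Item 1 of Theorem~\ref{thm: flattening maps est} and item 6 of Theorem~\ref{thm: X^s} both require the $\R^{n-1}$-factor to have $X^s$ (or $H^s$) regularity with $s$ above $n/2$, so no iteration of those two estimates reaches a low--high product of the type $H^{k-1}(\R^n)\cdot H^{1/2}(\R^{n-1}) \to L^2(\R^n)$; the same problem occurs, in milder form, for all intermediate monomials where many derivatives fall on a single $\eta$-factor. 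What is actually needed is an anisotropic argument: slice-wise H\"older in $x'$ (using $H^{k-1}\cdot H^{1/2}\subset L^2$ on $\R^{n-1}$, valid since $k - 1/2 > (n-1)/2$), mixed norms of the form $L^q_{x'}L^2_{x_n}$ with Minkowski's inequality, and a one-dimensional change of variables in $x_n$ for fixed $x'$ (the map $x_n \mapsto x_n + \eta(x')\varphi(x_n)$ is monotone with derivative bounded below by $1/2$), noting that the full-space change of variables must be performed with the $x_n$-integral innermost because $\mathfrak{F}$ shifts $x_n$ by an $x'$-dependent amount. This balancing act is precisely the content of the lemma the paper outsources to \cite{leonitice}; as written, your sketch asserts its conclusion without a mechanism that delivers it, so the $m=0$ well-definedness --- and with it everything downstream --- is not yet proved.
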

\begin{proof}
	The first and second items follow from Theorem 5.20 in \cite{leonitice}, and a close examination of the proof therein shows that the argument can be extended prove the third item. 

\end{proof}

Next we prove a variant of Theorem~\ref{comp_C2} for compositions between $C^k_b$ functions and Sobolev functions.
 
\begin{thm}\label{thm: smooth Sobolev composition}
Let $\Omega \subseteq \R^n$ be an extension domain and assume $\N \ni k \ge 2 + \tfloor{n/2}, m \in \{0,1,2\}$. Let $f \in C^{k+1+ m}_b(\R^n;\R^n)$ and assume $f(0) = 0$ if $\Omega$ has infinite measure. Then the map $\Lambda_f: H^k(\Omega; \R^n) \to H^{k}(\Omega;\R^n)$ defined via $\Lambda_f(u) = f \circ u$ is well-defined and $C^m$. 
\end{thm}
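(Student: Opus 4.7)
The plan is to establish well-definedness first ($m=0$), then deduce the $C^1$ and $C^2$ statements by Taylor expansion. Throughout, the key analytic tools are the Sobolev embedding $H^k(\Omega) \hookrightarrow C^0_b(\Omega)$ (valid since $k > n/2$ and $\Omega$ is an extension domain), the fact that $H^k(\Omega)$ is a Banach algebra, and Fa\`a di Bruno's formula for derivatives of compositions.

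For well-definedness, I would expand $\partial^\alpha(f \circ u)$ for $|\alpha| \le k$ via Fa\`a di Bruno as a finite sum of terms of the form $(\partial^\beta f)(u) \prod_{j=1}^\ell \partial^{\gamma_j} u$ with $1 \le |\beta| \le |\alpha|$, $|\gamma_j| \ge 1$, and $\sum_j |\gamma_j| = |\alpha|$. Each $(\partial^\beta f)(u)$ lies in $L^\infty$ since $\partial^\beta f \in C^0_b$, and the product $\prod_j \partial^{\gamma_j} u$ belongs to $H^{k-|\alpha|}$ with norm controlled by a polynomial in $\|u\|_{H^k}$, via iterated Moser-type estimates using that $H^k$ is an algebra. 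This bounds $\|\partial^\alpha(f \circ u)\|_{L^2}$ for all $1 \le |\alpha| \le k$. The remaining $L^2$ estimate of $f \circ u$ itself splits into two cases: if $\Omega$ has finite measure, then $\|f \circ u\|_{L^2} \le |\Omega|^{1/2} \|f\|_{L^\infty}$; if $\Omega$ has infinite measure, then $f(0) = 0$ allows the representation $f(u(x)) = \int_0^1 Df(tu(x)) u(x) \, dt$, giving $\|f \circ u\|_{L^2} \le \|Df\|_{L^\infty} \|u\|_{L^2}$. Together these yield $\Lambda_f : H^k(\Omega;\R^n) \to H^k(\Omega;\R^n)$, and continuity follows by applying the same scheme to $f(u) - f(w) = \int_0^1 Df(w+t(u-w))(u-w)\,dt$: the product factor $(u-w)$ carries the $L^2$-integrability and replaces the role of $u$ above, so no vanishing condition on $Df$ at $0$ is needed.

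For $m=1$, I would propose the derivative $D\Lambda_f(u) v = Df(u) \cdot v$ and verify the three standard requirements. Boundedness of $v \mapsto Df(u)v$ in $H^k$ follows exactly as in the well-definedness step, since the multiplicative factor $v$ supplies the $L^2$-integrability in every Fa\`a di Bruno term; no special behavior of $Df$ at $0$ is needed. Continuity of $u \mapsto Df(u) \in \mathcal{L}(H^k, H^k)$ reduces to bounding $\|(Df(u) - Df(w)) v\|_{H^k}$, which is handled by representing the difference via mean value theorem and again using the algebra property (this is where the hypothesis $f \in C^{k+2}_b$ is consumed, giving enough derivatives of $Df$). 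Finally, the remainder identity
\begin{equation*}
\Lambda_f(u+v) - \Lambda_f(u) - Df(u) v = \int_0^1 (1-t)\, D^2 f(u+tv)(v,v)\,dt
\end{equation*}
gives an $H^k$-bound of the form $C(\|u\|_{H^k} + \|v\|_{H^k}) \|v\|_{H^k}^2$, because the two factors $v$ in the bilinear form each provide an $H^k$ norm and the composition $D^2 f(u+tv)$ sits in $L^\infty$ with all its derivatives controlled in terms of $\|u+tv\|_{H^k}$.

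For $m=2$, I would repeat the argument with candidate second derivative $D^2\Lambda_f(u)(v,w) = D^2 f(u)(v,w)$, using third-order Taylor expansion
\begin{equation*}
\Lambda_f(u+h) - \Lambda_f(u) - Df(u) h - \tfrac{1}{2} D^2 f(u)(h,h) = \int_0^1 \tfrac{(1-t)^2}{2} D^3 f(u+th)(h,h,h)\,dt,
\end{equation*}
now consuming the full $f \in C^{k+3}_b$ hypothesis. The main obstacle is purely bookkeeping: verifying that each Fa\`a di Bruno term contains at least one differentiated factor $v$, $w$, or $\partial^\gamma u$ (with $|\gamma| \ge 1$) supplying the $L^2$ control, so that the failure of $Df$, $D^2 f$, $D^3 f$ to vanish at $0$ never obstructs the estimate in the infinite-measure case — the vanishing condition $f(0) = 0$ is only used once, at the very bottom level of the well-definedness step.
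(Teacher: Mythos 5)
Your strategy differs from the paper's in its analytic building blocks. The paper establishes two inductively proved estimates: a multiplier estimate, Step 1, giving $\norm{\Lambda_f(u)v}_{H^j}\lesssim\norm{f}_{C^j_b}\langle\norm{u}_{H^k}\rangle^j\norm{v}_{H^j}$, and a difference estimate, Step 2, giving $\norm{\Lambda_f(u)-\Lambda_f(v)}_{H^j}\to 0$ as $v\to u$; both are proved by finite induction on $j$ via the Leibniz rule and the Sobolev product law, with no appeal to Fa\`a di Bruno or Gagliardo--Nirenberg. Well-definedness, continuity, and differentiability are then deduced in Steps 3--4, where each order of differentiability is handled by a first-order FTC representation of the remainder in terms of $\Lambda_{\partial_q f}$ (for $m=1$) or $\Lambda_{\partial_p\partial_q f}$ (for $m=2$), to which the difference estimate is applied. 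Your proposal replaces the inductive multiplier estimate with a direct Fa\`a di Bruno expansion together with Sobolev product estimates; this is a valid (and more classical Moser-type) route, and your claim that $\prod_j\partial^{\gamma_j}u$ belongs to $H^{k-|\alpha|}$ is in fact correct via iterated application of the product estimate $H^{s_1}\cdot H^{s_2}\hookrightarrow H^{s_0}$ for $s_0\le\min(s_1,s_2)$ with $s_1+s_2>s_0+n/2$, though stating this as a consequence of ``$H^k$ is an algebra'' undersells what is being used.

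The genuine gap is in your $m=2$ step. A locally uniform $O(\norm{h}^3)$ bound on the second-order Taylor remainder of $\Lambda_f$ does not, by itself, show that $D\Lambda_f$ is Fr\'echet differentiable. The latter is the statement that $D\Lambda_f(u+h)-D\Lambda_f(u)-D^2f(u)(h,\cdot)=o(\norm{h}_{H^k})$ in $\mathcal{L}(H^k;H^k)$, which is \emph{not} formally implied by a bound on the scalar-valued quantity $\Lambda_f(u+h)-\Lambda_f(u)-Df(u)h-\tfrac12 D^2f(u)(h,h)$; in finite dimensions one can even produce $C^1$ functions with a good second-order Taylor remainder at a point that nevertheless fail to be twice differentiable there. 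To close the gap you must either invoke a converse Taylor theorem (e.g.\ Theorem 2.4.15 in Abraham--Marsden--Ratiu), verifying the local uniformity of your remainder bound in $u$, or, more directly and in the spirit of the paper's Step 4, expand
\begin{equation*}
D\Lambda_f(u+w)(v)-D\Lambda_f(u)(v)-\sum_{p,q}\Lambda_{\partial_p\partial_qf}(u)(v)_p(w)_q
= \sum_{p,q}(v)_p(w)_q\int_0^1\bigl[\Lambda_{\partial_p\partial_qf}(u+tw)-\Lambda_{\partial_p\partial_qf}(u)\bigr]\,dt
\end{equation*}
and estimate the right-hand side in $\mathcal{L}(H^k;H^k)$ using the difference estimate for $\partial_p\partial_qf\in C^{k+1}_b$. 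This is exactly where the paper consumes the hypothesis $f\in C^{k+3}_b$, and it immediately yields both existence and continuity of $D^2\Lambda_f$ without any detour through a converse Taylor argument.
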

\begin{proof}
We prove this in four steps.

\textbf{Step 1}: A multiplier estimate. We first prove via finite induction the statement $\mathbb{P}_j$ for all $0 \le j \le k$, where $\mathbb{P}_j$ denotes the proposition that for all $f \in C^{j+1}_b(\R^n; \R^n)$ and $u,v \in H^k(\Omega;\R^n)$, we have the a priori estimate 
\begin{align}\label{eq: H^k product est}
	\norm{\Lambda_f(u) v}_{H^j} \lesssim  \norm{f}_{C^{j}_b} \langle \norm{u}_{H^k} \rangle^{j} \norm{v}_{H^j}  \; \text{for all \;} u, v\in H^k(\Omega;\R^n),
\end{align}
where $\langle \cdot \rangle = (1 + \abs{\cdot}^2)^{1/2}$ denotes the Japanese bracket. 

In the case when $j = 0$, by the supercritical Sobolev embedding $H^{1+\tfloor{n/2}}(\Omega;\R^n) \hookrightarrow C^0_b(\Omega;\R^n)$ and the assumption that $f \in C^1_b(\R^n; \R^n)$, the estimate \eqref{eq: H^k product est} is satisfied trivially. Thus $\mathbb{P}_0$ holds.

Next we proceed inductively and suppose that $\mathbb{P}_l$ holds for all $0 \le l \le j \le k-1$ and consider the case $j+1 \le k$. We first note that for any $1 \le p \le n$ and $u,v \in H^k(\Omega;\R^n)$ we have 
\begin{equation}\label{eq: product derivative}
	\partial_p \Lambda_f(u) = \sum_{q=1}^n \Lambda_{\partial_q f}(u) \partial_p (u)_q, \; \p_p (\Lambda_f(u) v) = \Lambda_f(u) \p_p v + \sum_{q=1}^n \Lambda_{\partial_q f}(u) \partial_p (u)_q v.
\end{equation}
Then by the supercritical Sobolev embedding, applying the estimate \eqref{eq: product derivative} from the induction hypothesis on $f \in C_b^{j+2}(\R^n ; \R^n)$, $\p_q f \in C^{j+1}_b(\R^n; \R^n)$ and the standard Sobolev product estimate with the fact that $k-1 \ge 1 + \tfloor{n/2}$, we have
\begin{multline}
	\norm{\Lambda_f(u)v}_{H^{j+1}}   \lesssim \norm{\Lambda_f(u) v}_{H^{0}} + \sum_{p=1}^n \tnorm{\partial_p (\Lambda_f(u)v)}_{H^{j}} \lesssim \norm{\Lambda_f(u) v}_{H^{0}} + \sum_{p=1}^n \norm{\Lambda_f(u) \p_p v}_{H^j} \\  + \sum_{p,q=1}^n \norm{\Lambda_{\p_q f}(u) \p_p (u)_q v}_{H^j} 
   \lesssim \norm{f}_{C^0_b} \norm{v}_{H^0} + \norm{f}_{C^j_b} \langle \norm{u}_{H^k} \rangle^j \norm{v}_{H^{j+1}}    + \sum_{p,q=1}^n  \norm{f}_{C^{j+1}_b} \langle \norm{u}_{H^k}\rangle^j \norm{\p_p(u)_q v}_{H^j} \\
   \lesssim    \norm{f}_{C^0_b} \norm{v}_{H^0} + \norm{f}_{C^j_b} \langle\norm{u}_{H^k}\rangle^j \norm{v}_{H^{j+1}}    + \norm{f}_{C^{j+1}_b} \langle\norm{u}_{H^k}\rangle^j \norm{u}_{H^k} \norm{v}_{H^j} \lesssim \norm{f}_{C^{j+1}_b}\langle\norm{u}_{H^k}\rangle^{j+1} \norm{v}_{H_{j+1}},   
\end{multline}
which shows $\mathbb{P}_{j+1}$ holds. This completes the induction argument. 

\textbf{Step 2}: A difference estimate. Next we use the multiplier estimate from the previous step to prove the statement $\Q_j$ for $0 \le j \le k$, where $\mathbb{Q}_j$ denotes the proposition that for all $f \in C^{j+1}_b(\R^n ; \R^n)$ and $u,v \in H^k(\Omega; \R^n)$, the difference $\Lambda_f(u) - \Lambda_f(v) \in H^j(\Omega; \R^n)$ and satisfies 
\begin{align}\label{eq: Lambda continuity}
	\norm{\Lambda_f(u) - \Lambda_f(v)}_{H^j} \to 0 \; \text{if} \; v \to u \; \text{in} \; H^k(\Omega;\R^n). 
\end{align}
To prove $\mathbb{Q}_j$ for each admissible $j$ we proceed by finite induction again. 

In the case when $j=0$, we note that by applying the mean value inequality we may  deduce that 
\begin{align}\label{eq: continuity j=0}
	 \norm{\Lambda_f(u) - \Lambda_f(v)}_{H^0} \le \norm{f}_{C^1_b} \norm{u-v}_{H^0} \le \norm{f}_{C^1_b} \norm{u-v}_{H^k}.
\end{align}
Thus $\Lambda_f(u) - \Lambda_f(v) \in H^0(\Omega;\R^n)$ and $ \norm{\Lambda_f(u) - \Lambda_f(v)}_{H^0} \to 0$ as $v \to u \in H^k(\Omega;\R^n)$. Thus $\mathbb{Q}_0$ holds. 

Next we suppose that $\mathbb{Q}_l$ holds for all $0 \le l \le j \le k-1$ and consider the case $j+1 \le k$. We note that \eqref{eq: product derivative} implies that for all $1 \le p \le n$, 
\begin{multline}
	\partial_p(\Lambda_f(u) - \Lambda_f(v) )
	= \sum_{q=1}^n \Lambda_{\partial_q f}(u) \partial_p (u)_q - \Lambda_{\partial_q f}(v) \partial_p (v)_q \\ 
	= \sum_{q=1}^n \left( \Lambda_{\partial_q f}(u) - \Lambda_{\partial_q f}(v) \right) \partial_p (u)_q + \sum_{q=1}^n  \Lambda_{\partial_q f}(v) \partial_p (u-v)_q \; \text{for all} \; u,v \in H^k(\Omega;\R^n),
\end{multline}
thus by \eqref{eq: continuity j=0}, the multiplier estimate \eqref{eq: H^k product est}, the induction hypothesis and basic product estimates, we have
\begin{multline}\label{eq: difference estimate}
	\norm{\Lambda_f(u) - \Lambda_f(v)}_{H^{j+1}} =  \norm{\Lambda_f(u) - \Lambda_f(v)}_{H^{0}} + \sum_{p=1}^n \norm{\partial_p(\Lambda_f(u) - \Lambda_f(v) )}_{H^j} \\
	\lesssim  \norm{\Lambda_f(u) - \Lambda_f(v)}_{H^{0}} + 
	\sum_{p,q=1}^n \norm{\left( \Lambda_{\partial_q f}(u) - \Lambda_{\partial_q f}(v) \right) \partial_p (u)_q}_{H^j}+ \sum_{p,q=1}^n  \norm{\Lambda_{\partial_q f}(v) \partial_p (u-v)_q}_{H^j} \\
	\lesssim  \norm{\Lambda_f(u) - \Lambda_f(v)}_{H^{0}} +  \norm{u}_{H^k} \sum_{q=1}^n \norm{\Lambda_{\p_q f}(u) -\Lambda_{\p_q f}(v) }_{H^j} 
	+ \sum_{q=1}^n \norm{\p_q f}_{C^{j}_b} \langle \norm{v}_{H^k} \rangle^{j} \norm{u-v}_{H^{j+1}}.
\end{multline}
This shows that $\Lambda_f(u) - \Lambda_f(v) \in H^{j+1}(\Omega;\R^n)$ and $\norm{\Lambda_f(u) - \Lambda_f(v)}_{H^{j+1}} \to 0$ if $v \to u \in H^{j+1}(\Omega;\R^n)$.  
Thus $\mathbb{Q}_{j+1}$ holds and the induction argument is complete. 

\textbf{Step 3}: Well-definedness and continuity. Next we utilize the result from the previous step to show that $\Lambda_f: H^k(\Omega;\R^n) \to  H^k(\Omega;\R^n)$ is well-defined and continuous. We note that in the case when $\Omega$ has infinite measure, using the additional assumption $f(0) = 0$ we may apply the mean value inequality to deduce that 
\begin{align}
	\abs{\Lambda_f(u)} = \abs{f(u) - f(0)} \le \norm{Df}_{C^0_b} \abs{u} \; \text{for all} \; u \in H^k(\Omega;\R^n),
\end{align}
which in turn implies that 
\begin{align}\label{eq: comp H^0 est}
	\norm{\Lambda_f(u)}_{H^0} \lesssim  \norm{f}_{C^1_b}  \norm{u}_{H^0}  \lesssim \norm{f}_{C^1_b} \norm{u}_{H^k}  \; \text{for all} \; u \in H^k(\Omega;\R^n).
\end{align}
In the case when $\Omega$ has finite measure, the estimate \eqref{eq: comp H^0 est} also holds. In either case, we may use $v = 0 \in H^k(\Omega;\R^n)$ in the estimate \eqref{eq: difference estimate} for $j+1 = k$ from the previous step and \eqref{eq: comp H^0 est} to immediate deduce that
\begin{multline}
	\norm{\Lambda_f(u)}_{H^k} \lesssim \norm{\Lambda_f(u)}_{H^{0}} +  \norm{u}_{H^k} \sum_{q=1}^n \norm{\Lambda_{\p_q f}(u) -\Lambda_{\p_q f}(0) }_{H^{k-1}} 
	+ \sum_{q=1}^n \norm{\p_q f}_{C^{j}_b}  \norm{u}_{H^{k}} \\ 
	\lesssim \norm{f}_{C^1_b} \norm{u}_{H^k} +   \norm{u}_{H^k} \sum_{q=1}^n \norm{\Lambda_{\p_q f}(u) -\Lambda_{\p_q f}(0) }_{H^{k-1}} 
	+ \norm{f}_{C^{k}_b}  \norm{u}_{H^{k}} \; \text{for all} \; u \in H^k(\Omega;\R^n).
\end{multline}
Since $\p_q f \in C^k(\R^n;\R^n)$, from $\Q_{k-1}$ we know that $\Lambda_{\p_q f}(u) -\Lambda_{\p_q f}(0) \in H^{k-1}$ for all $1 \le q \le n$, therefore $\Lambda_f(u) \in H^k(\Omega;\R^n)$ for all $u \in H^k(\Omega;\R^n)$. From $\mathbb{Q}_k$ we also have $\norm{\Lambda_f(u) - \Lambda_f(v)}_{H^k} \to 0$ if $v \to u$ in $H^k(\Omega;\R^n)$, thus we may conclude that the map $\Lambda_f: H^k(\Omega;\R^n) \to  H^k(\Omega;\R^n)$ is well-defined and continuous.

\textbf{Step 4:} Continuous differentiability. 
To conclude the proof we show that $\Lambda_f$ is $C^m$ given $f \in C^{k+1+m}_b(\R^n ; \R^n)$ for $m \in \{1,2\}$. In the case when $m=1$, we note that by the fundamental theorem of calculus for all $u,v \in H^k(\Omega;\R^n)$ we have 
\begin{align}
	 \Lambda_f (u + v) - \Lambda_f(u) - \sum_{q=1}^n \Lambda_{\p_q f}(u) (v)_q =  \underbrace{\sum_{q=1}^n (v)_q \int_0^1 \Lambda_{\p_q f}(u+tv) - \Lambda_{\p_q f}(u) \; dt}_{ := \mathcal{R}_1},
\end{align}
and thus by the fact that $H^k(\Omega;\R^n)$ is an algebra and applying the statement $\mathbb{Q}_k$ from the induction argument above to $\p_q f \in C_b^{k+1}(\R^n ; \R^n)$, we have 
\begin{align}
\frac{\norm{\mathcal{R}_1}_{H^k}}{\norm{v}_{H^k}} 
\lesssim \sum_{q=1}^n \int_0^1 \norm{\Lambda_{\p_q f} (u+tv) - \Lambda_{\p_q f}(u)}_{H^k} \; dt \to 0 \; \text{as} \; \norm{v}_{H^k} \to 0. 
\end{align}
Thus we may conclude that $\Lambda_f$ is differentiable when $m=1$ and 
\begin{align}\label{eq: iterate step}
	 D \Lambda_f (u) (v) = \sum_{q=1}^n \Lambda_{\p_q f}(u) (v)_q. 
\end{align}
Since $D \Lambda_f(u)$ is in terms $\Lambda_{\p_q f}$ which satisfies \eqref{eq: Lambda continuity}, we may then conclude that $D\Lambda_f$ is continuously differentiable. 

To conclude in the case of $m=2$, we note that by \eqref{eq: iterate step} and the fundamental theorem of calculus again we have 
\begin{align}
	 D\Lambda_f(u+w)(v) - D \Lambda_f(u)(v) - \sum_{p,q=1}^n \Lambda_{\p_p \p_q f} (u)(v)_p(w)_q = \underbrace{\sum_{p,q=1}^n  (v)_p(w)_q \int_0^1 \Lambda_{\p_p \p_q f}(u+tw) - \Lambda_{\p_p \p_q f} (u) \; dt}_{:= \mathcal{R}_2}.
\end{align}
Using the fact that $H^k(\Omega;\R^n)$ is an algebra and applying the statement $\mathbb{Q}_k$ from the induction argument above to $\p_p \p_q f \in C_b^{k+1}(\R^n ; \R^n)$, we then have 
\begin{align}
 \frac{\norm{\mathcal{R}_2}_{\mathcal{L}(H^k)}}{\norm{w}_{H^k} } \lesssim \sum_{p,q=1}^n \int_0^1 \norm{\Lambda_{\p_p \p_q f}(u+tw) - \Lambda_{\p_p \p_q f} (u)}_{H^k} \; dt \to 0 \; \text{as} \; \norm{w}_{H^k} \to 0.
\end{align}
This shows that $\Lambda_f$ is twice-differentiable with
\begin{align}
	 D^2 \Lambda_f (u)(v,w) = \sum_{p,q=1}^n \Lambda_{\p_p \p_q f}(u)(v)_p (w)_q.
\end{align}
Since $\Lambda_{\p_p \p_q f}$ satisfies \eqref{eq: Lambda continuity}, we may then conclude that $\Lambda_f$ is $C^2$ when $m=2$.

\end{proof}

\bibliographystyle{abbrv}
\bibliography{bib.bib}

\begin{thebibliography}{10}

\bibitem{abraham}
R.~Abraham, J.~E. Marsden, and T.~Ratiu.
\newblock {\em Manifolds, tensor analysis, and applications}, volume~75 of {\em
  Applied Mathematical Sciences}.
\newblock Springer-Verlag, New York, second edition, 1988.

\bibitem{a-d-n}
S.~Agmon, A.~Douglis, and L.~Nirenberg.
\newblock Estimates near the boundary for solutions of elliptic partial
  differential equations satisfying general boundary conditions. {I}.
\newblock {\em Comm. Pure Appl. Math.}, 12:623--727, 1959.

\bibitem{ADN_1964}
S.~Agmon, A.~Douglis, and L.~Nirenberg.
\newblock {Estimates near the boundary for solutions of elliptic partial
  differential equations satisfying general boundary conditions. {II}}.
\newblock {\em Comm. Pure Appl. Math.}, 17:35--92, 1964.

\bibitem{viega}
H.~Beir\~{a}o~da Veiga.
\newblock On the regularity of flows with {L}adyzhenskaya shear-dependent
  viscosity and slip or nonslip boundary conditions.
\newblock {\em Comm. Pure Appl. Math.}, 58(4):552--577, 2005.

\bibitem{boyer}
F.~Boyer and P.~Fabrie.
\newblock {\em Mathematical tools for the study of the incompressible
  {N}avier-{S}tokes equations and related models}, volume 183 of {\em Applied
  Mathematical Sciences}.
\newblock Springer, New York, 2013.

\bibitem{brezis}
H.~Brezis.
\newblock {\em Functional analysis, {S}obolev spaces and partial differential
  equations}.
\newblock Universitext. Springer, New York, 2011.

\bibitem{ding}
S.~Ding and Z.~Lin.
\newblock Stability for two-dimensional plane {C}ouette flow to the
  incompressible {N}avier-{S}tokes equations with {N}avier boundary conditions.
\newblock {\em Commun. Math. Sci.}, 18(5):1233--1258, 2020.

\bibitem{dussan}
E.~B. Dussan.
\newblock On the spreading of liquids on solid surfaces: Static and dynamic
  contact lines.
\newblock {\em Annual review of fluid mechanics}, 11(1):371--400, 1979.

\bibitem{feireisl}
E.~Feireisl.
\newblock On the motion of rigid bodies in a viscous incompressible fluid.
\newblock {\em J. Evol. Equ.}, 3(3):419--441, 2003.

\bibitem{Ferreira}
L.~C.~F. Ferreira, G.~Planas, and E.~J. Villamizar-Roa.
\newblock On the nonhomogeneous {N}avier-{S}tokes system with {N}avier friction
  boundary conditions.
\newblock {\em SIAM J. Math. Anal.}, 45(4):2576--2595, 2013.

\bibitem{gv_hillairet_2010}
D.~G\'{e}rard-Varet and M.~Hillairet.
\newblock Regularity issues in the problem of fluid structure interaction.
\newblock {\em Arch. Ration. Mech. Anal.}, 195(2):375--407, 2010.

\bibitem{gv_hillairet_2014}
D.~G\'{e}rard-Varet and M.~Hillairet.
\newblock Existence of weak solutions up to collision for viscous fluid-solid
  systems with slip.
\newblock {\em Comm. Pure Appl. Math.}, 67(12):2022--2075, 2014.

\bibitem{gv_hillairet_wang}
D.~G\'{e}rard-Varet, M.~Hillairet, and C.~Wang.
\newblock The influence of boundary conditions on the contact problem in a 3{D}
  {N}avier-{S}tokes flow.
\newblock {\em J. Math. Pures Appl. (9)}, 103(1):1--38, 2015.

\bibitem{masmoudi}
D.~G\'{e}rard-Varet and N.~Masmoudi.
\newblock Relevance of the slip condition for fluid flows near an irregular
  boundary.
\newblock {\em Comm. Math. Phys.}, 295(1):99--137, 2010.

\bibitem{hillairet}
M.~Hillairet.
\newblock Lack of collision between solid bodies in a 2d incompressible viscous
  flow.
\newblock {\em Comm. Partial Differential Equations}, 32(7-9):1345--1371, 2007.

\bibitem{hillairet_takahashi}
M.~Hillairet and T.~Takahashi.
\newblock Collisions in three-dimensional fluid structure interaction problems.
\newblock {\em SIAM J. Math. Anal.}, 40(6):2451--2477, 2009.

\bibitem{kelliher}
J.~P. Kelliher.
\newblock Navier-{S}tokes equations with {N}avier boundary conditions for a
  bounded domain in the plane.
\newblock {\em SIAM J. Math. Anal.}, 38(1):210--232, 2006.

\bibitem{tice}
J.~Koganemaru and I.~Tice.
\newblock Traveling wave solutions to the inclined or periodic free boundary
  incompressible {N}avier-{S}tokes equations.
\newblock {\em J. Funct. Anal.}, 285(7):Paper No. 110057, 75, 2023.

\bibitem{leonitice}
G.~Leoni and I.~Tice.
\newblock Traveling wave solutions to the free boundary incompressible
  {N}avier-{S}tokes equations.
\newblock {\em Comm. Pure Appl. Math.}, 76(10):2474--2576, 2023.

\bibitem{li}
F.~Li, R.~Pan, and Z.~Zhang.
\newblock Stability and instability of the 3{D} incompressible viscous flow in
  a bounded domain.
\newblock {\em Calc. Var. Partial Differential Equations}, 61(3):Paper No. 95,
  26, 2022.

\bibitem{li-zhang}
H.-L. Li and X.~Zhang.
\newblock Stability of plane {C}ouette flow for the compressible
  {N}avier-{S}tokes equations with {N}avier-slip boundary.
\newblock {\em J. Differential Equations}, 263(2):1160--1187, 2017.

\bibitem{rousset}
N.~Masmoudi and F.~Rousset.
\newblock Uniform regularity for the {N}avier-{S}tokes equation with {N}avier
  boundary condition.
\newblock {\em Arch. Ration. Mech. Anal.}, 203(2):529--575, 2012.

\bibitem{Navier}
C.~L. M.~H. Navier.
\newblock {\em Mémoire sur les lois du Mouvement des Fluides}.
\newblock Mémoires de l’Académie Royale des Sciences de l’Institut de
  France, 1823.

\bibitem{neto}
C.~Neto, D.~R. Evans, E.~Bonaccurso, H.-J. Butt, and V.~S.~J. Craig.
\newblock Boundary slip in {N}ewtonian liquids: a review of experimental
  studies.
\newblock {\em Reports on progress in physics}, 68(12):2859--2897, 2005.

\bibitem{nguyentice}
H.~Q. Nguyen and I.~Tice.
\newblock Traveling wave solutions to the one-phase muskat problem: existence
  and stability.
\newblock 2022.
\newblock Preprint. \href{https://arxiv.org/abs/2211.06286}{arXiv:2211.06286}.

\bibitem{murata}
Y.~Shibata and M.~Murata.
\newblock On the global well-posedness for the compressible {N}avier-{S}tokes
  equations with slip boundary condition.
\newblock {\em J. Differential Equations}, 260(7):5761--5795, 2016.

\bibitem{solonnikovslip}
V.~A. Solonnikov and V.~E. \v{S}\v{c}adilov.
\newblock A certain boundary value problem for the stationary system of
  {N}avier-{S}tokes equations.
\newblock {\em Trudy Mat. Inst. Steklov.}, 125:196--210, 235, 1973.
\newblock Boundary value problems of mathematical physics, 8.

\bibitem{starovoitov}
V.~N. Starovoitov.
\newblock Behavior of a rigid body in an incompressible viscous fluid near a
  boundary.
\newblock In {\em Free boundary problems ({T}rento, 2002)}, volume 147 of {\em
  Internat. Ser. Numer. Math.}, pages 313--327. Birkh\"{a}user, Basel, 2004.

\bibitem{noahtice}
N.~Stevenson and I.~Tice.
\newblock Traveling wave solutions to the multilayer free boundary
  incompressible {N}avier-{S}tokes equations.
\newblock {\em SIAM J. Math. Anal.}, 53(6):6370--6423, 2021.

\bibitem{noahtice3}
N.~Stevenson and I.~Tice.
\newblock Well-posedness of the stationary and slowly traveling wave problems
  for the free boundary incompressible navier-stokes equations.
\newblock 2023.
\newblock Preprint. \href{https://arxiv.org/abs/2306.15571}{arXiv:2306.15571}.

\bibitem{noahtice2}
N.~Stevenson and I.~Tice.
\newblock Well-posedness of the traveling wave problem for the free boundary
  compressible {N}avier-{S}tokes equations.
\newblock 2023.
\newblock Preprint. \href{https://arxiv.org/abs/2301.00773}{arXiv:2301.00773}.

\end{thebibliography}

\end{document}